\renewcommand*\@tododisplay[0]
\let\TodoOriginal\Todo
\renewcommand\Todo[2][To~do]{%
  {\color{red} \TodoOriginal[#1]{\normalsize #2}}%
  \@todotrue%
}
\newcommand\pgfinvisible{\pgfsys@begininvisible}
\newcommand\pgfshown{\pgfsys@endinvisible}
\newcommand{\fullrlap}[1]{%
  \rlap{\kern\dimexpr-\@totalleftmargin+\textwidth+\marginparsep\relax#1}}
\newcommand{\restatableeq}[2]{
  #2
  \gdef#1{#2}
}
\newcommand{\subtag}[1]{
  \makeatletter
  \def\@currentlabel{#1}
  \makeatother
  \renewcommand{\theequation}{#1\arabic{equation}}
}
\newcommand{\CI}{
  \mathrm{CI}
}
\numberwithin{equation}{section}
\definecolor{linkred}{rgb}{0.75,0,0}
\definecolor{linkblue}{rgb}{0,0,0.75}
\theoremstyle{plain}
\theoremstyle{plain}
\newtheorem{theorem}{Theorem}[section]
\newtheorem{lemma}[theorem]{Lemma}
\newtheorem{proposition}[theorem]{Proposition}
\newtheorem{corollary}[theorem]{Corollary}
\theoremstyle{definition}
\newtheorem{definition}[theorem]{Definition}
\newtheorem{rem}[theorem]{Remark}
\definecolor{kellygreen}{rgb}{0.3, 0.73, 0.09}
\def\Email#1{{% additional braces for segregating \footnotesize
  \textit{E-mail}: #1
}}
\begin{document}

\title{\vspace{-8ex} Contour Integral Formulas for PushASEP on the Ring}

\author{Jhih-Huang Li\footnote{Department of mathematics, National Taiwan University, Taipei, Taiwan. \Email{lijhih@ntu.edu.tw}} \ and Axel Saenz\footnote{Department of mathematics, Oregon State University, Corvallis, United States. \Email{saenzroa@oregonstate.edu}}}

\date{}

\maketitle

\begin{abstract}
  We give contour integral formulas for the generating function of the joint distribution of the PushASEP on a ring.
  We obtained these formulas through a rigorous treatment of Bethe Ansatz.
  The approach relies on residue computations and controlling the location of the Bethe roots, which we achieve by partially decoupling the Bethe equations through extending the system of equations.
  Moreover, we are able to use our formulas to compute the asymptotic fluctuations for the flat and step initial conditions at the relaxation time scale.  
\end{abstract}

\section{Introduction}

\subsection{Description of the model}

The \emph{push asymmetric simple exclusion process (PushASEP)} on $\bbZ$ was introduced by Borodin-Ferrari in \cite{BorodinFerrari2008}.
Recently, in \cite{Matetski2023}, exact formulas for the PushASEP on the integer line obtained via the biorthogonalization method, generalizing the formulas from the paper \emph{KPZ fixed point} \cite{matetski2021}.
The PushASEP is a prototypical example of a determinantal interactive particle process, with an exclusion rule, in the \emph{Kardar-Parisi-Zhang (KPZ) universality class}. 
We consider the natural extension of the process on a periodic, one-dimensional lattice that we identify with $\bbZ / L \bbZ$, and call this model the \emph{PushASEP on a ring}.
The \emph{length}, or \emph{period}, of the ring is $L \in \bbZ_{\geq 1}$, and let $N \in \bbZ_{\geq 1}$, with $N \leq L$, be the number of particles on the ring.
In the results below, we take any deterministic initial configuration at time $t=0$.
The system evolves according to the following rules in time:

\begin{itemize}
  \item Each particle jumps independently to the neighboring right site at an exponential rate $p$ with \emph{exclusion} dynamics, i.e. if the target site is occupied, then the jump does not happen.
  \item Each particle jumps independently to the neighboring left site at an exponential rate $q$ with \emph{push} dynamics, i.e. if the target site is occupied, the particle at the occupied site also performs a jump to their neighboring left site with push dynamics at the same time, triggering a series of jumps of all the neighboring left particles by one step until reaching an unoccupied site.
\end{itemize}
We assume that both rates $p$ and $q$ are non-negative and normalized so that $p + q =1$. Moreover, the jumps to the right and to the left are also independent.

\subsection{Background and literature}

\label{s:historical_rmks}

The PushASEP specializes to the \emph{totally asymmetric simple exclusion process (TASEP)} by setting $(p,q)=(1,0)$.
The TASEP on the ring has been previously studied by several different authors, including Baik, Liu and Prolhac.
In a series of papers, Baik and Liu \cite{baikliu2016, baikliu2018, BaikLiu19} and Prolhac \cite{Prolhac2016, prolhac2020TASEP, prolhac2020KPZ} obtained asymptotic results for the one-point function in the relaxation time scale (i.e., $t \sim L^{3/2}$).
While both Baik--Liu and Prolhac are working with TASEP with periodic boundary conditions, the striking difference lies in the fact that Baik--Liu have an infinite state space that distinguishes the winding number of a particle \footnote{For instance, in the case of Baik--Liu, the state of a particle that has traveled around the ring exactly once is not the same as a particle that has stayed in the same location. Whereas, in the case of Prolhac, a particle that has traveled around the ring exactly one is indistinguishable from a particle that has stayed in the same location.}, and Prolhac has a finite-state space that does not distinguish that winding number of the particles.
This subtle difference leads to a different formulation of their results for the one-point function, which are only checked to be the same numerically \footnote{The claim that the results are only checked numerically is found in \cite{baikliu2018} on p.3. This claim may be outdated and needs to be checked.}.
Additionally, it is worth noting that the result obtained by Prolhac are not rigorous as they are based on two assumptions: (a) the completeness of the Bethe Ansatz, (b) the main contribution in the relaxation time scale is due to the eigenstates obtained from finite excitations of the stationary eigenstate.
In this paper, we aim to unify the work of Baik--Liu and Prolhac by looking at a joint process taking into account the state variable of the particles and the current, which is equivalent to counting the winding number.
Moreover, we generalize the result to a bidirectional model, called PushASEP model. More details will be explained in \Cref{s:preliminaries}.

\subsection{New results}\label{s:intro_results}

Here we briefly describe the PushASEP model and the new results of this paper.
A more detailed and formal definition of the model will be given in \Cref{s:PushASEP}. The proofs of the new results will be given in later sections.

The PushASEP model is a Markov process and may be described by a master equation \eqref{e:master_joint}.
However, diagonalizing the infinitesimal generator $\calH$ in the master equation is in general highly non-trivial.
In Prolhac's work \cite{Prolhac2016}, with additional assumptions on
(a) the completeness of the eigenfunctions and
(b) the decay of the eigenvalues,
the author was able to perform the one-point asymptotic analysis of the periodic TASEP model (i.e.~the periodic PushASEP model with $p=1$ and $q=0$) for flat, step and stationary initial condition with $L = 2N$.
In Baik--Liu \cite{baikliu2018}, the authors considered the periodic lattice \emph{with a winding number} to obtain a determinantal contour integral formula and performed the asymptotic analysis for the flat and step initial conditions for any fixed ratio $\rho = \tfrac{N}{L}$.

In this paper, we introduce a new formula for the transition probability of the system using a $(N+1)$ contour integral; see \Cref{t:intro_transition_prob} (with $\zeta = 1$) or Equation \eqref{e:transition_prob_N+1} in \Cref{t:transition_prob}.
Additionally, through residue computations, we give the transition probability function as a linear combination of eigenfunctions for the infenitesimal generator of the master equation mentioned above;
see \Cref{t:transition_prob_eigenbasis} (with $\zeta \rightarrow 1$) or Equation \eqref{e:transition_prob_eigenfunctions} in \Cref{t:transition_prob_simple}.
Our starting point for these results is the Bethe Asatz and the corresponding Bethe equations. Then, to make the analysis tractable, we introduce an auxiliary variable and rewrite the system of equations so that all but one of the resulting equations are decoupled. Our results, which make a precise connection between contour integral formulas and eigenfunctions arising from the Bethe Ansatz, bridge the gap between the work of Baik--Liu and Prolhac and, in general, give a rigorous treatment of the Bethe Ansatz. 

A configuration $X = (x_k)_{1 \leq k \leq N}$ encodes the positions of the $N$ particles on the ring $\bbZ / L \bbZ$.
In particular, we encode the positions of the particles so that the coordinates are in increasing order, i.e., $0 \leq x_1 < \cdots < x_N \leq L-1$.
We write $\calX_{L, N}$ for the configuration space of our system.

The \emph{global (total) current} of the system plays a crucial role in our analysis of the Bethe Ansatz for the PushASEP on the ring.
See \Cref{s:definitions} for a precise definition.
In short, we denote the global current at time $t\in \bbR_{\geq0}$ by $Q(t)$ and it measures the total net current, with particles moving locally to the right (resp., left) contributing positive (resp., negative) current, for all the particles on the ring from time zero to time $t$.
In the work of Prolhac \cite{Prolhac2016}, this observable appears explicitly and, in the work of Baik--Liu \cite{baikliu2018}, this observable appears implicitly through other auxiliary observables such as the position variables.
This is another instance showing that the results below bridge the gap between the work of Prolhac and Baik--Liu.
Moreover, as the global current is often left out in the Bethe Ansatz formulation, we emphasize that this observable was key in our analysis to make the Bethe Ansatz rigorous.

We state our precise results below, but first we introduce some notation.
Let $v + C_r$ be the positively oriented circle of radius $r > 0$ and centered at the point $v \in \bbC$.
Then, throughout the paper, we use the following notation for contour integrals,
\begin{equation}
  \label{e:intro_diffcontours}
  \diffcontoursz := \frac{1}{2 \pi \icomp} \Big( \oint_{C_{R'}} - \oint_{C_{\epsilon'}}  \Big),
  \quad \diffcontoursw := \frac{1}{2 \pi \icomp}\Big( \oint_{C_{R}} - \oint_{C_{\epsilon_1}} - \oint_{1 + C_{\epsilon_2}}  \Big),
\end{equation}
for $R, R', \epsilon', \epsilon_1, \epsilon_2 > 0$ so that $\epsilon' < R'$, $\epsilon_1 < R$ and $\epsilon_2+1 < R$.
This notation greatly simplifies many of our formulas bellow.

\begin{theorem}
  \label{t:intro_transition_prob}
  Let $X(t) \in \calX_{L,N}$ be the random configuration for the PushASEP, with $N$ particles on a ring of length $L$, at time $t \in \bbR_{\geq 0}$ and initial configuration $Y =(y_k)_{1 \leq k \leq N}\in \calX_{L,N}$. Then, the following $(N+1)$-fold contour integral formula gives the partial Fourier transform for the joint transition probability of the process,
  \begin{align}
    \label{e:intro_Fourier}
    \sum_{q\in \bbZ}\zeta^q\, \bbP_{Y}\left(X(t) = X, Q(t)=q \right) = \bigcontourintRzetaCpct ,
  \end{align}
  for $X \in \calX_{L, N}$ with $|\zeta| =1$, the functions in the integrand defined by \eqref{e:intro_eigenfunctions} and \eqref{e:intro_bethe_eqns}, and the notation for the contours defined above in \eqref{e:intro_diffcontours}.
  The radii $R, R', \epsilon', \epsilon_1, \epsilon_2 > 0$ are chosen as in \eqref{e:conditions}, see \Cref{s:delta_basis}, so that the poles of the integrand are appropriately positioned. 
\end{theorem}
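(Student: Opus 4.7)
The plan is to show that the right-hand side of \eqref{e:intro_Fourier}, viewed as a function of $(X, t)$ for fixed $Y$ and fixed $\zeta$ on the unit circle, satisfies the same Kolmogorov forward equation \eqref{e:master_joint} and the same initial condition as the left-hand side, and then to conclude by uniqueness of solutions to this finite-dimensional linear ODE system (graded by the value of the current $q$).

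For the dynamical equation I would proceed via the standard Bethe-Ansatz mechanism. Based on the discussion in the introduction, the integrand factors as a product of free-particle monomials $z_k^{x_k}$, time-exponentials $e^{t E(z_k)}$ encoding the single-particle eigenvalues, and a rational factor built from \eqref{e:intro_eigenfunctions} that is symmetric in $(z_1,\ldots,z_N)$ and depends on the auxiliary variable $w$ through the Bethe equations \eqref{e:intro_bethe_eqns}. The monomials $z_k^{x_k}$ diagonalize the free (non-interacting) part of $\calH$, so applying $\partial_t - \calH$ to the integrand produces only boundary terms supported on configurations with $x_{k+1} = x_k + 1$, coming from the exclusion rule at rate $p$ and the push rule at rate $q$. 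The two-body $S$-matrix embedded in \eqref{e:intro_eigenfunctions} is designed so that antisymmetrization over adjacent pairs $(z_k, z_{k+1})$ cancels precisely these boundary terms, yielding (using $p + q = 1$) one algebraic identity per adjacent pair. Cyclic periodicity on $\bbZ / L \bbZ$, twisted by the fugacity $\zeta$ tracking $Q(t)$, is encoded through the factor $z_k^L$ appearing in the Bethe denominator, and amounts to a routine change of variables in the contours.

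For the initial condition at $t = 0$, the exponential becomes $1$ and I need the $(N+1)$-fold integral to reduce to the indicator $\mathbf{1}_{X = Y}$, with the $\sum_q \zeta^q$ sum on the left collapsing because $Q(0) = 0$ deterministically. I would evaluate this by a sequence of residue computations. The choice of radii in \eqref{e:conditions} separates the poles of the integrand into three clean families (near $0$, near $\infty$, and the pole at $1$ in the $w$-variable), so the contour combinations in \eqref{e:intro_diffcontours} unambiguously select specific residues. After isolating the residue at $w = 1$ and antisymmetrizing over the $z_k$'s, a Cauchy/Vandermonde-type identity should reduce the resulting sum over permutations to the Kronecker delta.

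The main obstacle I anticipate is this residue/combinatorial analysis at $t = 0$. Because the auxiliary $w$-integration was introduced precisely to decouple the Bethe equations, the pole structure of the integrand in $(z_1,\ldots,z_N)$ depends nontrivially on $w$, and the order in which the residues are taken matters. Establishing the combinatorial identity that telescopes the residue sum into $\mathbf{1}_{X=Y}$ is where the real work lies; once this is in hand, the dynamical verification together with uniqueness of solutions to \eqref{e:master_joint} completes the argument.
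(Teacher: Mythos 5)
Your overall strategy---characterize the right-hand side by the master equation and match the initial condition at $t = 0$, then invoke uniqueness---is the same as the paper's, and your initial-condition step mirrors the paper's Proposition~\ref{p:delta_contour_formula} (proved in Appendix~\ref{a:delta_basis} by decomposing the nested contour into $2\cdot 3^N$ pieces and showing all but one vanish). But your verification of the dynamical equation has a genuine gap, precisely in the part you dismiss as ``a routine change of variables in the contours.''

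The point is this. When you apply $\calH_\zeta$ to the integrand, the $X$-dependence sits entirely in the Bethe-type factor $u_{\vec w}(X;\zeta)$, and the variables $\vec w$ range over contours, \emph{not} over solutions of the Bethe equations. For such unconstrained $\vec w$, the exclusion/push boundary condition \eqref{e:exclusion} does hold identically (Lemma~\ref{l:exclusion_check}: the cancellation of adjacent-pair terms uses only the form of $A_\sigma$, so your ``one algebraic identity per adjacent pair'' part is fine). But the periodicity condition \eqref{e:periodicity} does \emph{not} hold for generic $\vec w$: Lemma~\ref{l:periodicity_check} shows the discrepancy $u_{\vec w}(x_1,\dots,x_N;\zeta) - u_{\vec w}(x_2,\dots,x_N,x_1+L;\zeta)$ is proportional to factors of the form $1 - w^L\zeta^{L}\prod_i\bigl(1-(\zeta w_i)^{-1}\bigr)\bigl(1-(\zeta w)^{-1}\bigr)^{-1}$, which vanish only when $\vec w$ satisfies the Bethe equations. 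Consequently $\calH_\zeta u_{\vec w}(X;\zeta) = E(\vec w;\zeta)\,u_{\vec w}(X;\zeta) + R(\vec w, X;\zeta)$ with a nonzero wraparound remainder $R$ supported on configurations with $x_1 = 0$ or $x_N = L-1$. Establishing that $R$, integrated against $1/\bigl(p_z(\vec w;\zeta)\prod_i q_z(w_i)\bigr)$ over the contours of \eqref{e:intro_diffcontours}, gives zero is where the substance lies---one needs to exploit that the discrepancy factor is expressible through $p_z$ and $q_z$ (it equals $p_z + (p_z - 1)q_z(w_j)$, in the notation of \eqref{e:dBE}) and then carry out a delicate contour deformation/residue estimate, which is not a change of variables. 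The paper sidesteps this by first establishing, via the heavy residue machinery of Section~\ref{s:contour_deformations} (Proposition~\ref{p:w_residues}, Theorem~\ref{t:bethe_expansion}), that the $(N+1)$-fold integral equals a finite sum over genuine Bethe roots $\vec\lambda \in \calR(\zeta)$, and only then checks the ODE term by term via Proposition~\ref{p:Bethe_Ansatz}, where periodicity holds by construction. If you want to verify the ODE directly on the $(N+1)$-fold integral, you must either reproduce an equivalent residue analysis for the wraparound remainder, or find a genuinely new argument; the claim that it is routine is where your proposal would break.
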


Let us describe the functions in the integrand of the contour integral from the previous theorem. For any given vector $\vec{w}\in \bbC^N$, $X, Y \in \calX_{L, N}$ and $\zeta \in \bbC$, we have
\begin{equation}\label{e:intro_eigenfunctions}
  \begin{split}
    \calG_{\vec{w}}(Y;\zeta) & = \frac{h_{\vec{w}}(Y; \zeta)}{p_z( \vec{w} ; \zeta ) \prod_{i=1}^N q_z(w_i)}, \\
    u_{\vec{w}} (X;\zeta) & = \textsum_{\sigma \in S_N} A_{\sigma}(\vec{w}) \prod_{i=1}^N (\zeta^{-1} w_{\sigma (i)})^{-x_i},\\
    E(\vec{w}) & = \textsum_{i=1}^N (p w_i + q w_i^{-1} - 1)
  \end{split}
\end{equation}
where $S_N$ is the symmetric group on $N$ elements. The functions $p_z(\vec{w})$ and $q_z(w_i)$, for $1 \leq i \leq N $ arise naturally from what we call the \emph{deformed Bethe equations}, introduced later in \Cref{s:Bethe_Ansatz_equations}.
\begin{equation}\label{e:intro_bethe_eqns}
  \begin{split}
    p_z(\vec{w}; \zeta) &  = 1 + (-1)^N \zeta^L z^{-L} \textprod_{i=1}^N (1 - w_i^{-1}), \\
    q_z(w_i) & = 1 - z^L w_i^{-L} (1 - w_i^{-1})^{-N}, \qquad \forall i = 1, \cdots, N.
  \end{split}
\end{equation}
The other terms in \eqref{e:intro_eigenfunctions} are given as follows
\begin{equation}
  \begin{split}
    h_{\vec{w}}(Y; \zeta) = \prod_{i=1}^N (\zeta^{-1} w_i)^{y_i}, \quad
    A_{\sigma}(\vec{w})  =(-1)^{\sigma} \prod_{j=1}^N (1 - w_{\sigma(j)}^{-1} )^{\sigma(j)-j},
  \end{split}
\end{equation}
where $\sigma \in S_N$ and $(-1)^{\sigma}$ is the sign of the permuation.
Additionally, let us introduce the set of \emph{deformed Bethe roots},
\begin{equation}\label{e:intro_bethe_roots}
  \calR(\zeta) = \{ \vec{w} \in \bbC^N \mid p_z(\vec{w}; \zeta) = 0 \mbox{ and } q_z(w_i) = 0 \mbox{ for all } i = 1, \cdots, N \, \text{and for some } z \in \bbC^* \}.
\end{equation}
Note that the set of deformed Bethe roots is a finite set since it is given as system of equations of $N+1$ unknowns and $N+1$ equations -- it is not too difficult to show that system of equations are independent since $\zeta\in \bbC$ with $|\zeta|=1$.
Also, note that we exclude the case $z=0$ since the deformed Bethe equations \eqref{e:intro_bethe_eqns} are not well-defined in that case.
Then, through careful residue computations, we write \eqref{e:intro_Fourier}, the partial Fourier transform for the joint probability distribution of the PushASEP on a ring, as a sum over the deformed Bethe roots $\calR(\zeta)$.

\begin{theorem}\label{t:transition_prob_eigenbasis}
  The functions $u_{\vec{\lambda}}(X;\zeta)$ with $\vec{\lambda} \in \calR(\zeta)$, given by \eqref{e:intro_eigenfunctions} and \eqref{e:intro_bethe_roots}, are eigenfunctions of the infinitesimal generator for the PushASEP on the ring. The corresponding eigenvalues are $E(\vec{\lambda})$, given by \eqref{e:intro_eigenfunctions}.
  Moreover, the partial Fourier transform of the joint probability distribution \eqref{e:intro_Fourier} has the following spectral decomposition
  \begin{equation}
    \label{e:intro_finite_sum}
    \sum_{q \in \bbZ} \zeta^q \, \bbP_Y (X(t) = X, Q(t) = q)
    = {L \choose N}^{-1} \mathds{1} (\zeta^L=1)
    + \sum_{\vec{\lambda} \in \calR(\zeta) }  \calG_{\vec{\lambda}}(Y; \zeta) u_{\vec{\lambda}}(X;\zeta) e^{ t E(\vec{\lambda}) },
  \end{equation}
  for all but finite $\zeta \in \bbC$ with $|\zeta|=1$ and 
  \begin{equation}
    \calG_{\vec{\lambda}}(Y; \zeta) = h_{\vec{\lambda}}(Y; \zeta) \left(\prod_{i=1}^N\frac{\lambda_i -1} { L \lambda_i - (L-N)} \right)\left(L - \sum_{i=1}^{N}\frac{L}{ L \lambda_i - (L-N)} \right)^{-1}.
  \end{equation}
\end{theorem}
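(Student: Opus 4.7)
The plan is to prove the two claims in sequence. For the eigenfunction property, I would apply $\calH$ term by term to $u_{\vec\lambda}(X;\zeta)$, which is in standard coordinate Bethe Ansatz form. In the bulk of $\calX_{L,N}$ (configurations with no adjacent particles and no particle wrapping across site $L-1$), the generator acts on each plane wave $\prod_i(\zeta^{-1}w_{\sigma(i)})^{-x_i}$ diagonally, returning the eigenvalue $E(\vec w)$. The coefficients $A_\sigma$ are built so that the spurious terms arising from the exclusion rule for right jumps and the push-cascade for left jumps cancel between permutations related by an adjacent transposition; this reduces to the two-body $S$-matrix relation $A_{\sigma s_i}/A_\sigma = -(1-w_{\sigma(i+1)}^{-1})/(1-w_{\sigma(i)}^{-1})$, which is encoded in the product $\prod_j(1-w_{\sigma(j)}^{-1})^{\sigma(j)-j}$. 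The remaining wraparound contributions, weighted by $\zeta^{\pm 1}$ from the current $Q$, vanish precisely when $\vec\lambda \in \calR(\zeta)$, because the augmented system $\{q_z(w_i)=0\} \cup \{p_z(\vec w;\zeta)=0\}$ is equivalent to the standard twisted Bethe equations after elimination of the auxiliary variable $z$.

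For the spectral decomposition, I would evaluate the $(N+1)$-fold integral of \Cref{t:intro_transition_prob} by a multivariate residue computation. The integrand is meromorphic on $(\bbC^*)^{N+1}$ with pole divisors $\{p_z=0\}$ and $\{q_z(w_i)=0\}$ for $i=1,\ldots,N$. By successively deforming each $w_i$-contour $\oint_{C_R}-\oint_{C_{\epsilon_1}}-\oint_{1+C_{\epsilon_2}}$ and the $z$-contour $\oint_{C_{R'}}-\oint_{C_{\epsilon'}}$, the integral should collapse to a sum of residues at the isolated points where all $N+1$ divisors meet simultaneously, namely $(\vec\lambda, z_0)$ with $\vec\lambda \in \calR(\zeta)$. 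At each such point the residue is (up to the contour measure) $h_{\vec\lambda}(Y;\zeta)\, u_{\vec\lambda}(X;\zeta)\, e^{tE(\vec\lambda)}/\det J$, where $J$ is the Jacobian of the defining system with respect to $(z, w_1, \ldots, w_N)$. Using the on-shell identities $z^L = \lambda_i^L(1-\lambda_i^{-1})^N$ and $(-1)^N \zeta^L z^{-L} \prod_j(1-\lambda_j^{-1}) = -1$, direct differentiation gives $\partial_{w_i}q_z = \frac{L\lambda_i-(L-N)}{\lambda_i(\lambda_i-1)}$, $\partial_z q_z = -L/z$, $\partial_z p_z = L/z$, and $\partial_{w_i}p_z = -\frac{1}{\lambda_i(\lambda_i-1)}$; then a Schur-complement computation yields $\det J = \frac{1}{z}\prod_i\frac{L\lambda_i-(L-N)}{\lambda_i(\lambda_i-1)}\bigl(L - \sum_i \frac{L}{L\lambda_i-(L-N)}\bigr)$. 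Combining with the contour measure $\frac{dz}{z}\prod_i\frac{dw_i}{w_i}$ produces exactly the coefficient $\calG_{\vec\lambda}(Y;\zeta)$ as stated in the theorem.

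The extra term $\binom{L}{N}^{-1}\mathds{1}(\zeta^L=1)$ should capture the degenerate contribution at the coalescence locus $\vec w \to (1,\ldots,1)$, where the defining divisors intersect non-transversely; a separate limiting argument identifies this residue with the trivial uniform stationary eigenstate (coefficient $1/|\calX_{L,N}| = \binom{L}{N}^{-1}$), which is picked up by the chosen contours only when $\zeta$ is an $L$-th root of unity. The main obstacle I expect is the bookkeeping of multivariate residues: one must verify that the ``partial'' residues (where only a proper subset of the $N+1$ defining equations vanishes) either integrate to zero or cancel in pairs. I anticipate that this will follow from the antisymmetry of $\sum_\sigma A_\sigma(\vec w)\cdots$ under the $S_N$-action on the $w_i$'s, together with the fact that the small circles around $w_i=0$ and $w_i=1$ are positioned precisely to kill the extraneous poles of $u_{\vec w}$ and $h_{\vec w}$; the treatment of the degenerate coalescent residue producing the $\binom{L}{N}^{-1}$ term will likely require a separate careful limit analysis.
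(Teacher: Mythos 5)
Your plan for the eigenfunction property is essentially the paper's: the paper verifies the exclusion/push boundary condition and the periodicity condition for $u_{\vec\lambda}$ in \Cref{l:exclusion_check} and \Cref{l:periodicity_check}, and then invokes linearity (\Cref{p:Bethe_Ansatz}). Your two-body $S$-matrix ratio $A_{\sigma s_i}/A_\sigma = -(1-w_{\sigma(i+1)}^{-1})/(1-w_{\sigma(i)}^{-1})$ is exactly identity \eqref{e:Bethe_coeff_ratio} there.

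Your Jacobian/Schur-complement computation for the spectral decomposition is correct and gives a satisfying ``one-shot'' explanation for the coefficient $\calG_{\vec\lambda}$: indeed $\det J = \frac{1}{z}\prod_i\frac{L\lambda_i-(L-N)}{\lambda_i(\lambda_i-1)}\bigl(L-\sum_i\frac{L}{L\lambda_i-(L-N)}\bigr)$, and dividing the integrand (with its $\frac{1}{z\prod_i w_i}$ measure factor) by this Jacobian yields exactly $\calG_{\vec\lambda}$. This is a cleaner, more conceptual derivation of the coefficient than the paper's. The paper instead performs the residue computation \emph{sequentially} — one $w_i$ at a time in \Cref{l:contour_deformation_induction}, then the $z$-integral in \Cref{t:bethe_expansion} — and never assembles the full multivariate Jacobian, but reaches the same coefficient by composing one-variable residues.

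However, there is a genuine gap, and you already put your finger on it: you ``anticipate'' that the partial residues (where only a proper subset of the $N+1$ equations vanish) will cancel or vanish ``from the antisymmetry of $\sum_\sigma A_\sigma\cdots$ together with the fact that the small circles are positioned precisely to kill the extraneous poles.'' This is not what actually happens, and the mechanism is the bulk of the work. The paper must track, via the delicate exponent bookkeeping in conditions \eqref{e:conditions} and Steps 1--5 of \Cref{l:contour_deformation_induction}, which poles of $q_z$ and $p_z$ actually fall inside the annular region $\bbA$ bounded by $C_R$, $C_{\epsilon_1}$, $1+C_{\epsilon_2}$ at each stage; the residues from $p_z=0$ at intermediate stages ($k\neq N-1$) do not cancel by antisymmetry — they are shown to vanish by a brute magnitude estimate in $\epsilon'$ (Step 5). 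Antisymmetry of $u_{\vec w}$ plays a role only in resolving the square-root branch point at the double root $w=1-\rho$ (\Cref{l:well_def_hol}), not in eliminating partial residues. Similarly, your description of the $\binom{L}{N}^{-1}\mathds{1}(\zeta^L=1)$ term as a ``degenerate coalescent residue at $\vec w\to(1,\dots,1)$'' is not quite right: that term is the residue at the $p_z=0$ pole that \emph{survives} the sequential elimination only at the very last $w$-integration step (Step 6 of \Cref{l:contour_deformation_induction}), and its evaluation to $\binom{L}{N}^{-1}$ when $\zeta^L=1$, and to $0$ otherwise, requires the separate analysis of \Cref{l:u0_residue} and \Cref{l:u0_alt}, using the Fuss--Catalan expansion of the right-side Bethe roots. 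Finally, you do not address why the intersection of the $N+1$ divisors is transverse — equivalently why $\det J\neq 0$ — for all but finitely many $\zeta$; this genericity statement is exactly the content of \Cref{l:residue_simple}, and without it the residue formula you wrote would not apply.
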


\begin{rem}
  Note that we treat the case $\zeta^L =1$ separately in \Cref{t:transition_prob_eigenbasis}.
  When $\zeta^L =1$, there is an \emph{exceptional} solution to the deformed Bethe equations \eqref{e:intro_bethe_eqns} corresponding to $z = 0$, but we exclude this solution from the deformed Bethe roots \eqref{e:intro_bethe_roots} since the Bethe functions are not well-behaved for the exceptional solution.
  One may think of the exceptional solution to the deformed Bethe equation, when $\zeta^L = 1$, as the limit of a particular solution of the deformed Bethe equations, when $\zeta^L \neq 1$ and $\zeta^L \rightarrow 1$.
  In the computations below, the exceptional solution will actually appear naturally from residue computations.
  Additionally, the exceptional solution, when $\zeta^L=1$, corresponds an eigenfunction given by the stationary distribution.
  Lastly, we note that exceptional solution described above is the only exceptional solution and that there are no exceptional solutions when $\zeta^L \neq 1$.
  Thus, the first term on the right side of \eqref{e:intro_finite_sum} is due to the exceptional solution, when $\zeta^L =1$, and it correspond to the stationary eigenfunction in the spectral decomposition.
\end{rem}

Let us now consider the asymptotic limit of the \emph{current observable}. Note that the current, as opposed to particle positions, is more suitable for asymptotic analysis due to the periodic nature of our system.
The current of the edge $(x, x+1)$ at time $ t \geq 0$, for $x \in \bbZ / L \bbZ$, measures the number of particle jumps up to time $t$ from $x$ to $x+1$ (increased by 1) and from $x+1$ to $x$ (decreased by 1).
We denote the current of the edge $(x,x+1)$ at time $t\geq 0$ as $Q_{x}(t) \in \mathbb{Z}$.

In \cite{BorodinFerrari2008}, Borodin and Ferrari studied the fluctuations of the PushASEP model on $\bbZ$ for a tagged particle at large times. They showed that the fluctuations for the position of a tagged particle are of order $t^{1/3}$ and that the Airy processes appear in the limit for both the flat and step initial conditions when the particle density is $\rho = \frac12$.
This leads one to expect similar results irrespectively of particle density and/or the initial conditions of the system.
In the periodic case, one should expect similar results under a specific scaling.
In particular, note that the PushASEP on a ring is an ergodic Markov process, which means that the system will eventually converge in distribution to its stationary measure and that the fluctuations are Gaussian.
Then, in order to uncover KPZ-like statistics from the PushASEP on a ring, we must scale the length of the ring, while keeping the density (roughly) fixed, as we take time to infinity.
Moreover, in the KPZ universality class, the correlation length typically scales on the order of $t^{2/3}$ for time $t \rightarrow \infty$; this is called the \emph{relaxation time scale}.
Then, one should expect this to be the scale when particles on opposite sides of the ring start to interact and the system starts transitioning to the stationary distribution with Gaussian fluctuations.
Similar observations have been made for the TASEP model in the works of Baik--Liu \cite{baikliu2018} and Prohlac \cite{Prolhac2016}.
Thus, we take the relaxation time scale, i.e.~$t \propto L^{3/2}$, and obtain a transition regime between KPZ and Gaussian for the fluctuation statistics of the PushASEP on the ring for flat and step intial conditions, extending the results of Baik--Liu and Prohlac.

\begin{theorem}
  \label{t:intro_asymptotic_flat_current}
  Fix an integer $d \geq 2$. Consider the PushASEP model on a ring of length $L$ and the number of particles is $N$ satisfying $L = Nd$. Additionally, take the flat initial condition
  \[
    (y_1, \dots, y_N) = (0, d, \dots, (N-1)d) \in \calX_{L, N}.
  \]
  Fix $\tau > 0$ and set
  \[
    t = \frac{\tau}{ \sqrt{ \rho (1-\rho) } } \, L^{3/2}.
  \]
  Then, the fluctuations for the current at the edge $(L-1, 0)$ are given as follows
  \[
    \lim_{L \to \infty} \bbP^{\icflat}_{L,d} \Big( \frac{Q_{L-1}(t) - vt}{ \rho^{2/3} (1-\rho)^{2/3} t^{1/3} } \geq -x \Big) = F_1(\tau^{1/3} x; r \tau),
    \quad x \in \bbR,
  \]
  where $v = v(p, q) := \rho \big[ p (1-\rho) - \frac{q}{1-\rho} \big]$ and $r = r(p, q) := p + \frac{q}{(1-\rho)^3}$.
\end{theorem}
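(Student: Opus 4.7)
The plan is to combine the spectral decomposition of \Cref{t:transition_prob_eigenbasis} with a steepest descent analysis on the deformed Bethe roots, following the strategy developed by Baik--Liu \cite{baikliu2018} for TASEP on a ring, adapted to the bidirectional PushASEP setting.

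First, I would convert the edge-current observable into a quantity accessible from our spectral formula. Lifting particle trajectories from $\bbZ / L \bbZ$ to $\bbZ$ with winding numbers $w_k$, one has $Q_{L-1}(t) = \sum_k w_k$, and combined with $Q(t) = \sum_k (\tilde{x}_k(t) - y_k)$ this gives the identity
\[
  L \, Q_{L-1}(t) = Q(t) - \sum_{k=1}^N \bigl( x_k(t) - y_k \bigr).
\]
Setting $\zeta = e^{\icomp \xi / L}$ with $\xi \in \bbR$, so that $|\zeta| = 1$, the characteristic function $\mathbb{E}_Y \bigl[ e^{\icomp \xi Q_{L-1}(t)} \bigr]$ therefore equals
\[
  \Bigl( \prod_k \zeta^{y_k} \Bigr) \sum_{X \in \calX_{L, N}} \Bigl( \prod_k \zeta^{-x_k} \Bigr) \sum_{q \in \bbZ} \zeta^q \, \bbP_Y (X(t) = X,\, Q(t) = q),
\]
and the inner double sum is precisely the spectral object \eqref{e:intro_finite_sum}.

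Second, I would substitute the spectral expansion and perform the sum over $X \in \calX_{L, N}$. A key structural feature is that the phase $\prod_k \zeta^{-x_k}$ exactly cancels the $\zeta$-dependence inside $u_{\vec{\lambda}}(X;\zeta)$, reducing each summand to a product over Bethe roots whose $X$-sum can be evaluated by a Cauchy-type identity. Combined with $h_{\vec{\lambda}}(Y;\zeta)$ evaluated on the flat data $y_k = (k-1)d$, the $d$-periodicity of the initial condition together with the explicit form of the deformed Bethe equations \eqref{e:intro_bethe_eqns} produces a factorized expression over the individual Bethe roots $\lambda_i$, ready for asymptotic analysis.

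Third, in the relaxation scaling $t = \tau L^{3/2} / \sqrt{\rho(1 - \rho)}$, I would take $\xi$ so that the Fourier dual to $(Q_{L-1}(t) - vt) / \bigl( \rho^{2/3} (1 - \rho)^{2/3} t^{1/3} \bigr)$ stays of order one. Parameterizing the relevant Bethe roots via the auxiliary variable $z$ from \eqref{e:intro_bethe_eqns}, I would analyze the dominant exponent, built from $t E(\vec{\lambda})$ together with the $h_{\vec{\lambda}}$ and $u_{\vec{\lambda}}$ phases, around the condensation point $w_c = 1 - \rho$ on the level curve $|1 - w^{-1}|^N = |w|^L$. Taylor expanding $p w + q w^{-1} - 1$ at $w_c$ yields the centering velocity $v = \rho [ p (1 - \rho) - q / (1 - \rho) ]$ at first order and the cubic coefficient $r = p + q / (1 - \rho)^3$; these together determine the KPZ-type transition distribution $F_1(\tau^{1/3} x; r \tau)$ in the limit.

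The hard part will be rigorously exchanging the $L \to \infty$ limit with the spectral sum over $\calR(\zeta)$. This requires two ingredients: tail bounds on $\mathrm{Re}\, E(\vec{\lambda})$ controlling the Bethe roots far from the critical one --- analogous to assumptions made by Prolhac \cite{Prolhac2016} but provable here thanks to the partially decoupled extended system --- and rigidity estimates locating the contributing roots near $w_c$ on an explicit level curve, in the spirit of \cite{baikliu2018} but extended to the bidirectional dynamics. The pushing term $q w^{-1}$ in $E(\vec{w})$ leaves the critical point and its order unchanged but shifts the cubic coefficient, which is exactly the source of the $r = r(p, q)$ correction. Once uniform control of the Bethe roots is in place, matching the resulting saddle-point sum to the known contour-integral representation of $F_1$ identifies the limit.
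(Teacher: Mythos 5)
Your proposal starts from the correct geometric identity $L\,Q_{L-1}(t) = Q(t) - \sum_k (x_k(t) - y_k)$ (this is \Cref{l:global_local} in the paper) and correctly identifies the critical point $w_c = 1-\rho$ and the parameters $v$ and $r$ coming from the quadratic and cubic Taylor coefficients of $pw + qw^{-1}$ at $w_c$. However, the central mechanism you propose --- substituting the spectral decomposition \eqref{e:intro_finite_sum} and exchanging the $L\to\infty$ limit with the sum over $\calR(\zeta)$ --- is not what the paper does, and it reintroduces the very obstacle the paper is engineered to avoid. You acknowledge that the hard part is proving tail bounds on $\mathrm{Re}\,E(\vec{\lambda})$ over the full set of Bethe roots and assert these are ``provable here thanks to the partially decoupled extended system,'' but you give no mechanism. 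The number of Bethe roots grows combinatorially in $L$, so uniform control over the spectral sum is exactly the unproven assumption (b) in Prolhac's work that Section~\ref{s:historical_rmks} explicitly flags as the reason his derivation is not rigorous; the decoupled system locates the roots but does not by itself bound the sum.

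The paper takes a materially different route precisely to avoid this. It never uses the eigenfunction expansion \eqref{e:main_eigenfunctions} in the asymptotics. Instead it starts from the one-fold contour-integral CDF formula of \Cref{p:current_cdf} (derived from \eqref{e:main_1} via the relation \eqref{e:cont_local_x} and a telescoping sum over the current value), then uses a Cauchy--Binet/Andreief reduction in \Cref{p:flat_current} to recast the CDF as $\oint \calC_N^{\icflat}(z)\det(I + K_z^{\icflat})\,\dd z/z$, a contour integral of a Fredholm determinant over the \emph{finite} set $\calQ_0(z)$ at fixed $z$. The steepest-descent analysis is then carried out on that determinantal kernel, following Baik--Liu; uniform control is obtained at the level of the kernel entries near the critical point, not over an exponentially large spectral sum. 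The characteristic-function framing you propose is also different from what the paper actually computes (a pointwise CDF limit); while it could in principle be repaired via a continuity theorem, the real obstruction is the spectral-sum interchange, which is not established in the paper and is not obviously fixable along the lines you sketch.
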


\begin{rem}\label{r:asymptotic_flat_current}
  Let us make a few comments about the above result.
  \begin{enumerate}
    \item The limiting distribution $F_1$ in \Cref{t:intro_asymptotic_flat_current} will be defined explicitly in \eqref{e:F1_defn}.
    This distribution is exactly the same as the distribution that appears in the TASEP model in the relaxation time scale \cite{baikliu2018}, except that we have an additional multiplicative parameter, $r = r(p, q)$,  on the second argument of the distribution.
    \item The parameter $v = v(p, q)$ matches a similar parameter in the results of Baik--Liu in \cite{baikliu2018} when we specialize our result to the TASEP, i.e.~$(p,q) = (1,0)$.
    The parameter $v$ is the speed of the local current, whereas the parameter $r$ takes care of the time dilatation at the scale $L^{3/2}$. Note that both the signs in front of $p$ and $q$ are positive, which may indicate that $r$ plays a role similar to the variance.
    \item Our formulas for $v$ and $r$ also generalize similar parameters in \cite{BorodinFerrari2008}, where only $\rho = \frac12$ case is treated.
    \item For shifted flat initial conditions, the same result holds, and can be found in \Cref{t:asymptotic_flat_current}.
  \end{enumerate}
\end{rem}

We obtain a similar asymptotic result for step initial condition; see \Cref{t:asymptotic_step_current}.
We defer stating the result in order to have a more streamlined introduction since more notation is required before we state the corresponding statements and the results are a bit more complicated due to the nature of the step initial condition.
The step initial condition means that the particles are packed together at time 0, and this phenomenon persists at the relaxation time-scale, which can be seen in the corresponding statements.
More detailed discussions on this shock phenomenon will follow in \Cref{r:step_shock}.
We note that the results include the same speed parameter $v$ and the same time dilatation parameter $r$.
However, the limiting distribution is a bit different, which we call $F_2$ and also appeared in \cite{baikliu2018}.
We will introduce the distribution $F_2$ explicitly in \eqref{e:F2_defn}.

\subsection{Outline of the paper}

We introduce definitions and notations in \Cref{s:preliminaries}, for instance we give a precise definition of the PushASEP model and an introduction to the Bethe Ansatz.
In \Cref{s:results}, we state all the main results of this paper in detail, and to be more concise, we postpone the technical proofs and results to later sections.
\Cref{s:contour_deformations} contains the key technical results and proofs using contour deformation method.
\Cref{s:current} deals with the asymptotic fluctuations of the current observable at the relaxation time scale.
In \Cref{s:prob_fun_expansion}, we connect our formulas to those from \cite{baikliu2018} via series expansion arguments.
Lastly, from \Cref{a:delta_basis} to \Cref{a:last_appendix}, we establish miscellaneous results needed throughout the paper.

\section{Preliminaries}
\label{s:preliminaries}

In this section, we give more details on the PushASEP.
We begin by carefully defining the state space for the PushASEP; see \Cref{s:definitions}.
We give two equivalent and common descriptions of the state space: the occupation state space and the particle configuration state space.
Then, we give the infinitesimal generator, i.e.~the Kolmogorov backwards generator, for the PushASEP on the ring; see \Cref{s:PushASEP}.
The infinitesimal generator is more readily given using the occupation configuration notation.
We also establish the master equation for the PushASEP on the ring, giving a difference/differential equation for the joint probability distribution for the particle configuration and the current of the PushASEP.
The master equation is more readily given using the particle configuration notation.
Lastly, we introduce the Bethe Ansatz along with some relevant results; see \Cref{s:Bethe_Ansatz_equations}.
The Bethe Ansatz gives eigenfunction of the master equation for the PushASEP depending on solutions of a system of algebraic equations called the Bethe equations.

\subsection{Definitions and notations}

\label{s:definitions}

The \emph{(occupation) state space} for PushASEP on a ring of length $L\in \bbZ_{\geq1}$ is $\calX_L := \{ 0, 1 \}^{\bbZ/L\bbZ}$.
A \emph{configuration} of the PushASEP on a ring of length $L \in \bbZ_{\geq1}$ is given by a function,
\begin{align}
  \begin{array}{lccc}
    \eta : & \bbZ / L \bbZ & \rightarrow & \{ 0, 1 \} \\
           & x & \mapsto & \eta_x
  \end{array},
\end{align}
called an \emph{occupation variable}.
We say that position $x \in \bbZ / L \bbZ$ is \emph{occupied} if $\eta_x =1$ and, otherwise, the position is \emph{vacant}, i.e.~there is a particle at position $x \in \bbZ / L \bbZ$ or there is no particle, respectively.

Equivalently, we fix the number of particles, $N \leq L$, in a configuration so that we only need to consider a specific connected component of the configuration space.
Let
\begin{equation}\label{e:configuration}
  \calX_{L, N} = \{ (x_1, \cdots, x_N) \in \bbZ^N  \mid 0 \leq x_1 < x_2 < \cdots < x_N < L \},
\end{equation}
be the \emph{(position) state space} for the PushASEP on a ring of length $L \in \bbZ_{\geq 1}$ and $N \in \bbZ_{\geq 1}$ particles \footnote{The position state space $\calX_{L,N}$, given by all configurations with exactly $N$ particles, is an irreducible component of the full occupation state space $\calX_L$ due to conservation of particles in the dynamics. It is not hard to show that $X_{L} \cong \calX_{L, 0} \cup \calX_{L, 1} \cup \cdots \cup \calX_{L, N}$; the precise bijection is given by $\eta[X]$ in \eqref{e:eta_X}.}. For a position configuration $X \in \calX_{L,N}$, the corresponding occupation variable is given by 
\begin{equation}
  \label{e:eta_X}
  \eta[X]_{x} :=
  \begin{cases}
    1, \quad x \in X, \\
    0, \quad x \notin X.
  \end{cases}
\end{equation}

We introduce an additional quantity called the (oriented) \emph{current} to keep track of the evolution of the system. We distinguish two notions of current: instant current and (net) current. The former denotes the particle flow at a given time and the latter denotes the cumulative instant current up to a given time.
For $0 \leq x \leq L-1$, the \emph{instant local current} at position $x$ and time $t$ is given by
\[
  j_x(t) =
  \left\{
    \begin{array}{ll}
      1 & \mbox{ if there is a particle jumping from } x \mbox{ to } x+1, \\
      -1 & \mbox{ if there is a particle jumping from }  x+1 \mbox{ to } x, \\
      0 & \mbox{ otherwise}.
    \end{array}
  \right.
\]
Then,  \emph{instant global current}, which is the sum of $j_x(t)$ over all the positions, is given by
\[
  j(t) = \sum_{x \in \bbZ / L \bbZ} j_x(t).
\]
Similarly, for $0 \leq x \leq L-1$, the \emph{(net) local current} at position $x$ and time $t$ is given by
\begin{equation}\label{e:local_current_intro}
  Q_x(t) = \int_0^t \delta(j_x(s) = 1) - \delta(j_x(s) = -1) \dd s,
\end{equation}
which counts the number of particles jumping from $x$ to $x+1$ \emph{minus} the number of particles jumping from $x+1$ to $x$ up to time $t$.
Then, the \emph{(net) global current} is given by
\begin{equation}\label{e:global_current}
  Q(t) = \sum_{x \in \bbZ / L \bbZ} Q_{x}(t).
\end{equation}

For the PushASEP on the ring, we consider the joint evolution of the state variable and the current variable of the system, $(\eta(t), Q(t))_{t \geq 0}$, or equivalently, $(X(t), Q(t))_{t \geq 0}$, which takes values in $\calX_{L} \times \bbZ$ or $\calX_{L, N} \times \bbZ$, respectively.
This is a Markov process, which is not hard to see.
In the following, we give the corresponding Kolmogorov backwards generator and write down the master equation for the process, see \Cref{d:generator} and \Cref{e:master_joint}.

\begin{rem}
  In \cite{baikliu2018}, the authors take a different configuration space for the periodic TASEP.
  They take the particle configuration $(x_i)_{i=1}^N \in \mathbb{Z}^N$ so that $x_i < x_{i+1}$, for $i=1, \dots, N-1$, and $x_N < x_1+L$.
  See \eqref{e:configuration} for comparison with the state space we use in this work. Note that the state space in \cite{baikliu2018} is infinite, whereas our state space is finite.
  Moreover, the state space in \cite{baikliu2018} keeps track of the current implicitly, whereas we introduce the state variable $Q(t)$ to keep track of the current.
  As a consequence, in \cite{baikliu2018}, the authors first establish their results for a tagged particle and then translate their results in terms of the current through the following simple relation
  \begin{equation}
    \bbP(x_k(t) \geq \ell L +1) = \bbP(Q_0(t) \geq \ell N - k+1 )
  \end{equation}
  for all $1 \leq k \leq N$ and $\ell \in \mathbb{Z}_{\geq 0}$.
  See \cite[Equation (3.8)]{baikliu2018}.
\end{rem}

\subsection{Master equation for PushASEP}

\label{s:PushASEP}

The joint process $(\eta(t), Q(t))_{t\geq0}$ from the PushASEP is a Feller process and is uniquely determined by an initial value partial differential equation, called \emph{the master equation}, see \eqref{e:master_joint}.
The master equation is obtained from the \emph{Kolmogorov backwards generator}, see \Cref{d:generator}, with the initial conditions determined by the initial configuration of the process.
In particular, the generator gives a precise description of the dynamics for the PushASEP on a ring. More details on the general theory of interacting particle processes, including details on the Kolmogorov backwards generator and the master equation, may be found in \cite{KipnisLandim13}. Below, we give the master equation and introduce the necessary notation.

Let us start by describing the (infinitesimal) generator of the Feller process $(\eta(t), Q(t))_{t\geq 0}$. The generator is characterized by its action on the space of bounded functions on the joint state space, which we denote by $\calB(\calX_L \times \bbZ)$. The generator acts on the entries of the functions, i.e.~the configurations, by switching the position of the particles. More precisely, for a given occupation variable $\eta \in \calX_L$, denote $\eta^{x, \pm k} \in \calX_L$ to be the occupation variable with the values $\eta_x$ and $\eta_{x \pm k}$ interchanged.
That is,
\begin{equation}
  \eta^{x, \pm k}_z =
  \begin{cases}
    \eta_z, \quad z \neq x, x \pm k \\
    \eta_{x \pm k}, \quad z= x \\
    \eta_x, \quad z = x \pm k.
  \end{cases}
\end{equation}
For instance, if $\eta_x = 1$ and $\eta_{x \pm k} = 0$, the occupation variable $\eta^{x \pm k}$ corresponds to moving a particle from position $x \in \bbZ / L \bbZ$ to position $x \pm k \in \bbZ / L \bbZ$. Then, the generator of the joint process is given as follows.

\begin{definition}
  \label{d:generator}
  The joint process $(\eta(t), Q(t))$ for the PushASEP on $\bbZ / L \bbZ$ with $L \in \bbZ_{\geq1}$ at time $t \in \bbR_{\geq 0}$ with exclusion jumps to the right at rate $p \in [0,1]$, push jumps to the left at rate $q = 1-p$, and initial configuration $(\eta(0), 0) \in \calX_L \times \bbZ$ is given by the \emph{Kolmogorov backwards generator}
  \begin{align}
    \label{e:kbe}
    \calL f (\eta, Q)
    & = p \sum_{x=0}^{L-1} (1 - \eta_{x+1}) \eta_{x} [f(\eta^{x, 1}, Q+1) - f(\eta, Q)] \nonumber \\
    & \quad + q \sum_{x=0}^{L-1} \sum_{k=1}^{L-1} (1- \eta_{x-k}) \prod_{j=0}^{k-1} \eta_{x-j} [f(\eta^{x, -k}, Q-k) - f(\eta, Q)],
  \end{align}
  so that 
  \begin{equation}
    \frac{\dd}{\dd t} \bbE_{\eta(0)}[f(\eta(t), Q(t)) ] = \calL\, \bbE_{\eta(0)}[f(\eta(t), Q(t)) ],
  \end{equation}
  for any initial condition $\eta(0) \in \mathcal{X}_L$ and $f \in \calB(\calX_{L} \times \bbZ)$.
\end{definition}

Let us now establish the master equation based on \Cref{d:generator}.
Given $0 \leq N \leq L$, $X \in \calX_{L, N}$ and $\zeta \in \bbU = \{ z \in \bbC \mid |z| = 1 \}$, define $f_X(\cdot; \zeta) \in \calB(\calX_L \times \bbZ)$ as follows,
\begin{align}
  \label{e:f_X}
  f_X (\eta, Q; \zeta) := \sum_{q \in \bbZ} \zeta^q \cdot \mathds{1}( \eta = \eta[X], Q = q) = \zeta^Q \mathds{1} (\eta = \eta[X]),
\end{align}
with $\eta[X]$ given by \eqref{e:eta_X}. Note that it is important to take $\zeta \in \bbU$ so that $f_X(\cdot; \zeta)$ is a bounded function. Indeed, for a given configuration $\eta \in \calX_L$, the current is well defined up to an integer multiple of $L$ and the term infront of the indicator function may be arbitrarily large if $|\zeta| \neq 1$. Also, define the following generating series for the joint distribution,
\begin{align}
  \label{e:joint_gf}
  \QXgf := \bbE_{\eta[Y]}[ f_X(\eta(t), Q(t); \zeta) ] = \sum_{q \in \bbZ} \zeta^q \cdot \bbP_Y( X(t) = X, Q(t) = q).
\end{align}
Note that we may obtain the generating series of the current as follows,
\begin{equation}
  \label{e:current_gf}
  \Qgf := \sum_{q \in \bbZ} \zeta^q \cdot \bbP_Y( Q(t) = q) = \sum_{X \in \calX_{L, N}} \QXgf,
\end{equation}
by summing the generating series for the joint distribution over the state space. Similarly, if we take $\zeta = 1$, we recover the transition probability function,
\begin{equation}
  \label{e:prob_transition}
  g(Y, X; 1; t) = \sum_{q \in \bbZ} \bbP_Y( X(t) = X, Q(t) = q) = \bbP_Y( X(t) = X ).
\end{equation}

The \emph{master equation} for the joint process of the PushASEP is obtained by the action of the Kolmogorov backwards generator, given in \Cref{d:generator}, on the generating series $g(Y, X;\zeta;t)$, given by \eqref{e:joint_gf}. In particular, we have
\begin{equation}\label{e:master_joint}
  \begin{cases}
    \partial_t \QXgf = \Hzeta \QXgf, \\
    g(Y, X; \zeta; 0) = \mathds{1}(X=Y),
  \end{cases}
\end{equation}
so that $\zeta$-deformed linear operator $\Hzeta := p \Hzeta^T + q \Hzeta^P$ is given by
\begin{equation}
  \label{e:master_joint_operators}
  \begin{split}
    \Hzeta^T \QXgf &:= \sum_{i = 0}^{L-1} \left( \zeta \QXgf[i, -] - \QXgf \right) \mathds{1} (X^{i,-} \in \calX_{L,N}),\\
    \Hzeta^P \QXgf &:= \sum_{i=0}^{L-1} \sum_{k=1}^{N} \left( \zeta^{-k} \QXgf[i, +k] - \QXgf \right) \mathds{1} (d(x_{i+k-1}, x_i) = k-1, X^{i, +k} \in \calX_{L,N}),
  \end{split}
\end{equation}
so that the configuration $X^{i, -}$ is equal to the configuration $X$ except that the particle with label $i$ has moved one position to the left on the ring, the configuration $X^{i, +k}$ is equal to the configuration $X$ except that the particles with labels $i$ through $(i + k) (\textrm{mod}\; N)$ have moved one position to the right on the ring, and the function $d(x_{i+k-1}, x_i)$ is equal to the distance of the particles on the ring where the particle number is taken modulo $N$. More precisely, for a configuration $X = (x_1 < \cdots < x_N) \in \calX_{L,N}$, we have
\begin{equation}
  X^{i, -} =(x_1^{i,-} < x_2^{i, -} < \cdots < x_N^{i, -}) = \begin{cases}
    (x_2 , \cdots , x_N , L-1), &\quad \text{if } x_1 = 0\text{ and } i =1, \\
    (\tilde{x}_1^{i,-} ,  \tilde{x}_2^{i,-} , \cdots , \tilde{x}_N^{i,-}), &\quad \text{otherwise}, 
  \end{cases}
\end{equation}
with $\tilde{x}_j^{i,-} := x_j - \mathds{1}(j=i)$; and 
\begin{equation}
  X^{i, +k} = (x_1^{i,+k},  \cdots, x_{N-1}^{i, +k} , x_N^{i, +k}) =
  \begin{cases}
    (\tilde{x}^{i, +k}_N , \tilde{x}_1^{i, +k}, \cdots, \tilde{x}_{N-1}^{i, +k}), &\quad x_N = L-1 \text{ and } i \leq N \leq i+k-1, \\
    (\tilde{x}_1^{i,+k},  \cdots, \tilde{x}_{N-1}^{i, +k} , \tilde{x}_N^{i, +k}), &\quad \text{otherwise},
  \end{cases}
\end{equation}
with $\tilde{x}_j^{i, +k} := \left[x_j + \mathds{1}(i \leq j \leq \min\{i + k-1, N\}) + \mathds{1}(1 \leq j \leq i +k -1-N)\right] (\textrm{mod}\; L)$; and
\begin{equation}
  d(x_{i+k}, x_{i}) :=
  \begin{cases}
    L + x_{i+ k -N} - x_i, \quad i+k >N, \\
    x_{i+k} - x_i, \quad i+k \leq N.
  \end{cases}
\end{equation}
Note that the configurations $X^{i, -}$ and $X^{i, +k}$ do not necessarily lie in the configuration space $\calX_{L, N}$.
In that case, if $X^{i, -}$ and $X^{i, +k}$ do not lie in the configuration space $\calX_{L, N}$, the indicator functions are equal to zero and we treat the corresponding term equal to zero, despite the fact that generating series for the joint distribution is not defined for those configurations.
In the following section, we will extend the domain of the generating series for the joint distribution so that it is defined for $X^{i, -}$ and $X^{i, +k}$, if $X \in \calX_{L,N}$, via a some assumptions.

\subsection{Bethe Ansatz and Bethe equations}
\label{s:Bethe_Ansatz_equations}

The Bethe Ansatz was first introduced by \cite{Bethe} for the 1-dimensional Heisenberg-Ising spin-1/2 chain (aka XXZ spin-1/2 chain) from quantum mechanics, and it has since found applications in several other 1-dimensional models.
See \cite{Gaudin2014, Baxter} for a comprehensive introduction. 

The Bethe Ansatz, in short, assumes that the solution of the master equation is made up of a linear combination of solutions for the same master equation without any interactions.
This assumption comes from the infinite volume case, e.g.~the integer line $\bbZ$, when the particles are not expected to interact as time goes to infinity.
Note that, by removing the interactions, the solution for the master equation is much simpler; it is simply a product of wave functions for free particles.
In the case of a ring, there is an additional periodicity condition for the Bethe Ansatz.
Based on these assumptions, one may construct eigenfunctions for the master equation of the PushASEP given a solution to a system of algebraic equations.
These eigenfunctions are called \emph{Bethe functions} and the system of algebraic equations is called the \emph{Bethe equations}.

For the PushASEP, we first extend the domain of the function $X \mapsto \QXgf$, which is given by $\calX_{L,N}$ in \eqref{e:configuration}, to the domain,
\begin{equation}
  \overline{\calX}_{L,N} := \{ (x_1, x_2 ,\cdots, x_{N}) \in \bbZ^N \mid x_1 \leq x_2 \leq \cdots \leq x_N \leq x_1 + L \text{ with at most one equality} \},
\end{equation}
so that at most two particles may be in the same location and the ring can be shifted on the integer lattice $\bbZ$.
Then, the generating series $g(Y, X; \zeta;t)$ is extended, with respect to the $X$ configuration, from the original domain $\mathcal{X}_{L,N}$ to the domain $\overline{\calX}_{L,N}$ by the following assumptions:
\begin{itemize}
  \item (Periodicitiy rule) For all $x_1 < \cdots < x_{N} < x_1 + L$, the function remains invariant after a periodic shift of the particles,
  \begin{equation}
    \begin{split}
      \label{e:periodicity}
      g(Y, (x_1, \cdots, x_{N}); \zeta; t) & = g(Y, (x_2, \cdots, x_{N}, x_1 + L); \zeta; t).
    \end{split}
  \end{equation}  
  \item (Exclusion/push rule) For all $1 \leq k \leq N$, fix the positions $x_1< x_2< \cdots < x_{k-1} < x_{k+1} < \cdots < x_{N} < x_1 + L$ for all the particles except for the $k^{th}$ particle, where $x_0$ is identified as $x_N$. Also, denote by $X[x_k =x]$ to be the configuration given by the fixed positions and the position of the $k^{th}$ particle given by $x \in \mathbb{Z}$. Then, the function is defined for configurations where two particles are on the same site as follows,
  \begin{equation}
    \label{e:exclusion}
    g(Y,X[x_k = x_{k-1}]; \zeta;t )  = g(Y,X[x_k = x_{k-1} + 1]; \zeta;t ).
  \end{equation}
\end{itemize}

In particular, if we assume the exclusion/push rule and the periodicity rule, the linear operators \eqref{e:master_joint_operators} for the $\zeta$-deformed master equation may be rewritten in a more compact way,
\begin{equation}
  \label{e:master_operators_compact}
  \begin{split}
    \overline{\calH}_\zeta^T \QXgf & = \sum_{i = 0}^{L-1} \left( \zeta g(X^{i, -}; \zeta; t) - \QXgf \right),\\
    \overline{\calH}_\zeta^P \QXgf & = \sum_{i=0}^{L-1} \left( \zeta^{-1} g(X^{i, +}; \zeta; t) - \QXgf \right),
  \end{split}
\end{equation}
for $X = (x_k)_{1 \leq k \leq N} \in \calX_{L,N}$ and $X^{i, \pm} = (x_k \pm \mathds{1}(k=i))_{1 \leq k \leq N} \in \overline{\calX}_{L,N}$.
Note that we do not apply the operators to functions $\QXgf$ with $X \in \overline{\calX}_{L,N} \backslash \calX_{L,N}$ since the value of the function is completely determined by the periodicity rule \eqref{e:periodicity} and the exclusion/push rule \eqref{e:exclusion} for those configurations.
As a consequence, if $g(Y, X;\zeta; t)$ satisfies \eqref{e:periodicity} and \eqref{e:exclusion}, the master equation \eqref{e:master_joint} can be simplified into what we call the \emph{free equation},
\begin{equation}
  \label{e:master_joint_compact}
  \begin{cases}
    \partial_t \QXgf = \overline{\calH}_\zeta \QXgf, \\
    g(Y, X; \zeta; 0) = \mathds{1}(X=Y),
  \end{cases}
\end{equation}
where $\overline{\calH}_\zeta = p \overline{\calH}^T_\zeta + q \overline{\calH}^P_\zeta$.
Note that the free equation has the advantage that there is no particle interaction, since we do not have indicators in the formula for the operators $\overline{\calH}_\zeta^T$ and $\overline{\calH}_\zeta^P$.

We obtain eigenfunctions, called \emph{Bethe functions}, for the operator $\Hzeta$ using this Ansatz, i.e.~solutions to \eqref{e:master_joint_compact} that satsify the boundary conditions \eqref{e:exclusion} and \eqref{e:periodicity}.
The Bethe eigenfunctions are given by,
\begin{equation}
  \label{e:bethe_function_zeta}
  u_{\vec{w}} (X; \zeta)
  = \sum_{\sigma \in S_N} A_{\sigma}(\vec{w}; \zeta) \prod_{i=1}^N w_{\sigma (i)}^{-x_i},
  \quad {X \in \calX_{L, N}},
\end{equation}
with the $\zeta$-deformed \emph{amplitude coefficients} $A_{\sigma}(\vec{w}; \zeta)$ given by,
\begin{equation}
  \label{e:bethe_coeffs_zeta}
  A_{\sigma}(\vec{w}; \zeta)
  = (-1)^{\sigma} \prod_{i<j} \frac{ 1 - (\zeta w _{j})^{-1}}{ 1 - (\zeta w_{\sigma(j)})^{-1}}
  = (-1)^{\sigma} \prod_{j} \left( 1 - (\zeta w_{\sigma(j)})^{-1} \right)^{\sigma(j) - j},
\end{equation}
so that the parameters $\vec{w} \in \bbC^N$ satisfy the $\zeta$-deformed \emph{Bethe equations},
\begin{equation}
  \label{e:bethe_equations_zeta}
  w_j^L = (-1)^{N-1} \prod_{i=1}^N \frac{ 1 - (\zeta w_i)^{-1}}{ 1 - (\zeta w_j)^{-1}}, \quad j =1, \cdots, N.
\end{equation}
The form of the $\zeta$-deformed Bethe function \eqref{e:bethe_function} is due to the simpler expression for the operators $\Hzeta^T$ and $\Hzeta^P$, given by \eqref{e:master_operators_compact}, since each summand of the Bethe function is an eigenfunction of the free operators.
Still, we must satisfy the exclusion/push rule and the periodicity rule, or boundary conditions, given by \eqref{e:exclusion} and \eqref{e:periodicity}.
Thus, the $\zeta$-deformed amplitude coefficients $A_{\sigma}$, given by \eqref{e:bethe_coeffs}, are determined by the exclusion/push rule, and the Bethe equations, given by \eqref{e:bethe_equations}, are determined the periodicity rule.
Moreover, we also have a detrminantal formula for the $\zeta$-deformed Bethe function,
\begin{equation}\label{e:bethe_determinantal}
  u_{\vec{w}} (X; \zeta) = \det \left[ (1 - (\zeta w_j)^{-1})^{j-i} w_j^{-x_i} \right]_{1 \leq i, j \leq N}, \quad {X \in \calX_{L, N}}.
\end{equation}

We may set $\zeta = 1$ in the definitions given in \Cref{e:master_joint_operators,e:exclusion,e:periodicity,e:master_operators_compact,e:bethe_function,e:bethe_coeffs,e:bethe_equations}, in which case we can simply drop the notation $\zeta$ from the formula and remove the prefix ``$\zeta$-deformed'' in the corresponding name.

Consider the change of variables
\begin{equation}
  \label{e:zeta_CoVs}
  w_{i, \zeta} := \zeta w_i, \quad i = 1, \dots, N,
\end{equation}
and $\vec{w}_\zeta = (w_{i, \zeta})_{1 \leq i \leq N}$.
We note that the $\zeta$-deformed amplitude coefficients can be rewritten in terms of the non-deformed amplitude coefficients, corresponding to $\zeta = 1$,
\begin{equation}
  \label{e:bethe_coeffs}
  A_{\sigma}(\vec{w}; \zeta)
  = (-1)^{\sigma} \prod_{i<j} \frac{ 1 -  w_{j, \zeta}^{-1}}{ 1 - w_{\sigma(j), \zeta}^{-1}}
  = A_{\sigma}(\vec{w_{\zeta}}; 1)
  =: A_{\sigma}(\vec{w_{\zeta}}).
\end{equation}
Similarly, the $\zeta$-deformed Bethe functions can be rewritten as,
\begin{equation}
  \label{e:bethe_function}
  u_{\vec{w}} (X; \zeta) = \sum_{\sigma \in S_N} A_{\sigma} (\vec{w}_\zeta) \prod_{i=1}^N \zeta^{x_i} w_{\sigma (i), \zeta}^{-x_i}
  = \zeta^{\sum x_i} \cdot u_{\vec{w}_\zeta} (X), \quad {X \in \calX_{L, N}},
\end{equation}
where $u_{\vec{w}_{\zeta}}(X) := u_{\vec{w}_\zeta} (X; 1)$, $X \in \calX_{L, N}$. Finally, after the change of variables, we write the $\zeta$-deformed Bethe equations as follows
\begin{equation}
  \label{e:bethe_equations}
  \zeta^{-L} w_{j, \zeta}^L = (-1)^{N-1} \prod_{i=1}^N \frac{ 1 - w_{i, \zeta}^{-1}}{ 1 - w_{j, \zeta}^{-1}}, \quad j =1, \cdots, N.
\end{equation}
Then, instead of the $\zeta$-deformed Bethe equations in \eqref{e:bethe_equations_zeta}, we refer to \eqref{e:bethe_equations} when we talk about the $\zeta$-deformed Bethe equations, since it has a simpler form after the change of variables. In the following, we will also drop the $\zeta$ subscript when it is clear from the context.

\begin{proposition}\label{p:Bethe_Ansatz}
  Fix $\zeta \in \bbU$.
  Let $\vec{\lambda} = (\lambda_1, \cdots, \lambda_N) \in \bbC^N$ be such that $\vec{\lambda}_\zeta$ is a solution to the $\zeta$-deformed Bethe equations \eqref{e:bethe_equations}.
  Then, we have 
  \begin{equation}
    \Hzeta u_{\vec{\lambda}}(X; \zeta) = E(\vec{\lambda}; \zeta) \,  u_{\vec{\lambda}}(X; \zeta),
    \quad
    E(\vec{\lambda}; \zeta)
    : = \sum_{i=1}^N (p \zeta \lambda_i + q \zeta^{-1} \lambda_i^{-1} - 1)
    =: E(\vec{\lambda}_\zeta).
  \end{equation}
  with the Bethe function $u_{\vec{\lambda}}(\cdot; \zeta)$ given by \eqref{e:bethe_function_zeta} (or equivalently, $u_{\vec{\lambda}_\zeta}$ given by \eqref{e:bethe_function}) and the operator of the master equation of the PushASEP given in \eqref{e:master_joint_operators}.
\end{proposition}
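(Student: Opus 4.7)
The plan is to reduce the claim to the non-deformed case ($\zeta = 1$) and then exploit the fact that, on functions satisfying the boundary rules, the full operator $\mathcal{H}_\zeta$ in \eqref{e:master_joint_operators} coincides with the free operator $\overline{\mathcal{H}}_\zeta$ in \eqref{e:master_operators_compact}. Using the factorization $u_{\vec{\lambda}}(X;\zeta) = \zeta^{\sum_i x_i} u_{\vec{\lambda}_\zeta}(X)$ from \eqref{e:bethe_function}, the explicit $\zeta$'s in $\overline{\mathcal{H}}_\zeta^T$ and $\overline{\mathcal{H}}_\zeta^P$ cancel the $\zeta^{\pm 1}$ that comes out of shifting $x_i$ by $\mp 1$, yielding
\begin{equation*}
  \overline{\mathcal{H}}_\zeta u_{\vec{\lambda}}(X;\zeta) = \zeta^{\sum_i x_i} \overline{\mathcal{H}}_1 u_{\vec{\lambda}_\zeta}(X).
\end{equation*}
Hence it suffices to prove that, whenever $\vec{\mu} := \vec{\lambda}_\zeta$ satisfies the non-deformed Bethe equations, the function $u_{\vec{\mu}}(X) = u_{\vec{\mu}}(X;1)$ satisfies both the exclusion/push rule \eqref{e:exclusion} and the periodicity rule \eqref{e:periodicity}, and is an eigenfunction of $\overline{\mathcal{H}}_1$ with eigenvalue $E(\vec{\mu}) = \sum_k(p\mu_k + q\mu_k^{-1} - 1)$.

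To verify the exclusion/push rule, pair each permutation $\sigma \in S_N$ with $\sigma' = \sigma \circ s_{k-1,k}$, where $s_{k-1,k}$ swaps the entries $k-1$ and $k$. A short calculation with the explicit formula $A_\sigma(\vec{\mu}) = (-1)^\sigma \prod_j (1 - \mu_{\sigma(j)}^{-1})^{\sigma(j) - j}$ gives
\begin{equation*}
  \frac{A_{\sigma'}(\vec{\mu})}{A_\sigma(\vec{\mu})} = -\frac{1 - \mu_{\sigma(k)}^{-1}}{1 - \mu_{\sigma(k-1)}^{-1}},
\end{equation*}
which is precisely the ratio required to make the paired contributions to $u_{\vec{\mu}}(X[x_k = x_{k-1}])$ and $u_{\vec{\mu}}(X[x_k = x_{k-1}+1])$ agree. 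For the periodicity rule, pair $\sigma$ with $\tilde{\sigma} = \sigma \circ c_N$, where $c_N$ is the $N$-cycle $j \mapsto j+1 \pmod{N}$. Matching the two sides of \eqref{e:periodicity} monomial by monomial requires $A_\sigma = A_{\tilde{\sigma}} \mu_{\sigma(1)}^{-L}$, and using $\mathrm{sgn}(c_N) = (-1)^{N-1}$ together with telescoping of the product $\prod_j (1 - \mu_{\sigma(j)}^{-1})^{\sigma(j) - j}$, this condition reduces to
\begin{equation*}
  \mu_{\sigma(1)}^L = (-1)^{N-1} \prod_{k=1}^N \frac{1 - \mu_k^{-1}}{1 - \mu_{\sigma(1)}^{-1}},
\end{equation*}
which is precisely the $j = \sigma(1)$ instance of the non-deformed Bethe equations \eqref{e:bethe_equations}. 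Letting $\sigma(1)$ range over all indices gives the full system.

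Finally, since $\sigma$ is a bijection, $\sum_i \mu_{\sigma(i)} = \sum_k \mu_k$ independent of $\sigma$, so the direct termwise action gives
\begin{equation*}
  \overline{\mathcal{H}}_1^T u_{\vec{\mu}}(X) = \Big( \sum_k \mu_k - N \Big) u_{\vec{\mu}}(X),
  \qquad
  \overline{\mathcal{H}}_1^P u_{\vec{\mu}}(X) = \Big( \sum_k \mu_k^{-1} - N \Big) u_{\vec{\mu}}(X),
\end{equation*}
and combining with weights $p$ and $q$ yields the eigenvalue $\sum_k(p\mu_k + q\mu_k^{-1} - 1) = E(\vec{\mu})$, which matches $E(\vec{\lambda};\zeta)$ under $\vec{\mu} = \vec{\lambda}_\zeta$. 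The main obstacle is the periodicity step: one must track signs, exponents of $(1 - \mu_k^{-1})$, and the fact that the resulting constraint depends only on $\sigma(1)$ (and not on the rest of $\sigma$) so that it collapses to the $N$ Bethe equations rather than to $N!$ independent conditions.
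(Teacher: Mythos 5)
Your proposal is correct and follows essentially the same route as the paper: verify that the Bethe function satisfies the exclusion/push and periodicity boundary conditions (via the same pairings $\sigma \leftrightarrow \sigma\circ(k-1\;k)$ and $\sigma \leftrightarrow \sigma\circ c_N$ used in the paper's \Cref{l:exclusion_check} and \Cref{l:periodicity_check}), conclude that $\calH_\zeta$ agrees with the free operator $\overline{\calH}_\zeta$ on such functions, and then read off the eigenvalue from the termwise action. Your explicit reduction to $\zeta=1$ via $u_{\vec{\lambda}}(X;\zeta)=\zeta^{\sum x_i}u_{\vec{\lambda}_\zeta}(X)$ is a cosmetic repackaging of what the paper does by carrying the $\zeta$-deformed variables $\lambda_{i,\zeta}$ through the appendix lemmas.
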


\begin{rem}
  Note that the solutions $\vec{\lambda} \in \bbC^N$ to the Bethe equations \eqref{e:bethe_equations} \emph{depend on} $\zeta$.
  Also, note that some of these solutions lead to trivial Bethe functions \eqref{e:bethe_function}, i.e., $u_{\vec{\lambda}} = 0$.
  In fact, one can show that any solution with $\lambda_i = \lambda_j \neq 1$, for $i \neq j$, leads to a trivial Bethe function using the determinantal structure \eqref{e:bethe_determinantal}.
  In particular, the statement of \Cref{p:Bethe_Ansatz} is trivial for the trivial Bethe functions.
  As a result, we refrain from saying that the Bethe functions are eigenfunctions of the operator $\Hzeta$ in the statement of \Cref{p:Bethe_Ansatz} since some of the Bethe functions may be identically equal to zero.
\end{rem}

\begin{proof}
  To prove the above \Cref{p:Bethe_Ansatz}, we want to check that the exclusion/push rule \eqref{e:exclusion} and the periodicity condition \eqref{e:periodicity} are satisfied for $u_{\vec{\lambda}}$ if $\vec{\lambda}_{\zeta} \in \bbC^N$ is a solution to the Bethe equations \eqref{e:bethe_equations}.
  This is checked in \Cref{l:exclusion_check} and \Cref{l:periodicity_check} from \Cref{app:Bethe_properties}.
  Then, due to the exclusion/push rule and the periodicity condition, we have
  \begin{equation}
    \calH_\zeta^T\, u_{\vec{\lambda}} = \overline{\calH}_\zeta^T\, u_{\vec{\lambda}}, \quad \calH_\zeta^P\, u_{\vec{\lambda}} = \overline{\calH}_\zeta^P\, u_{\vec{\lambda}},
  \end{equation}
  with $\overline{\calH}_\zeta^T$ and $\overline{\calH}_\zeta^T$ given by \eqref{e:master_operators_compact}. Therefore, \Cref{p:Bethe_Ansatz} follows from the straightforward computation below,
  \begin{align*}
    \Hzeta u_{\vec{\lambda}}(X; \zeta)
    & = \overline{\calH}_\zeta u_{\vec{\lambda}}(X; \zeta)
      = p \overline{\calH}_\zeta^T u_{\vec{\lambda}}(X; \zeta) + q \overline{\calH}_\zeta^P u_{\vec{\lambda}}(X; \zeta) \\
    & = \sum_{i=0}^{L-1} \sum_{\sigma \in S_N} \left(
      p (\zeta \lambda_{\sigma(i)} - 1)
      + q (\zeta^{-1} \lambda_{\sigma(i)}^{-1} - 1) 
      \right)
      A_\sigma(\vec{\lambda}; \zeta) \prod_{i=j}^N \lambda_{\sigma(j)}^{-x_j} \\
    & = \sum_{\sigma \in S_N} \sum_{i=0}^{L-1} \left(
      p (\zeta \lambda_{\sigma(i)} - 1)
      + q (\zeta^{-1} \lambda_{\sigma(i)}^{-1} - 1) 
      \right)
      A_\sigma(\vec{\lambda}; \zeta) \prod_{j=1}^N \lambda_{\sigma(j)}^{-x_j} \\
    & = E(\vec{\lambda_\zeta}) u_{\vec{\lambda}}(X; \zeta),
  \end{align*}
  where in the second line, we directly apply the definitions in \eqref{e:master_operators_compact}.
\end{proof}

\subsection{Decoupled Bethe equations}

We decouple the $\zeta$-deformed Bethe equation \eqref{e:bethe_equations} by introducing an auxiliary variable and a corresponding auxiliary equation.
The coupling among the variables in the Bethe equations will shift completely to the auxiliary equation.
This will give us better control of the solutions to the Bethe equations in future formulas.

The decoupled $\zeta$-deformed \emph{Bethe equations} are given by,
\begingroup
\renewcommand{\theequation}{dBE}
\begin{subequations}
  \subtag{dBE}
  \label{e:dBE}
  \begin{align}
    q_z(w_i) & := 1 - z^L w_i^{-L} (1 - w_i^{-1})^{-N} = 0 , \quad i= 1, \cdots, N, \label{e:dBE1} \\
    p_z(\vec{w}; \zeta) &  := 1 + (-1)^N \zeta^L z^{-L} \textprod_{i=1}^N (1 - w_i^{-1})= 0. \label{e:dBE2} 
  \end{align}
\end{subequations}
\endgroup
Note that $w_i$ variables are decoupled in the first $N$ equations, given by \eqref{e:dBE1}, which are also independent of $\zeta$.
Moreover, the coupling of the $w_i$ variables and the dependency on $\zeta$ are only due to the last equation \eqref{e:dBE2}.
Note that what we call $w_i$ here, and in the rest of the paper, are actually $w_{i, \zeta}$ in \eqref{e:bethe_equations}. Here, we see again how the change of variables introduced in \eqref{e:zeta_CoVs} is useful, since it allows us to remove the $\zeta$ dependency in \eqref{e:dBE1}.

The system of equations given by the decoupled Bethe equations \eqref{e:dBE1} and \eqref{e:dBE2} is equivalent to the system of equations given by the Bethe equations \eqref{e:bethe_equations}, except for the case when $z=0$ in the decoupled Bethe equations. The exceptional point $z=0$ will be important for our analysis and will give rise to the probability of the stationary distribution. The reason for the disparity in the case $z=0$ is technical. When $z=0$, the solutions to the decoupled Bethe equations include coordinates with $w_i =0$ or $w_i=1$, where the denominator of the Bethe equations vanishes. The solutions of the decoupled Bethe equations with $z=0$ should be thought as solutions of the Bethe equations when the singularities of the system of equations are resolved. 

Let us denote the set of solution to the Bethe equation \eqref{e:bethe_equations} and the decoupled Bethe equations \eqref{e:dBE1} and \eqref{e:dBE2} as follows,
\begin{align}
  \calR(\zeta) & := \left\{ \vec{w} \in \bbC^N \mid \vec{w} \text{ is a solution of \eqref{e:bethe_equations}} \right\}, \label{e:bethe_roots}\\
  \calQ (z) & := \left\{ w \in \bbC \mid q_z(w) = 0 \right\}, \quad z \in \bbC, \label{e:bethe_roots_deformed}\\
  \calP(z; \zeta) & := \left\{ \vec{w} =(w_1, \cdots, w_N) \in \bbC^N \mid p_z( \vec{w}; \zeta ) = 0 \right\}, \quad z \in \bbC, \zeta \in \bbU \label{e:coupled_roots}. 
\end{align}
We say that $\calR(\zeta)$ is the set of $\zeta$-deformed \emph{Bethe roots} and $\calQ(z)$ is the set of \emph{decoupled Bethe roots}.
Additionally, we denote $\calZ(\zeta)$ to be the set of solutions in $z\in \bbC$ to the system of decoupled $\zeta$-deformed Bethe equations \eqref{e:dBE1} and \eqref{e:dBE2},
\begin{equation}
  \label{e:calZ_def}
  \calZ(\zeta) = \{ z \in \bbC \mid \calQ^N(z) \cap \calP(z; \zeta) \neq \emptyset \}.
\end{equation}

We have a decomposition of the Bethe roots,
\begin{equation}\label{e:bethe_roots_decomp}
  \calR(\zeta)   = \bigcup_{\substack{z \in \calZ(\zeta)\\ z \neq 0}}\left(  \calQ^N(z) \cap \calP (z; \zeta) \right).
\end{equation}
Note that we have excluded the value $z = 0$ on the right side. This value, and only this value, leads to solutions $\vec{w}$ with $w_i = 1$ for some $i \in \{1, \cdots, N\}$ when $\zeta$ is an $L$-th root of unity.
On the left side, any vector $\vec{w}$ with $w_i = 1$ for some $i \in \{1, \cdots, N\}$ is, technically speaking, excluded since the denominator in \eqref{e:bethe_equations} vanishes.
Thus, we do not compare the solution for the value $z=0$ and simply exclude the $z=0$ case, dealing with it separately when necessary.
Nonetheless, the set of solutions $\calQ(0) \cap \calP(0; \zeta)$, when $\zeta$ is an $L$-th root of unity, will be included in our analysis as these solutions act as solutions of the Bethe equations \eqref{e:bethe_equations} when the singularities from the denominators have been resolved.
Some more details will be given in \Cref{l:residue_computations} when we need to compute the residue at this particular solution.

\subsection{Delta basis}
\label{s:delta_basis}

We represent a collection of delta functions as nested contour integrals, see \Cref{p:delta_contour_formula}.
The delta functions we consider are the delta functions arising in the master equation \eqref{e:master_joint}, as the initial condition.
We will then use the nested contour integral representation in the solution we give to the master equation.  

The \emph{delta basis} for the PushASEP with $N$ particles on a ring of length $L$ is given by the set of indicator functions $\mathds{1}_{Y}$ so that,
\begin{equation}
  \mathds{1}_{Y}(X) = \mathds{1}(Y=X) =
  \begin{cases}
    1, \quad Y = X, \\
    0, \quad Y \neq X, \end{cases}
\end{equation}
for any $X, Y \in \calX_{L,N}$. Note that the set $\{\mathds{1}_X \mid X \in \calX_{L,N}\}$ of delta basis has cardinality ${L \choose N}$, equal to the cardinality of the state space $\calX_{L,N}$ and the dimension of the space of functions $\calB (\calX_{L,N})$ on the state space. We will write the delta functions as linear combination of Bethe functions, see \Cref{t:bethe_expansion}.

Let us introduce some notation before we give the representation of the delta functions as nested contour integral. First, we introduce the function
\begin{equation}\label{e:integrand_fun}
  h_{\vec{w}}(Y) := \prod_{i=1}^N w_i^{y_i},
\end{equation}
for any $\vec{w} = (w_1, \cdots, w_N) \in \bbC^N$ and $Y = (y_1, \cdots, y_n) \in \bbZ^N$.
Also, we use the following notation, already mentioned in \eqref{e:intro_diffcontours}, for the contour intergrals
\begin{align}
  \label{e:diffcontours}
  \diffcontoursz & = \frac{1}{2 \pi \icomp} \Big( \oint_{C_{R'}} - \oint_{C_{\epsilon'}}  \Big),
  &   \diffcontoursw & = \frac{1}{2 \pi \icomp} \Big( \oint_{C_{R}} - \oint_{C_{\epsilon_1}} - \oint_{1 + C_{\epsilon_2}}  \Big),
\end{align}
where, for $r > 0$, the contour $C_r$ denotes the positively oriented circle of radius $r$ around the origin, and $1+C_{r}$ denotes its shift by 1.
The radii $R', \epsilon'$, $R, \epsilon_1, \epsilon_2 > 0$ for the different contours are chosen to satisfy the following conditions,
\begingroup
\renewcommand{\theequation}{Cond}
\begin{subequations}
  \subtag{Cond}
  \label{e:conditions}
  \begin{empheq}[left = \empheqlbrace]{align}
    & \epsilon' \ll 1 \text{ is arbitrarily small and } R' = 1/\epsilon', \label{e:z_contour} \\
    & R = (R')^{\beta}, \epsilon_1 = (\epsilon')^{\beta_1}, \epsilon_2 = (\epsilon')^{\beta_2}, \label{e:powers}  \\
    & \beta > 1,  \label{e:beta} \\
    & \tfrac{1}{1-\rho} = \tfrac{L}{L-N}< \beta_1 <d = \tfrac{L}{N}, \label{e:beta1_bounds}\\
    &  d  < \beta_2 < \tfrac{L}{N-1}. \label{e:beta2_bounds}
  \end{empheq}
\end{subequations}
\endgroup

\begin{rem}
  \label{r:2nl}
  The conditions \eqref{e:conditions} are due to technical issues in the proofs below, mostly to control the location of the roots of the decoupled Bethe equations.
  The reason for the exact conditions will become apparent throughout the proofs; see \Cref{s:contour_deformations}, and in particular, the proof of \Cref{l:contour_deformation_induction}.
  Also, note that we must have $2 N <L$ due to \eqref{e:beta1_bounds} since $(1-\rho)^{-1} < L/N = \rho^{-1}$. This is technical condition that may be relaxed so that all the results also hold for $2N = L$ by altering the radius of the contours. The alternate conditions are given by \eqref{e:conditions_p}, in \Cref{r:conditions_p}, for the radius of the contours for the case $2N = L$. In particular, \Cref{l:contour_deformation_induction} and \Cref{l:CI_zero} are the key lemmas where the contours need to be altered for the case $2N = L$. Then, all the rest of the results also follow for $2N=L$. Moreover, via the so-called particle-hole duality, one can treat all the cased for general $0 \leq N \leq L$.
\end{rem}

We have the following representation of the delta functions, whose proof will be given in \Cref{a:delta_basis}.

\begin{proposition}\label{p:delta_contour_formula} 
  Fix some an initial configurations $Y= (y_1, \cdots, y_N) \in \calX_{L,N}$ for the PushASEP with $N$ particles on a ring of length $L$, and take $X = (x_1, \cdots, x_N)\in \calX_{L,N}$.
  Then, we have
  \begin{equation}
    \label{e:initial_condition}
    \mathds{1}(X=Y) = \bigcontourintRiczeta
  \end{equation}
  with the functions $p_z, q_z$ given by \eqref{e:dBE}, the function $h_{\vec{w}}$ given by \eqref{e:integrand_fun}, the function $u_{\vec{w}}$ given by \eqref{e:bethe_function}, and the radii of contours $R', \epsilon', R, \epsilon_1, \epsilon_2$ satisfying \eqref{e:conditions}.
\end{proposition}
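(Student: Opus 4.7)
The plan is to evaluate the nested contour integral directly via residue calculus on the annular contours \eqref{e:diffcontours}. Expanding $u_{\vec{w}}(X) = \sum_{\sigma \in S_N} A_\sigma(\vec{w}) \prod_i w_{\sigma(i)}^{-x_i}$ per \eqref{e:bethe_function}, the integrand becomes a sum over permutations whose summands differ from one another by a relabeling of the $w_i$'s. Since $p_z(\vec{w};\zeta) \prod_i q_z(w_i)$ is symmetric in the $w_i$'s and all $w_i$-contours coincide, a change of variable $w_i \mapsto w_{\sigma^{-1}(i)}$ in each summand puts the integral in a canonical form, so the proof reduces to a single symmetrized residue computation against $\prod_i w_i^{y_i - x_{\sigma^{-1}(i)}}$ weighted by the amplitudes.

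Next I would perform the $z$-integral. By the explicit form of $p_z$ and $q_z$, the quantity $1/(p_z \prod_i q_z(w_i))$ is a rational function of $z^L$ whose behavior at $z = 0$ and at $z = \infty$ is controlled by $z^L$ and $z^{-LN}$ respectively. The conditions \eqref{e:z_contour}--\eqref{e:powers} on $R', \epsilon'$ are chosen so that the annulus $\epsilon' < |z| < R'$ captures precisely the solutions of the coupled equation \eqref{e:dBE2} (the zeros of $p_z$) while keeping the roots of $q_z(w_i) = 0$ outside, so those latter poles are handled in the subsequent $w$-integration rather than here.

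Then I would perform each $w_i$-integral. The contour $C_R - C_{\epsilon_1} - (1 + C_{\epsilon_2})$ excises disks around $w_i = 0$ and around $w_i = 1$, the latter being essential since $q_z(w_i) = 1 - z^L w_i^{-L}(1-w_i^{-1})^{-N}$ is singular of order $N$ at $w_i = 1$. The inequalities \eqref{e:beta1_bounds}--\eqref{e:beta2_bounds} on $\beta_1, \beta_2$ are chosen precisely so that, for the $z$-residues surviving from the previous step, all roots of $q_z(w_i) = 0$ lie in the annular region swept out by this contour, while the exceptional points $w_i = 0$ and $w_i = 1$ are excluded. The decoupled nature of $q_z(w_i)$ in $w_i$ means the $N$ integrals separate at this stage.

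The remaining step is to show the accumulated residues equal $\mathds{1}(X=Y)$. When $X = Y$, the identity permutation yields the main contribution (the product $\prod_i w_i^{y_i - x_i}$ trivializes the exponential counting and a direct computation gives $1$), while the non-identity permutation terms must cancel via the antisymmetry encoded in $(-1)^\sigma$ in $A_\sigma$; when $X \neq Y$, the full sum must cancel. The hard part will be this combinatorial bookkeeping of residue cancellations, particularly near $w_i = 1$ where the factors $(1 - w_{\sigma(j)}^{-1})^{\sigma(j)-j}$ in $A_\sigma$ have exponents of both signs. The delicate interplay between these factors, the order-$N$ singularity of $q_z$ at $w_i = 1$, and the contour subtraction $1 + C_{\epsilon_2}$ in \eqref{e:diffcontours} is what forces the specific contour radii \eqref{e:conditions} and is the technical heart of the argument (deferred to \Cref{a:delta_basis}).
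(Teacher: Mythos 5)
Your high-level idea (evaluate via residues, use the contour radii in \eqref{e:conditions} to control pole locations) is sound, but both the order of operations and the closing argument diverge substantially from what the paper does, and there is a genuine gap in the closing step. The paper's proof in \Cref{a:delta_basis} does \emph{not} compute residues at the roots of $q_z$ or $p_z$ at all. Instead it first expands the difference of contours multiplicatively into $2\cdot 3^N$ terms $\CI(r,I,J,K)$ indexed by which circle each variable sits on (see \eqref{e:CI_initial_decomposition_v2}), then handles each term by geometric-series expansion of $1/p_z$ and $1/q_z(w_i)$, extraction of the $[z^0]$ coefficient, and deformation of the $w_j$-contours to $0$, $1$, or $\infty$. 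The conditions \eqref{e:beta1_bounds}--\eqref{e:beta2_bounds} are used precisely to decide which series expansion is valid on each circle, not to corral Bethe roots into the annulus. Your plan to do the $z$-integral first, collecting the zeros of $p_z$ as residues while the zeros of $q_z(w_i)$ sit outside, is only true when all $w_i\in C_R$; when some $w_i$ lie near $1$ the $p_z$-poles move inside $C_{\epsilon'}$, so that picture breaks down and you would need the same case analysis anyway.

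The more serious gap is at the end. You assert that for $X=Y$ ``the identity permutation yields the main contribution \ldots a direct computation gives $1$'' with the non-identity terms cancelling by the sign $(-1)^\sigma$, and you defer this ``combinatorial bookkeeping.'' That bookkeeping is not a side remark — it is the entire content of the proposition, and the cancellations are far from a simple sign argument (the exponents $\sigma(j)-j$ in $A_\sigma$ have both signs, and for $X\neq Y$ all $N!$ terms must conspire). The paper avoids this completely: after the decomposition, the one surviving term $\CI(R',[N],\emptyset,\emptyset)$ is shown in \Cref{l:CI_delta_function} to reduce, after dropping corrections that vanish under deformation to $\infty$, to the known contour-integral delta-function identity for TASEP on $\bbZ$ due to Sch\"utz and Tracy--Widom (\cite{Schutz97,TW08a}), which is cited rather than re-proved. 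If you pursue your direct route you would essentially have to re-derive that identity from scratch, so you should either do so (a substantial lemma of its own) or invoke it explicitly; as written the proof has a hole exactly where you say ``a direct computation gives $1$.''
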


\section{Results}
\label{s:results}

We present the more technical results in this section now that we have introduced some background in \Cref{s:preliminaries}. We begin with \Cref{s:zeta_deformed} where we give three alternate formulas for the generating series for the joint distribution $\QXgf$ in \Cref{t:main}. In the next section, \Cref{s:transition_prob}, we specialize the generating series for the joint distribution to obtain the transition probability of the PushASEP on a ring, see \Cref{t:transition_prob} and \Cref{t:transition_prob_simple}. Next, in \Cref{s:BL_transition_prob}, we give a decomposition of the generating series for the joint distribution, in \Cref{p:method_images}, as a sum of terms with shifted initial conditions. It follows, by \Cref{c:method_images}, that our formulas for the generating series for the joint distribution are compatible with the transition probability formulas given by Baik--Liu in \cite{baikliu2018}. Then, in \Cref{s:current_results}, we establish a result connecting the local and total current, \Cref{l:global_local}, and we use this result, along with previous results, to establish a contour integral formula for the cumulative distribution function of the local current at site $L-1$ in \Cref{p:current_cdf}. In the last section, \Cref{s:asymptotic_results}, we give asymptotic results for the local current under flat and step initial conditions; see \Cref{t:asymptotic_flat_current} and \Cref{t:asymptotic_step_current}. In \Cref{s:results}, we only present the results and leave the proof of the results to the following sections so that we present all of the major results in a unified manner and so that the work in the following sections is better motivated.

\subsection{\texorpdfstring{$\zeta$}{Zeta}-deformed formula}
\label{s:zeta_deformed}

We recall that the generating series for the joint distribution introduced in \eqref{e:joint_gf} is given by
\[
  \QXgf := \bbE_{\eta[Y]}[ f_X(\eta(t); \zeta) ] = \sum_{q \in \bbZ} \zeta^q \cdot \bbP_Y( X(t) = X, Q(t) = q).
\]
It specializes to the transition probability, as explained in \eqref{e:prob_transition}, when we take $\zeta \rightarrow 1$.
Moreover, the current introduced in \Cref{s:definitions} can be recovered from the same generating series, after a summation over the state space and an inverse Fourier transform,
\begin{align}
  \label{e:current_gf2}
  \Qgf & := \sum_{q \in \bbZ} \zeta^q \cdot \bbP_Y( Q(t) = q) = \sum_{X \in \calX_{L, N}} \QXgf, \\
  \label{e:current_proba}
  \bbP_Y(Q(t) = q) & = \oint_{C_1} \frac{\dd \zeta}{\zeta} \frac{\Qgf}{\zeta^q}, \quad q \in \bbZ,
\end{align}
so that the contour $C_1$ is the unit circle with positive orientation.
The following main theorem gives different equivalent formulas of the generating series $\QXgf$. Later, we see that these formulas lead to precise asymptotic results; see \Cref{t:asymptotic_flat_current} for flat initial conditions and \Cref{t:asymptotic_step_current} for step initial conditions.

\begin{theorem}
  \label{t:main}
  Fix an initial configuration $Y = (y_1, \cdots, y_N) \in \calX_{L,N}$ for the PushASEP with $N$ particles on a ring of length $L$, and let $X= (x_1, \cdots, x_N)  \in \calX_{L,N}$ be a random configuration at time $t \in \bbR_{\geq 0}$.
  Then, for all but finite values of $\zeta \in \mathbb{U}$, the generating series for the joint distribution, defined by \eqref{e:joint_gf}, is given by the following three equivalent formulas: an $(N+1)$-fold contour integral, a 1-fold contour integral with an additional continuous function $u_0 : \bbU \longrightarrow \bbC$, and the expansion on Bethe functions of the operator $\Hzeta$ in the master equation \eqref{e:master_joint}.
  In particular, we have
  \begin{subequations}
    \begin{align}
      \label{e:main_N+1}
      \QXgf
      & = \bigcontourintRzeta\\
      \label{e:main_1}
      & = u_0(\zeta) + \CIpi{1} \diffcontoursZ \sumNbetherootszeta, \\  
      \label{e:main_eigenfunctions}
      &  = u_0(\zeta) + \eigenexpansionzeta,
    \end{align}
  \end{subequations}
  where the last equality \eqref{e:main_eigenfunctions} only holds for generic $\zeta \in \bbU$ (i.e., all but finitely many $\zeta \in \bbU$), we take the convention given by \eqref{e:diffcontours} for the contour integrals, the radii $R', \epsilon', R, \epsilon_1, \epsilon_2$ satisfy \eqref{e:conditions}, $u_0$ is a continuous function on $\bbU$ that is given later in \Cref{l:u0_residue}, the functions $p_z$ and $q_z$ are given by \eqref{e:dBE1} and \eqref{e:dBE2}, the set $\calQ(z)$ is given by \eqref{e:bethe_roots_deformed}, the set $\calR(\zeta)$ is given by \eqref{e:bethe_roots}, the function $u_{\vec{w}}$ is given by \eqref{e:bethe_function}, the function $E$ and the coefficient $\alpha_{\vec{\lambda}}$ are given by
  \begin{align*}
    & & & E(\vec{\lambda}) = \sum_{i=1}^N (p \lambda_i + q \lambda_i^{-1} - 1), 
    & & \alpha_{\vec{\lambda}}^{-1}
        = r(\vec{\lambda})
        \prod_{i=1}^N \lambda_i q'(\lambda_i), \\
    \mbox{with } 
    & & & r(\vec{\lambda}) = L - \sum_{i=1}^{N}\frac{L}{ L \lambda_i - (L-N)},
    & & \lambda_i q'_z(\lambda_i) = \frac{ L \lambda_i - (L-N) }{\lambda_i -1}.
  \end{align*}
\end{theorem}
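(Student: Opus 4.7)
The plan is to prove the three formulas in cascade: establish \eqref{e:main_N+1} directly from the master equation, then evaluate the $N$ inner $w_i$-integrals by residues to obtain \eqref{e:main_1}, and finally evaluate the outer $z$-integral by residues to obtain the eigenbasis expansion \eqref{e:main_eigenfunctions}.

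To establish \eqref{e:main_N+1}, I would use uniqueness of the linear ODE \eqref{e:master_joint} on the finite-dimensional space $\calB(\calX_{L,N}\times\bbZ)$ by checking two points. The correct value at $t=0$ is immediate from \Cref{p:delta_contour_formula}, since the exponential factor becomes $1$. For the time derivative, differentiating under the integral inserts a factor of $E(\vec{w})$ in the integrand, so it suffices to check that $\calH_\zeta$ acting on the integrand also produces this same factor. The exclusion/push piece is automatic: by \Cref{l:exclusion_check} (used in the proof of \Cref{p:Bethe_Ansatz}), the Bethe function $u_{\vec{w}}(X;\zeta)$ satisfies the exclusion/push rule for any $\vec{w}$, so $\calH_\zeta$ agrees with the free operator $\overline{\calH}_\zeta$ on the integrand modulo periodicity terms. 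The periodicity rule itself is not satisfied pointwise in $\vec{w}$, but the periodicity-violating terms factor through the quantities $q_z(w_i)$ and $p_z(\vec{w};\zeta)$ that appear in the denominator; with the radii chosen as in \eqref{e:conditions}, the corresponding residues lie on the wrong side of the contours and integrate to zero.

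To pass from \eqref{e:main_N+1} to \eqref{e:main_1}, I would evaluate each inner $w_i$-integral by residues. The conditions \eqref{e:conditions} are engineered so that the decoupled Bethe roots $\calQ(z)$ all lie in the annular region between the large circle of radius $R$, the small circle of radius $\epsilon_1$, and the small circle $1+C_{\epsilon_2}$ around $w=1$, which is precisely what the $w_i$-contour picks up. The stray singularities of the integrand at $w_i=0$ and $w_i=1$ (arising from the positive powers in $h_{\vec{w}}$ and from the negative exponents $\sigma(j)-j$ in $A_\sigma$) are excised. Summing residues at $\vec{\lambda}\in\calQ(z)^N$ produces the inner sum of \eqref{e:main_1}, while the residues from the stray singularities recombine into an $X$-, $Y$-, and $t$-independent continuous function of $\zeta$, namely $u_0(\zeta)$ as isolated in \Cref{l:u0_residue}. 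For \eqref{e:main_eigenfunctions}, the remaining $z$-integral is evaluated by residues at the simple poles where $p_z(\vec{\lambda};\zeta)=0$ with $\vec{\lambda}\in\calQ(z)^N$, i.e., at $z\in\calZ(\zeta)\setminus\{0\}$ via the decomposition \eqref{e:bethe_roots_decomp}. Implicit differentiation of $q_z(\lambda_i)=0$ produces $\lambda_iq'_z(\lambda_i)=(L\lambda_i-(L-N))/(\lambda_i-1)$, and differentiating $p_z$ along the implicit curve $z\mapsto\vec{\lambda}(z)$ yields the factor $r(\vec{\lambda})$; together these give the coefficient $\alpha_{\vec{\lambda}}$, and summing over $\vec{\lambda}\in\calR(\zeta)$ gives \eqref{e:main_eigenfunctions} for all but finitely many $\zeta\in\bbU$ (those excluded being $\zeta$'s at which two branches of Bethe roots collide or where $z=0$ is a limit).

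The main obstacle is the verification that \eqref{e:main_N+1} satisfies the master equation, specifically the disappearance of the periodicity-violating boundary terms. These are not small perturbations but algebraic cancellations, requiring one to track the pole structure in $\vec{w}$ and $z$ simultaneously. This is precisely why the Bethe equations are decoupled into \eqref{e:dBE1} and \eqref{e:dBE2}: in the decoupled form, the coupling and the periodicity factor are isolated in the single equation $p_z=0$, so the cancellation becomes amenable to the inductive contour-deformation arguments carried out in \Cref{s:contour_deformations} (see \Cref{l:contour_deformation_induction}), where the bounds on the contour radii from \eqref{e:conditions} are exactly what is needed to keep every intermediate residue on the correct side.
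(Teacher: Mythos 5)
Your overall architecture---show the $(N+1)$-fold integral satisfies the master equation, then cascade down to the one-fold integral and eigenbasis expansion by residues---differs from the paper's in one load-bearing way, and that difference opens a genuine gap exactly where you yourself flag the ``main obstacle.''

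You propose to verify \emph{both} conditions of the master equation (initial value and time derivative) directly on the $(N+1)$-fold contour integral \eqref{e:main_N+1}. The initial condition is indeed immediate from \Cref{p:delta_contour_formula}, as you say. But for the time derivative you need $\calH_\zeta$ acting on the integrand to produce the factor $E(\vec{w})$, and this only holds for $\vec{w}$ satisfying the Bethe equations: the exclusion/push rule holds for all $\vec{w}$ (your citation of \Cref{l:exclusion_check} is correct), but the periodicity rule does not, and the periodicity-violating boundary terms do not obviously die under integration. You assert that these terms ``factor through'' $q_z$ and $p_z$ and that the relevant residues land on the wrong side of the contours, but this is precisely the step that is never carried out. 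Pointing to \Cref{l:contour_deformation_induction} does not fill the gap, because that lemma computes $w_i$-residues to pass from \eqref{e:main_N+1} to \eqref{e:main_1}; it is not a statement about $\calH_\zeta$ applied to the unexpanded integrand, and it never touches the boundary terms coming from the periodicity rule.

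The paper avoids this obstacle entirely by reordering the logic. It first establishes the equality of all three right-hand sides by pure residue computation (\Cref{p:w_residues} and \Cref{t:bethe_expansion}), with no reference yet to the master equation. Only then does it verify the master equation, and crucially it checks the time-derivative condition on the \emph{third} expression \eqref{e:main_eigenfunctions}: there the summation runs over genuine Bethe roots $\vec{\lambda}\in\calR(\zeta)$, so \Cref{p:Bethe_Ansatz} applies directly and yields $(\calH_\zeta-\partial_t)\big(u_{\vec{\lambda}}(X;\zeta)\,e^{tE(\vec{\lambda})}\big)=0$ with no boundary terms to chase. The initial condition is then checked on expression \eqref{e:main_N+1} via \Cref{p:delta_contour_formula}, exactly as you do, and the two verifications transfer across the established equalities. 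In short, the paper's decomposition lets the eigenfunction structure absorb all the periodicity bookkeeping; your route requires a separate (and nontrivial) argument that the periodicity boundary terms integrate away on the $(N+1)$-fold contour, and you have not supplied it.

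Your cascade from \eqref{e:main_N+1} to \eqref{e:main_1} to \eqref{e:main_eigenfunctions} is otherwise consistent with the paper's \Cref{p:w_residues} and \Cref{t:bethe_expansion}, including the identification of $\lambda_i q'_z(\lambda_i)$ and $r(\vec{\lambda})$ from implicit differentiation and the genericity condition on $\zeta$ for simple $z$-poles.
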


\begin{rem}
  There are three formulas in \Cref{t:main} which \emph{all} describe the same quantity, i.e. the generating series.
  The first formula \eqref{e:main_N+1} is given by a $(N+1)$-fold contour integral formula, where variables of contour integrals arise from the decoupled $\zeta$-deformed Bethe equations \eqref{e:dBE1} and \eqref{e:dBE2}.
  In \Cref{p:w_residues}, we obtain the 1-fold contour integral formula \eqref{e:main_1} by integrating the $w_i$ variables out from the $N+1$-fold contour integral in \eqref{e:main_N+1}.
  Next, in \Cref{t:bethe_expansion}, we obtain an finite sum expansion over eigenfunctions of the operator $\Hzeta$ by integrating the remaining $z$ variable out from the 1-fold contour integral.
  In this case, we need a generic condition on the $\zeta$ variable, as described in the statement of \Cref{t:main}, to make sure that the poles in the integrand are simple, making the residue computations correct.
  Finally, we also check that these three different formulas describe the generating function, which is discussed below, in the proof of \Cref{t:main}.
\end{rem}

\begin{proof}
  We focus on establishing the equality with the generating series.
  The second equality, between the $(N+1)$-fold contour integral and the 1-fold contour integral, is established in \Cref{p:w_residues}, and the third equality, between the 1-fold contour integral and the finite sum, is established in \Cref{t:bethe_expansion}.
  Thus, we treat the three expressions on the right side to be equivalent.
  Below, we check that the three equivalent expressions give the generating series.
  
  Let us denote the $(N+1)$-fold contour integral on the right side of \eqref{e:main_N+1} by $g_1 (Y, X; \zeta; t)$.
  Similarly, we denote the right side of \eqref{e:main_1} and the right side of \eqref{e:main_eigenfunctions} by $g_2 (Y, X; \zeta; t)$ and $g_3 (Y, X; \zeta; t)$, respectively.
  Then, we have
  \begin{equation}
    g_1 (Y, X; \zeta; t) = g_2 (Y, X; \zeta; t) = g_3 (Y, X; \zeta; t)
  \end{equation}
  by \Cref{p:w_residues} and \Cref{t:bethe_expansion}, respectively. Note that \Cref{p:last_res} only applies for all but finite values of $\zeta\in \mathbb{U}$, meaning that the rest of the arguments only apply for all but finite values of $\zeta$. We denote,
  \begin{equation}
    g(Y, X; \zeta; t) := g_1 (Y, X; \zeta; t) = g_2 (Y, X; \zeta; t) = g_3 (Y, X; \zeta; t),
  \end{equation}
  for the common value of all three expressions.
  Below, we show that $g(X; \zeta; t)$ is indeed the generating series for the PushASEP on the ring defined in \eqref{e:joint_gf}.
  
  The generating series is uniquely determined by the master equation \eqref{e:master_joint}.
  In particular, we show that the function $g(Y, X; \zeta; t)$ satisfies the two conditions for the master equation: (1) the differential equation and (2) the initial conditions.
  The differential equation is most readily checked using the finite sum expression, i.e.~$g_3 (Y, X; \zeta; t)$, and the initial conditions are most readily checked using the $(N+1)$-fold contour integral, i.e.~$g_1 (Y, X; \zeta; t)$. We then check the conditions for the master equations on different expression since all three expressions are equal.
  
  First, we check the initial condition for the master equation using the $(N+1)$-fold contour integral formula. The initial condition
  \begin{equation*}
    g(Y, X; \zeta; 0)= g_1 (Y, X; \zeta; t) = \mathds{1}_{Y}(X)
  \end{equation*}
  is due to \Cref{p:delta_contour_formula}.
  This establishes the second part of the master equation \eqref{e:master_joint}.
  
  The first part of the master equation,
  \begin{equation}
    \frac{\dd}{\dd t} g(Y, X; \zeta; t) = \Hzeta \, g(Y, X; \zeta; t),
  \end{equation}
  follows from linearity and \Cref{p:Bethe_Ansatz}. The operator $\left( \Hzeta - \dd / \dd t \right)$ commutes with summations due to linearity.
  So, we take the operator $\left( \Hzeta - \dd / \dd t \right)$ inside the sum for the expression of $g^{(3)}(Y, X; \zeta; t)$.
  Then, by \Cref{p:Bethe_Ansatz}, we have
  \begin{equation}
    \left( \Hzeta - \dd / \dd t \right) \left( u_{\vec{\lambda}} (X; t) e^{t E(\vec{\lambda})} \right) = 0.
  \end{equation}
  This establishes the first part of the master equation \eqref{e:master_joint}. 
  
  We have then shown that the function $g(Y, X; \zeta; t)$ satisfies the master equation \eqref{e:master_joint} by the arguments above. Therefore, $g(Y, X; \zeta; t)$ is indeed the generating series for the PushASEP on a ring, and the result follows.
\end{proof}

\subsection{Transition probability}
\label{s:transition_prob}

Similar to the generating series in \Cref{t:main}, \Cref{t:transition_prob} and \Cref{t:transition_prob_simple} gives three equivalent formulas of the transition probability.
In particular, we note that, in the case $(p, q) = (1, 0)$, \eqref{e:transition_prob_1} closely resembles \cite[Equation (5.4)]{baikliu2018}.
We will have a more precise connection to the formula in \cite{baikliu2018} in \Cref{s:BL_transition_prob}.
The third formula \eqref{e:transition_prob_eigenfunctions} can be viewed as the decomposition over the ``eigenbasis'', similar to the various formulas given by Prolhac, see Equations (1) and (2) in \cite{Prolhac2016} for example.

\begin{corollary}
  \label{t:transition_prob}
  Take the same conditions and the same notations as in \Cref{t:main}.
  Then, the transition probability is given by
  \begin{subequations}
    \label{e:transition_prob}
    \begin{align}
      \label{e:transition_prob_N+1}
      \bbP_Y(X(t)= X)
      & = \bigcontourintRzetaOne\\
      \label{e:transition_prob_1}
      & = u_0 + \CIpi{1} \diffcontoursZ \sumNbetherootszetaOne,
    \end{align}
  \end{subequations}
  where $X= (x_1, \cdots, x_N) \in \calX_{L, N}$ and $u_0 = u_0(1) = {L \choose N}^{-1}$.
\end{corollary}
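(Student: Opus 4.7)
The plan is to specialize Theorem~\ref{t:main} at $\zeta = 1$ and identify both sides with the transition probability. By definition \eqref{e:prob_transition}, the generating series reduces exactly to the transition probability at $\zeta = 1$:
\begin{equation*}
  g(Y, X; 1; t) = \sum_{q \in \bbZ} \bbP_Y(X(t) = X, Q(t) = q) = \bbP_Y(X(t) = X).
\end{equation*}
Thus the task is to evaluate the right-hand sides of \eqref{e:main_N+1} and \eqref{e:main_1} at $\zeta = 1$.

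For the $(N+1)$-fold contour integral \eqref{e:main_N+1}, the integrand is jointly continuous (in fact holomorphic) in $\zeta$ in a neighbourhood of the unit circle, uniformly on the compact contours: the contours prescribed by \eqref{e:diffcontours} and \eqref{e:conditions} are fixed independently of $\zeta$, and the only $\zeta$-dependence sits in the smooth factors $p_z(\vec{w};\zeta)$, $h_{\vec{w}}(Y;\zeta)$, and $u_{\vec{w}}(X;\zeta)$. Hence the $(N+1)$-fold integral depends continuously on $\zeta \in \bbU$. Since Theorem~\ref{t:main} provides the equality with $g(Y,X;\zeta;t)$ on a cofinite subset of $\bbU$, and the left side $g(Y,X;\zeta;t)$ is a finite Fourier series in $\zeta$, continuity lets me pass to the limit $\zeta \to 1$ on both sides, yielding \eqref{e:transition_prob_N+1}.

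The same continuity argument handles the 1-fold formula \eqref{e:main_1}. The remaining $z$-contour integral is continuous in $\zeta$ by the same reasoning, and the continuity of $u_0$ on $\bbU$ is part of the statement of Theorem~\ref{t:main}, so $u_0(\zeta) \to u_0(1)$. This produces \eqref{e:transition_prob_1} up to the identification of the constant $u_0(1)$.

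The only nontrivial step is then the explicit value $u_0(1) = {L \choose N}^{-1}$. This factor arises from the exceptional solution $z = 0$ of the decoupled Bethe equations \eqref{e:dBE}: at roots of unity $\zeta^L = 1$ (which includes $\zeta = 1$), the point $z = 0$ forces some $w_i \in \{0,1\}$ where $q_z$ is singular, and the corresponding residue in the $(N+1)$-fold integral contributes the uniform stationary probability ${L \choose N}^{-1}$ of $\calX_{L,N}$. The rigorous computation -- the main technical obstacle -- is the residue analysis deferred to Lemma~\ref{l:u0_residue}, which uses a local expansion of $p_z$ and $q_z$ near $z=0$ together with the conditions \eqref{e:conditions} to extract the combinatorial factor $\binom{L}{N}^{-1}$. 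A useful sanity check is ergodicity: since PushASEP on the ring is an irreducible finite-state Markov process with uniform stationary measure on $\calX_{L,N}$, we must have $\bbP_Y(X(t) = X) \to {L \choose N}^{-1}$ as $t \to \infty$, and in \eqref{e:transition_prob_1} (or equivalently in the spectral form \eqref{e:main_eigenfunctions}) every nontrivial eigenvalue contribution decays, leaving precisely $u_0(1)$ as the surviving term.
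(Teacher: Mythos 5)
Your proposal matches the paper's own proof: both reduce the corollary to \Cref{t:main} by taking $\zeta \to 1$, justify passing the limit inside the contour integrals by the continuity of the integrand on the fixed contours, and defer the value $u_0(1) = \binom{L}{N}^{-1}$ to \Cref{l:u0_residue}. One small correction: $g(Y,X;\zeta;t) = \sum_{q\in\bbZ}\zeta^q\,\bbP_Y(X(t)=X,\,Q(t)=q)$ is \emph{not} a finite Fourier series (the current $Q(t)$ is unbounded on the ring), but since the coefficients are nonnegative and sum to $\bbP_Y(X(t)=X)\leq1$, the series converges uniformly on $\bbU$ and hence is continuous, which is all your argument needs. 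The ergodicity remark at the end is a nice sanity check but plays no role in the proof itself.
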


\begin{proof}
  This result follows from \Cref{t:main} and the definition of $\QXgf$ given by \eqref{e:joint_gf}. Thus, we have
  \begin{equation}
    \bbP_Y(X(t)= X) = \lim_{\zeta \rightarrow 1} \sum_{q \in \bbZ} \zeta^q \cdot \bbP_Y( X(t) = X, Q(t) = q) = \lim_{\zeta \rightarrow 1} \QXgf
  \end{equation}
  by the definition of $\QXgf$. Then, the result follows by \Cref{t:main}. In particular, we take the limit $\zeta \rightarrow 1$ for equations \eqref{e:main_N+1} and \eqref{e:main_1}. In both of these equations, we may take take the limit inside of the integrals and compute the limit simply by evaluating the integrand at $\zeta =1$. The limit of $u_0(\zeta)$ as $\zeta \rightarrow 1$ is more tricky, and we establish this in \Cref{l:u0_residue}. Thus, from the limit computations, we obtain \eqref{e:transition_prob_N+1} and \eqref{e:transition_prob_1}. 
\end{proof}

\begin{rem}
  The limit of \eqref{e:main_eigenfunctions} as $\zeta \rightarrow 1$ is more subtle. In particular, it is difficult to control the term $r(\vec{\lambda})$, given in the statement of \Cref{t:main}. For instance, this term may vanish as $\zeta \rightarrow 1$, meaning that the term $\alpha_{\lambda}$ blows up as $\zeta \rightarrow 1$. If we can rule this case out, then we are able to take the limit $\zeta \rightarrow 1$; see \Cref{t:transition_prob_simple} below. On the other hand, if $r(\vec{\lambda}) \rightarrow 0$ as $\zeta \rightarrow 1$, we expect that there are cancellations from multiple terms in the summation in \eqref{e:main_eigenfunctions}, leading to a finite limit. Intuitively, this would follow by taking the limit $\zeta \rightarrow 1$ in \eqref{e:main_1} and then obtaining a finite sum expansion by residue computation, instead of the other way around. Moreover, one may show that the case where $r(\vec{\lambda}) \rightarrow 0$ as $\zeta \rightarrow 1$ corresponds to case when the roots of $p_z(\lambda; 1)$ have multiplicity higher than one, independent of the $(p,q)$ parameters. This means that the residue computation for the one dimensional integral \eqref{e:main_1} with $\zeta=1$ is not straightforward. In short, limit of \eqref{e:main_eigenfunctions} as $\zeta \rightarrow 1$ may be more involved than just setting $\zeta =1$, but it is not clear at the moment. 
  
\end{rem}

\begin{corollary}
  \label{t:transition_prob_simple}
  Take the same conditions and the same notations as in \Cref{t:main}. Additionally, assume that the roots of $p_z(\vec{\lambda}; 1)$ are simple, for $\vec{\lambda} = (\lambda_1, \dots, \lambda_N) \in \calQ(z)$ so that the elements of $\vec{\lambda}$ are pairwise distinct. Then, the transition probability is given by
  \begin{equation}\label{e:transition_prob_eigenfunctions}
    \bbP_Y(X(t)= X) = u_0 + \eigenexpansionzetaOne
  \end{equation}
  where $X= (x_1, \cdots, x_N) \in \calX_{L, N}$ and $u_0 = u_0(1) = {L \choose N}^{-1}$.
\end{corollary}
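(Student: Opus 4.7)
The plan is to derive this corollary by taking the limit $\zeta \to 1$ in the eigenfunction expansion \eqref{e:main_eigenfunctions} of \Cref{t:main}, with the simplicity hypothesis ensuring the limit commutes with the finite sum. By \Cref{t:main}, \eqref{e:main_eigenfunctions} holds for all but finitely many $\zeta \in \bbU$, so I apply it along a sequence in a punctured neighborhood of $1$ avoiding the exceptional set. The left-hand side $\QXgf$ is a Fourier series with bounded (probability) coefficients, hence continuous on $\bbU$, and its value at $\zeta = 1$ is precisely $\bbP_Y(X(t)=X)$.

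For the right-hand side, I would analyze each factor separately. The inhomogeneous term $u_0(\zeta)$ tends to $u_0(1) = \binom{L}{N}^{-1}$ by \Cref{l:u0_residue}. The Bethe roots $\vec{\lambda} \in \calR(\zeta)$ depend algebraically on $\zeta$ through the decoupled Bethe system \eqref{e:dBE1}--\eqref{e:dBE2}, and under the simplicity hypothesis the implicit function theorem yields a continuous, non-colliding extension $\vec{\lambda}(\zeta) \to \vec{\lambda}(1) \in \calR(1)$. Consequently the factors $h_{\vec{\lambda}}(Y;\zeta)$, $u_{\vec{\lambda}}(X;\zeta)$, and $E(\vec{\lambda})$ all converge to their values at $\zeta = 1$, and the remaining factors $\lambda_i q'_z(\lambda_i) = (L\lambda_i - (L-N))/(\lambda_i - 1)$ in $\alpha_{\vec{\lambda}}^{-1}$ are continuous in $\zeta$ since $\lambda_i \neq 1$ for $\vec{\lambda} \in \calR(\zeta)$ (the value $\lambda_i = 1$ would force $z = 0$, excluded from $\calR(\zeta)$).

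The main obstacle, as flagged in the remark preceding the statement, is controlling $\alpha_{\vec{\lambda}}$ through the factor $r(\vec{\lambda})$, which may vanish in general. I would prove that $r(\vec{\lambda})$ equals, up to an explicit nonzero factor, the derivative of $z \mapsto p_z(\vec{\lambda}(z); 1)$ along the implicit curve cut out by the decoupled Bethe equations \eqref{e:dBE1}. Concretely, differentiating $p_z(\vec{\lambda}(z);1)$ in $z$ and using \eqref{e:dBE1} to compute $d\lambda_i/dz$ in terms of $q'_z(\lambda_i)$ produces the expression $L - \sum_{i} L/(L\lambda_i - (L-N)) = r(\vec{\lambda})$ up to a nonvanishing prefactor involving $z$ and $\prod_i(1-\lambda_i^{-1})$. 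Thus simplicity of the $z$-root $p_z(\vec{\lambda};1) = 0$ is equivalent to $r(\vec{\lambda}) \neq 0$, and under the hypothesis of the corollary $\alpha_{\vec{\lambda}}$ remains finite at $\zeta = 1$.

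Since $|\calR(\zeta)|$ is constant in a neighborhood of $\zeta = 1$ (by the no-collision argument), each term in the finite sum \eqref{e:main_eigenfunctions} extends continuously to $\zeta = 1$; passing to the limit term-by-term and equating with the left-hand side yields \eqref{e:transition_prob_eigenfunctions}.
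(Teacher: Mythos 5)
Your route is genuinely different from the paper's. The paper first sets $\zeta = 1$ in the one-fold contour formula (this is Corollary~\ref{t:transition_prob}, established via \eqref{e:main_1}) and only then performs the residue computation in $z$, directly at $\zeta=1$, invoking Lemma~\ref{l:residue_computations} and Lemma~\ref{l:residue_simple}. You instead keep $\zeta$ generic, use the eigenfunction expansion \eqref{e:main_eigenfunctions}, and then take $\zeta \to 1$. Your computation relating $r(\vec{\lambda})$ to the derivative of $z \mapsto p_z(\vec{\lambda}(z);1)$ along the curve $q_z(\lambda_i)=0$ is correct, and it confirms that simplicity of the $z$-root is equivalent to $r(\vec{\lambda}) \neq 0$ --- this is exactly what the paper's remark preceding the corollary alludes to.

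However, there is a genuine gap in the bookkeeping of the $\zeta \to 1$ limit, and it is exactly the subtlety the paper's remark warns about. Two of your assertions fail. First, $u_0(\zeta)$ does \emph{not} tend to $u_0(1) = \binom{L}{N}^{-1}$: by the remark following Lemma~\ref{l:u0_alt}, for any fixed $\zeta$ with $\zeta^L \neq 1$ and $\epsilon'$ small enough (which is what the residue computation in Proposition~\ref{p:last_res} requires), one has $u_0(\zeta) = 0$. So along your limiting sequence, $u_0(\zeta) \to 0$, and $u_0$ has a jump discontinuity at $\zeta^L = 1$. Second, $|\calR(\zeta)|$ is \emph{not} locally constant near $\zeta = 1$: as $\zeta^L \to 1$, one $z$-root of the decoupled system \eqref{e:dBE} converges to $z = 0$, which is excluded from $\calR(1)$ by construction (see \eqref{e:bethe_roots_decomp}). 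Hence $\calR(\zeta)$ for nearby generic $\zeta$ contains an ``exceptional'' family of vectors that has no counterpart in $\calR(1)$. For this exceptional family one has $\lambda_i \to 1$ and $r(\vec{\lambda}) \to 0$ simultaneously, so your ``continuity of factors'' argument breaks down for precisely that term; its limit is the delicate $0/0$ computation that yields $\binom{L}{N}^{-1}$ (this is essentially the content of Lemma~\ref{l:u0_residue}). The correct accounting is: $u_0(\zeta) \to 0$, the non-exceptional terms of $\calR(\zeta)$ converge to the terms of $\calR(1)$, and the exceptional term degenerates to $\binom{L}{N}^{-1}$, which then plays the role of $u_0(1)$ in the $\zeta = 1$ formula. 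Your proof omits the exceptional term entirely and mislocates the contribution $\binom{L}{N}^{-1}$ in $u_0(\zeta)$.
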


\begin{proof}
  The result follows by evaluating the contour integral \eqref{e:transition_prob_1} from \Cref{t:transition_prob}.
  First, note that the integrand vanishes for $\lambda_i = \lambda_j$ with $i\neq j$ due to the determinantal structure of the integrand, see \eqref{e:bethe_determinantal}.
  Then, we only need to consider the integral for $\vec{\lambda} \in \calQ(z)$ with pairwise distinct elements.
  Moreover, by the assumption of the Corollary, it follows that the integrand of \eqref{e:transition_prob_1} only has simple poles at the roots of $p_z(\vec{\lambda}; 1)$; \Cref{l:residue_computations} rules out higher order poles for $z\neq 0$.
  The result then follows by residue computations at the the roots of $p_z(\vec{\lambda}; 1)$ give by \Cref{l:residue_simple}.
\end{proof}

\subsection{Comparison to TASEP on ring}
\label{s:BL_transition_prob}

We show that the formula \eqref{e:transition_prob_1} for the transition probability function that we obtain is compatible to the formula obtained by Baik--Liu in \cite{baikliu2018} for the periodic TASEP, when we set our parameters to $(p, q) = (1, 0)$; see \Cref{c:method_images}.
Recall the difference bewteen the setting in \cite{baikliu2018} and our setting: we do not keep track of the winding number in our particle configuration space and Baik--Liu implicitly keep track of the winding number in their particle configuration space.
As a result, in \cite{baikliu2018}, the probability function is not periodic but, instead, the probability function does satisfy a periodic boundary condition.
More specifically, in \cite{baikliu2018}, the first and last particle are coupled to be at most a distance $L$ apart, but the particles may be anywhere on the integer lattice.
Thus, we show that our formulas are compatible by making the formula in \cite{baikliu2018} periodic or, equivalently, by taking the marginal distribution where the winding number is forgotten.
In particular, we make the formula in \cite{baikliu2018} periodic by taking sums over shifted configurations, a procedure often called the \emph{method of (mirror) images}. We give more precise details below.

Let us introduce the following function:
\begin{equation}\label{e:prob_fun_winding}
  \begin{split}
    u^{\rm BL(p,q)}(Y, X; t) := 
    & \CIpi{1} \oint_{C_{R'}} \frac{\dd z}{z} \sum_{\SumLambdas}
    \frac{ \sum_{\sigma \in S_N} A_{\sigma} \prod_{i =1}^N \left(  \lambda_{\sigma(i)}^{y_{\sigma(i)} - x_i} e^{tE(\lambda_i)} \right)}
    { \prod_{i =1}^N \left( \frac{L \lambda_i - (L-N)}{\lambda_i - 1} \right)}\\
    = & \CIpi{1} \oint_{C_{R'}} \frac{\dd z}{z} \sum_{\SumLambdas}
    \det \Bigg[ \frac{1}{L} \frac{ (1- \lambda_j^{-1})^{j-i+1}  \lambda_j^{y_j - x_i +1} e^{tE(\lambda_i)}}
    { \lambda_j - (1-\rho) } \Bigg]_{i,j=1}^N ,
  \end{split}
\end{equation}
for any $X, Y \in \bbZ^N$ so that $\mathcal{Q}(z)$ is given by \eqref{e:bethe_roots_deformed}, $E(\lambda) = p \lambda + q \lambda^{-1} -1$, and the contour is the counter-clockwise circle centered at the origin with radius $R' >0$. We note that $u^{\rm BL(1,0)}$ equals the transition probability distribution $\bbP^{\rm BL}$, from Baik--Liu in \cite[Equation (5.4)]{baikliu2018} (after a change of varaibles $\lambda \rightarrow w+1$ in the integrand). Also, define the $m^{th}$ periodic shift of for a particle configuration
\begin{equation}
  \label{e:Y_m_shift}
  Y^{m} = (y^m_i)_{i=1}^N := (y_{N-j+1} - (k+1) L  , \dots, y_{N} - (k+1)L, y_{1} - kL, \dots, y_{N-j} - kL)
\end{equation}
with $m = N k + j$ so that $k \in \bbZ$ and $0 \leq j \leq N-1$.

\begin{proposition}
  \label{p:method_images}
  Let $g(Y,X; \zeta;t)$ be the generating series for the joint probability function, given in \eqref{e:joint_gf}, for the PushASEP on a ring of length $L$, with $N$ particles so that $2N\leq L$, and initial conditions given by $Y =(y_i)_{i=1}^N \in \calX_{L,N}$.
  Then, we have the expansion
  \begin{equation}
    g(Y, X; \zeta;t) = \sum_{m=-\infty}^{\infty} u^{\rm BL(p,q)}(Y^m ,X; t) \prod_{i=1}\zeta^{x_i - y^{m}_i}
  \end{equation}
  so that $u^{\rm BL(p,q)}$ is given by \eqref{e:prob_fun_winding} and $Y^{m} = (y^m_i)_{i=1}^N$ is the $m^{th}$ periodic shift of $Y$ given by \eqref{e:Y_m_shift}.
\end{proposition}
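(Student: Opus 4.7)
The plan is to expand the 1-fold contour integral formula \eqref{e:main_1} for $g(Y,X;\zeta;t)$ into a sum over periodic shifts of $Y$ matching the claimed right-hand side. The key starting observation is the factorization
\begin{equation*}
  h_{\vec{\lambda}}(Y;\zeta)\,u_{\vec{\lambda}}(X;\zeta) = \zeta^{\sum_i(x_i-y_i)}\Big(\prod_i \lambda_i^{y_i}\Big)\sum_{\sigma\in S_N}A_\sigma(\vec{\lambda})\prod_i\lambda_{\sigma(i)}^{-x_i},
\end{equation*}
so that the integrand of \eqref{e:main_1} equals $\zeta^{\sum_i(x_i-y_i)}/p_z(\vec{\lambda};\zeta)$ times the integrand of $u^{\rm BL(p,q)}(Y,X;t)$ from \eqref{e:prob_fun_winding}. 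Since the shift identity $\sum_i y^m_i = \sum_i y_i - mL$, read off from \eqref{e:Y_m_shift}, gives $\prod_i\zeta^{x_i-y^m_i} = \zeta^{\sum_i(x_i-y_i)}\zeta^{mL}$, the proposition reduces (for generic $\zeta$, where $u_0(\zeta)=0$ by \eqref{e:intro_finite_sum}) to showing
\begin{equation*}
  \frac{1}{2\pi\icomp}\left(\oint_{C_{R'}}-\oint_{C_{\epsilon'}}\right)\frac{\mathrm{d}z}{z}\sum_{\vec{\lambda}\in\calQ(z)^N}\frac{\prod_i\lambda_i^{y_i}\,u_{\vec{\lambda}}(X;1)\,e^{tE(\vec{\lambda})}}{p_z(\vec{\lambda};\zeta)\prod_i\lambda_i q'_z(\lambda_i)} = \sum_{m\in\bbZ}\zeta^{mL}\tilde u^{\rm BL(p,q)}(Y^m,X;t),
\end{equation*}
where $\tilde u^{\rm BL(p,q)}$ denotes $u^{\rm BL(p,q)}$ with the scalar weight $\zeta^{\sum_i(x_i-y_i)}$ removed.

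To prove this reduced identity, I would expand $1/p_z(\vec{\lambda};\zeta) = \bigl[1+(-1)^N\zeta^L z^{-L}\prod_i(1-\lambda_i^{-1})\bigr]^{-1}$ as a geometric series. On the large circle $|z|=R'$ the second term is small and
\begin{equation*}
  \frac{1}{p_z(\vec{\lambda};\zeta)} = \sum_{k=0}^\infty\Big[-(-1)^N\zeta^L z^{-L}\prod_i(1-\lambda_i^{-1})\Big]^k,
\end{equation*}
whereas on the small circle $|z|=\epsilon'$ the second term dominates and the complementary expansion with indices $k\leq -1$ holds. Using the Bethe-curve identity $z^L = \lambda_i^L(1-\lambda_i^{-1})^N$ valid for each $\lambda_i\in\calQ(z)$, the factor $z^{-kL}\prod_i(1-\lambda_i^{-1})^k$ can be rewritten in a form that corresponds to a uniform shift $y_i\mapsto y_i-kL$ of the exponents in $\prod_i\lambda_i^{y_i}$. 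Combining the contributions from both contours produces a sum over $k\in\bbZ$, and for each $k$ the contribution naturally splits into sub-terms indexed by $j\in\{0,\ldots,N-1\}$, each corresponding to a cyclic reindexing of the ordered tuple $\vec{\lambda}$; these sub-terms recover the cyclic part of the shift $Y\mapsto Y^{Nk+j}$ and match the integrand of $u^{\rm BL(p,q)}(Y^{Nk+j},X;t)$ with the correct weight $\zeta^{(Nk+j)L}$.

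The main obstacle is the algebraic bookkeeping: the cyclic reindexing $\tau_j$ of $\vec{\lambda}$ introduces a sign $(-1)^{j(N-1)}$, and this sign must cancel against the corresponding sign changes in the amplitude coefficients $A_\sigma(\vec{\lambda})$, so that no spurious sign remains in the final identification. One must also verify absolute convergence of the $k$-series uniformly over each contour, which is guaranteed by the contour conditions \eqref{e:conditions} that control $|1-\lambda_i^{-1}|$ on the Bethe curve, and then justify the interchange of summation and integration by Fubini. The exceptional case $\zeta^L=1$ can be recovered by continuity in $\zeta$, noting that $g(Y,X;\zeta;t)$ extends continuously to $\zeta^L=1$ after reinstating the $u_0(\zeta)$ contribution.
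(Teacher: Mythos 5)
The approach is in the same spirit as the paper's proof (geometric series expansion of $1/p_z$, using the decoupled Bethe equations $z^L = \lambda_i^L(1-\lambda_i^{-1})^N$, and cyclic reindexing to produce the shifts $Y^m$), but there is a genuine gap in the convergence analysis on the small contour that you need to address.

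You assert that on $|z|=\epsilon'$ ``the second term dominates and the complementary expansion with indices $k\le -1$ holds.'' This is false in the case where \emph{all} $\lambda_i\in\calQ_1(z)$. Recall from \eqref{e:q_small_asymptotic} that $|1-\lambda_i^{-1}|\sim|z|^d$ for $\lambda_i\in\calQ_1(z)$, so with all $N$ roots in $\calQ_1(z)$ one has $|\zeta^L z^{-L}\prod_i(1-\lambda_i^{-1})|\sim|z|^{-L+Nd}=|z|^0\approx 1$. Neither the $k\ge 0$ nor the $k\le -1$ geometric series converges absolutely there, and term-by-term integration is unjustified for those summands. This is precisely the case the paper is forced to treat with care: rather than simply dropping $u_0(\zeta)$ for generic $\zeta$, the paper substitutes the alternative expression \eqref{e:u0_alt} (that $u_0$ equals the $\calQ_1(z)^N$ portion of the $C_{\epsilon'}$ integral) into \eqref{e:main_1}, which produces a \emph{restricted} sum excluding the $\calQ_1(z)^N$ case on the small contour, applies the two geometric expansions (\Cref{l:coupling_expansion}) only on the unambiguous cases, and then shows that the terms one wishes to reinsert vanish individually after expansion (an argument referenced to the passage between \eqref{e:Q_pos2} and \eqref{e:Q_pos3}). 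Your plan to ``drop $u_0(\zeta)$ for generic $\zeta$'' does not give you the exclusion of $\calQ_1(z)^N$ from the small-contour sum, so the expansion step remains invalid for those terms; knowing that $u_0(\zeta)=0$ does not justify interchanging an asymptotically balanced geometric series with the $z$-integral.

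A second omission, more presentational: after the two expansions, the paper explicitly deforms $C_{\epsilon'}$ to $C_{R'}$ (legitimate because after expansion the integrand has no $p_z$-poles, by \Cref{l:well_def_hol}), which is what actually merges the $k\ge 0$ and $k\le -1$ contributions into a single $k\in\bbZ$ sum against a single contour. Your proposal says the contributions from both contours combine into a $\bbZ$-sum but does not explain how the two different contours are reconciled, and without that step the identification with $u^{\mathrm{BL}(p,q)}(Y^m,X;t)$ cannot be made termwise. Finally, the ``algebraic bookkeeping'' you flag is resolved in the paper via the explicit identity $A_\tau^{-1}A_\sigma = \tau\cdot A_{\tau^{-1}\sigma}$ (\Cref{l:amp_product}), which you should cite or reprove since it is what makes the cyclic-shift relabeling go through without spurious signs; \Cref{l:coupling_expansion} already tracks the sign $(-1)^{(N-1)(Nk+2m)}$ as even, so no extra cancellation argument is needed once that lemma is used.
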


In \Cref{p:method_images}, we give an additional series expansion of the generating series for the joint distribution; cf. \eqref{e:joint_gf}. The proof of this result requires some technical lemmas based on series expansions of the integrand for contour integral formula of $g(Y, X; \zeta;t)$ given by \eqref{e:main_1}. We provide the lemmas, with proofs, and the proof of this result in \Cref{s:prob_fun_expansion}.

Now, consider the PushASEP  with $N$ particles where the first and last particles are conditioned to be at most distance $L$ apart, i.e.~$x_N < x_1 +L$.The configuration space is given by the configuration space of Baik--Liu in \cite{baikliu2018}
\begin{equation}
  \label{e:config_space_BL}
  \calX^{\rm BL}_{L, N} = \{ (x_1, \dots, x_N) \in \bbZ^N \mid x_1 < x_2 < \dots < x_N < x_1 + L \}.
\end{equation}
We call this the \emph{periodic PushASEP (with winding)} and the denote the corresponding transition probability distribution by $\bbP^{\rm BL(p,q)}$, where $p, q=1-p \in [0,1]$ are the right and left jump rates.
We give a formula for the transition probability distribution in the following result.

\begin{corollary}\label{c:prob_fun_winding}
  The transition probability of the periodic PushASEP with winding, as described above, is given by \eqref{e:prob_fun_winding}. That is,
  \begin{equation}\label{e:prob_PPASEP}
    \bbP^{\rm BL (p,q)}_{Y}(X(t) = X) = u^{\rm BL(p,q)}(Y, X; t).
  \end{equation}
\end{corollary}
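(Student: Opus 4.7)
My plan is to extract Corollary \ref{c:prob_fun_winding} from Proposition \ref{p:method_images} by matching coefficients in the $\zeta$-expansion of the joint generating series, reducing the claim to a single algebraic identity for $u^{\rm BL(p,q)}$ that I would verify using the decoupled Bethe equations.

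I would first identify $\calX^{\rm BL}_{L,N}$ with pairs (ring configuration, net current). Every $\tilde X \in \calX^{\rm BL}_{L,N}$ has a unique representation $\tilde X = X^m$ with $X \in \calX_{L,N}$ and $m \in \bbZ$ via \eqref{e:Y_m_shift}, with associated current $q_m := \sum_i x^m_i - \sum_i y_i = \sum_i x_i - mL - \sum_i y_i$. Since the Baik--Liu dynamics on $\calX^{\rm BL}_{L,N}$ and the ring dynamics paired with $Q(t)$ agree under this bijection, for $Y \in \calX_{L,N}$ we have $\bbP^{\rm BL(p,q)}_{Y}(\tilde X(t) = X^m) = \bbP_{Y}(X(t)=X, Q(t)=q_m)$. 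Proposition \ref{p:method_images} writes $g(Y,X;\zeta;t) = \sum_m u^{\rm BL(p,q)}(Y^m, X; t)\,\zeta^{\sum_i x_i - \sum_i y_i + mL}$ (using $\sum_i y^m_i = \sum_i y_i - mL$); matching against $g(Y,X;\zeta;t) = \sum_q \zeta^q\,\bbP_Y(X(t)=X, Q(t)=q)$ and specializing to $q = q_m$ gives $\bbP^{\rm BL(p,q)}_{Y}(X^m) = u^{\rm BL(p,q)}(Y^{-m}, X; t)$ for every $m \in \bbZ$. Thus the corollary reduces to the algebraic identity
\[
  u^{\rm BL(p,q)}(Y^{-m}, X; t) = u^{\rm BL(p,q)}(Y, X^m; t), \qquad m \in \bbZ.
\]

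The main obstacle is this identity, which I would establish via the simultaneous cyclic-shift invariance $u^{\rm BL(p,q)}(R Y, R X; t) = u^{\rm BL(p,q)}(Y, X; t)$, where $R(z_1,\ldots,z_N) := (z_2,\ldots,z_N, z_1+L)$: iterating gives the required identity because $R^m Y = Y^{-m}$ and $R^{-m}X = X^m$. Applied to the integrand of \eqref{e:prob_fun_winding}, the shift $R$ picks up factors $\lambda_N^L$ (from $(RY)_N = y_1+L$) and $\lambda_{\sigma(N)}^{-L}$ (from $(RX)_N = x_1+L$); both are absorbed by the decoupled Bethe equation $\lambda_i^L = z^L(1-\lambda_i^{-1})^{-N}$. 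What remains is a cyclic reshuffling of indices: the $Y$-product becomes $\prod_k \lambda_k^{y_{s^{-1}(k)}}$ (with $s$ the $N$-cycle $s(j)=j+1$ for $j<N$, $s(N)=1$) and the amplitude $A_\sigma$ becomes $A_{\sigma\circ s^{-1}}$ (up to an explicit sign and a $\lambda_?$-factor handled by Bethe). Since $\sum_{\vec\lambda \in \calQ(z)^N}$ runs over ordered tuples of distinct elements of the set $\calQ(z)$, the further relabeling $\vec\lambda \mapsto \vec\lambda \circ s$ is legal; it returns the $Y$-product to $\prod_k \lambda_k^{y_k}$ and the amplitude sum to its original form, with the surviving factors $(1-\lambda_1^{-1})^{\pm N}$ and signs $(-1)^{N-1}$ cancelling in pairs. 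A clean consistency check: when $m$ is a multiple of $N$, $R^m$ is pure translation by $L$, and the identity reduces to the manifest translation invariance of the exponents $y_j - x_i$ in \eqref{e:prob_fun_winding}. Extension from $Y \in \calX_{L,N}$ to arbitrary $Y \in \calX^{\rm BL}_{L,N}$ follows by translation invariance of both sides.
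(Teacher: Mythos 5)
Your argument is correct, but it takes a slightly longer route than the paper's. Both proofs rest on the same two pillars: the identification $\left(\calX_{L,N}^2\times\bbZ\right)_s \cong \calX^{\rm BL}_{L,N}\times\calX_{L,N}$ and coefficient matching in the $\zeta$-expansion from \Cref{p:method_images}. The difference is where you put the winding degree of freedom. The paper realizes the current $Q$ as a shift of the \emph{initial} condition, $\bbP^{\rm BL(p,q)}_{Y^m}(X) = \bbP_Y(X,Q)$, which lines up exactly with the form of the expansion $g = \sum_m u^{\rm BL(p,q)}(Y^m,X;t)\,\zeta^{\sum_i(x_i-y_i^m)}$; matching coefficients then yields $u^{\rm BL(p,q)}(Y^m,X;t) = \bbP^{\rm BL(p,q)}_{Y^m}(X)$ directly, and translation invariance of both sides finishes. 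You instead realize $Q$ as a shift of the \emph{final} configuration, $\bbP^{\rm BL(p,q)}_Y(X^m)=\bbP_Y(X,q_m)$, which after matching gives $\bbP^{\rm BL(p,q)}_Y(X^m)=u^{\rm BL(p,q)}(Y^{-m},X;t)$ and leaves you one algebraic identity short, namely the simultaneous cyclic-shift invariance $u^{\rm BL(p,q)}(RY,RX;t)=u^{\rm BL(p,q)}(Y,X;t)$. Your sketch of that identity is sound: writing $\tau = s\sigma s^{-1}$ and relabeling $\vec\lambda\mapsto\vec\lambda\circ s$, the extra factors $\lambda_N^L\lambda_{\sigma(N)}^{-L}$ from the boundary entries $(RY)_N,(RX)_N$ are turned into $(1-\lambda_{\sigma(N)}^{-1})^N(1-\lambda_N^{-1})^{-N}$ by \eqref{e:dBE1}, which is exactly the ratio $A_\tau(\vec\lambda\circ s^{-1})/A_\sigma(\vec\lambda)$, and the remaining symmetric factors in the integrand are unchanged. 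So your approach is correct and establishes, as a byproduct, a cyclic invariance of $u^{\rm BL(p,q)}$ that the paper does not state; the cost is that this lemma requires its own (non-trivial) verification, which the paper's bookkeeping choice renders unnecessary.
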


\begin{proof}
  Without loss of generality, we may assume that $X \in \mathcal{X}_{L,N}$ due to translation invariance on the integer lattice. Now, define the subspace of configurations
  \begin{equation}
    \left( \calX_{L,N}^2 \times \mathbb{Z} \right)_s : = \left\{(Y,X, Q) \in \calX_{L,N}^2 \times \mathbb{Z} \mid Q = \sum_{i=1}^N x_i - y_i^{\ell} \text{ for some } \ell \in \mathbb{Z}\right\},
  \end{equation}
  where $Y^{\ell} = (y_i^{\ell})_{i=1}^N$ is $\ell^{th}$ periodic shift given by \eqref{e:Y_m_shift}. Note that $\left( \calX_{L,N}^2 \times \mathbb{Z} \right)_s \subset \calX_{L,N}^2 \times \mathbb{Z}$ is the (time $t>0$) support of the joint probability distribution for the PushASEP on the ring, meaning that the joint probability distribution for the PushASEP on the ring is non-zero if and only if the configuration is in $\left( \calX_{L,N}^2 \times \mathbb{Z} \right)_s$ for $t>0$.
  Then, we have a bijection
  \begin{equation}\label{e:bijection_push}
    \left\{
      \begin{array}{ccc}
        \left( \calX_{L,N}^2 \times \mathbb{Z} \right)_s & \longrightarrow & \calX^{\rm BL}_{L, N} \times \calX_{L, N} \\
        (Y, X, Q) & \mapsto & (Y^{m(Q)}, X)
      \end{array}\right.,
  \end{equation}
  The function $m(Q)$ is defined so that
  \begin{equation}
    Q = \sum_{i=1}^N x_i - y^m_i 
  \end{equation}
  where $m = m(Q)$ and $Y^{m} = (y^m_i)_{i=1}^N$ is the $m^{th}$ periodic shift. The bijection \eqref{e:bijection_push} induces an identification between the PushASEP on the ring and the periodic PushASEP with winding. Then, it follows that
  \begin{equation}
    \bbP^{\rm BL(p,q)}_{Y^m}(X(t) = X) = \bbP_{Y}(X(t) = X, Q(t) =Q),
  \end{equation}
  where $\bbP_{Y}$ is the transition probability of the PushASEP on a ring. Now, by taking the definition of $g(Y, X; \zeta; t)$ (see \eqref{e:joint_gf}), \Cref{p:method_images} and \eqref{e:prob_PPASEP}, we have
  \begin{equation}
    \sum_{m=-\infty}^{\infty} u^{\rm BL(p,q)}(Y^m ,X; t) \prod_{i=1}^N\zeta^{x_i - y^{m}_i}
    = \sum_{m=-\infty}^{\infty} \bbP^{\rm BL(p,q)}_{Y^m}(X(t) =X) \prod_{i=1}^N \zeta^{x_i - y^{m}_i}.
  \end{equation}
  Thus, the result follows by matching coefficients of both series expansions.
\end{proof}

Finally, we establish the method of images for the PushASEP on the ring. In particular, take a formula for the PushASEP with winding number, e.g.~the transition probability distribution. Then, we make the function periodic by taking the sum over all the configurations where the particles are shifted to mirror images while maintaining relative order. We call this the \emph{method of (mirror) images}, based on a method to solve differential equations with certain symmetric boundary conditions. Then, in the next result, we show that the method of images indeed gives the transition probability distribution of the PushASEP on the ring from the transition probability distribution of the periodic PushASEP (with winding). 

\begin{corollary}\label{c:method_images}
  Let $\bbP_Y(X(t)=X)$ be the transition probability distribution for PushASEP on the ring of length $L$ with (no winding number,) $N$ particles, and initial conditions $Y= (y_i)_{i=1}^N \in \calX_{L,N}$ at time $t \in \mathbb{R}_{\geq 0}$ so that $X= (x_i)_{i=1}^N \in \calX_{L,N}$. Similarly, let $\bbP^{\rm BL (p,q)}$ be the transition probability distribution of the PushASEP with winding, as in \cite{baikliu2018} when $(p,q)=(1,0)$.  Then, we have
  \begin{equation}
    \bbP_Y(X(t)=X) = \sum_{m = -\infty}^{\infty} \bbP^{\rm BL(p,q)}_{Y^m}(X(t) =X)
  \end{equation}
  so that $Y^{m} = (y^m_i)_{i=1}^N $ is the $m^{th}$ periodic shift of $Y$ given by \eqref{e:Y_m_shift}.
\end{corollary}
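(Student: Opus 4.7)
The plan is to specialize Proposition \ref{p:method_images} at $\zeta = 1$ and then invoke Corollary \ref{c:prob_fun_winding} term by term. On the left side of the identity in Proposition \ref{p:method_images}, setting $\zeta = 1$ collapses the generating series defining $g(Y,X;\zeta;t)$ in \eqref{e:joint_gf} to the marginal
\[
g(Y, X; 1; t) = \sum_{q \in \bbZ} \bbP_Y(X(t)=X, Q(t)=q) = \bbP_Y(X(t)=X),
\]
which is exactly the quantity we want to express. On the right side, the factor $\prod_{i=1}^N \zeta^{x_i - y_i^m}$ becomes $1$ uniformly in $m$, so the sum reduces to $\sum_{m \in \bbZ} u^{\rm BL(p,q)}(Y^m, X; t)$.

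Then I would apply Corollary \ref{c:prob_fun_winding} to each summand: since $Y^m \in \calX_{L,N}^{\rm BL}$ for every $m \in \bbZ$ (the shift preserves the ordering and the spacing constraint defining \eqref{e:config_space_BL}), we have $u^{\rm BL(p,q)}(Y^m, X; t) = \bbP^{\rm BL(p,q)}_{Y^m}(X(t)=X)$, and the claimed identity follows immediately.

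The only real point of care is continuity/convergence as $\zeta \to 1$. Proposition \ref{p:method_images} gives an expansion as a series over $m \in \bbZ$; to substitute $\zeta = 1$ we need to know that the series converges (absolutely) at $\zeta = 1$, and that the passage commutes with the limit. But the left side is already a well-defined probability in $[0,1]$ for $\zeta = 1$, and the terms on the right are non-negative (they are transition probabilities of the winding process), so the series on the right has non-negative summands bounded by the left side; this yields absolute convergence and legitimizes the term-by-term evaluation. Alternatively, one may view the series in Proposition \ref{p:method_images} as the Fourier series in $\zeta$ for a function known to be continuous at $\zeta = 1$ and apply Abel summation. Either way, the remainder of the argument is purely formal. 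The main conceptual content is already contained in Proposition \ref{p:method_images} and Corollary \ref{c:prob_fun_winding}; this corollary is essentially the specialization $\zeta = 1$ that makes the method-of-images interpretation transparent.
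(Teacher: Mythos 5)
Your proposal matches the paper's proof in essence: both specialize \Cref{p:method_images} by letting $\zeta \to 1$, invoke \Cref{c:prob_fun_winding} (i.e.~\eqref{e:prob_PPASEP}) to identify each summand $u^{\rm BL(p,q)}(Y^m, X; t)$ with the winding transition probability, and appeal to absolute convergence to justify the termwise passage to the limit. The only difference is that the paper asserts absolute convergence without comment, whereas you supply a justification — note, though, that your bound "non-negative summands bounded by the left side" is mildly circular before the identity is proved; the clean version is that $|\zeta^{\sum_i(x_i - y_i^m)}| = 1$ on the unit circle, so the termwise absolute values are independent of $\zeta$, and since they are non-negative, convergence of the series at any fixed $\zeta \in \bbU$ (which \Cref{p:method_images} already asserts) already forces absolute convergence, after which dominated convergence applies.
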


\begin{proof}
  This result follows from \Cref{p:method_images}, \Cref{c:prob_fun_winding}, and the definition of $\QXgf$ given by\eqref{e:joint_gf}. 
  
  Recall that we have
  \begin{equation}
    \bbP_Y(X(t)= X) = \lim_{\zeta \rightarrow 1} \sum_{q \in \bbZ} \zeta^q \cdot \bbP_Y( X(t) = X, Q(t) = q) = \lim_{\zeta \rightarrow 1} \QXgf
  \end{equation}
  by the definition of $\QXgf$. Then, the result follows by taking the limit $\zeta \rightarrow 1$ for the result in \Cref{p:method_images} and taking the identification given by \eqref{e:prob_PPASEP}. Additionally, note that the summation is absolutely convergent, meaning that we can take the limit inside the summation.
\end{proof}

\subsection{Current}
\label{s:current_results}

In this section, we consider the local current $Q_x(t)$ and the global current $Q(t)$, introduced in detail in \eqref{e:local_current_intro} and \eqref{e:global_current}. In \Cref{t:main}, we have already given a formula for the generating series of the joint distribution of the particle configuration and global current for the PushASEP on the ring. In the next result, \Cref{l:global_local}, we show in \eqref{e:local_current} that the global and the local current are equal, up to a bounded shift. This result, along with \Cref{t:main}, leads to an exact formula for the local current $Q_{L-1}(t)$.

\begin{lemma}\label{l:global_local}
  At time $t \geq 0$, conditioned on the configuration $X(t) = X := (x_1, \cdots, x_N) \in \calX_{L, N}$, the local current $Q_j(t)$, with $0 \leq j \leq L-1$, and the total current $Q(t)$ have the following relation,
  \begin{equation}
    \label{e:local_current}
    Q_{j}(t) = \frac{Q(t)}{L} - \frac{1}{L} \sum_{i=1}^N (  [x_i]_j - [y_i]_j  ),
  \end{equation}
  where we define $[x_i]_j$ to be an integer between $0$ and $L-1$ which is the position $x_i$ in a \emph{shifted} ring where the origin is set to be at $j$.
  In other words,
  \begin{equation*}
    [x_i]_j = \begin{cases}
      x_i - (j+1), &  \mbox{ if } x_i \geq j+1, \\
      x_i - (j+1) + L,  & \mbox{ if } x_i \leq j. 
    \end{cases}
  \end{equation*}
  Equivalently, \eqref{e:local_current} is given by
  \begin{equation}
    \label{e:local_current2}
    Q_{j}(t) = \frac{Q(t)}{L} - \frac{1}{L} \sum_{i=1}^N (  x_i - y_i  ) - \# \{ 1 \leq i \leq N : x_i \leq j \} + \# \{ 1 \leq i \leq N : y_i \leq j \}.
  \end{equation}
  In particular, for $j = L-1$, one has
  \begin{equation}
    \label{e:local_current_L-1}
    Q_{L-1}(t) = \frac{Q(t)}{L} - \frac{1}{L} \sum_{i=1}^N (  x_i - y_i  ).
  \end{equation}
  
\end{lemma}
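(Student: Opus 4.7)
The natural approach is through a discrete conservation law at each site. For each $j \in \bbZ/L\bbZ$, the change in occupation $\eta_j(t) - \eta_j(0)$ must equal the net flow into site $j$ across the two adjacent edges, giving
\[
  Q_{j-1}(t) - Q_j(t) = \eta_j(t) - \eta_j(0).
\]
For PushASEP this needs a brief check because push dynamics can relocate many particles in a single event, but the local current at each edge still only records $\pm 1$ per crossing of that edge, so the site-wise balance reduces to this nearest-neighbour identity regardless of whether a given crossing came from an isolated right-jump or a cascade of leftward pushes.

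Iterating the identity from $j = 0$ gives
\[
  Q_j(t) = Q_0(t) + \sum_{k=1}^{j} \bigl(\eta_k(0) - \eta_k(t)\bigr).
\]
Summing over $j \in \{0, \dots, L-1\}$ and using $\sum_j Q_j(t) = Q(t)$, conservation of particle number $\sum_k \eta_k(t) = N$, and $\sum_k k\,\eta_k(t) = \sum_i x_i$, a short rearrangement lets me solve for $Q_0(t)$ in terms of $Q(t)$, the boundary occupation $\eta_0$, and $\sum_i(x_i - y_i)$. Substituting this expression back into the iterated formula, the $\eta_0$ contribution is absorbed by extending the summation range to start at $k=0$, producing
\[
  Q_j(t) = \frac{Q(t)}{L} - \frac{1}{L} \sum_{i=1}^{N} (x_i - y_i) + \#\{i : y_i \leq j\} - \#\{i : x_i \leq j\},
\]
which is precisely \eqref{e:local_current2}.

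The equivalent form \eqref{e:local_current} is then purely algebraic: using $[x_i]_j = x_i - (j+1) + L\,\mathds{1}(x_i \leq j)$, one has $\tfrac{1}{L}\sum_i([x_i]_j - [y_i]_j) = \tfrac{1}{L}\sum_i(x_i - y_i) + \#\{i : x_i \leq j\} - \#\{i : y_i \leq j\}$, and substituting this into \eqref{e:local_current2} yields \eqref{e:local_current}. Finally, \eqref{e:local_current_L-1} is the specialization $j = L-1$, where both indicator cardinalities equal $N$ and cancel. The only genuinely non-routine step is verifying the continuity equation for the push mechanism; everything after that is bookkeeping.
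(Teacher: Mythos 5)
Your proof is correct, and it takes a genuinely different route from the paper. The paper tags particles (so the particle starting at $y_i$ ends at $x_{\tau(i)}$), establishes a pointwise identity relating $Q_{L-1}(t)-Q_j(t)$ to indicator functions of the start and end positions using path-independence of the crossing count modulo full revolutions, then sums over $j$ to recover $Q_{L-1}$ from $Q(t)$; it proves only the $j=L-1$ case explicitly and declares the rest similar. Your argument instead starts from the discrete site-wise continuity equation $Q_{j-1}(t)-Q_j(t)=\eta_j(t)-\eta_j(0)$, iterates, and sums over $j$ to solve for $Q_0(t)$, from which the general-$j$ formula \eqref{e:local_current2} drops out directly. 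Your approach is both more elementary (it avoids the tagging bijection and the somewhat delicate path-independence bookkeeping) and more uniform (no separate treatment for general $j$ versus $j=L-1$). The one step that deserves the care you flag — that the continuity equation survives the push cascade — is clean: each leftward push step crosses exactly one edge and is recorded as $-1$ on exactly that edge's local current, so a single push event contributes the correct $\pm 1$ balance to every site it sweeps through. The algebra translating between \eqref{e:local_current}, \eqref{e:local_current2}, and \eqref{e:local_current_L-1} via $[x_i]_j = x_i-(j+1)+L\,\mathds{1}(x_i\le j)$ is exactly as you state.
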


\begin{proof}
  We give the proof for the case $j=L-1$. Other cases work similarly.
  
  First, we tag the particles so that the particle starting at position $y_{i}$ ends up at position $x_{\tau(i)}$ at time $t \geq 0$, for some $\tau: \{1, \dots, N\} \rightarrow \{ 1, \dots, N \}$. 
  We now establish a relation between local currents $Q_{L-1}(t)$ and $Q_j(t)$, for any $0 \leq j \leq L-1$.
  Note that the difference in the local current for two positions on the ring is at most $N$, since the particles are moving on a ring.
  For instance, consider the local current at positions $j$ and $L-1$.
  If a particle makes a full cycle around the ring, the difference for the local current for each position will remain the same, i.e.~$Q_{L-1}(t)- Q_j(t)$ will be the same.
  On the other hand, if a particle does not go a full cycle around the ring, the difference in the total current for positions $j$ and $L-1$ may differ by $\pm1$, which depends on the starting and final positions of particle.
  Then, we have
  \begin{equation}
    Q_{L-1}(t) = Q_j(t) + \sum_{i=1}^N \left( \mathds{1}(0 < x_{\tau(i)} < j \leq y_i \leq L-1) - \mathds{1}(0 < y_i < j \leq x_{\tau(i)} \leq L-1)\right).
  \end{equation}
  Now, recall that the total current is equal to the sum of all local currents.
  Then, we have
  \begin{equation}
    \begin{split}
      L\, Q_{L-1}(t) & = \sum_{j=0}^{L-1} Q_j(t) + \sum_{j=0}^{L-1} \sum_{i=1}^N \left( \mathds{1}(0 < x_{\tau(i)} < j \leq y_i \leq L-1) - \mathds{1}(0 < y_i < j \leq x_{\tau(i)} \leq L-1)\right) \\
      & = Q(t) + \sum_{i=1}^N \sum_{j=0}^{L-1}\left( \mathds{1}(0 < x_{\tau(i)} < j \leq y_i \leq L-1) - \mathds{1}(0 < y_i < j \leq x_{\tau(i)} \leq L-1)\right) \\
      & = Q(t) + \sum_{i=1}^N \left( (y_i -x_{\tau(i)}) \mathds{1}(y_i < x_{\tau(i)}) - (x_{\tau(i)} - y_i)\mathds{1}(y_i < x_{\tau(i)})\right)\\
      & = Q(t) - \sum_{i=1}^N (x_{\tau(i)} - y_i)\\
      & = Q(t) - \sum_{i=1}^N (x_{i} - y_i). 
    \end{split}
  \end{equation}
  Note that the result is independent of the tag $\tau$.
  This establishes the result. 
\end{proof}

Define the generating function $\QjXgf{j}{}$ for the joint distribution of the particle configuration and the local current at position $i \in \{0, 1, \dots, L-1\}$ for the PushASEP on the ring as follows,
\begin{align}
  \label{e:local_joint_gf}
  \QjXgf{j}{} & = \sum_{q \in \bbZ} \zeta^q \cdot \bbP_Y( X(t) = X, Q_j(t) = q) \quad \mbox{ and} \\
  \label{e:local_current_gf}
  \Qjgf{j}{} & = \sum_{q \in \bbZ} \zeta^q \cdot \bbP_Y( Q_j(t) = q) = \sum_{X \in \calX_{L, N}} \QjXgf{j}{}.
\end{align}

\begin{corollary}
  \label{c:QjXgf_contour_integral}
  Take the same conditions and the same notation as in \Cref{t:main}.
  Then, we have the following relation,
  \begin{equation}\label{e:cont_local_x}
    \QjXgf{L-1}{L} = \zeta^{- \sum (x_i - y_i)} \QXgf,
  \end{equation}
  between $g^{(L-1)}(X; \zeta^L; t)$, the generating function for the local current at position $L-1$ with configuration $X \in \mathcal{X}_{L,N}$ given by \eqref{e:local_joint_gf}, and $\QXgf$, the generating function for the global current with configuration $X \in \mathcal{X}_{L,N}$ is given by \eqref{e:joint_gf}.
\end{corollary}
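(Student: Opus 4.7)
The plan is to derive the identity directly from the definition of the generating function together with the deterministic relation between global and local current established in Lemma \ref{l:global_local}. Since the target identity holds pointwise on the event $\{X(t) = X\}$, this is essentially a bookkeeping computation.

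First I would rewrite the left-hand side of \eqref{e:cont_local_x} using the definition \eqref{e:local_joint_gf} with $\zeta$ replaced by $\zeta^L$:
\[
\QjXgf{L-1}{L} = \sum_{q \in \bbZ} \zeta^{Lq} \, \bbP_Y(X(t) = X, Q_{L-1}(t) = q) = \bbE_Y\bigl[\zeta^{L\, Q_{L-1}(t)} \mathds{1}(X(t) = X)\bigr].
\]
Next I invoke \eqref{e:local_current_L-1} from Lemma \ref{l:global_local}, which gives, deterministically on the event $\{X(t) = X\}$, the identity $L\, Q_{L-1}(t) = Q(t) - \sum_{i=1}^N (x_i - y_i)$. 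Substituting this into the expectation and pulling the (now deterministic, since we have conditioned on $X(t) = X$) factor $\zeta^{-\sum_{i=1}^N (x_i - y_i)}$ out of the expectation yields
\[
\QjXgf{L-1}{L} = \zeta^{-\sum_{i=1}^N (x_i - y_i)} \bbE_Y\bigl[\zeta^{Q(t)} \mathds{1}(X(t) = X)\bigr] = \zeta^{-\sum_{i=1}^N (x_i - y_i)} \QXgf,
\]
where the last equality is just the definition \eqref{e:joint_gf} of $\QXgf$.

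There is no real obstacle here; the only thing to be slightly careful about is that the quantity $\sum_{i=1}^N (x_i - y_i)$ depends only on $X$ and $Y$, so it is indeed a constant on the event $\{X(t) = X\}$ and can legitimately be pulled out of the expectation. Note that the corollary does \emph{not} require the assumption \eqref{e:local_current_L-1}; once we have Lemma \ref{l:global_local}, the result follows immediately without needing any of the contour integral machinery from Theorem \ref{t:main}, which explains why the statement can be presented as a corollary. The result could in principle be stated for any $0 \leq j \leq L-1$ by using \eqref{e:local_current2} instead of \eqref{e:local_current_L-1}, picking up additional indicator corrections, but the cleanest form is for $j = L-1$.
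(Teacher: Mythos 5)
Your proof is correct and follows essentially the same route as the paper: express the generating function as an expectation restricted to $\{X(t)=X\}$, substitute the deterministic identity $L\,Q_{L-1}(t) = Q(t) - \sum_i(x_i - y_i)$ from Lemma~\ref{l:global_local}, and pull out the resulting constant factor. Your observation that the contour-integral machinery of Theorem~\ref{t:main} is not actually needed here is also accurate; the paper's phrasing "take the same conditions and notation as in Theorem~\ref{t:main}" is for notational convenience only.
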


\begin{proof}
  The result follows from \Cref{l:global_local}.
  Consider the generating function of the global current \eqref{e:joint_gf} and the local current \eqref{e:local_joint_gf}, together with the identity \eqref{e:local_current_L-1}. Then, we have
  \begin{align*}
    \QjXgf{L-1}{L} & = \bbE_Y[ \zeta^{LQ_{L-1}(t)} \mid X(t) = X] \cdot \bbP_Y( X(t) = X ) \\
                   & = \bbE_Y[ \zeta^{Q(t) - \sum (x_i - y_i)} \mid X(t) = X] \cdot \bbP_Y( X(t) = X ) \\
                   & = \zeta^{- \sum (x_i - y_i)} \QXgf.
  \end{align*}
  
\end{proof}

In the next result, we are able to combine the formulas from \Cref{t:main} and \Cref{c:QjXgf_contour_integral} to obtain a formula for the cumulative distribution function for the local current at position $L-1$.
The proof of the following result is given in \Cref{s:cdf_proof}.

\begin{proposition}
  \label{p:current_cdf}
  The cumulative distribution function for the local current at position $L-1$ is given by the following two formulas. When $Q \in \bbZ$, we have
  \begin{align}
    \label{e:current_cdf1}
    \bbP_Y(Q_{L-1}(t) \geq Q)
    = (-1)^{(N+1)Q}\oint_{\epsilon'} \frac{\dd z}{z^{1 +Q L }}
    \sum_{\SumLambdas}
    \det \Bigg[ \frac{(1 -\lambda_j^{-1})^{Q+j-i} \lambda_j^{y_j+1}e^{t E(\lambda_j)}}{L \lambda_j -(L-N)} \Bigg]_{i,j=1}^N,
  \end{align}
  with $\mathcal{Q}(z)$ given by \eqref{e:bethe_roots_deformed}.
  Moreover, if $Q$ is a multiple of $N$, then we also have the following alternative expression
  \begin{align}
    \label{e:current_cdf2}
    \bbP_Y(Q_{L-1}(t) \geq Q)
    = (-1)^{{N-1 \choose 2}Q} \oint_{C_{\epsilon'}} \frac{\dd z}{z}
    \sum_{\SumLambdas}
    \det \Bigg[ \frac{1}{L} \frac{ (1- \lambda_j^{-1})^{j-i}  \lambda_j^{y_j +1 - Q d} e^{ t E(\lambda_j) }}{ \lambda_j - (1-\rho) } \Bigg]_{i,j=1}^N.
  \end{align}
\end{proposition}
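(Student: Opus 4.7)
The plan is to combine the method-of-images identity from \Cref{c:method_images} with the current-configuration relation from \Cref{l:global_local}, reducing the cdf of $Q_{L-1}(t)$ to a double sum --- over shifted initial conditions $Y^m$ with $m \geq Q$, and over final configurations $X \in \calX_{L,N}$ --- of the Baik--Liu transition kernel \eqref{e:prob_fun_winding}, and then collapsing the double sum inside the $z$-integral.

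First I would upgrade the bijection \eqref{e:bijection_push} used in the proof of \Cref{c:method_images} to a joint identification for the local current. Since $\sum_i y_i^m = \sum_i y_i - Lm$ (from \eqref{e:Y_m_shift}), the identity $Q = \sum_i(x_i - y_i^m)$ rewrites as $Q = \sum_i(x_i - y_i) + Lm$, which by \eqref{e:local_current_L-1} is equivalent to $Q_{L-1}(t) = m$. Consequently
\[
\bbP_Y\bigl(X(t) = X,\; Q_{L-1}(t) = m\bigr) = u^{\rm BL(p,q)}(Y^m, X; t),
\]
so that
\[
\bbP_Y\bigl(Q_{L-1}(t) \geq Q\bigr) = \sum_{m \geq Q}\sum_{X \in \calX_{L,N}} u^{\rm BL(p,q)}(Y^m, X; t).
\]

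Next I substitute \eqref{e:prob_fun_winding}, exchange the two discrete sums with the $z$-contour and the finite $\vec\lambda$-sum (absolute convergence on $C_{R'}$ is routine), and carry out the two summations inside the integrand. The $m$-sum is handled by writing $m = Nk + \ell$ with $0 \leq \ell \leq N-1$: as $k$ grows, each column of the determinant is multiplied by $\lambda_j^{-L}$, yielding a geometric series $\sum_k \prod_j \lambda_j^{-Lk}$; as $\ell$ varies, the columns are cyclically permuted by the definition of $Y^m$, contributing the signs $(-1)^{(N+1)Q}$ or $(-1)^{\binom{N-1}{2}Q}$ depending on how one rewrites the final expression. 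The $X$-sum over $0 \leq x_1 < \cdots < x_N \leq L-1$ acts only on the factor $\lambda_j^{-x_i}$ in row $i$ of the $(i,j)$-entry; using the determinantal form \eqref{e:bethe_determinantal} of the Bethe function together with a standard truncated-Vandermonde identity for ordered tuples, it reduces to a closed rational expression in the $\lambda_j$'s. The decoupled Bethe relation $\lambda_j^L = z^L(1-\lambda_j^{-1})^{-N}$ then merges and simplifies the rational factors produced by the two summations; the cancellations leave an integrand analytic in the annulus $\epsilon' \leq |z| \leq R'$, which lets the contour be moved to $C_{\epsilon'}$ to give \eqref{e:current_cdf1}. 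The alternative form \eqref{e:current_cdf2} then follows from \eqref{e:current_cdf1} by redistributing the global factor $z^{-QL}$ into each column using $z^L = \lambda_j^L(1-\lambda_j^{-1})^{-N}$, which requires $Qd = QL/N \in \bbZ$, i.e.\ $N \mid Q$.

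The main obstacle is executing the coordinated $X$-sum and $m$-sum. Each produces a rational factor in the $\lambda_j$'s with denominator of the form $1-\lambda_j^{-L}$ (or $1-\prod_j \lambda_j^{-L}$ for the $m$-sum), and the clean final answer requires these to cancel against each other through the Bethe relation. The cleanest strategy is to perform the $m$-sum first on a contour where $|\lambda_j|>1$ (so the geometric series converges absolutely), recognize that the resulting kernel is exactly the generating series for the $X$-sum over the fundamental domain $\calX_{L,N}$ of the ring, and then apply the decoupled Bethe equation to collapse the combined expression into a determinant whose entries are polynomial in $\lambda_j^{\pm 1}$. Once this cancellation is made manifest, the remaining contour deformation and the passage between the two equivalent forms \eqref{e:current_cdf1} and \eqref{e:current_cdf2} are routine.
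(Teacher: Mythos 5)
Your proposal takes a genuinely different route from the paper's own argument, and the strategy is sound, but several load-bearing steps are glossed over. The paper works directly with the $\zeta$-generating function: it Fourier-inverts $\QjXgf{L-1}{L}$ against $\zeta^{-1-QL}$, expands $1/p_z$ as a $\zeta$-power series to extract a single coefficient (\Cref{l:coupling_expansion}), sums over $X$ via \Cref{l:sum_config_space}, and then telescopes in $Q$, with the stationary $1/\binom{L}{N}$ contribution handled by \Cref{l:stat_current}. You instead invoke the method-of-images identity \Cref{c:method_images}\,+\,\Cref{c:prob_fun_winding}, upgrade the bijection \eqref{e:bijection_push} to read off $Q_{L-1}(t)=m$ directly --- that identification is correct, since $\sum_i y_i^m = \sum_i y_i - Lm$ combined with \eqref{e:local_current_L-1} forces it --- and then sum the Baik--Liu kernel over $m\geq Q$ and $X$. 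This is the ``dual'' order of operations: you are summing the coefficients of the $1/p_z$ expansion rather than extracting a single one and telescoping afterward. Both routes ultimately rest on the same machinery (the expansion of the coupling term and the Vandermonde-type configuration sum), so yours does not shorten the argument, but it does make the identification between the CDF and the method-of-images decomposition structurally transparent, which is a genuinely useful reorganization.

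The gaps are as follows. First, the claim that the $\ell$-part of the $m$-sum amounts to cyclically permuting columns is not literally true: the $(i,j)$-entry in \eqref{e:prob_fun_winding} carries the factor $(1-\lambda_j^{-1})^{j-i+1}$, which depends on the column index $j$ and not merely on the root $\lambda_j$, so reindexing the Bethe roots picks up the correction factor $(1-\lambda_j^{-1})^{\sigma(j)-j}$ --- exactly the amplitude coefficient $A_{s^\ell}^{-1}$ from \eqref{e:bethe_coeffs}. Carrying out the $\ell$-sum therefore requires \Cref{l:coupling_expansion} and \Cref{l:amp_product}, not a plain column permutation, and this is where the asserted signs $(-1)^{(N+1)Q}$ actually originate; as written, those are claimed rather than derived. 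Second, the Bethe relation you cite is misstated: \eqref{e:dBE1} gives $z^L = \lambda_j^L (1-\lambda_j^{-1})^{N}$, not $z^L = \lambda_j^L(1-\lambda_j^{-1})^{-N}$, and this sign-flip in the exponent is precisely what converts \eqref{e:current_cdf1} into \eqref{e:current_cdf2} (and why $N\mid Q$ is needed, so that $Qd = QL/N\in\bbZ$). Third, the final ``move the contour to $C_{\epsilon'}$'' step is not free: the integrand is a priori only single-valued and holomorphic thanks to \Cref{l:well_def_hol}, which handles the would-be square-root branch point at $z$ with a double root of $q_z$, and one must also check, as the paper does between \eqref{e:Q_pos2} and \eqref{e:Q_pos3}, that the purely-$\calQ_1(z)$ contribution vanishes. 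None of these kill the approach, but a full proof along your lines would need to invoke all three explicitly.
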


\begin{rem}
  We note that the above formula \eqref{e:current_cdf2} takes a very similar form as the cumulative distribution function for the position observable in the setting of Baik--Liu, see \cite[Equation (6.1)]{baikliu2018}.
\end{rem}

\subsection{Asymptotic results}
\label{s:asymptotic_results}

We introduce the two probability distribution functions, $F_1 = F^\icflat$ and $F_2 = F^\icstep$, that first appeared in \cite{baikliu2018}.
We will show that the current observable can be described by these two probability distributions in the scaling limit when $N/L$ is fixed, $t \propto L^{3/2}$, and $L \rightarrow \infty$;  see \Cref{t:asymptotic_flat_current} and \Cref{t:asymptotic_step_current}.

\subsubsection{Flat initial condition}

Define the following function
\begin{equation}
  \label{e:F1_defn}
  F_1(x; \tau) = \oint e^{x A_1(z) + \tau A_2(z) + A_3(z) + B(z)} \det \big( I - \calK_z^\icflat \big) \frac{\dd z}{z}, \qquad x \in \bbR,
\end{equation}
for $\tau > 0$, where the other terms in the formula are described in the following.
The functions $A_1(z)$, $A_2(z)$, and $A_3(z)$ are the polylogarithms,
\begin{equation}
  \label{e:A_i(z)}
  A_1(z) := - \frac{1}{\sqrt{2\pi}} \Li_{3/2}(z), \quad
  A_2(z) := - \frac{1}{\sqrt{2\pi}} \Li_{5/2}(z), \quad
  A_3(z) := - \frac{1}{4} \log(1-z),
\end{equation}
and the function $B(z)$ is given by
\begin{equation}
  \label{e:B(z)}
  B(z) := \frac{1}{4\pi} \int_0^z \frac{ (\Li_{1/2}(y))^2}{y} \dd y,
\end{equation}
where the path integral for $B$ is taken to be any curve in $\bbC \backslash \bbR_{\geq 1}$.
Note that these four functions are analytic in $z \in \bbC \backslash \bbR_{\geq 1}$.
The operator $\calK_z^\icflat$ acts on $\ell^2(\calS_{-}(z))$, i.e.~the space of finite $\ell^2$-norm sequences on $\calS_{-}(z)$, where $\calS_{-}(z)$ is given by
\begin{equation}
  \label{e:S-}
  \calS_{-}(z) = \{ \xi \in \bbC \mid e^{-\xi^2/2} = z, \myre(\xi) < 0 \}.
\end{equation}
Additionally, before giving an explicit expression of the kernel $\calK_z^\icflat$, let us introduce the following function
\begin{equation}
  \label{e:Psi_z}
  \Psi_z(\xi; x, \tau) := - \frac13 \tau \xi^3 + x \xi - \frac{1}{\sqrt{2\pi}} \int_{-\infty}^\xi \Li_{1/2}(e^{-\omega^2/2}) \dd \omega, \quad \arg(\xi) \in (\tfrac34 \pi, \tfrac54 \pi),
\end{equation}
where the integration path from $-\infty$ to $\xi$ should be understood as going from $-\infty + \icomp 0$ to $\xi$ lying in the sector with $\arg(w) \in (\tfrac34 \pi, \tfrac54 \pi)$.
Then, we define the kernel $\calK_z^\icflat$ by
\begin{equation}
  \calK_z^\icflat(\xi_1, \xi_2) = \calK_z^\icflat(\xi_1, \xi_2; x, \tau) = \frac{ \exp( \Psi_z(\xi_1; x, \tau) + \Psi_z(\xi_2; x, \tau) ) }{ \xi_1(\xi_1 + \xi_2) }, \quad \xi_1, \xi_2 \in \calS_-(z).
\end{equation}

Following the discussion from \cite[Section~4.1]{baikliu2018} and \cite[Theorem~8.1 and 8.2]{BLS-limiting-F}, the function $F_1$ is well-defined and satisfies the following properties; see \Cref{fig:F1_CDF} for a plot.
\begin{enumerate}[label=(\alph*)]
  \item For each $\tau > 0$, the function $x \mapsto F_1(x; \tau)$ is a probability cumulative distribution function.
  Moreover, it is continuous in $\tau > 0$.
  \item For each $x \in \bbR$, $\lim_{\tau \to 0} F_1(\tau^{1/3} x; \tau) = F_{\text{GOE}}(2^{2/3} x)$ where $F_{\text{GOE}}$ is the Tracy-Widom GOE distribution.
  \item For each $x \in \bbR$,
  \begin{equation}
    \lim_{\tau \to \infty} F_1 \big( -\tau + \frac{\pi^{1/4}}{\sqrt{2}} \tau^{1/2} x ; \tau \big) = \frac{1}{\sqrt{2\pi}} \int_{-\infty}^x e^{-y^2/2} \dd y.
  \end{equation}
\end{enumerate}

\begin{figure}[htb!]
  \centering
  \includegraphics[scale=0.5]{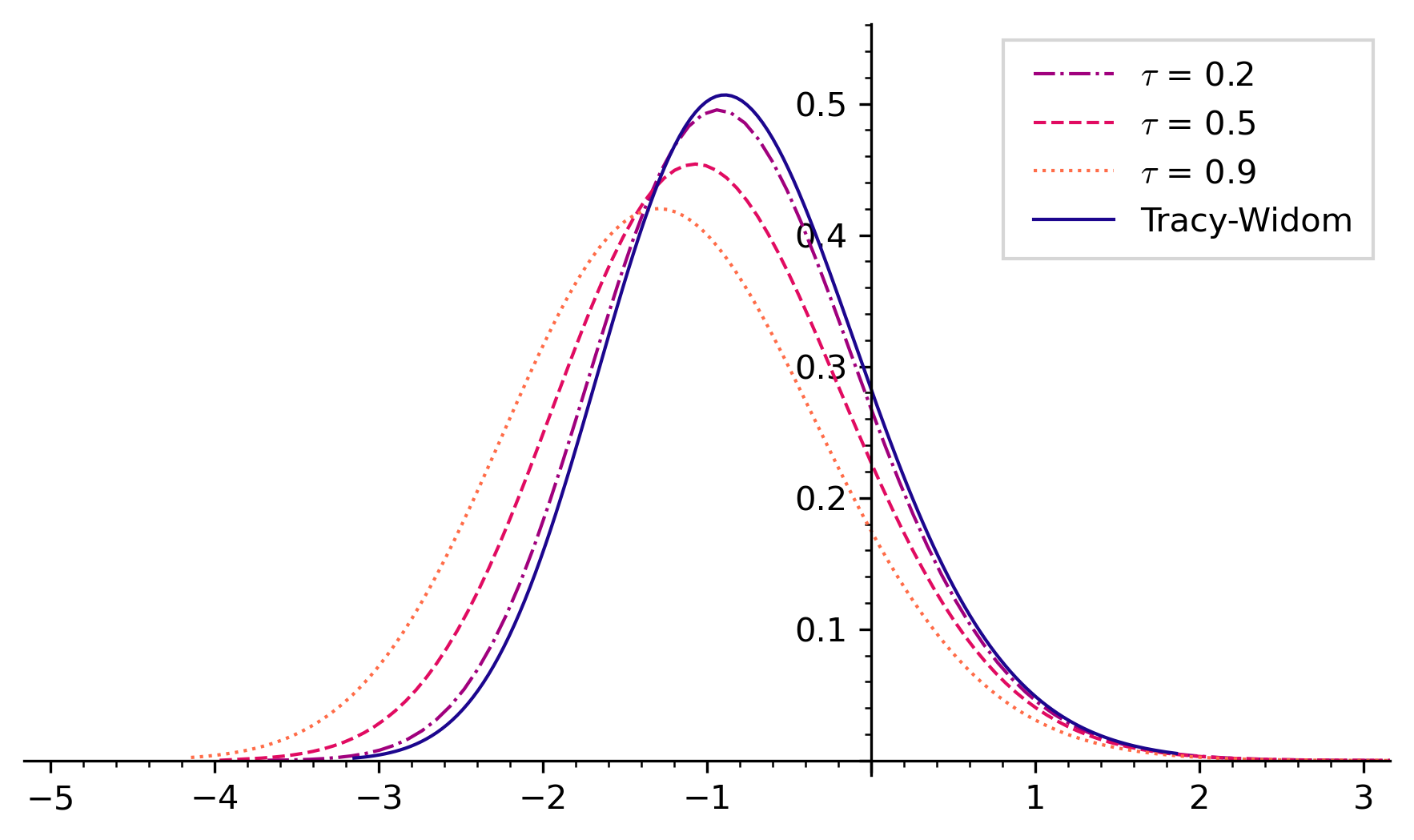}
  \hfill
  \includegraphics[scale=0.5]{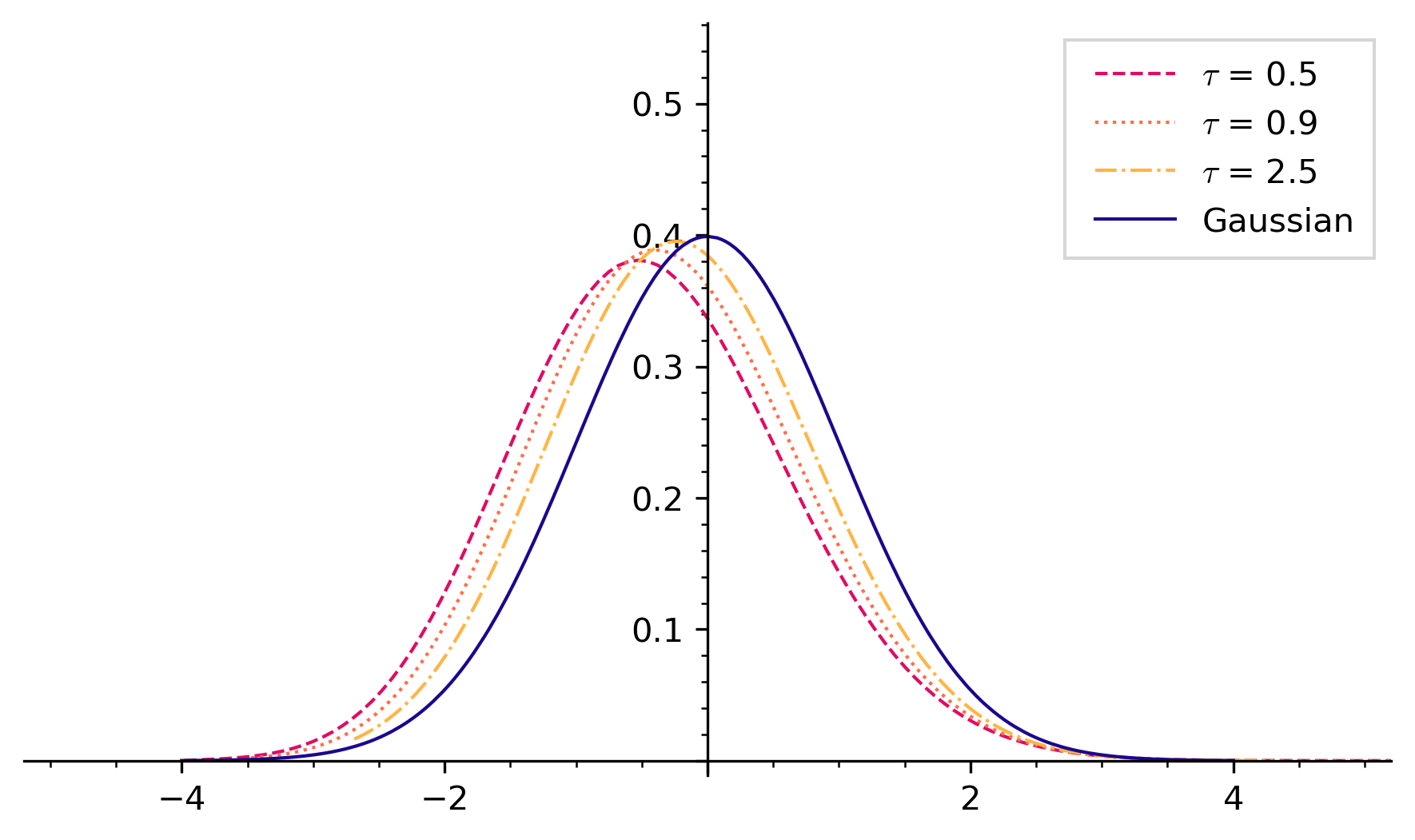}
  \caption{\textbf{Left:} The three curves from the left to right correspond to the probability distribution function of $F_1(\tau^{1/3} x; \tau)$ for the values $\tau = 0.9$, $0.5$ and $0.2$. The solid curve is the distribution of $F_{\text{GOE}}(2^{2/3} x)$.
    \textbf{Right:} The three curves from the left to right correspond to the probability distribution function of $F_1( -\tau + \frac{\pi^{1/4}}{\sqrt{2}} \tau^{1/2} x ; \tau)$ for the values $\tau = 0.5$, $0.9$ and $2.5$. The solid curve is the distribution of the standard Gaussian density.}
  \label{fig:F1_CDF}
\end{figure}

\begin{theorem}
  \label{t:asymptotic_flat_current}
  Fix an integer $d >2$. Take the PushASEP on the ring of length $L$ with $N$ particles so that $L= Nd$.
  Additionally, assume that we take the flat initial condition
  \[
    (y_1, \dots, y_N) = (\delta, d+\delta, \dots, (N-1)d+\delta) \in \calX_{L, N} \quad \text{for some integer} \, 0 \leq \delta \leq d-1.
  \]
  Fix $\tau > 0$ and set
  \[
    t = \frac{\tau}{ \sqrt{ \rho (1-\rho) } } \, L^{3/2}.
  \]
  Then, the asymptotic fluctuations of the local current are given as follows 
  \[
    \lim_{L \to \infty} \bbP^{\icflat}_{d, \delta} \left( \frac{Q_{L-1}(t) - vt}{ \rho^{2/3} (1-\rho)^{2/3} t^{1/3} } \geq -x \right) = F_1(\tau^{1/3} x; r \tau),
    \quad x \in \bbR,
  \]
  where $v = v(p, q) := \rho \big[ p (1-\rho) - \frac{q}{1-\rho} \big]$ and $r = r(p, q) := p + \frac{q}{(1-\rho)^3}$.
\end{theorem}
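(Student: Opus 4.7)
My plan is to follow the asymptotic analysis of Baik--Liu \cite{baikliu2018} for periodic TASEP, modified to account for the more general energy function $E(\lambda) = p \lambda + q \lambda^{-1} - 1$. The starting point is the cumulative distribution function formula \eqref{e:current_cdf2} from \Cref{p:current_cdf}. For the flat initial condition $y_j = (j-1)d + \delta$, the power $y_j + 1 - Qd = (j - 1 - Q)d + \delta + 1$ is linear in $j$, so the $j$-dependence factorizes cleanly through the determinant. I would take $Q$ equal to the nearest multiple of $N$ to $vt - x \rho^{2/3}(1-\rho)^{2/3} t^{1/3}$; the rounding produces an error that is negligible after rescaling and disappears in the continuity of $F_1$.

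The central step is a saddle point analysis. The relevant Bethe roots $\vec{\lambda} \in \calQ(z)^N$ should cluster near $\lambda = 1 - \rho$ at the scale $L^{-1/2}$. I would substitute $\lambda = 1 - \rho + c \xi L^{-1/2}$ with a constant $c$ chosen so that the defining equation $\lambda^L (1 - \lambda^{-1})^N = z^L$ for $\calQ(z)$ reduces, in the $L \to \infty$ limit, to $e^{-\xi^2/2} = z$, matching $\calS_-(z)$ from \eqref{e:S-}. The speed $v = \rho[p(1-\rho) - q/(1-\rho)]$ is fixed by requiring that the linear-in-$(\lambda - 1 + \rho)$ coefficient of the exponent $t E(\lambda) + (y_j + 1 - Qd) \log \lambda$ vanish at $\lambda = 1 - \rho$, which reduces to $(1 - \rho) E'(1 - \rho) = (1 - \rho)[p - q/(1-\rho)^2] = v/\rho$. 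The parameter $r = p + q/(1-\rho)^3$ arises from the cubic Taylor coefficient of the same exponent at $\lambda = 1 - \rho$, combining $E'''(1-\rho) = -6q/(1-\rho)^4$ with the third-order term of $\log \lambda$; this coefficient is precisely what multiplies $\tau$ inside the limiting $F_1$.

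With these rescalings in place, the determinant entries of \eqref{e:current_cdf2} converge pointwise to an $\ell^2(\calS_-(z))$ kernel, and the discrete sum over $\vec{\lambda} \in \calQ(z)^N$ of the product of determinants converges to the Fredholm determinant $\det(I - \calK_z^\icflat)$ through the Cauchy--Binet identity. The prefactor $\exp(x A_1(z) + \tau A_2(z) + A_3(z) + B(z))$ would be assembled from the limits of three pieces: (i) the polylogarithmic sums $\sum_{\xi \in \calS_-(z)} \xi^k$ coming from diagonal terms produce $A_1$ and $A_2$; (ii) the normalization $\prod_j (L\lambda_j - (L - N))^{-1}$ together with the $(1 - \lambda_j^{-1})$ factors yields $A_3$; and (iii) the off-diagonal Vandermonde-type contributions yield the double integral defining $B(z)$.

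The main obstacle is uniform control of the Bethe roots $\calQ(z)$ for $z$ ranging over the contour $C_{\epsilon'}$: one must show that all relevant roots lie in an $O(L^{-1/2})$ neighborhood of $1 - \rho$ with error bounds uniform in $z$, and then justify the interchange of limit, sum over $\vec{\lambda}$, and integration in $z$. This is where the technical conditions \eqref{e:conditions} on the contour radii become essential. Since the combinatorial and determinantal structure is identical to the TASEP case in \cite{baikliu2018}, the proof follows their strategy in broad outline, with the $E(\lambda)$-dependent Taylor computations being the only genuinely new part producing the parameters $v$ and $r$. The shift parameter $\delta$ only introduces a bounded phase that does not affect the limit.
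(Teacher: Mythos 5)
Your outline follows the paper's strategy, which in turn adapts Baik--Liu: start from the CDF formula of \Cref{p:current_cdf}, perform a saddle point analysis at $\lambda = 1-\rho$ on the scale $L^{-1/2}$, identify $v$ from the vanishing linear coefficient of the exponent, and pass to the Fredholm determinant $\det(I - \calK_z^\icflat)$. The identification of $v$ via $(1-\rho)E'(1-\rho) = v/\rho$ is correct. However, there are two substantive gaps in what you have sketched.

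First, you skip over a nontrivial pre-limit step. The paper first proves \Cref{p:flat_current}, which rewrites the sum over $\vec{\lambda} \in \calQ(z)^N$ of $N\times N$ determinants as $\calC_N^\icflat(z) \det(I + K_z^\icflat)$ with $K_z^\icflat$ a \emph{finite} kernel acting on $\ell^2(\calQ_0(z))$. This requires the Cauchy--Binet/Andreief identity, the decomposition $\calQ(z) = \calQ_0(z) \cup \calQ_1(z)$, the factorization $q_z = q_{z,0} q_{z,1}$, and the algebraic identities \eqref{e:roots_relation1}, \eqref{e:roots_relation2}, \eqref{e:derivative_qz01}. Without this intermediate Fredholm structure in hand, one cannot meaningfully speak of the ``determinant entries converging pointwise to a kernel on $\calS_-(z)$''; the object that converges is $K_z^\icflat$ after a further conjugation.

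Second, and more seriously, your plan omits the kernel conjugation. The raw kernel contains the factor $e^{tE(w)}$, and $t E(1-\rho) = \Theta(L^{3/2})$, so the entries of $K_z^\icflat$ are exponentially large and your proposed ``pointwise convergence'' would fail. The paper fixes this by introducing the conjugated kernel $\tilde{K}_\bfz^\icflat$ via the multiplicative factor $[w^{d-1}(w-1)]^k$ with $k = -\lfloor \sqrt{\rho/(1-\rho)}\,\vshock \tau L^{3/2} \rfloor + O(1) \approx -\rho\vshock t$, chosen so that $\ln g^\icflat(w) = \tau^{1/3} x \xi - \tfrac{\tau}{3}(p + \tfrac{q}{(1-\rho)^3})\xi^3 + O(N^{-1/2})$ in the rescaled variable. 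Only after this conjugation do the entries stay bounded and does the limit make sense. And finally, your derivation of $r$ as ``the cubic Taylor coefficient'' is an oversimplification: in \eqref{e:G_flat_Taylor}--\eqref{e:Gflat_integral}, the coefficient of $\tau A_2(z)$ is $3C_2 - \tfrac{2(1-2\rho)}{\sqrt{1-\rho}}C_1$, combining the cubic coefficient $C_2$ with corrections involving $C_1 \propto \vshock$ and the subleading expansion of the Cauchy-type factor $\tfrac{L(w-(1-\rho))\bfz^L}{w(w-1)\tilde q_\bfz(w)}$; it is the full bookkeeping of these pieces, not just $E'''(1-\rho)$, that produces $r = p + q/(1-\rho)^3$.

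A minor further point: the paper begins from \eqref{e:current_cdf1}, which is valid for all integers $Q$, rather than \eqref{e:current_cdf2}, which requires $Q \in N\bbZ$. Your rounding idea would work and is negligible after rescaling, but it is an unnecessary detour.
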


In \Cref{s:intro_results}, we had already introduced a version of this result, c.f.~\Cref{t:intro_asymptotic_flat_current}, where we had set $\delta =0$ for simplicity since the asymptotic fluctuations are independent of $\delta$. Additionally, we note that \Cref{r:asymptotic_flat_current}, for \Cref{t:intro_asymptotic_flat_current}, also applies to this result.

\subsubsection{Step initial condition}

Define the following function
\begin{equation}
  \label{e:F2_defn}
  F_2(x; \tau, \gamma) = \oint e^{x A_1(z) + \tau A_2(z) + 2 B(z)} \det \big( I - \calK_z^\icstep \big) \frac{\dd z}{z}, \qquad x \in \bbR.
\end{equation}
for $\tau > 0$ and $\gamma \in \bbR$. The functions $A_1(z)$, $A_2(z)$ and $B(z)$ are given in \eqref{e:A_i(z)} and \eqref{e:B(z)}. Similarly, as in the previous section, the operator $\calK_z^\icstep$ also acts on $\ell^2(\calS_1(z))$, where $\calS_1(z)$ is given in \eqref{e:S-}.
The kernel $\calK_z^\icstep$ is expressed as follows
\begin{equation}
  \begin{split}
    \xi_1, \xi_2 \in \calS_-(z), \quad
    & \calK_z^\icstep(\xi_1, \xi_2) = \calK_z^\icstep(\xi_1, \xi_2; x, \tau, \gamma) \\
    & \quad = \sum_{\eta \in \calS_{-}(z)} \frac{ \exp( \Psi_z(\xi_1; x, \tau) + \Psi_z(\xi_2; x, \tau) + \frac{\gamma}{2} (\xi_1^2 - \eta^2) ) }{ \xi_1 \eta (\xi_1 + \eta) (\xi_2 + \eta) },
  \end{split}
\end{equation}
where
\begin{equation}
  \label{e:Phi_z}
  \Phi_z(\xi; x, \tau) := - \frac13 \tau \xi^3 + x \xi - \sqrt{\frac{2}{\pi}} \int_{-\infty}^\xi \Li_{1/2}(e^{-\omega^2/2}) \dd \omega, \quad \arg(\xi) \in (\tfrac34 \pi, \tfrac54 \pi),
\end{equation}
which is exactly the same as \eqref{e:Psi_z} in the flat case, except that the integral part has its coefficient doubled.

Following the discussion from \cite[Section~4.2]{baikliu2018} and \cite[Theorem~1.5 and 1.6]{BLS-limiting-F}, the function $F_2$ is well defined and satisfies the following properties; see \Cref{fig:F2_CDF_gamma_05} for a plot.
\begin{enumerate}[label=(\alph*)]
  \item For each $\tau > 0$ and $\gamma \in \bbR$, the function $x \mapsto F_2(x; \tau, \gamma)$ is a probability cumulative distribution function.
  Moreover, it is continuous in $\tau > 0$ and $\gamma \in \bbR$.
  \item $F_2(x; \tau, \gamma)$ is an even function and is $1$-periodic in $\gamma\in \bbR$.
  \item For each $x \in \bbR$ and $\gamma \in \bbR$,
  \[
    \lim_{\tau \to 0} F_2(\tau^{1/3} x - \frac{\gamma^2}{4\tau}; \tau, \gamma) =
    \left\{
      \begin{array}{ll}
        F_{\text{GUE}}(x), & \gamma \in (-\tfrac12, \tfrac12), \\
        F_{\text{GUE}}(x)^2, & \gamma = \tfrac12,
      \end{array}
    \right.
  \]
  where $F_{\text{GUE}}$ is the Tracy-Widom GUE distribution.
  \item For each $x \in \bbR$,
  \begin{equation}
    \lim_{\tau \to \infty} F_2 \big( -\tau + \frac{\pi^{1/4}}{\sqrt{2}} \tau^{1/2} x ; \tau, \gamma \big) = \frac{1}{\sqrt{2\pi}} \int_{-\infty}^x e^{-y^2/2} \dd y.
  \end{equation}
\end{enumerate}

\begin{figure}[htb!]
  \centering
  \includegraphics[scale=0.5]{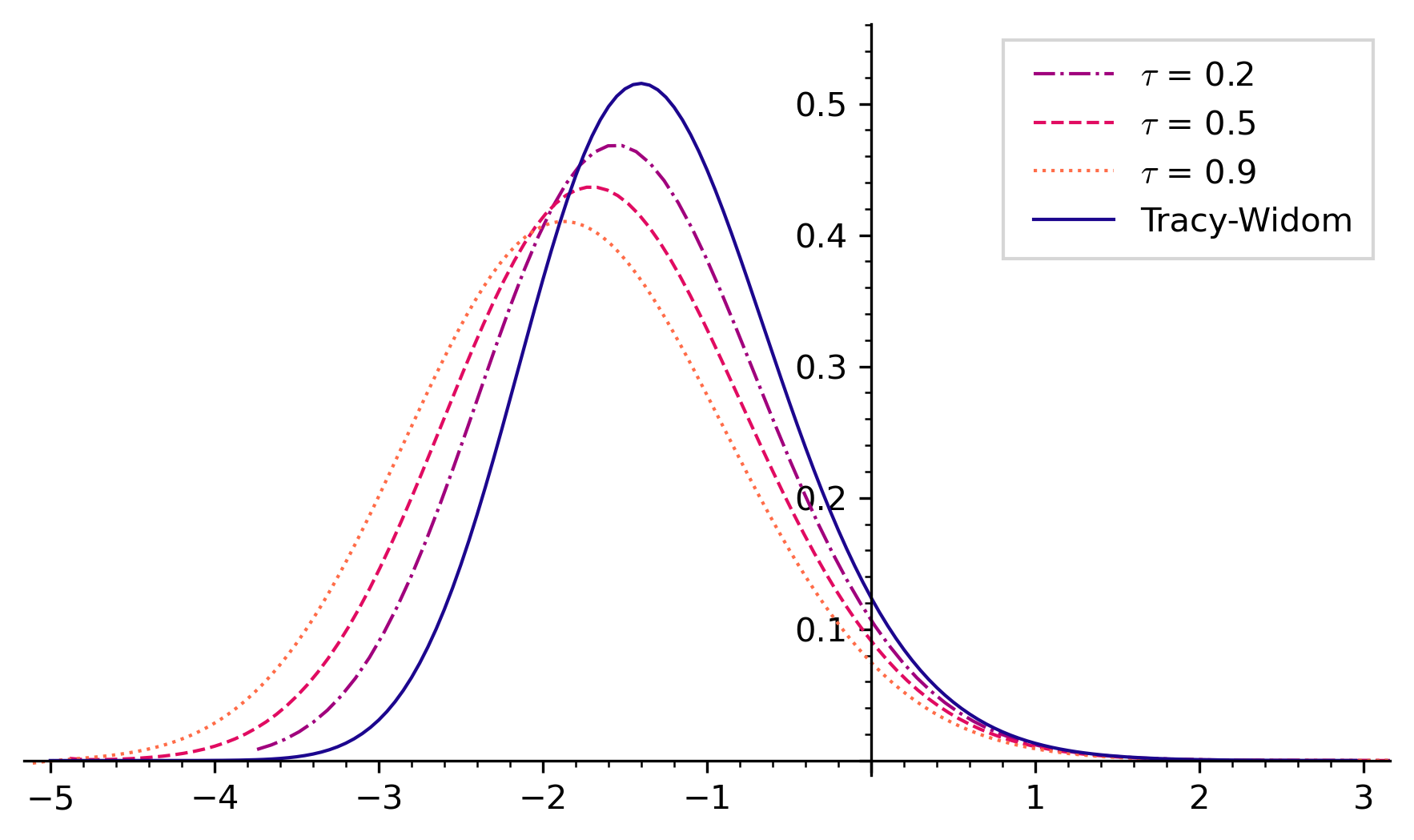}
  \hfill
  \includegraphics[scale=0.5]{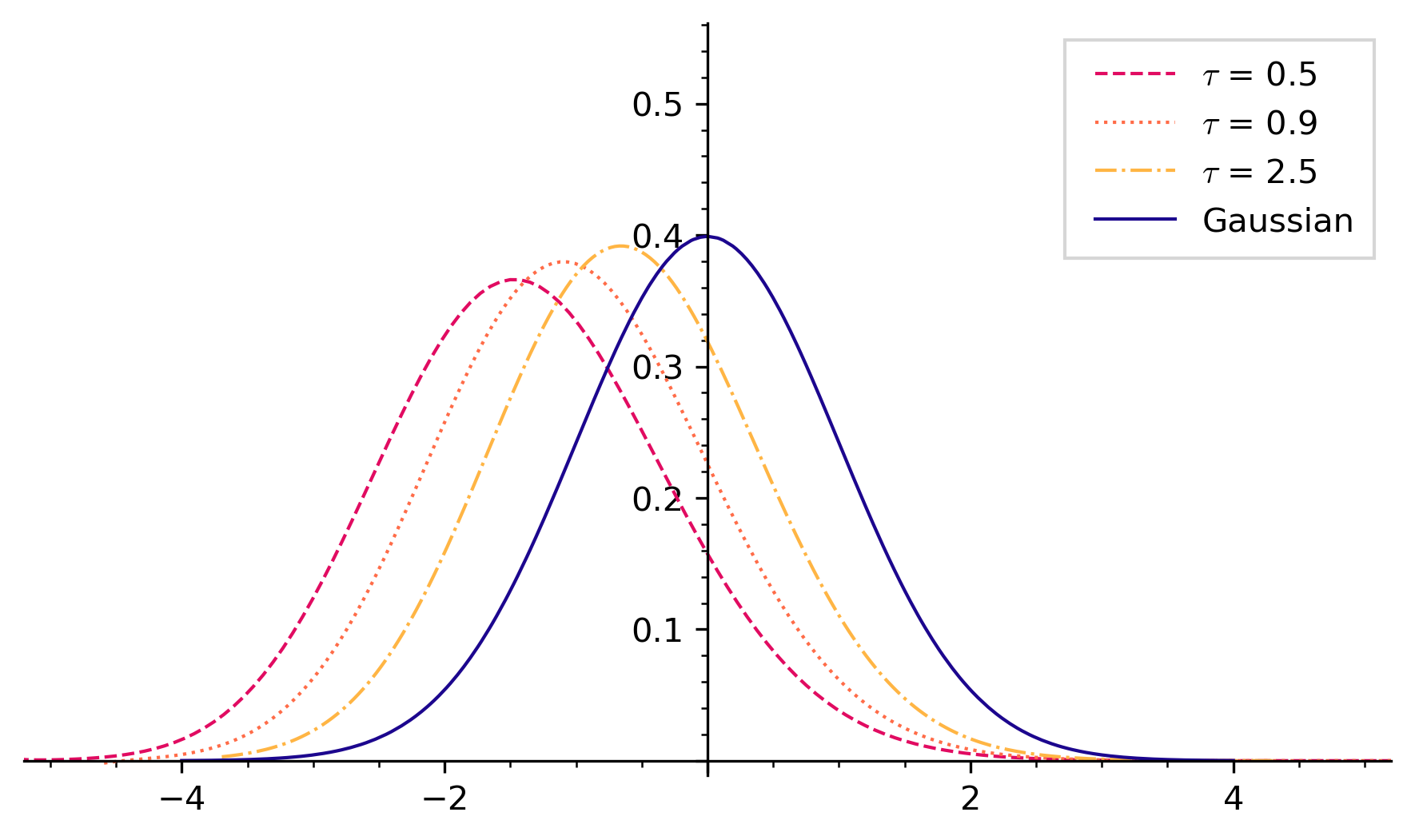}
  \caption{\textbf{Left:} The three curves from the left to right correspond to the probability distribution function of $F_2(\tau^{1/3} x - \frac{\gamma^2}{4\tau}; \tau, \gamma=\frac12)$ for the values $\tau = 0.9$, $0.5$ and $0.2$. The solid curve is the distribution of $F_{\text{GUE}}(x)^2$.
    \textbf{Right:} The three curves from the left to right correspond to the probability distribution function of $F_2(-\tau + \frac{\pi^{1/4}}{\sqrt{2}} \tau^{1/2} x; \tau, \gamma=\frac12)$ for the values $\tau = 0.5$, $0.9$ and $2.5$. The solid curve is the distribution of the standard Gaussian density.}
  \label{fig:F2_CDF_gamma_05}
\end{figure}

The statistics of the local current depends on the observed location of the step initial conditions.
This is in contrast to the flat initial conditions, where the initial conditions are translation invariant in the relaxation time scale, i.e.~$t \propto L^{3/2}$.
Note that the current $Q_{L-1}(t)$ on the edge $(L-1)$ has a well suited formula for asymptotic analysis, see \Cref{p:current_cdf}.
Whereas, the current for any other edge does not appear to be suitable for asymptotic analysis.
Thus, to study the asymptotic behavior of the current on different relative locations, we simply just consider shifted step initial conditions.
More precisely, let us consider the PushASEP model with the step initial condition defined by
\begin{equation}
  \label{e:step_IC1_result}
  (y_1, \dots, y_N) = (0, 1, \dots, N-1) + (m, \dots, m) \in \calX_{L, N},
\end{equation}
for an integer $0 \leq m \leq L-N$, or,
\begin{equation}
  \label{e:step_IC2_result}
  (y_1, \dots, y_N) = (0, 1, \dots, k-1, L-N+k, \dots, L-1),
\end{equation}
for and integer $0 \leq k \leq N$.
Note that the difference between the two cases above is that, in the second case \eqref{e:step_IC2_result}, the position of the current considered is in the middle of the step initial configuration.
This leads to slightly different formulations of the results and  computations that we will see below in \Cref{p:step_current}.

For the step initial conditions, we expect to observe shocks when the front of the step crashes with the back of the step. Additionally, we expect the shock to be traveling at a certain speed $\vshock$.
For the (T)ASEP, the speed of the shock may be determined by the considering the hydrodynamic limit, where space and time are scaled linearly.
The macroscopic density function $\rho(x,t)$ satisfies the inviscid Burgers’ equation, see \cite{Rost81, BenassiFouque87, DerridaLebowitzSpeer97} for details.
In our case, based on some heuristic analysis, we expect in the macroscopic density function for the PushASEP to satisfy the following Burgers-type partial differential equation
\begin{equation}
  \frac{\partial }{\partial t} \rho + \frac{\partial}{\partial x} \left[ p \rho (1- \rho) -q \frac{\rho}{1- \rho}  \right] = 0.
\end{equation}
Furthermore, we may expect the speed of the shock in the PushASEP to be given by
\begin{equation}
  \vshock := p (1-2\rho)  - \frac{q}{(1-\rho)^2}.
\end{equation}
Although we do not give a rigorous argument for the speed of the shock based on the hydrodynamic limit approach, we do see the parameter $\vshock$ appear naturally in our asymptotic analysis for the current of the step initial conditions and, thus, interpret the parameter as the speed of the shock.
In the following theorem, \Cref{t:asymptotic_step_current}, we determine the fluctuations of the local current at position $L-1$ for general step initial conditions under the relaxation time scale.

In particular, we consider a sequence $(L_n, N_n)_{n \geq 1}$ where $L_n$ denotes the size of the ring and $N_n$ denotes the number of particles.
We set $\rho_n := L_n / N_n$ for all $n \geq 1$ and assume that both $N_n$ and $L_n$ increase to infinity.
Then, we write $\bbP^\icstep_{1, n}$ for the probability measure of the PushASEP system with parameters $L_n$ and $N_n$ with the initial condition given as in \eqref{e:step_IC1_result} where the shift is set to be $m_n$; similarly, we write $\bbP^\icstep_{2, n}$ for the probability measure with the initial condition given as in \eqref{e:step_IC2_result} where the shift is set to be $k_n$.

\begin{theorem}
  \label{t:asymptotic_step_current}
  Take the PushASEP on ring of length $L_n$ with $N_n$ particles, where both $L_n$ and $N_n$ are increasing to infinity as $n\rightarrow \infty$, and set $\rho_n:= N_n/ L_n$. Also, introduce the parameters
  \begin{equation}
    v = v(p, q) := \rho \big[ p (1-\rho) - \frac{q}{1-\rho} \big], \quad r(p, q) := p + \frac{q}{(1-\rho)^3}.
  \end{equation}
  Additionally, assume that we take the step initial conditions given by \eqref{e:step_IC1_result} or \eqref{e:step_IC2_result}.
  \begin{enumerate}
    \item For the step initial condition given by \eqref{e:step_IC1_result}, the asymptotic fluctuations of the local current are given as follows
    \[
      \lim_{n \to \infty} \bbP^{\icstep}_{1, n} \Big( \frac{Q_{L_n-1}(t) - vt + \rho_n [(1-\rho_n) L_n - m_n]}{ \rho_n^{2/3} (1-\rho_n)^{2/3} t^{1/3} } \geq -x \Big) = F_2(\tau^{1/3} x; r \tau, \gamma),
      \quad x \in \bbR,
    \]
    where the parameters are given below under the following two assumptions.
    \begin{enumerate}
      \item Assume that $\rho_n = \rhoc + O(L_n^{-1})$.
      Then, let $\gamma \in [0, 1-\rho]$, $m_n = \lfloor (1 - \rho_n - \gamma) L_n \rfloor$, and set
      \[
        t =  \frac{\tau}{ \sqrt{\rho_n(1-\rho_n)} } L_n^{3/2}
      \]
      for any fixed $\tau > 0$.
      \item Assume that $|\rho_n - \rhoc|$ is bounded away from zero for $n$ large enough.
      Then, let $\gamma \in \bbR$ and $m_n$ be an arbitrary sequence of integers such that $0 \leq m_n \leq L_n - N_n$, and set
      \[
        t =  \frac{L_n}{|\vshock|} \bigg\lfloor \frac{|\vshock| \tau}{ \sqrt{\rho_n(1-\rho_n)} } L_n^{1/2} \bigg\rfloor - \frac{(\rho_n + \gamma) L_n + m_n}{\vshock}
      \]
      for any fixed $\tau > 0$.
    \end{enumerate}
    \item For the step initial condition is given by \eqref{e:step_IC2_result}, the asymptotic fluctuation of the local current is given as follows
    \[
      \lim_{n \to \infty} \bbP^{\icstep}_{2, n} \Big( \frac{Q_{L_n-1}(t) - vt + (1 - \rho_n) k_n}{ \rho_n^{2/3} (1-\rho_n)^{2/3} t^{1/3} } \geq -x \Big) = F_2(\tau^{1/3} x; r \tau, \gamma),
      \quad x \in \bbR,
    \]
    where the parameters are given below under the following two assumptions.
    \begin{enumerate}
      \item Assume that $\rho_n = \rhoc + O(L_n^{-1})$.
      Then, let $\gamma \in [1-\rho, 1]$, $k_n = \lfloor (1 - \gamma) L_n \rfloor$, and for any fixed $\tau > 0$, we set
      \[
        t =  \frac{\tau}{ \sqrt{\rho_n(1-\rho_n)} } L_n^{3/2}.
      \]
      \item Assume that $|\rho_n - \rhoc|$ is bounded away from zero for large enough $n$.
      Then, let $\gamma \in \bbR$ and $k_n$ be an arbitrary sequence of integers such that $0 \leq k_n \leq N_n$, and set
      \[
        t =  \frac{L_n}{|\vshock|} \bigg\lfloor \frac{|\vshock| \tau}{ \sqrt{\rho_n(1-\rho_n)} } L_n^{1/2} \bigg\rfloor - \frac{k_n + \gamma L_n}{\vshock}
      \]
      for any fixed $\tau > 0$.
    \end{enumerate}
  \end{enumerate}
\end{theorem}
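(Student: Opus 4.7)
The plan is to take the contour integral formula \eqref{e:current_cdf2} from \Cref{p:current_cdf} and perform a steepest-descent asymptotic analysis under the relaxation time scaling. The step initial condition \eqref{e:step_IC1_result} gives $y_j = j-1 + m$, for which the determinantal integrand simplifies: the exponents $y_j + 1 - Qd$ line up cleanly across the rows, so that all $m$-dependence factors out and the determinant keeps its Vandermonde-like structure. For the split step initial condition \eqref{e:step_IC2_result}, the determinant factors into two index blocks, $1 \leq j \leq k$ and $k+1 \leq j \leq N$, and a Cauchy--Binet-type manipulation reduces the problem to an IC1-like structure plus a finite-rank correction. This correction is ultimately responsible for the extra summation over $\eta \in \calS_{-}(z)$ in the kernel $\calK_z^{\icstep}$, a feature already absent from the flat-case kernel $\calK_z^{\icflat}$.

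Next I would carry out the saddle-point analysis. Write $\lambda = (1-\rho) + L^{-1/2}\sqrt{\rho(1-\rho)}\,\xi$, the natural rescaling near the singular point $\lambda = 1-\rho$ of the factor $1/(\lambda - (1-\rho))$ appearing in \eqref{e:current_cdf2}. Under this change of variables the Bethe equation $q_z(\lambda) = 0$ becomes $e^{-\xi^2/2} = z + o(1)$, so the Bethe roots concentrate discretely on the set $\calS_{-}(z)$ of \eqref{e:S-}. The total exponent $tE(\lambda) + (y_j + 1 - Qd)\log \lambda$, once $Q$ is centered at $vt$ plus the appropriate shift ($\rho(1-\rho)L - m$ for IC1 and $-(1-\rho)k$ for IC2), expands into the functional $\Phi_z(\xi; x, \tau)$ of \eqref{e:Phi_z} plus vanishing error terms; the constant piece produces the macroscopic speed $v = \rho[p(1-\rho) - q/(1-\rho)]$, while the cubic coefficient in $\xi$ assembles into $r = p + q/(1-\rho)^3$, which is exactly the second argument of $F_2$. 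The $(1 - \lambda^{-1})^{j-i}$ factors from the Vandermonde structure produce, after summation, the $\Li_{1/2}$ integrals appearing in $\Phi_z$.

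The assembly into the Fredholm determinant $\det(I - \calK_z^{\icstep})$ then follows by identifying the remaining sum over Bethe-root tuples in $\calQ^N(z) \cap \calP(z;1)$ with a Fredholm expansion on $\ell^2(\calS_{-}(z))$, via the standard identity relating sums of minors over subsets to the Fredholm determinant of the associated kernel. This step runs in parallel with the TASEP analysis in Baik--Liu \cite{baikliu2018}, and by \Cref{c:method_images} our transition probability matches theirs in the TASEP case $(p,q) = (1,0)$, so the Fredholm-extraction machinery of \cite{baikliu2018, BLS-limiting-F} can be imported once the PushASEP-specific parameters $v$ and $r$ have been identified in the saddle-point expansion.

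The main obstacle is the split into regimes (a) $\rho_n = \rhoc + O(L_n^{-1})$ and (b) $|\rho_n - \rhoc|$ bounded away from zero. In regime (a), $\vshock = O(L^{-1})$, the shock is essentially stationary on the relaxation time scale, the standard $t \propto L^{3/2}$ scaling synchronizes directly, and $\gamma$ tracks the offset of the observation edge from the shock within one period. In regime (b), the shock drifts across the ring at non-vanishing speed $\vshock$, and the correct observation time must align a shock passage with the edge $L-1$ to within $O(1)$; this is precisely the content of the discretized prescription $t = L|\vshock|^{-1} \lfloor |\vshock| \tau L^{1/2}/\sqrt{\rho(1-\rho)} \rfloor - (\rho+\gamma)L/\vshock$, whose floor function shifts $t$ by integer multiples of $L/\vshock$. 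The $1$-periodicity of $F_2$ in $\gamma$ then absorbs the floor discrepancy and lets $\gamma$ range over all of $\bbR$. Making (b) rigorous requires saddle-point error estimates uniform in $\rho_n$ away from $\rhoc$, together with a discrete shift identity absorbing $\lfloor \cdot \rfloor$ into $\gamma$; the IC2 case reduces to IC1 after the same shock-tracking step, with the $\eta$-summation in $\calK_z^{\icstep}$ arising from the Cauchy--Binet splitting of the initial-data determinant described above.
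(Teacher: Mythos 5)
Your high-level plan --- steepest-descent analysis of a contour-integral Fredholm formula under the relaxation scaling, tracking the PushASEP-specific parameters $v$ and $r$, then importing Baik--Liu's Fredholm machinery --- is indeed the paper's approach. However, there are three substantive problems with the way you propose to set it up.

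First, you start from \eqref{e:current_cdf2}, but that formula is only valid when $Q$ is a multiple of $N$. Under your centering $Q = vt - x\rho^{2/3}(1-\rho)^{2/3}t^{1/3} + \text{(shift)}$, $Q$ will almost never be a multiple of $N$, so you cannot apply \eqref{e:current_cdf2}. The paper instead works from \eqref{e:current_cdf1}, which holds for all $Q\in\bbZ$, and then reorganizes the determinant via Cauchy--Binet (as in \Cref{p:step_current}) to pass through the finite-$N$ Fredholm form \eqref{e:step_current}. You should make \eqref{e:current_cdf1}, or the resulting \Cref{p:step_current}, your starting point.

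Second, you misattribute the extra $\eta$-summation in $\calK_z^\icstep$ to the split IC \eqref{e:step_IC2_result}. Looking at \eqref{e:K_z_step}, the kernel $K_{i,z}^\icstep$ already has a summation over $v\in\calQ_1(z)$ for \emph{both} $i=1$ and $i=2$; the contrast is with the flat kernel \eqref{e:K_flat}, where $v'$ is the single element of $V(u')$. The $\eta$-sum is a feature of step-vs-flat, not of IC2-vs-IC1. What the IC2 Cauchy--Binet block split actually produces is the substitution $g_j(w) = w^{L-N} = z^L(w-1)^{-N}$ for $j>k$, which pulls a factor of $z^{(N-k)L}$ out and shifts the exponent of $(w-1)$, leading to $f_2^\icstep$ in \eqref{e:f_step2} differing from $f_1^\icstep$ in \eqref{e:f_step1} --- but the Fredholm structure with the $v$-sum is common to both.

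Third, your description of the regime split (a)/(b) has the right picture but does not pinpoint the concrete mechanism. The expansion \eqref{e:G_step_Taylor} of $G^\icstep$ around $w=1-\rho$ contains a term $C_1 N^{1/2}\xi^2$ with $C_1 = \vshock/(2\rho\sqrt{1-\rho})$. In regime (a), $\vshock = O(L_n^{-1})$, so this term is $O(L_n^{-1/2})$ and negligible. In regime (b), $\vshock$ is bounded away from zero and $C_1 N^{1/2}\to\infty$; the quadratic-in-$\xi$ contribution would destroy the limit unless the time parametrization is chosen so that the \emph{integer} appearing in the conjugating power $k$ of \eqref{e:g_step_k} exactly absorbs it. This is the reason for the floor function in the time parametrization: $t$ is picked so that $k$ is an integer and the leftover $O(1)$ residue produces $\gamma$. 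Simply invoking $1$-periodicity of $F_2$ in $\gamma$ and ``uniform saddle-point error estimates'' is not enough; you must actually cancel the divergent $N^{1/2}\xi^2$ coefficient via the conjugation of the kernel by $[w^{L-N}(w-1)^N]^k$, which is what makes the proof work.
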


\begin{rem}
  The results in \Cref{t:asymptotic_step_current}, when $(p, q) = (1, 0)$, coincides with those from \cite[Theorem~3.4]{baikliu2018} after an appropriate change of variables.
\end{rem}

\begin{rem}\label{r:step_shock}
  The time parametrization is different depending on whether the shock speed is zero or not.
  When the shock travels at speed zero, the position of the shock does not move, so only the shift in the initial condition determines the parameter $\gamma$ in the limit.
  When the shock travels at some speed, the quantity $\frac{L_n}{|\vshock|}$ is the necessary length of time for a shock to travel through the whole ring.
  The integer part $\bigg\lfloor \frac{|\vshock| \tau}{ \sqrt{\rho_n(1-\rho_n)} } L_n^{1/2} \bigg\rfloor$ counts the number of shocks that have traveled through the whole system, and the remaining term,  normalized by $\vshock$, measures how far we are away from the shock, giving rise to the parameter $\gamma$ in the limit.
\end{rem}

\section{Contour deformations and residues}
\label{s:contour_deformations}

We present several formulas obtained through contour deformations and residue computation techniques.
Contour deformations are particularly useful when we want to show that certain contour integrals vanish.
In the deformation process, we need to control, up to a certain order, the magnitude of different variables or functions. We will use the following notation, in the regime $|z| \to 0$ or $|z| \to \infty$ depending on the context which should be clear the most of the time.
\begin{align*}
  f \approx g & \quad \Longleftrightarrow \quad f =  \Theta (g)
                \quad \Longleftrightarrow \quad f = \calO(g) \text{ \ and \ } g = \calO(f), \\
  f \sim g & \quad \Longleftrightarrow \quad f = g (1 + o(1)), \\
  f \ll g & \quad \Longleftrightarrow \quad f = o(g), \\
  f \gg g & \quad \Longleftrightarrow \quad g = o(f).
\end{align*}

\subsection{From $(N+1)$-fold integral to 1-fold integral}

We take the contour integral formula of the generating series for the joint density function given in \eqref{e:main_N+1}, removing the $\zeta$-dependent term in the numerator, since it may be factored outside the integral, and we compute the contour integrals with respect to the $w$-variables by residue computations.
The result is a linear combination of 1-dimensional contour integrals on the $z$-variables; see \Cref{p:w_residues}. This provides the bridge between \eqref{e:main_N+1} and \eqref{e:main_1} in \Cref{t:main}.

\begin{proposition}
  \label{p:w_residues}
  Fix an initial configuration $Y = (y_1, \cdots, y_n) \in \calX_{L,N}$ for the PushASEP with $N$ particles on a ring of length $L$, and take $X = (x_1, \cdots, x_N) \in \calX_{L,N}$.
  Then, we have
  \begin{equation}
    \label{e:w_residues}
    \begin{split}
      & \bigcontourintRzetaOneNumerator \\
      & = u_0(\zeta) + \CIpi{1} \diffcontoursZ \sumNbetherootszetaOneNumerator,
    \end{split}
  \end{equation}
  where $u_0(\zeta)$ is a function of $\zeta$ and will be explicited in \Cref{l:u0_residue}.
\end{proposition}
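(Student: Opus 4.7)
The plan is to evaluate the $N$-fold $\vec w$-integral on the left-hand side of \eqref{e:w_residues} by successive residue calculations in $w_1,w_2,\ldots,w_N$, with $z$ treated as a parameter ranging over the annulus $\epsilon'<|z|<R'$. Fixing $z$ together with $w_2,\ldots,w_N$, the singularities of the integrand in $w_1$ come from three sources: (i) the zeros of $q_z(w_1)$, i.e.\ the set $\calQ(z)$; (ii) the unique zero in $w_1$ of $p_z(\vec w;\zeta)$, determined by the affine structure of $p_z$ in $(1-w_1^{-1})$; and (iii) the putative singularities at $w_1=0$ and $w_1=1$ coming from the factors $w_1^{\pm k}$ and $(1-w_1^{-1})^{\sigma(1)-1}$ appearing in $h_{\vec w}$, $u_{\vec w}$ and the amplitude coefficients $A_\sigma$. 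The first task is to verify, using the conditions \eqref{e:conditions} together with a separate lemma controlling the location of the zeros of $q_z$ and $p_z$, that the elements of $\calQ(z)$ lie in the annular region enclosed by $\diffcontoursw$, while the $p_z=0$ root in $w_1$ lies inside the excluded small disk $1+C_{\epsilon_2}$, and that the residues at $w_1=0,1$ give no net contribution (either being absent from the integrand or cancelling because of the numerator structure).

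Granting this, the residue of the integrand at $w_1=\lambda_1\in\calQ(z)$ is simple: since $q_z$ has only simple zeros for generic $z$, it produces the factor $1/(\lambda_1 q_z'(\lambda_1))$ when combined with the measure $dw_1/w_1$. I would then iterate the procedure in $w_2,\ldots,w_N$, checking at each step that, as the previously integrated $w_j$ variables get set to elements of $\calQ(z)$, the $p_z=0$ root in the next variable $w_k$ still lies inside $1+C_{\epsilon_2}$. This is precisely where the quantitative bounds in \eqref{e:beta1_bounds}--\eqref{e:beta2_bounds} on the exponents $\beta_1,\beta_2$ are used. After all $N$ residues, the $\vec w$-integration reduces to a sum over $\vec\lambda\in\calQ(z)^N$; the determinantal structure \eqref{e:bethe_determinantal} forces the summand to vanish whenever two coordinates of $\vec\lambda$ coincide, so after dividing by $N!$ the sum becomes one over unordered $N$-subsets of $\calQ(z)$ and matches the form of the second term on the right-hand side of \eqref{e:w_residues}.

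The delicate point---and what I expect to be the principal obstacle---is the origin of the additive constant $u_0(\zeta)$. I would look for it in one of two equivalent places. First, the sub-contour $-\oint_{1+C_{\epsilon_2}}$ in the decomposition of $\diffcontoursw$ captures the residue at the $p_z=0$ root of $w_1$ sitting near $1$; this residue eliminates the $1/p_z$ factor in the integrand, yields a qualitatively different reduction under the remaining $w_2,\ldots,w_N$ and $z$ integrations, and should---after careful evaluation---reduce to the quantity defined in \Cref{l:u0_residue}. Alternatively, one can attempt to commute the residue computation with the $z$-integral and observe that a new singularity in $z$ at the origin (or at one of the exceptional points in $\calZ(\zeta)$ excluded from $\calR(\zeta)$) emerges after the $w$-residues are taken, whose residue is exactly $u_0(\zeta)$. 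Identifying which mechanism is operative and matching the resulting expression with the definition in \Cref{l:u0_residue} is the main computation. Once this is done, combining the $\calQ(z)$-residues with the $u_0(\zeta)$ term yields the right-hand side of \eqref{e:w_residues} and completes the proof.
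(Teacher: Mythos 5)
Your overall strategy — iterated residue computation in $w_1,\ldots,w_N$ with $z$ held as a parameter, identifying the poles coming from $q_z=0$ and from $p_z=0$, and reducing the inner $N$-fold integral to a sum over $\vec\lambda\in\calQ(z)^N$ — is the one the paper uses; the paper packages the single-variable step as \Cref{l:contour_deformation_induction} and iterates it $N$ times. However, your treatment of the $p_z=0$ pole is geometrically wrong, and this is precisely where the content of the proof lives.

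You assert that the root $\mu$ of $p_z(\vec w;\zeta)=0$ in the $w_{k+1}$-variable ``lies inside the excluded small disk $1+C_{\epsilon_2}$,'' hence contributes nothing. But the paper's Step 2 of \Cref{l:contour_deformation_induction} shows this fails in exactly one configuration: when $z\in C_{\epsilon'}$, all the previously integrated variables are set to Bethe roots $\lambda_j\in\calQ_1(z)$, and all the not-yet-integrated variables sit on $1+C_{\epsilon_2}$, one has $|1-\mu^{-1}|\sim(\epsilon')^{(N-k)(d-\beta_2)+\beta_2}\gg\epsilon_2$ (because $(N-k)(d-\beta_2)+\beta_2<\beta_2$), so $\mu$ lies in the region $\bbA$ \emph{between} the contours and its residue \emph{is} picked up. Your subsequent explanation that the sub-contour $-\oint_{1+C_{\epsilon_2}}$ ``captures'' the residue at a $p_z=0$ pole sitting inside that disk is internally inconsistent with your own setup: for a pole inside $1+C_{\epsilon_2}$, the contribution of $-\oint_{1+C_{\epsilon_2}}$ exactly cancels against the contribution of $\oint_{C_R}-\oint_{C_{\epsilon_1}}$, so the net effect of $\diffcontoursw$ is zero — that is the whole point of the three-circle contour. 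What you have missed is the analysis the paper carries out in Steps 5 and 6: the $p_z=0$ residue \emph{is} captured in the critical configuration above, but for $k\leq N-2$ it is bounded by a positive power of $\epsilon'$ (this is the delicate estimate using the rearrangement inequality on the $A_\sigma$ factors together with \eqref{e:beta1_bounds}--\eqref{e:beta2_bounds}) and therefore vanishes after the remaining integrations, while for $k=N-1$ it survives and, once the $z$-integral over $C_{\epsilon'}$ is taken, yields $u_0(\zeta)$ as in \eqref{e:u0_definition}. Your second, ``alternative'' heuristic is closer in spirit — the final $u_0$ does emerge as a $\oint_{C_{\epsilon'}}\mathrm{d}z/z$ of a residue — but as stated it is not a proof, and the first, primary mechanism you propose is incorrect.
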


\begin{proof}
  This proposition is a consequence of \Cref{l:contour_deformation_induction} and \Cref{l:u0_residue}, where the former consists of an induction step and the latter consists in computing the function $u_0(\zeta)$ in \eqref{e:w_residues}.

  Using the notation from \Cref{l:contour_deformation_induction}, the left side of \eqref{e:w_residues} is exactly the left side of \eqref{e:contour_deformation_induction} for $k = 0$ and the right side of \eqref{e:w_residues} is exactly the right side of \eqref{e:contour_deformation_induction} for $k = N-1$.
  Then, the result follows by applying the identity \eqref{e:contour_deformation_induction} $N$ times.
\end{proof}

In the next \Cref{l:contour_deformation_induction}, we compute the contour integrals with respect to the $w_k$-variables, inductively. We compute the contour integrals by residue computations.
As we will show in the proof of \Cref{l:contour_deformation_induction}, the residues are located at the deformed Bethe roots \eqref{e:bethe_roots_deformed} and (in the case $k = N-1$) at the roots of the coupling term \eqref{e:coupled_roots}.
That is, at the points in the sets $\mathcal{Q}(z)$ and $\mathcal{P}(z; \zeta)$, respectively.
When $|z|\rightarrow 0$, the deformed Bethe roots approach either 0 or 1 (see \eqref{e:P1_small_z} below) and this will play a role in our computations. Thus, we introduce the following decomposition of the set of deformed Bethe roots
\begin{equation}
  \label{e:q_small_sol}
  \calQ_0(z) = \calQ(z) \cap \{ w : \myre(w) < 1-\rho \}, \qquad
  \calQ_1(z) = \calQ(z) \cap \{ w : \myre(w) > 1-\rho \}.
\end{equation}
for $|z| < \rho^{\rho}(1- \rho)^{1- \rho}$ with $\rho = N/L$. Moreover, we note the asymptotic behavior of the deformed Bethe roots
\begin{equation}\label{e:q_small_asymptotic}
  \begin{cases}
    |\lambda| =|z|^{1/(1-\rho)}(1 + o(1)), &\quad \lambda \in \mathcal{Q}_0(z), \\
    |1- \lambda| =|z|^{1/\rho}(1 + o(1)), &\quad \lambda \in \mathcal{Q}_1(z),
  \end{cases}
\end{equation}
as $|z| \rightarrow 0$.
\begin{lemma}
  \label{l:contour_deformation_induction}
  We have the following identity \footnote{In the notation used here, empty products and empty integrals are taken to be the identity, and so, the right side of the case $k=N-1$ equals the integrand in the right side of \eqref{e:w_residues}.} between $(N-k+1)$-fold and $(N-k)$-fold contour integrals,
  \begin{equation}
    \label{e:contour_deformation_induction}
    \begin{split}
      & \CIpi{N+1-k} \diffcontoursZ %\oint_{C_{R'}} \frac{d z}{z} 
      \diffcontoursw \frac{\dd w_{k+1}}{w_{k+1}} \cdots \diffcontoursw \frac{\dd w_N}{w_N} I_k(w_{k+1}, \cdots, w_N; z , \zeta , t)\\
      & = \CIpi{N-k} \diffcontoursZ %\oint_{C_{R'}} \frac{d z}{z} 
      \diffcontoursw \frac{\dd w_{k+2}}{w_{k+2}}\cdots \diffcontoursw \frac{\dd w_N}{w_N} I_{k+1}(w_{k+2}, \cdots, w_N; z , \zeta , t)
      + u_0(\zeta) \mathds{1}(k = N-1),
    \end{split}
  \end{equation}
  for $k=0, \cdots, N-1$; where the contours satisfy the conditions \eqref{e:conditions} following the notation in \eqref{e:diffcontours}, the function $u_0(\zeta)$ along with its value at $\zeta = 1$ will be given later in \Cref{l:u0_residue}, and the integrands are given by
  \begin{equation}
    \label{e:I_k}
    \begin{split}
      I_0(w_{1}, \cdots, w_N; z , \zeta , t)
      & := \frac{ \sum_{\sigma \in S_N} A_{\sigma} \prod_{i =1}^N w_{\sigma(i)}^{y_{\sigma(i)} - x_i} e^{t E(\vec{w})}}%
      { p_z(\vec{w ; \zeta }) \prod_{i=1}^N q_z(w_i)}, \qquad \mbox{ and}  \\
      I_k(w_{k+1}, \cdots, w_N; z, \zeta , t)
      & :=
      \sum_{\SumLambdas[k]}
      \Bigg( \frac{ \sum_{\sigma \in S_N} A_{\sigma} \prod_{i=1}^N w_{\sigma(i)}^{y_{\sigma(i)} - x_i}  e^{t E(\vec{w})} }%
      { p_z(\vec{w} ; \zeta ) \prod_{i=k+1}^N q_z(w_i) \prod_{i=1}^{k} w_i q_z'(w_i)} \Bigg)_{(w_1, \cdots, w_k) = (\lambda_1, \cdots, \lambda_k)}.
    \end{split}
  \end{equation}
\end{lemma}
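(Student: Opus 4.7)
The plan is to prove \eqref{e:contour_deformation_induction} by induction on $k$, with the inductive step consisting of a single residue calculation in the variable $w_{k+1}$, holding $z$ and $w_{k+2},\dots,w_N$ frozen on their contours and $w_1,\dots,w_k$ specialized to Bethe roots $\lambda_1,\dots,\lambda_k\in\calQ(z)$.  The three-circle contour $\diffcontoursw$ collects, via residues, the singularities of the integrand of $I_k$ (viewed as a meromorphic function of $w_{k+1}$) inside the annular region
$\mathcal{A}:=\{w\in\bbC:\epsilon_1<|w|<R,\ |w-1|>\epsilon_2\}$.
At each simple zero $\lambda\in\calQ(z)$ of $q_z(w_{k+1})$, the residue relative to the measure $dw_{k+1}/w_{k+1}$ equals $1/(\lambda\, q_z'(\lambda))$ times the remaining integrand evaluated at $w_{k+1}=\lambda$; summing over $\lambda\in\calQ(z)$ converts $I_k$ into $I_{k+1}$, exactly matching the extra denominator factor $w_i\,q_z'(w_i)$ for $i\le k+1$ and the extra outer sum over $\lambda_{k+1}$ that distinguish \eqref{e:I_k} at level $k+1$ from level $k$.

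The first non-trivial sub-task is to verify that every zero of $q_z$ actually lies inside $\mathcal{A}$ and that no other singularity of the integrand contributes there.  By the asymptotics \eqref{e:q_small_asymptotic}, for $z\in C_{\epsilon'}$ the $L-N$ points of $\calQ_0(z)$ sit at distance $\approx(\epsilon')^{1/(1-\rho)}$ from $0$ while the $N$ points of $\calQ_1(z)$ sit at distance $\approx(\epsilon')^{1/\rho}$ from $1$; conditions \eqref{e:beta1_bounds}--\eqref{e:beta2_bounds} (namely $\beta_1>1/(1-\rho)$ and $\beta_2>1/\rho$) then place all of $\calQ(z)$ strictly inside $\mathcal{A}$, and for $z\in C_{R'}$ the choice $R=(R')^\beta$ with $\beta>1$ does the same.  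The essential singularity of $e^{tE(\vec w)}$ at $w_{k+1}=0$, together with any poles from negative exponents $y_i-x_j<0$, lies inside $C_{\epsilon_1}$ and is excluded; the apparent poles at $w_{k+1}=1$ coming from the anti-symmetrizer $A_\sigma$ are cancelled by the order-$N$ zero that $1/q_z(w_{k+1})$ develops at $w_{k+1}=1$.

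The second and more delicate sub-task is to handle the remaining factor $p_z(\vec w;\zeta)^{-1}$, whose unique zero in $w_{k+1}$ is the point $w_{k+1}^\star$ determined by
$1-(w_{k+1}^\star)^{-1}=-(-1)^N z^L\zeta^{-L}\big/\big(\prod_{i\le k}(1-\lambda_i^{-1})\prod_{i\ge k+2}(1-w_i^{-1})\big)$.
A case-by-case analysis over the three sub-contours $C_R$, $C_{\epsilon_1}$, $1+C_{\epsilon_2}$ available to each $w_i$ and the two clusters $\calQ_0$, $\calQ_1$ available to each $\lambda_i$ --- using power-counting in $\epsilon'$ and the strict inequalities \eqref{e:conditions} --- shows that in every configuration either $w_{k+1}^\star$ remains inside $1+C_{\epsilon_2}$ (so that it is swept out by the contour) or the corresponding residue becomes a function of $w_{k+2},\dots,w_N$ that integrates to zero against the remaining $\diffcontoursw$ operators.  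The only surviving $p_z$-contribution occurs at $k=N-1$, when no variables remain to integrate against, and this is precisely the term $u_0(\zeta)$ extracted in \Cref{l:u0_residue}.  The main obstacle of the whole argument is exactly this power-counting bookkeeping, which is where the auxiliary restriction $2N<L$ mentioned in \Cref{r:2nl} enters; once the cancellation is in hand, the induction closes immediately by the algebraic residue computation of the first sub-task.
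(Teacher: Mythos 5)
Your proposal mirrors the paper's own proof step by step: induction via a residue computation in $w_{k+1}$ over the three-circle contour difference, the $q_z$-roots located inside $\mathcal{A}$ through the exponent conditions, and power-counting in $\epsilon'$ to kill the $p_z$-residue except at $k=N-1$ where it becomes $u_0(\zeta)$. This is exactly the paper's Steps 1, 2, 4, 5 and 6.

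One step is missing, however. The simple-residue formula you use, $\mathrm{Res}_{w_{k+1}=\lambda}\,I_k/w_{k+1}=(\lambda q_z'(\lambda))^{-1}\cdot(\text{remaining integrand at }\lambda)$, presupposes that the remaining integrand --- in particular $p_z(\vec{w};\zeta)^{-1}$ --- is regular at $w_{k+1}=\lambda$. You must therefore rule out that a $q_z$-root and the $p_z$-root coincide; if they did, the pole would be double and the residue would pick up a derivative term that does not match $I_{k+1}$. The paper does this in its Step 3 by comparing the $\epsilon'$-exponents of the two candidate roots (they can agree only when $k=N-1$, all $\lambda_j\in\calQ_1(z)$, all $w_i$ on $1+C_{\epsilon_2}$, and $z\in C_{\epsilon'}$) and then invoking \Cref{l:bethe_roots_location} to exclude even that residual case. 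Also a minor inaccuracy in your second sub-task: when $z\in C_{R'}$ the excluded $p_z$-root has $|w_{k+1}^\star|\ll\epsilon_1$, so it lies inside $C_{\epsilon_1}$ rather than inside $1+C_{\epsilon_2}$ as your dichotomy states; the conclusion $w_{k+1}^\star\notin\mathcal{A}$ is the same, but the mechanism differs by case.
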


\begin{rem}
  Note that the integrand $I_{k+1}$ is equal to the residue of the integrand $I_{k}$ at a Bethe root, $w_{k+1} = \lambda_{k+1} \in \calQ(z)$.
  In other words, $I_{k+1}$ can be defined ``recursively'' from $I_k$ by replacing the term  $ q_z(w_{k+1})$ in the denominator with the term $ \lambda_{k+1} q_z'(\lambda_{k+1})$ and summing over all deformed Bethe roots for $\lambda_{k+1}$.
\end{rem}

\begin{proof}
  The arguments given below only apply for $2N < L$ due to \eqref{e:beta1_bounds}.
  The case $2N= L$ may be treated similarly given more technical constraints on the contours; see.
  
  The result follows by residue computations. We compute the integral on the left side of \eqref{e:contour_deformation_induction} with respect to the $w_{k+1}$-variable by computing the residues inside the region bounded by the contours, $C_R$, $C_{\epsilon_1}$ and $1 + C_{\epsilon_2}$.
  Residues may arise from two possible type of poles given by the following equations
  \begin{subequations}
    \subtag{P}
    \begin{align}
      q_z(w_{k+1}) = & \ 1 - z^L w_{k+1}^{-L}(1- w_{k+1}^{-1})^{-N} = 0, \label{e:pole_c1_v2} \\
      p_z( \vec{w}; \zeta )_{(w_1, \cdots, w_k) = (\lambda_1, \cdots, \lambda_k)} = & \ 1 + (-1)^N z^{-L} \zeta^L \prod_{i=1}^k(1-\lambda_i^{-1})  \prod_{i=k+1}^N(1-w_i^{-1}) = 0. \label{e:pole_b1_v2}
    \end{align}
  \end{subequations}
  It is straightforward to check that the rest of the terms in the integrand $I_k(w_{k+1}, \cdots, w_N; z)$ do not introduce poles or residues.
  Below, we start by checking the location of the poles coming from \eqref{e:pole_c1_v2} and \eqref{e:pole_b1_v2} in Step 1 and Step 2, i.e.~whether they lie inside the region bounded by the contours $C_R$, $C_{\epsilon_1}$ and $1 + C_{\epsilon_2}$.
  Then, we show that each pole is simple in Step 3, leading to a computation of the residues in Step 4 and Step 5.
  In Step 4, we compute the residue arising from \eqref{e:pole_c1_v2}, giving the main contribution in the induction step.
  In Step 5, we compute the residue arising from \eqref{e:pole_b1_v2} and show that apart from the case $k = N-1$, the residue vanishes after a contour deformation in $z$-variable, thus giving the indicator term in the right side of \eqref{e:contour_deformation_induction}.

  We denote $\bbA$ to be the region bounded by the contours $C_{R}$, $C_{\epsilon_1}$ and $1+ C_{\epsilon_2}$. This region is a sphere with three discs remove. In particular, the boundary of the region is given by three circles,
  \begin{equation}
    \partial \bbA = C_{R} - C_{\epsilon_1} - (1+ C_{\epsilon_2}),
  \end{equation}
  where the negative sign indicates the negative orientation on the circles. In the following, we first determine if the poles $w_{k+1} \in \bbC$, given by \eqref{e:pole_c1_v2} and \eqref{e:pole_b1_v2}, lie inside the region $\bbA$. This result will depend on the contours $C_{R}$, $C_{R'}$, $C_{\epsilon_1}$, $C_{\epsilon_2}$ and $C_{\epsilon'}$ since the solution to equations \eqref{e:pole_c1_v2} and \eqref{e:pole_b1_v2} depend on the variables $w_{k+2}, \cdots, w_{N}, z \in \bbC$ that lie on those contours; see \eqref{e:diffcontours} and \eqref{e:conditions} for a precise description of the contours.
  
  \paragraph{Step 1 (Poles from (P1))}
  Assume $w_{k+1}\in \bbC$ is given by \eqref{e:pole_c1_v2}.
  In this case, note that $w_{k+1}$ only depends on $z \in \bbC$, and we either have $z \in C_{R'}$ or $z \in C_{\epsilon'}$.
  In either case, we show that $w_{k+1} \in \bbA$.
  
  If $z \in C_{R'}$, we have
  \begin{equation}\label{e:P1_large_z}
    w_{k+1} = \eta z + \rho + o(1),
  \end{equation}
  so that $\eta$ is a $L^{th}$ root of unity and $\rho = N/L$. Also, by \eqref{e:powers} and \eqref{e:beta}, we have $R \gg R'$.
  Then, it follows that $w_{k+1} \in \bbA$. If $z \in C_{\epsilon'}$, there are two possible behaviours for $w_{k+1}$.
  We have
  \begin{equation}\label{e:P1_small_z}
    |w_{k+1}| \sim |z|^{1/(1-\rho)}, \quad |1- w_{k+1}^{-1}| \sim |z|^{d},
  \end{equation}
  with $d = L/N = 1/\rho$, according to \eqref{e:q_small_asymptotic}. Note that $w_{k+1} \rightarrow 0$ or $w_{k+1} \rightarrow 1$ as $z \rightarrow 0$. Either way, by \eqref{e:powers}, \eqref{e:beta1_bounds} and \eqref{e:beta2_bounds}, we have that $w_{k+1} \in \bbA$.
  
  \paragraph{Step 2 (Poles from (P2))}
  Assume $w_{k+1} \in \bbC$ is given by \eqref{e:pole_b1_v2}.
  In this case, note that $w_{k+1}$ depends on $\lambda_1, \cdots, \lambda_{k}, w_{k+2}, \cdots, w_{N}, z, \zeta \in \bbC$.
  In particular, we have $w_i \in \partial \bbA$, for $i = k+2, \cdots, N$, and $z \in C_{R'} \cup C_{\epsilon'}$.
  Additionally, note that the Bethe roots $\lambda_1, \cdots, \lambda_k \in \bbC$ depend only on $z \in \bbC$ since these are given by the equation $q_z(\lambda_j) = 0$, which correspond to the poles \eqref{e:pole_c1_v2}. Moreover, note that $\lambda_j \approx 1$ for some solutions of $q_z(\lambda_j)=0$, see \eqref{e:q_small_sol} and \eqref{e:q_small_asymptotic}.
  In the following, we show that $w_{k+1} \in \bbA$ only if $\lambda_j \approx 1$ for all $j=1, \cdots, k$. 
  
  First, we solve \eqref{e:pole_b1_v2} with respect to the $w_{k+1}$-variable and write $\mu$ for its root,
  \begin{equation}
    \label{e:P2_pole_mu}
    \restatableeq{\PoleMu}{
      \mu = \left( \frac{1}{1 + (-1)^N z^L \zeta^{-L} \prod_{i\neq k+1} (1- w_i^{-1})^{-1}}\right)_{(w_1, \cdots, w_k) = (\lambda_1, \cdots, \lambda_k)}.
    }
  \end{equation}
  Note that $\mu$ depends on $z$, $\zeta$, $\lambda_1, \cdots, \lambda_k$ and $w_{k+2}, \cdots, w_N$.
  Recall that we have $w_i\in \partial \bbA$ for $i =1, \cdots, k$.
  Then, we have
  \begin{equation}
    |1 - w_{i}^{-1}| = \begin{cases}
      1 + o(1),& \quad w_i \in C_{R}, \\
      \epsilon_1^{-1}(1 + o(1)),& \quad w_i \in C_{\epsilon_1}, \\
      \epsilon_2,& \quad w_i \in 1 +C_{\epsilon_2},
    \end{cases}
  \end{equation}
  for $i = k+2, \cdots, N$. Let $m_0$ (resp.~$m_1$ and $m_2$) be the number of $w_i$, for $i = k+2, \cdots, N$, with $w_i \in C_{R}$ (resp.~$w_i \in C_{\epsilon_1}$ and $w_i \in 1+ C_{\epsilon_2}$). Then, $m_0 + m_1 + m_2 = N-k-1$ with $m_0, m_1, m_2 \in \bbZ_{\geq 0}$. 
  
  Now, consider the behaviour of the Bethe roots $\lambda_j\in \calQ(z)$, $j =1, \cdots, k$.
  Since the $\lambda_j$ are determined by $q_z(\lambda_j) = 0$ just as in \eqref{e:pole_c1_v2}, we have the same behaviour as in Step 1 above, which depends on $z \in C_{R'}$ or $z \in C_{\epsilon'}$.
  We consider both cases below. 
  
  If $z \in C_{R'}$, we have
  \begin{equation*}
    \lambda_j  = \eta_j\, z + \rho + o(1), 
  \end{equation*}
  for $j = 1, \cdots, k$ and $\eta_j$ an $L^{th}$ root of unity.
  Then, we have
  \begin{equation}
    \label{e:P2_large_z}
    |1 - \mu^{-1} |
    = \left| z^L \zeta^{-L} \prod_{i\neq k+1} (1- w_i^{-1})^{-1}\right|
    \sim (R')^{L} \epsilon_1^{m_1} \epsilon_2^{-m_2}
    = (R')^{L  - m_1 \beta_1 + m_2 \beta_2}
  \end{equation}
  with the last equality due to \eqref{e:powers}.
  Moreover, since $\beta_1, \beta_2 >0$, we have
  \begin{equation}\label{e:P2_large_z_ineq}
    L - m_1 \beta_1 + m_2 \beta_2 \geq L - (N-k-1) \beta_1
    = N (d - \beta_1) + (k + 1) \beta_1 > \beta_1 .
  \end{equation}
  In particular, this means that $|1 - \mu^{-1}| \gg (R')^{\beta_1}$ and $\mu \ll (\epsilon')^{\beta_1} = \epsilon_1$.
  Thus, if $z \in C_{R'}$, we have $\mu \notin \bbA$.
  
  If $z \in C_{\epsilon'}$, we have
  \begin{equation*}
    |\lambda_j| \sim |z|^{1/(1-\rho)}, \quad \text{or} \quad |1 - \lambda_j^{-1}| \sim |z|^{d},
  \end{equation*}
  for $j =1, \cdots, k$.
  In particular, $\lambda_j \in \calQ_0(z)$ or $\lambda_j \in \calQ_1(z)$, see \eqref{e:q_small_sol} and \eqref{e:q_small_asymptotic}.
  Let $n_0$ (resp.~$n_1$) be the number of $\lambda_j$, for $j=1, \cdots, k$, with $\lambda_j \in \calQ_0(z)$  (resp.~$\lambda_j \in \calQ_1(z)$).
  Then, we have 
  \begin{equation}
    \label{e:P2_small_z}
    |1 - \mu^{-1}|
    = \left| z^L \prod_{i\neq k+1} (1- w_i^{-1})^{-1}\right|
    \sim (\epsilon')^{L + n_0/(1-\rho) - n_1 d + m_1 \beta_1 - m_2 \beta_2 }
  \end{equation}
  by \eqref{e:powers}.
  Below, we consider two cases $n_0 = 0$ and $n_0 > 0$ separately.

  If $n_0>0$, we bound the exponent in \eqref{e:P2_small_z} as follows
  \begin{align*}
    L + \frac{n_0}{1-\rho} - n_1 d + m_1 \beta_1 - m_2 \beta_2
    & > L + \frac{1}{1-\rho} - (k-1)d  - (N-k-1) \beta_2 \\
    &= (L- (N-1) \beta_2) + (k-1)(\beta_2-d)+ \frac{1}{1-\rho}   + \beta_2\\
    & >\beta_2,
  \end{align*}
  with the first inequality due to setting $(n_0, n_1)= (1, k-1)$ and $(m_1, m_2) = (0, N-k-1)$, and the second inequality due to \eqref{e:beta2_bounds}. In particular, this means that
  \begin{equation}
    |1 -\mu^{-1}| \ll (\epsilon')^{\beta_2} = \epsilon_2.
  \end{equation}
  Thus, we have $\mu \notin \bbA$ if $z \in C_{\epsilon'}$ and $n_0 >0$.
  
  If $n_0=0$, the exponent in \eqref{e:P2_small_z} is given by
  \begin{equation}
    L + \frac{n_0}{1-\rho} - n_1 d + m_1 \beta_1 - m_2 \beta_2 = L - k d + m_1 \beta_1 - m_2 \beta_2 = (N-k) d + m_1 \beta_1 - m_2 \beta_2,
  \end{equation}
  with the first equality due to setting $(n_0, n_1) = (0, k)$ and the second equality by recalling $d = L/N$.
  Again, we have to consider two cases, $m_2 < N-k-1$ and $m_2 =N-k-1$.
  If $m_2 < N-k-1$, we have
  \begin{equation}
    (N- k)d +m_1 \beta_1 - m_2 \beta_2 \geq k(\beta_2-d) + (L - (N-1) \beta_2) + \beta_2  >  \beta_2,
  \end{equation}
  with the first inequalities due to setting $(m_0, m_1, m_2) = (1, 0, N-k-2)$ and recalling $d = L/N$, and the second inequality due to the bounds $d< \beta_2 < L/(N-1)$ given by \eqref{e:beta2_bounds}.
  Then, by this inequality and \eqref{e:P2_small_z} we have
  \begin{equation}
    |1 - \mu^{-1}| \ll (\epsilon')^{\beta_2}= \epsilon_2,
  \end{equation}
  which means that $\mu \notin \mathbb{A}$ if $m_2 < N-k-1$ and $n_0=0$. The only remaining case is $n_1 = k, m_0 = 0, m_1 = 0 , m_2 = N-k-1$.
  In this case, we have
  \begin{equation}
    (N-k) d + m_1 \beta_1 - m_2 \beta_2= (N-k)( d-\beta_2) + \beta_2 < \beta_2,
  \end{equation}
  with the inequality due to the bound $d< \beta_2$ given by \eqref{e:beta2_bounds}.
  Then, by this inequality and \eqref{e:P2_small_z}, we have
  \begin{equation}
    |1 - \mu^{-1}| \gg (\epsilon')^{\beta_2}= \epsilon_2,
  \end{equation}
  which means that $\mu \in \bbA$.
  Thus, in conclusion, we have that $w_{k+1} \in \bbA$ if $n_0 = 0$ and $m_0 = m_1 = 0$, or equivalently, if $|1-\lambda_j| \sim |z|^d$ for all $j = 1, \cdots, k$ and $w_i \in 1 + C_{\epsilon_2}$ for all $i = k+2, \cdots, N$.
  
  \paragraph{Step 3 (Double roots from (P1) and (P2))}
  We show that the poles due to \eqref{e:pole_c1_v2} and \eqref{e:pole_b1_v2} are simple.
  In particular, we show that the solution sets for \eqref{e:pole_c1_v2} and \eqref{e:pole_b1_v2} are disjoint.
  The other cases that might lead to higher order poles, i.e.~$p_z(\vec{w})$ or $q_z(w)$ having repeated roots, are easy to rule out and we omit the details.
  
  First, we may show that some of the solutions of \eqref{e:pole_c1_v2} and \eqref{e:pole_b1_v2} are disjoint by showing that some solutions lie inside and some solutions lie outside the region $\bbA$. If $w_{k+1}$ is a solution of \eqref{e:pole_c1_v2}, we know that $w_{k+1} \in \bbA$, by Step 1, for $z \in C_{R'} \cup C_{\epsilon'}$.
  If $w_{k+1}$ is a solution to \eqref{e:pole_b1_v2}, we know that $w_{k+1} \in \bbA$ only if $z \in C_{\epsilon'}$, $\lambda_j \in \calQ_1(z)$ for $j =1, \cdots, k$, and $w_i \in 1 + C_{\epsilon_2}$ for $i = k+2, \cdots, N$.
  In all other cases, we have that the solution sets are disjoint. So, below, we only consider the case with $z \in C_{\epsilon'}$, $\lambda_j \in \calQ_1(z)$ for $j =1, \cdots, k$, and $w_i \in 1 + C_{\epsilon_2}$ for $i = k+2, \cdots, N$.
  
  If $w_{k+1}$ is a solution to \eqref{e:pole_c1_v2}, we have
  \begin{equation}
    |w_{k+1}| = (\epsilon')^{1/(1-\rho)}(1+ o(1)), \quad \text{or}\quad |1 - w_{k+1}^{-1}| = (\epsilon')^d(1+ o(1))
  \end{equation}
  by \eqref{e:q_small_asymptotic}. On the other hand if $w_{k+1}$ is a solution to \eqref{e:pole_b1_v2}, we have
  \begin{equation}
    \label{e:w_k+1_exponent}
    |1- w_{k+1}^{-1}| = (\epsilon')^{(N-k)(d-\beta_2) +\beta_2}(1+o(1))
  \end{equation}
  by \eqref{e:P2_small_z} and setting $(n_0, n_1, m_0, m_1, m_2) =(0, k, 0, 0, N-k-1)$, as it was shown in Step 2 to be a necessary condition for $w_{k+1} \in \bbA$.
  Moreover, note that it is necessary for the exponents $d$ and $(N-k)(d- \beta_2) + \beta_2$ to be equal for the solutions to \eqref{e:pole_c1_v2} and \eqref{e:pole_b1_v2} to be the same.
  Then, the case $k =N-1$ is the only case when the solutions to \eqref{e:pole_c1_v2} and \eqref{e:pole_b1_v2} might agree due to the exponents in the previous formulas.
  In particular, this case is equivalent to the system of equations
  \begin{equation}
    \begin{cases}
      p_{z}(\vec{w}; \zeta) = 0,\\
      q_{z}(w_i) = 0, \quad & i =1, \cdots, N,
    \end{cases}
  \end{equation}
  with $z \in C_{\epsilon'}$ and $|w_i| = |z|^d(1 + o(1))$ for $i=1, \cdots, N$.
  By \Cref{l:bethe_roots_location}, the solution set for this system of equations is empty. Therefore, it follows that the solution sets \eqref{e:pole_c1_v2} and \eqref{e:pole_b1_v2} are disjoint.
  
  \paragraph{Step 4 (Residues from (P1))}
  
  We compute the residues due to the poles from \eqref{e:pole_c1_v2}.
  Let $\lambda_{k+1} \in \bbC$ be a solution to \eqref{e:pole_c1_v2} or, equivalently, $\lambda_{k+1} \in \calQ(z)$, see \eqref{e:bethe_roots_deformed}.
  We know, from Step 1 above, that $\lambda_{k+1} \in \bbA$ if $z \in C_{R'} \cup C_{\epsilon'}$.
  By Step 3, the terms in the integrand $I_{k}$ are regular at the pole $w_{k+1} = \lambda_{k+1}$, except for the simple pole due to $q_{z}(w_{k+1})$.
  Then, we have
  \begin{equation}
    \Res_{w_{k+1} = \lambda_{k+1}} \frac{1}{q_{z}(w_{k+1})} = \frac{1}{q'_{z}(\lambda_{k+1})}.
  \end{equation}
  By computing the residues from $\lambda_{k+1} \in \calQ(z)$, we obtain,
  \begin{equation}
    \sum_{\lambda_{k+1} \in \calQ(z)} \Res_{w_{k+1} = \lambda_{k+1}} \frac{I_{k}(w_{k+1}, \cdots, w_N; z , \zeta , t)}{w_{k+1}}
    = I_{k+1}(w_{k+2}, \cdots, w_N; z , \zeta , t),
  \end{equation}
  for $k= 0, \cdots, N-1$. This gives rise to the first term on the right side of \eqref{e:contour_deformation_induction}.
  
  \paragraph{Step 5 (Residues from (P2) and $k \neq N-1$)}
  We compute the residues due to poles from \eqref{e:pole_b1_v2} with $k \neq N-1$.
  We will show that these residues vanish and do not contribute to the right side of \eqref{e:contour_deformation_induction}.
  Let $\mu \in \bbC$ be a solution to \eqref{e:pole_b1_v2} so that $\mu \in \bbA$.
  In Step 2, we obtained the conditions $n_0 = m_0 = m_1 = 0, n_1 = k$ and $m_2 = N-k-1$.
  This means that
  \begin{equation}
    \begin{cases}
      z \in C_{\epsilon'}, \\
      w_{i} \in 1 + C_{\epsilon_2}, \quad & \text{for } i = k+2, \cdots, N, \\
      \lambda_j  \in \calQ_1(z) , \quad & \text{for } j = 1, \cdots, k,
    \end{cases}
  \end{equation}
  with $\calQ_1(z)$ given by \eqref{e:q_small_sol}.
  Then, by Step 3, the resulting residue at $w_{k+1} = \mu$ is given by
  \begin{equation}
    \label{e:step5}
    \oint_{C_{\epsilon'}} \frac{\dd z}{z} \oint_{1 + C_{\epsilon_2}} \frac{\dd w_{k+2}}{w_{k+2}} \cdots \oint_{1 + C_{\epsilon_2}} \frac{\dd w_{N}}{w_{N}}
    \Res_{w_{k+1} = \mu}
    \frac{ I_{k}(w_{k+1}, \cdots, w_{N}; z , \zeta , t) }{ w_{k+1} },
  \end{equation}
  with
  \begin{align}
    \Res_{w_{k+1} = \mu} \frac{ I_{k}(w_{k+1}, \cdots, w_{N}; z , \zeta , t) }{w_{k+1} }
    = \sum_{\SumLambdasR[k]}
    \Bigg( \frac{ (1 - w_{k+1}) \sum_{\sigma \in S_N} A_{\sigma} \prod_{i =1}^N w_{\sigma(i)}^{y_{\sigma(i)} - x_i}  e^{t E(\vec{w})} }%
    { \prod_{i=k+1}^N q_z(w_i) \prod_{i=1}^{k} w_i q_z'(w_i)} \Bigg)_{\subalign{ & (w_1, \cdots, w_k) = (\lambda_1, \cdots, \lambda_k) \\  & \phantom{(} w_{k+1} = \mu }}, \label{e:step5_residue}
  \end{align}
  which is a consequence of the following computation,
  \begin{equation}
    \label{e:pz_residue}
    \restatableeq{\PzResidue}{
      \Res_{w_{k+1} = \mu } \bigg( \frac{1}{w_{k+1} \; p_z(\vec{w} ; \zeta )} \bigg)
      = \left( \frac{1}{ \partial_{k+1} [ w_{k+1} \; p_z ( \vec{w} ; \zeta )] } \right)_{w_{k+1} = \mu}
      = 1 - \mu,
    }
  \end{equation}
  so that $\partial_{k+1} p_z(\vec{w} ; \zeta )$ is the partial derivative of $p_z(\vec{w} ; \zeta )$ with respect to $w_{k+1}$.  
  
  We show below that this residue term vanishes.
  In particular, we show that the integrand can be bounded by some positive power of $\epsilon'$.
  This means that the resulting residue in \eqref{e:step5} must equal zero since it is arbitrarily close to 0 by taking $\epsilon'$ small enough.

  First, we make sure that when taking $\epsilon'$ to 0, we do not cross any poles.
  For $i = k+2, \cdots, N$, we have
  \begin{equation}
    \label{e:no_pole_z_deform_1}
    |1-q_z(w_i)|
    = |z^L w_{i}^{-L}(1- w_i^{-1})^{-N}|
    \approx (\epsilon')^{L - \beta_2 N},
  \end{equation}
  due to $w_i \in 1 + C_{\epsilon_2}$.
  Moreover, \eqref{e:beta2_bounds} gives $L - \beta_2 N < 0$, meaning that for $\epsilon'$ small enough, $|1 - q_z(w_i)| \gg 1$ for $i = k+2, \cdots, N$.
  In other words, for $\epsilon'$ small enough, $q_z(w_i)$ stays away from zero.
  Similarly, for $i = k+1$ with $w_{k+1} =\mu$, we have
  \begin{equation}
    \label{e:no_pole_z_deform_2}
    |1 - q_z(w_{k+1})|
    = |z^L w_{k+1}^{-L}(1- w_{k+1}^{-1})^{-N}|
    \sim (\epsilon')^{ L - N( (N-k)(d-\beta_2) + \beta_2 ) }
    = (\epsilon')^{(N-k-1)(\beta_2 N - L)},
  \end{equation}
  where we recall from \eqref{e:w_k+1_exponent} that
  \begin{equation*}
    |1 - w_{k+1}^{-1}| \sim (\epsilon')^{(N-k)(d-\beta_2) + \beta_2} \ll 1,
  \end{equation*}
  resulting from the fact that the exponent has the following bound,
  \begin{equation*}
    (N-k)(d-\beta_2) + \beta_2 = k (\beta_2 -d) + L - (N-1) \beta_2 >0.
  \end{equation*}
  Note that, again by \eqref{e:beta2_bounds} and the condition $k < N-1$, the exponent in \eqref{e:no_pole_z_deform_2} satisfies
  \begin{equation}
    \label{e:step5_positive_exponent}
    (N-k-1)(\beta_2 N - L) > 0.
  \end{equation}
  This means that $|1 - q_z(w_{k+1})| \ll 1$ and hence $q_z(w_{k+1}) \neq 0$ when $\epsilon'$ is small enough.
  It is important to note that the bound in \eqref{e:step5_positive_exponent} is only possible when $N \neq k-1$ and in the case $N = k-1$, we actually pick up a residue, which will be discussed later in Step 6.

  Now, we show that the residue term given in \eqref{e:step5_residue} can be bounded by some power of $\epsilon'$.
  Let us start with the most complicated term: $A_\sigma$, $\sigma \in S_N$.
  Recall that
  \begin{equation}
    \label{e:step5_roots}
    \begin{cases}
      |1 - \lambda_i^{-1}| \sim (\epsilon')^d \quad & i =1, \cdots, k, \\
      |1- w_i^{-1}| = \epsilon_2 = (\epsilon')^{\beta_2}, \quad & i = k+2, \cdots, N, \\ 
      |1-w_{k+1}^{-1}| \sim (\epsilon')^{(N-k)(d-\beta_2) + \beta_2},
    \end{cases}
  \end{equation}
  since $\lambda_i \in \calQ_1(z)$, $w_i \in 1 + C_{\epsilon_2}$ and $w_{k+1}$ is a solution to \eqref{e:pole_b1_v2}.
  Then, given $\sigma \in S_N$, recall the expansion of $A_\sigma$ given in \eqref{e:bethe_coeffs_zeta} and write
  \begin{align*}
    |A_\sigma(\vec{w})|
    & = \prod_{j=1}^N | 1 - w_{\sigma(j)}^{-1} |^{\sigma(j) - j}
      = \prod_{j=1}^N | 1 - w_{j}^{-1} |^{j - \sigma^{-1}(j)},
  \end{align*}
  where the evaluation $(w_1, \cdots, w_k) = (\lambda_1, \cdots, \lambda_k)$ is omitted in the above expression, and also below, to make the notation more concise.
  Using the orders in \eqref{e:step5_roots}, the fact that $\beta_2 > d > (N-k)(d-\beta_2) + \beta_2$, and the rearrangement inequality, we find
  \begin{align*}
    |A_\sigma(\vec{w})|
    \leq \sup_{\sigma \in S_N} \prod_{j=1}^N | 1 - w_{j}^{-1} |^{j - \sigma^{-1}(j) }
    \sim \prod_{j=1}^{N} | 1 - w_{j}^{-1} |^{j - \sigma_0(j)},
  \end{align*}
  where $\sigma_0 \in S_N$ is such that
  \begin{align*}
    \sigma_0( \{ 1, \cdots, k \} ) & = \{ 2, \cdots, k+1 \}, \\
    \sigma_0( k+1 ) & = 1, \\
    \sigma_0( \{ k+2, \cdots, N \} ) & = \{ k+2, \cdots, N \}.
  \end{align*}
  Hence, we obtain
  \begin{equation}
    \label{e:Asigma_bound}
    |A_\sigma(\vec{w})| < \mathrm{Const} \times (\epsilon')^{(N-k-1)(d-\beta_2)k}.
  \end{equation}
  for some constant $\mathrm{Const}$ independent of $\epsilon'$. Let us analyse the other terms.
  We first note that due to \eqref{e:step5_roots},
  \begin{equation}
    \label{e:ws_bounds}
    \begin{cases}
      \lambda_i \approx 1, & \forall i = 1, \cdots, k, \\
      w_{i} \approx 1, & \forall i = k+1, k+2, \cdots, N.
    \end{cases}
  \end{equation}
  Besides, \eqref{e:no_pole_z_deform_1} and \eqref{e:no_pole_z_deform_2} also give that
  \begin{equation}
    \label{e:qz_bound}
    \forall i = 1, \cdots, k, \quad |q_z(w_i)| \approx |1 - q_z(w_i)| \approx (\epsilon')^{L - \beta_2 N}
    \qquad \text{and} \qquad |q_z(w_{k+1})| \approx  1.
  \end{equation}
  The residue computation at of $q_z$ at $\lambda_i$, for $i = 1, \cdots, k$, gives
  \begin{equation}
    \label{e:residue_P1_bound}
    |\lambda_i q'_z(\lambda_i)| = \left| \frac{L \lambda_i - (L-N)}{ \lambda_i - 1 } \right| \approx (\epsilon')^{-d}, \quad \forall i = 1, \cdots, k.
  \end{equation}

  Putting all the above bounds \eqref{e:Asigma_bound}, \eqref{e:ws_bounds} \eqref{e:qz_bound} and \eqref{e:residue_P1_bound} together, we obtain
  \begin{align*}
    \left| \Res_{w_{k+1} = \mu} I_{k}(w_{k+1}, \cdots, w_{N}; z; t) \right|
    & < \mathrm{Const} \times 
      \frac{ (\epsilon')^{(N-k)(d-\beta_2) + \beta_2} (\epsilon')^{(N-k-1)(d-\beta_2)k} }%
      {(\epsilon')^{(L-\beta_2 N)(N-k-1)} (\epsilon')^{-dk}} \\
    & = \mathrm{Const} \times  (\epsilon')^{ (N-k-1)^2 (\beta_2 - d) + d (k+1) }
      \leq \mathrm{Const} \times  (\epsilon')^{ d (k+1) }
  \end{align*}
  Thus, the integral in \eqref{e:step5} can be bounded by
  \begin{align*}
    \left| \oint_{C_{\epsilon'}} \frac{\dd z}{z} \oint_{1 + C_{\epsilon_2}} \frac{\dd w_{k+2}}{w_{k+2}} \cdots \oint_{1 + C_{\epsilon_2}} \frac{\dd w_{N}}{w_{N}} \Res_{w_{k+1} = \mu} I_{k}(w_{k+1}, \cdots, w_{N}; z, \zeta t) \right|
    &< \mathrm{Const} \times  (\epsilon')^{ N-k-1 + d (k+1) }\\
    &= \mathrm{Const} \times  (\epsilon')^{N + (d-1)(k+1)}.
  \end{align*}
  Since we may take $\epsilon' > 0$ arbitrarily small, it follows that the integral vanishes.
  Therefore, the residues vanish for the poles due to \eqref{e:pole_b1_v2} with $k \neq N-1$.

  \paragraph{Step 6 (Residues from (P2) and $k =N-1$)}
  We compute the residue due to the poles form \eqref{e:pole_b1_v2} with $k = N-1$.
  Note that the analysis from Step 5 fails since the term in \eqref{e:no_pole_z_deform_2} is of constant order and thus $q_z(w_{k+1})$ might be 0.
  In this case, the residue we obtain corresponds to the second term on the right side of \eqref{e:contour_deformation_induction}.
  
  Let $\mu \in \bbC$ be a solution of \eqref{e:pole_b1_v2} with respect to the variable $w_{N}$ so that $\mu \in \bbA$.
  By Step 2 above, we must have that $\lambda_j \in \calQ_1(z)$ for all $j = 1, \cdots, N-1$ and $z \in C_{\epsilon'}$.
  By Step 3, the terms in the integrand are regular at the pole $w_N = \mu$, except for the simple pole due to $p_z(\vec{w}; \zeta )$.
  Then, we define $u_0(\zeta)$ as the contour integral with respect to $\frac{\dd z}{z}$ of the residue of $I_{N-1}(w_N; z; t) / w_N$ at $w_N = \mu$,
  \begin{equation}
    \label{e:u0_definition}
    u_0(\zeta) := -\oint_{C_{\epsilon'}} \frac{\dd z}{z} \Res_{w_N = \mu} \frac{ I_{N-1}(w_N; z , \zeta , t) }{w_N}.
  \end{equation}
  A priori, the above integral depends on $t$ but, for $\zeta^L =1$, we will see in \Cref{l:u0_residue} that $u_0$ is indeed independent of $t$.
  This establishes the second term on the right side of \eqref{e:contour_deformation_induction}.
\end{proof}

\begin{rem}\label{r:conditions_p}
  The result of \Cref{l:contour_deformation_induction} may be extended to the case $2N \leq L$ by changing the radius of the contours as follows:
  \begingroup
  \renewcommand{\theequation}{Cond'}
  \begin{subequations}
    \subtag{Cond'}
    \label{e:conditions_p}
    \begin{empheq}[left = \empheqlbrace]{align}
      & \epsilon' \ll 1 \text{ is arbitrarily small and } R' = \alpha_{0}/\epsilon', \label{e:z_contour_p} \\
      & R = (R')^{\beta}, \epsilon_1 = \alpha_1 (\epsilon')^{\beta_1}, \epsilon_2 = (\epsilon')^{\beta_2}, \label{e:powers_p}  \\
      & \beta > 1,\label{e:beta_p}\\
      & \alpha_1 < 2^{-N/(L-N)}, \quad (1 + \epsilon_1)(1+ \epsilon_1^{N-1}) < \alpha_{0}^L \alpha_1^N,\label{e:alpha}\\
      & \tfrac{1}{1-\rho} = \tfrac{L}{L-N}\leq \beta_1 \leq d = \tfrac{L}{N}, \label{e:beta1_bounds_p}\\
      &  d  < \beta_2 < \tfrac{L}{N-1}. \label{e:beta2_bounds_p}
    \end{empheq}
  \end{subequations}
  \endgroup
\end{rem}

In particular, for \eqref{e:conditions} and for \eqref{e:beta1_bounds} in particular, the root given by the left side of \eqref{e:P1_small_z} may not lie inside $\bbA$ and the inequality \eqref{e:P2_large_z_ineq} does not hold for $m_1 = N-1$.
By introducing \eqref{e:conditions_p}, we can treat these two exceptional cases and the rest of the arguments are the same. We omit the details of the case $2N=L$ and only consider the $2N < L$ case to make the arguments less cumbersome.
Additionally, we note that \Cref{l:contour_deformation_induction} (and \Cref{l:CI_zero}) is a key result and, accordingly, the rest of the results in the paper extend to the case $2N = L$ by extending \Cref{l:contour_deformation_induction} and \Cref{l:CI_zero} to the case $2N = L$ by using the conditions \eqref{e:conditions_p} on the contours. 

\subsection{Expressions for the $u_0$ function.}

We give a formula for the $u_0$ function in \Cref{l:u0_residue}. This function is defined as an integral of a residue term in \eqref{e:u0_definition}. In the following \Cref{l:u0_residue}, we give a formula for the $u_0$ function by evaluating the residue. Additionally, in \Cref{l:u0_alt}, we give an alternate expression for the $u_0$ function that will be more useful for further computations.

\begin{lemma}
  \label{l:u0_residue}
  The function $u_0 : \bbU \to \bbC$ in \Cref{p:w_residues} is given by the following formula
  \begin{align}
    \label{e:u0_CI}
    u_0(\zeta)
    & = \oint_{C_{\epsilon'}} \frac{\dd z}{z} \CISumAsymmetricT,
  \end{align}
  where $\mu$ is a function in $\zeta$, and is defined in \Cref{e:P2_pole_mu}.
  Moreover, we have
  \begin{align*}
    u_0(\zeta) & = {L \choose N}^{-1}
  \end{align*}
  for $\zeta^L = 1$.
\end{lemma}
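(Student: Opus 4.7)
The plan is to prove the two statements of the lemma separately. For the first identity~\eqref{e:u0_CI}, I would start from the definition~\eqref{e:u0_definition} and directly evaluate the residue of $I_{N-1}(w_N; z, \zeta, t)/w_N$ at $w_N = \mu$. By Step~3 of the proof of \Cref{l:contour_deformation_induction}, only the factor $p_z(\vec{w}; \zeta)$ in the denominator of $I_{N-1}$ produces a simple pole at $w_N = \mu$, while the remaining factors $q_z(w_i)$ for $i \leq N-1$, the residue denominators $w_i q'_z(w_i)$, and the whole numerator are regular and nonvanishing there. Applying the residue identity~\eqref{e:pz_residue} yields a prefactor $(1-\mu)$ multiplying the remaining integrand evaluated at $w_N = \mu$. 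Moreover, Step~2 of the same proof shows that $\mu$ lies inside the region $\mathbb{A}$ only when $\lambda_j \in \mathcal{Q}_1(z)$ for each $j = 1, \dots, N-1$, which restricts the sum over deformed Bethe roots to $\mathcal{Q}_1(z)^{N-1}$ and produces~\eqref{e:u0_CI} after accounting for the sign in~\eqref{e:u0_definition}.

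For the second statement, assume $\zeta^L = 1$. The key algebraic observation is that the equation $p_z(\vec{w}; \zeta) = 0$ defining $\mu$ then reads $\prod_{i=1}^N (1 - w_i^{-1}) = (-1)^{N+1} z^L$ with $(w_1,\dots,w_{N-1}) = (\lambda_1,\dots,\lambda_{N-1})$ and $w_N = \mu$. Combined with the equations $q_z(\lambda_j) = 0$, a short manipulation shows that $\mu$ itself must satisfy $q_z(\mu) = 0$, so that $\mu \in \mathcal{Q}_1(z)$. Hence, on the support of the integrand, $(\lambda_1, \dots, \lambda_{N-1}, \mu)$ is nothing other than an ordered $N$-tuple of distinct elements of $\mathcal{Q}_1(z)$, and the sum in~\eqref{e:u0_CI} becomes (up to a combinatorial factor) a symmetric sum over $\mathcal{Q}_1(z)^N$.

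Next, I would shrink the contour $C_{\epsilon'}$ to $z = 0$ and compute the residue at the origin by series expansion. By~\eqref{e:q_small_asymptotic}, the $N$ elements of $\mathcal{Q}_1(z)$ behave like $\lambda_k = 1 + \omega_k z^d + o(z^d)$, where $\omega_k$ ranges over the $N$-th roots of a fixed quantity determined by $z$; accordingly, $\lambda_k q'_z(\lambda_k) = (L\lambda_k - (L-N))/(\lambda_k - 1) \sim N(\lambda_k - 1)^{-1}$ contributes $z^{-d(N-1)}$ in the denominator. Expanding the determinantal form~\eqref{e:bethe_determinantal} of $u_{\vec{\lambda}}$ and $h_{\vec{\lambda}}(Y;\zeta)$, all $X$- and $Y$-dependence enters only through powers of $(\lambda_k - 1) \to 0$, and $e^{tE(\vec{\lambda})} \to 1$ since the eigenvalue $E$ vanishes at the degenerate root. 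The main obstacle is showing that, after summing the leading-order terms over permutations $\sigma \in S_N$ and over the symmetric choice of which root plays the role of $\mu$, all the $X$-, $Y$- and $t$-dependence cancels out and the resulting numerical constant is exactly $\binom{L}{N}^{-1}$; this identification also yields the $t$-independence claim automatically. I expect this last combinatorial cancellation, which reflects the fact that the $u_0$ term encodes the uniform stationary distribution of the conservative Markov process, to be the most delicate bookkeeping step in the proof.
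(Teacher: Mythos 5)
For the first identity~\eqref{e:u0_CI}, your argument matches the paper's: you apply the residue computation~\eqref{e:pz_residue} at $w_N=\mu$, invoke Step~2 of \Cref{l:contour_deformation_induction} to restrict the sum to $\calQ_1(z)^{N-1}$, and track the sign from~\eqref{e:u0_definition}. That part is fine.

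The second part contains a genuine error. You claim that combining $p_z(\vec{w};\zeta)=0$ (with $w_N=\mu$) with $q_z(\lambda_j)=0$ for $j\leq N-1$ forces $q_z(\mu)=0$, so that $\mu\in\calQ_1(z)$. This is false, and in fact the whole lemma hinges on it being false. If $\mu$ were a Bethe root for $z$ on the contour $C_{\epsilon'}$, then $(\lambda_1,\dots,\lambda_{N-1},\mu)$ would solve the full decoupled Bethe equations~\eqref{e:dBE}, but \Cref{l:bethe_roots_location}(3) shows that there are \emph{no} such solutions for $0<|z|<\epsilon$ with $\epsilon$ small. What the paper actually computes, via \Cref{l:limit_ratio}, is $q_z(\mu) = (-1)^{N-1} N \binom{L}{N} z^L + \calO(z^{L+d})$: small, but nonzero, of precisely the same order $z^L$ as the numerator factor $\prod_{i=1}^N(w_i-1)$ evaluated at the $\calQ_1$-roots and at $\mu$. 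It is exactly this balance that yields the finite, nonzero limit $N^{-1}\binom{L}{N}^{-1}$. If $q_z(\mu)$ were identically zero, the integrand would have a double pole at $w_N=\mu$ in the residue step and the subsequent analysis would be structurally different; if one instead treats $\mu$ as a root and drops the $1/q_z(\mu)$ factor, the numerator is $\calO(z^L)$ while the denominator contributes only $z^{-d(N-1)}$, and the limit $z\to0$ gives $0$, not $\binom{L}{N}^{-1}$.

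You also defer the final cancellation as "delicate bookkeeping". In the paper that step is not bookkeeping but the content of two separate lemmas: \Cref{l:limit_ratio} pins down the limit of $\prod_{i=1}^N(w_i-1)/q_z(\mu)$ to $N^{-1}\binom{L}{N}^{-1}$ (independent of the choice of $\vec\lambda$), and \Cref{l:sum_permutations} evaluates $\sum_{\vec\lambda\in\calQ_1^{\neq}(z)} u_{(\vec\lambda,\mu)}(X)\to N^N$ by recognizing a Vandermonde-type determinant in roots of unity; the factor $N^{N-1}$ from the $N-1$ factors $\lambda_i q_z'(\lambda_i)\to N/(\lambda_i-1)$ then combines with these to give $\binom{L}{N}^{-1}$. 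Note also that the $e^{tE}\to1$ argument relies on all roots $\lambda_i\to1$ and $\mu\to1$, which is correct, but the constancy of the limit in $\vec\lambda$ is itself a nontrivial claim proved in \Cref{l:limit_ratio} using the Fuss--Catalan expansion of \Cref{l:lambda_expansion}.
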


\begin{proof}
  First, recall that $u_0$ is defined in \eqref{e:u0_definition}. Then, \eqref{e:u0_CI} follows from a similar residue computation as in \eqref{e:pz_residue}; we omit the details. For the second part, assume $\zeta \in \bbU$ is such that $\zeta^L = 1$.
  We write
  \begin{align*}
    u_0(\zeta)
    & = \lim_{z \to 0} \CISumAsymmetricT \\
    & = \lim_{z \to 0}   \sum_{\SumLambdasR[N-1]}
      \Bigg(
      \frac{ \prod_{i=1}^{N} ( w_i -1) }{q_z(w_N)}
      \frac{ u_{\vec{w}}(X) }{ N^{N-1} } 
      \Bigg)_{\subalign{ & (w_1, \dots, w_{N-1}) = (\lambda_1, \dots, \lambda_{N-1}) \\  & \phantom{(} w_{N} = \mu }}.
  \end{align*}
  The first equality above can be obtained using the following two facts.
  First, we can easily see that the integrand of \eqref{e:u0_CI} does have a pole at $z=0$; moreover, since the integrand is holomorphic around 0 and is not identically zero, there is no other singularity in an arbitrarily small neighborhood.
  Second, the contour integral, which corresponds to a residue computation, can be translated into a limit if the limit is not zero, that is, the pole is of order 1. We provide more details about this residue computation below.

  In the second equality, we use the property that when $z$ tends to 0, all the $w_i$'s tend to 1, implying $E(\vec{w}) \to 1$ under the condition $(w_1, \cdots, w_{N-1}) = (\lambda_1, \cdots, \lambda_{N-1}) \in \calQ_1(z)^{N-1}$.
  For $1 \leq i \leq N-1$, we rewrite the quantity $w_i q_z'(w_i) = \frac{L w_i - (L-N)}{w_i - 1}$ and use the fact that $\lim_{z \to 0} (L \lambda_i(z) - (L-N)) = N$.
  
  To conclude the proof, first note that if we have $w_i = w_j$ for $i \neq j$, then $u_{\vec{w}}(X) = 0$ due to its determinantal structure.
  Hence, we can reduce the summation set from $\{ (\lambda_i)_{1 \leq i \leq N-1} : \lambda_i \in \calQ_1(z) \}$ to
  \begin{equation}
    \label{e:disjoint_Q1}
    {\calQ}_1^\neq (z) := \{ (\lambda_i)_{1 \leq i \leq N-1} \in \bbC^{N-1} \mbox{ such that } \lambda_i \in \calQ_1 (z) \mbox{ are pairwise distinct} \}.
  \end{equation}
  \Cref{l:limit_ratio} shows that for any choice of $(\lambda_i)_{1 \leq i \leq N-1}$ in $\calQ_1^\neq(z)$, the following limit is the same and has the value
  \begin{align*}
    \lim_{z \to 0}
    \Bigg( \frac{\prod_{i=1}^{N-1} (\lambda_i -1) (w_N-1)}{q_z(w_N)} \Bigg)_{w_N = \mu}
    = N^{-1} {L \choose N}^{-1}.
  \end{align*}
  Additionally, by \Cref{l:sum_permutations}, we may compute the summation
  \begin{align*}
    \sum_{\vec{\lambda} \in \calQ_1^\neq (z)} u_{(\vec{\lambda}, w_N)}(X) = N^N.
  \end{align*} 
  Thus, the second equality of this lemma follows from the previous identities.
\end{proof}

In the rest of this section, we establish an alternate expression for the $u_0$ function.
This following expression will be useful in further arguments.
For instance, note that the following expression for $u_0$ easily combines with the other terms in the 1-fold integral expression for the generating series for the joint distribution $g(X; \zeta; t)$ given by \eqref{e:main_1}; see \Cref{s:cdf_proof} and, in particular, \eqref{e:prob_distr_alt} for instance.

\begin{lemma}
  \label{l:u0_alt}
  Let $u_0$ be given by \eqref{e:u0_CI}.
  Then, we have
  \begin{equation}\label{e:u0_alt}
    u_0(\zeta) =  \oint_{C_{\epsilon'}} \frac{\dd z}{z}
    \sum_{\SumLambdasR}
    \Bigg(
    \frac{ h_{\vec{\lambda}}(Y) u_{\vec{\lambda}}(X) e^{t E(\vec{\lambda})} }{ p_z(\vec{\lambda}; \zeta) \prod_{i=1}^N \lambda_i q_z'(\lambda_i) }
    \Bigg).
  \end{equation}
\end{lemma}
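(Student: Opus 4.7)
The plan is to evaluate an auxiliary $(N+1)$-fold contour integral in two different ways and match the results. Let
\begin{equation*}
J := \oint_{C_{\epsilon'}} \frac{\dd z}{z} \oint_{\Gamma} \cdots \oint_{\Gamma} \frac{h_{\vec w}(Y)\, u_{\vec w}(X)\, e^{tE(\vec w)}}{p_z(\vec w;\zeta)\prod_{i=1}^N w_i\, q_z(w_i)} \, \dd w_1 \cdots \dd w_N,
\end{equation*}
where each of the $N$ copies of $\Gamma$ denotes the cycle $C_R - C_{\epsilon_1} - (1+C_{\epsilon_2})$ appearing in \eqref{e:diffcontours}, so that $J$ is the standard Bethe integrand with the $z$-contour restricted to $C_{\epsilon'}$ alone.

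First, I would compute $J$ by iterated $w_i$-residues, repeating the induction in the proof of \Cref{l:contour_deformation_induction} verbatim but with the $z$-contour taken to be $C_{\epsilon'}$ rather than the full $\diffcontoursZ$. The residues at the $q_z$-roots collected across the $N$ stages assemble exactly into $\oint_{C_{\epsilon'}}\tfrac{\dd z}{z}\sum_{\vec\lambda\in\calQ(z)^N}\frac{h_{\vec\lambda}(Y)u_{\vec\lambda}(X)e^{tE(\vec\lambda)}}{p_z(\vec\lambda;\zeta)\prod_i \lambda_i q'_z(\lambda_i)}$, which is precisely the right-hand side of \eqref{e:u0_alt}. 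The intermediate $p_z$-residues from stages $k<N-1$ integrate to zero in $z$ by the $\epsilon'$-power estimate in Step~5 of that proof, and the final $p_z$-residue at $w_N=\mu$ from stage $k=N-1$ integrates over $C_{\epsilon'}$ to $-u_0(\zeta)$ by the defining formula \eqref{e:u0_definition}. Collecting these three contributions,
\begin{equation*}
J = \oint_{C_{\epsilon'}}\frac{\dd z}{z}\sum_{\vec\lambda\in\calQ(z)^N}\frac{h_{\vec\lambda}(Y)u_{\vec\lambda}(X)e^{tE(\vec\lambda)}}{p_z(\vec\lambda;\zeta)\prod_{i=1}^N \lambda_i q'_z(\lambda_i)} - u_0(\zeta).
\end{equation*}

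The proof then reduces to the vanishing statement $J = 0$, which is the main obstacle. The heuristic is that near $z=0$ one has $1/p_z(\vec w;\zeta)\sim (-1)^N\zeta^{-L}z^L/\prod_i(1-w_i^{-1})$, supplying an extra factor of $z^L$, so that the full integrand behaves like $z^{L-1}$ near the origin and is holomorphic there for $L\geq 2$. The subtlety is that the inner $w$-integral in $J$ is not itself analytic in $z$ throughout the disk bounded by $C_{\epsilon'}$: as $|z|$ shrinks below $\epsilon'$, the roots of $q_z$ in $\calQ_0(z)$ cross the contour $C_{\epsilon_1}$ and those in $\calQ_1(z)$ cross $1+C_{\epsilon_2}$, causing jumps in the $w$-integral and obstructing a naive Cauchy argument. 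The conditions \eqref{e:conditions} on the radii are calibrated so that these jumps are compensated by residues from $p_z$-poles entering the region $\bbA$ at the same thresholds, and the resulting cancellation yields $J=0$. I would formalize this cancellation as the contour-vanishing lemma \Cref{l:CI_zero} and invoke it to conclude \eqref{e:u0_alt}.
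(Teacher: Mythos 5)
There are two concrete gaps.

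\textbf{Domain mismatch.} The iterated residue computation you propose for $J$ on the $z$-contour $C_{\epsilon'}$ picks up \emph{all} roots of $q_z$ inside the region $\bbA$ bounded by $C_R - C_{\epsilon_1} - (1+C_{\epsilon_2})$, which by Step~1 of the proof of \Cref{l:contour_deformation_induction} includes both $\calQ_0(z)$ and $\calQ_1(z)$. So the residue sum your computation would produce is $\oint_{C_{\epsilon'}}\tfrac{\dd z}{z}\sum_{\vec\lambda\in\calQ(z)^N}(\cdots)$, whereas the right-hand side of \eqref{e:u0_alt} is the sum over $\calQ_1(z)^N$ only (that is what $\SumLambdasR$ denotes, as made explicit in the proof of \Cref{l:u0_res_int}). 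Your identification of the two is therefore incorrect, and it is not clear the contribution from tuples with some $\lambda_i\in\calQ_0(z)$ vanishes after the $z$-integration: for $q>0$ the factor $e^{tE(\vec\lambda)}$ contains $e^{tq/\lambda_i}$, which is unbounded as $\lambda_i\to 0$, so the crude size estimates one might try are unavailable.

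\textbf{Invalid invocation of \Cref{l:CI_zero} for $J=0$.} That lemma is stated and proven for the $t=0$ integrand of \Cref{p:delta_contour_formula}, whose numerator carries no $e^{tE(\vec w)}$ factor. The key step in its proof — deforming a $w_j$-contour from $C_{\epsilon_1}$ to the origin and showing the integral vanishes — is exactly the step that $e^{tE(\vec w)}$ breaks: the factor $e^{tq/w_j}$ has an essential singularity at $w_j=0$, and for example $\oint_{C_r}\tfrac{\dd w}{w}e^{tq/w} = 2\pi\icomp$ independently of $r$, so the integral does not shrink with the radius. A parallel obstruction arises if one instead pushes $C_R$ to infinity through the essential singularity of $e^{tpw_j}$. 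The $e^{tE}$ factor is benign only near $w_j=1$.

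The paper's actual proof is designed precisely to avoid both problems: it introduces a third contour $1+C_{\epsilon_3}$ with $\epsilon_3=(\epsilon')^{\beta_3}$ chosen in \eqref{e:beta3} so that all of $\calQ_1(z)$, and none of $\calQ_0(z)$, lies in the annulus between $1+C_{\epsilon_2}$ and $1+C_{\epsilon_3}$. The iterated residues on the cycle $(1+C_{\epsilon_2})-(1+C_{\epsilon_3})$ then collect only $\calQ_1(z)$ roots (\Cref{l:u0_res_int}), and the vanishing of the same integral (\Cref{l:u0_zero_int}) is established by $\epsilon'$-power estimates that never require moving a $w_j$-contour toward 0 or $\infty$. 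If you want to salvage the ``compute an auxiliary integral two ways'' strategy, you would need either to localize your $w$-contours near $1$ as the paper does, or to supply a separate argument for the vanishing of the $\calQ_0$-contributions that controls the essential singularity in $e^{tE}$.
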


\begin{rem}
  We may note the two following facts.
  \begin{enumerate}
    \item The summation inside the integrand of \eqref{e:u0_alt}, an alternative formula for $u_0$, is symmetric in the roots $\lambda_i \in \calQ_1(z)$.
    This allows us to interprete it, in the case $\zeta^L = 1$, as the \emph{stationary solution} to the infinitesimal generator $\calH_\zeta$, as given in the first part of \eqref{e:master_joint}.
    \item The function $u_0$ depends both on $\zeta$ and $\epsilon'$.
    For any fixed $\epsilon' > 0$, the function $u_0$ is continuous in $\zeta$; however, for any fixed $\zeta$ with $\zeta^L \neq 1$, and $\epsilon'$ arbitrarily small, the function $u_0(\zeta)$ is actually zero.
    Another way to understand this fact is that, the parameter $\zeta^L$ should be seen as a perturbation parameter of the system.
    When $\zeta^L = 1$, the stationary solution does indeed correspond to the case $z=0$.
    When $\zeta^L \neq 1$, the corresponding \emph{perturbed} stationary solution corresponds to some $z$ very close to $0$, which indeed does have contribution when $\epsilon' > |z| > 0$ but vanishes when $\epsilon' < |z|$.
  \end{enumerate}
  
\end{rem}

\begin{proof}
  This a direct consequence of \Cref{l:u0_zero_int} and \Cref{l:u0_res_int} below. In particular, we obtain
  \begin{equation}
    u_0(\zeta) - \oint_{C_{\epsilon'}} \frac{\dd z}{z}
    \sum_{\SumLambdasR}
    \Bigg(
    \frac{ h_{\vec{\lambda}}(Y) u_{\vec{\lambda}}(X) e^{t E(\vec{\lambda})} }{ p_z(\vec{\lambda}; \zeta) \prod_{i=1}^N \lambda_i q_z'(\lambda_i) }
    \Bigg) = 0
  \end{equation}
  from the lemmas.
  The result follows by solving for $u_0$ in the equation above.
\end{proof}

For the following technical lemmas, we introduce a new variable $\epsilon_3$ that is given by $\epsilon_3 = (\epsilon')^{\beta_3}$, similar to $\epsilon_1$ and $\epsilon_2$ in \eqref{e:conditions}. Additionally, the exponent $\beta_3$ is chosen so that
\begin{equation}
  \label{e:beta3}
  \tfrac{L- \beta_2}{N-1} < \beta_3 < \tfrac{L}{N}, \quad \text{and} \quad
  L(N-k) - N (N-k-m) \beta_2 - N m \beta_3 \neq 0
  \tag{Cond6}
\end{equation}
for $0 \leq k \leq N-1$ and $0 \leq m \leq N-k$. One may check that such a $\beta_3$ always exists.

\begin{lemma}\label{l:u0_zero_int}
  Set $\epsilon_3 = (\epsilon')^{\beta_3}$ with $\beta_3$ satisfying \eqref{e:beta3}. Then,
  \begin{equation}
    \oint_{C_{\epsilon'}} \frac{\dd z}{z} \diffcontourA{1} \diffcontourA{N}
    \Bigg( \frac{ h_{\vec{w}} (Y) u_{\vec{w}} (X) e^{t E(\vec{w}) }}
    { p_z( \vec{w} ; \zeta ) \prod_{i=1}^N q_z(w_i) } \Bigg) = 0.
  \end{equation}
\end{lemma}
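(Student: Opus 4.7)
The plan is to prove the vanishing by a careful contour deformation argument, in the same spirit as Step 5 of the proof of \Cref{l:contour_deformation_induction}. I read $\diffcontourA{i}$ as the three-piece contour $\frac{1}{2\pi \icomp}(\oint_{C_R} - \oint_{C_{\epsilon_1}} - \oint_{1+C_{\epsilon_3}})$, i.e.\ the usual $\diffcontoursw$ with the radius $\epsilon_2$ around $1$ replaced by $\epsilon_3$ satisfying \eqref{e:beta3}. The first task is then to classify the singularities of the integrand in each variable $w_i$ and to locate them relative to these new contours.

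Following the analysis done in Step 1--3 of the proof of \Cref{l:contour_deformation_induction}, the only singularities inside the region $\bbA'$ bounded by these three circles come from (P1) $q_z(w_i)=0$ and (P2) $p_z(\vec w;\zeta)=0$. Because $\beta_3<d=L/N$, the $\calQ_1(z)$-roots (which satisfy $|1-\lambda|\sim(\epsilon')^d$) still lie inside $1+C_{\epsilon_3}$, while the $\calQ_0(z)$-roots still lie inside $C_{\epsilon_1}$ by \eqref{e:beta1_bounds}. So the iterated (P1)-residues are computed as in Step 4. For (P2), the analogue of the magnitude estimate \eqref{e:P2_small_z} becomes
\begin{equation*}
  |1-\mu^{-1}|\sim (\epsilon')^{L+\frac{n_0}{1-\rho}-n_1 d+m_1\beta_1 - m_2\beta_2 - m_3\beta_3},
\end{equation*}
where now $m_2,m_3$ count the $w_i$'s on $1+C_{\epsilon_2}$ and $1+C_{\epsilon_3}$ respectively. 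The upper bound $\beta_3<d$ combined with $\beta_3>(L-\beta_2)/(N-1)$ in \eqref{e:beta3} forces this exponent to avoid the window $[\beta_2,\beta_3]$ that would put $\mu$ into the annular region between $1+C_{\epsilon_2}$ and $1+C_{\epsilon_3}$, so no (P2)-pole ever enters $\bbA'$.

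Having ruled out (P2) contributions, the iterated (P1)-residue computation parallels Step 4 of \Cref{l:contour_deformation_induction} and produces the analogue of $I_N$, namely a symmetric sum over $\vec\lambda\in\calQ(z)^N$ of the integrand
\[
\frac{h_{\vec\lambda}(Y)\,u_{\vec\lambda}(X)\,e^{tE(\vec\lambda)}}{p_z(\vec\lambda;\zeta)\prod_{i=1}^N \lambda_i q_z'(\lambda_i)},
\]
split into $2^N$ pieces depending on how many $\lambda_i$'s are chosen from $\calQ_0(z)$ vs.\ $\calQ_1(z)$. I would then bound each of these pieces, for $z\in C_{\epsilon'}$, by a positive power of $\epsilon'$ using the order estimates \eqref{e:q_small_asymptotic}, \eqref{e:residue_P1_bound}, the rearrangement bound \eqref{e:Asigma_bound} for $A_\sigma$, and the sizes of $p_z(\vec\lambda;\zeta)$ and $q_z'$. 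The genericity condition $L(N-k)-N(N-k-m)\beta_2-Nm\beta_3\neq 0$ in \eqref{e:beta3} is used precisely to guarantee that none of the competing terms in $p_z(\vec\lambda;\zeta)$ cancel, so that $|p_z|$ is bounded below by a definite power of $\epsilon'$; this is the only place the second half of \eqref{e:beta3} enters.

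Combining the bounds, each of the $2^N$ pieces is $O((\epsilon')^{\alpha})$ for some strictly positive exponent $\alpha$, and the outer $\oint_{C_{\epsilon'}}\frac{dz}{z}$ multiplies by $O(1)$. Since $\epsilon'$ may be taken arbitrarily small, the integral must vanish. The main obstacle is the detailed order-of-magnitude bookkeeping: there are many combinations of $(n_0,n_1,m_1,m_2,m_3)$ to check, and one must verify that \eqref{e:beta3} leaves a strictly positive total exponent in every single case. This is essentially an enlarged version of the case analysis carried out in Step 2 and Step 5 of \Cref{l:contour_deformation_induction}, with one extra slot $m_3$ corresponding to the new $\epsilon_3$-contour.
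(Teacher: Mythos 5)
Your reading of $\diffcontourA{i}$ does not match the paper's. The paper's own proof of this lemma says explicitly that each $w_k$ lies on $(1+C_{\epsilon_2}) \cup (1+C_{\epsilon_3})$, i.e.\ $\diffcontourA{i}$ is the \emph{difference of two small circles centered at $1$}, $\frac{1}{2\pi\icomp}\bigl(\oint_{1+C_{\epsilon_3}} - \oint_{1+C_{\epsilon_2}}\bigr)$ (an annulus around $1$, since $\epsilon_2 < \epsilon_3$), with no $C_R$ and no $C_{\epsilon_1}$ component. This is confirmed by the proof of \Cref{l:u0_res_int}, where the only \eqref{e:pole_c1_v2}-residues encountered are those at $\calQ_1(z)$, which is exactly the set of $q_z$-roots caught between $1+C_{\epsilon_2}$ and $1+C_{\epsilon_3}$. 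Under your three-piece reading the (P1)-poles inside the region would instead be $\calQ_0(z)$-roots, which contradicts \Cref{l:u0_res_int}. So, as written, your argument addresses a different statement.

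Beyond the contour identification, the strategy is also the wrong one for this lemma. The paper proves \Cref{l:u0_zero_int} by a \emph{direct bound on the integrand}: with each $w_i$ sitting on $1+C_{\epsilon_2}$ or $1+C_{\epsilon_3}$, it estimates $|A_\sigma|$ by rearrangement, estimates $|q_z(w)|$ by $(\epsilon')^{L - N\beta_2}$ or $\calO(1)$ according to which circle $w$ is on, estimates $|p_z|$ by $\calO(1)$ when $m_2 \geq 1$ and by $(\epsilon')^{-L+N\beta_3}$ when $m_2 = 0$, and combines these to get $|I_N| = \calO\bigl((\epsilon')^{b}\bigr)$ with $b>0$ in every branch — no residue computations appear. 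Your plan (iterated (P1)-residues first, then bound the residual $z$-integral) is the content of \Cref{l:u0_res_int}, not of \Cref{l:u0_zero_int}: carried out with the correct contour, the residue theorem does pick up the (P2)-pole for the last $w$-variable, producing the $-u_0(\zeta)$ term, and the resulting representation $-u_0(\zeta) + \text{(residue sum)}$ is \emph{not} termwise $\calO((\epsilon')^{\alpha})$ — indeed $u_0(\zeta) = \binom{L}{N}^{-1}$ when $\zeta^L=1$. You would be left needing to prove the cancellation, which is precisely the content of \Cref{l:u0_alt} and would be circular. (Your claim that "no (P2)-pole ever enters" is therefore also an error, not a feature.) Finally, the genericity clause in \eqref{e:beta3} is used in \Cref{l:u0_res_int} and \Cref{l:u0_ind_step} to rule out borderline exponent cases; the proof of \Cref{l:u0_zero_int} only uses the open interval condition $(L-\beta_2)/(N-1) < \beta_3 < L/N$.
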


\begin{proof}
  The proof is similar to the argument in Step 5 for the proof of \Cref{l:contour_deformation_induction}.
  We bound the integrad uniformly by a non-negative power of $\epsilon'$.
  Then, by taking $\epsilon' > 0$ arbitrarily small, we have that the integral must equal zero.
  
  Let us approximate the different terms in the integrand. Recall that
  \[
    h_{\vec{w}} (Y) u_{\vec{w}} (X) = \sum_{\sigma \in S_N} A_\sigma(\vec{w}) \prod_{i=1}^N w_{\sigma(i)}^{y_{\sigma(i)} - x_i}, \quad A_\sigma(\vec{w}) = (-1)^\sigma \prod_{j=1}^N (1 - w_{\sigma(j)}^{-1})^{\sigma(j) - j}
  \]
  and note that $|w_k-1| = \epsilon_i$, with $i = 2$ or $i = 3$, since $w_k \in (1 + C_{\epsilon_2}) \cup (1+ C_{\epsilon_3})$. Then, it is straightforward to check that all the terms in the integrand are bounded by constants except for the terms $q_z, p_z,$ and $A_{\sigma}$.
  For the term $q_z$, we have the following approximation:
  \begin{equation}
    |q_z(w)| \sim
    \begin{cases}
      (\epsilon')^{L-\beta_2 N}, \quad & w \in 1+C_{\epsilon_2}, \\ 
      1, \quad & w \in 1 +C_{\epsilon_3},
    \end{cases}
  \end{equation}
  since $\beta_3 < L/N$ by \eqref{e:beta3}.
  For the terms $p_z$, we have the following approximation,
  \begin{equation}
    |p_z(\vec{w}; \zeta) -1| \sim (\epsilon')^{-L + m_2 \beta_2 +m_3 \beta_3}
  \end{equation}
  where $m_i$, $i=2, 3$, are the number of $w$-variables that lie in the contour $1+C_{\epsilon_i}$.
  Note that the exponent on the right side is only negative when $m_2 = 0$, since $\beta_3 <L/N$, and the exponent is positive when $m_2 \geq 1$, since $(L - \beta_2)/(N-1) < \beta_3$.
  Then, we have
  \begin{equation}
    |p_z(\vec{w}; \zeta)| \sim \begin{cases}
      1, \quad & m_2 \geq 1 \\
      (\epsilon')^{-L + N \beta_3}, \quad & m_2 = 0
    \end{cases}.
  \end{equation}
  Lastly, we consider the $A_{\sigma}$ terms.
  The magnitude of these terms depends on $\sigma, m_2, m_3$, and the contours, $1 + C_{\epsilon_2}$ or $1 + C_{\epsilon_3}$, for the  $w_j$-variables. For fixed $m_2, m_3$, the largest magnitude of $A_\sigma$ is obtained by considering the rearrangement inequality. In particular, since $\beta_2 > \beta_3$, the magnitude of $A_\sigma$ is maximized when
  \begin{align*}
    \sigma(j) \in
    \begin{cases}
      \{ 1, \dots, m_3 \}, & w_j \in 1 + C_{\epsilon_3}, \\
      \{ m_3 + 1, \dots, N \}, & w_j \in 1 + C_{\epsilon_2}.  
    \end{cases}
  \end{align*}
  with the choice $w_j \in 1 + C_{\epsilon_2}$ for $1 \leq j \leq m_2$ and $w_j \in 1 + C_{\epsilon_3}$ for $m_2 + 1 \leq j \leq N$. Thus, for any $\sigma \in S_N$, we have 
  \begin{equation}
    |A_{\sigma}| \sim (\epsilon')^{b}
  \end{equation}
  with $b = (\beta_3 - \beta_2) m_2 m_3$.
  
  Now, we use the approximations above to give a uniform bound for the integrand. Let $I_N$ denote the integrand. Then, we have
  \begin{equation}
    |I_N| \sim
    \begin{cases}
      (\epsilon')^{m_2(-L + m_2 \beta_2 + m_3 \beta_3 )}, \quad & m_2 \geq 1, \\
      (\epsilon')^{L-N \beta_3}, \quad & m_2 = 0.
    \end{cases}
  \end{equation}
  Note that in either case the exponent is positive since we have chosen $\beta_3$ so that $(L-\beta_2)/(N-1) < \beta_3 < L/N$. It follows that the integral is arbitrarily small and, thus, equal to zero. This establishes the result.
\end{proof}

\begin{lemma}
  \label{l:u0_res_int}
  Set $\epsilon_3 = (\epsilon')^{\beta_3}$ with $\beta_3$ satisfying \eqref{e:beta3}. Then,
  \begin{equation}
    \begin{split}
      & \oint_{C_{\epsilon'}} \frac{\dd z}{z} \diffcontourA{1} \cdots \diffcontourA{N}
      \left( \frac{ h_{\vec{w}} (Y) u_{\vec{w}} (X) e^{t E(\vec{w}) } }
        { p_z( \vec{w} ; \zeta ) \prod_{i=1}^N q_z(w_i) } \right) \\
      & \qquad \qquad \qquad =  -u_0(\zeta) + \oint_{C_{\epsilon'}} \frac{\dd z}{z} \sum_{\SumLambdasR} \left(\frac{h_{\vec{\lambda}}(Y) u_{\vec{\lambda}}(X) e^{t E(\vec{\lambda})}}{p_z(\vec{\lambda}; \zeta)\prod_{i=1}^N \lambda_i q_z'(\lambda_i)} \right).
    \end{split}
  \end{equation}
\end{lemma}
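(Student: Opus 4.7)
The plan is to evaluate the left-hand side by an iterated residue computation. For each variable $w_i$, the difference of contours $\diffcontourA{i}$ isolates the annulus between $1+C_{\epsilon_2}$ and $1+C_{\epsilon_3}$. In this annular region, the poles of the integrand come from two sources already identified in Steps 1--3 of the proof of \Cref{l:contour_deformation_induction}: the zeros of $q_z(w_i)$ at the ``close-to-$1$'' Bethe roots $\lambda \in \calQ_1(z)$, which satisfy $|1-\lambda|\sim(\epsilon')^{d}$ and hence lie strictly inside the annulus because $\beta_3 < d < \beta_2$ by \eqref{e:beta3} and \eqref{e:beta2_bounds}; and the single zero of $p_z(\vec{w};\zeta)$ at a location $w_i=\mu$ depending on the remaining variables. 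Higher-order poles are ruled out by the disjointness argument in Step 3 of that proof.

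Iterating Cauchy's theorem $N$ times splits the integral into two families of contributions, according to whether the coupling factor $p_z(\vec{w};\zeta)$ in the denominator is ``used up'' or not. In Family~A, every $w_i$ is placed at some Bethe root $\lambda_i \in \calQ_1(z)$; since the residue at $w_i = \lambda_i$ of $\frac{1}{w_i\, q_z(w_i)}$ equals $\frac{1}{\lambda_i q_z'(\lambda_i)}$, this yields exactly the sum over $\calQ_1(z)^N$ on the right-hand side. In Family~B, exactly one coordinate---call it $w_j$---sits at the $p_z$-pole $\mu$, while the remaining $N-1$ coordinates sit at Bethe roots in $\calQ_1(z)$; the $w_j$-residue is computed with \eqref{e:pz_residue}, producing the factor $1-\mu$ and eliminating the $p_z$ denominator. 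A priori there are $N$ such placements, one for each choice of $j$, and they must together produce $-u_0(\zeta)$.

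The main obstacle is to identify the sum over $j$ of the Family~B contributions with $-u_0(\zeta)$ as defined in \eqref{e:u0_definition}, where the canonical ``distinguished'' slot in the definition is the last one, $j=N$. Since $p_z$, $\prod_i q_z(w_i)$ and $e^{tE(\vec{w})}$ are symmetric in $\vec{w}$, only the factors $h_{\vec{w}}(Y) = \prod_i w_i^{y_i}$ and $u_{\vec{w}}(X)$ carry asymmetric dependence on the arguments, and the latter is in fact antisymmetric by the determinantal identity \eqref{e:bethe_determinantal}. The plan is to transport the $\mu$-coordinate from slot $j$ to slot $N$ by $N-j$ adjacent transpositions, picking up a sign $(-1)^{N-j}$ from the antisymmetry of $u_{\vec{w}}(X)$, and then to relabel the $(N-1)$ Bethe-root dummies so that the sum is taken over $\calQ_1(z)^{N-1}$ in the canonical order; a symmetrization of the remaining $h_{\vec{w}}(Y)$-factor, along the lines of \Cref{l:sum_permutations} used in the proof of \Cref{l:u0_residue}, should then collapse the $N$ Family~B contributions into the single residue expression \eqref{e:u0_definition} defining $u_0(\zeta)$, with the overall minus sign coming from the orientation convention in that definition.
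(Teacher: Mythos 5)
Your decomposition into ``Family A'' (all residues at $q_z$-zeros) and ``Family B'' (one residue at the $p_z$-pole) is the right starting picture, but the crucial step — collapsing the $N$ Family-B contributions into $-u_0(\zeta)$ via transpositions and symmetrization — rests on a false claim and also bypasses the actual content of the argument.

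First, $u_{\vec{w}}(X)$ is \emph{not} antisymmetric in $\vec{w}$. The determinantal formula \eqref{e:bethe_determinantal} is $\det[(1-w_j^{-1})^{j-i}\,w_j^{-x_i}]_{i,j}$, where the column index $j$ also appears in the exponent $j-i$; so interchanging $w_a\leftrightarrow w_b$ is not the same as interchanging two columns. What is true is that if $w_a=w_b$ then column $b$ becomes a scalar multiple $(1-w_a^{-1})^{b-a}$ of column $a$ and the determinant vanishes — that is the ``determinantal structure'' invoked in the proof of \Cref{l:u0_residue} — but vanishing on the diagonals does not entail antisymmetry. Consequently the step ``transport the $\mu$-coordinate from slot $j$ to slot $N$ by $N-j$ adjacent transpositions, picking up a sign $(-1)^{N-j}$'' is unavailable, and the whole collapse into the single expression \eqref{e:u0_definition} does not go through.

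Second, and independently of the sign issue, the Family-B terms with $j<N$ are not on the same footing as the $j=N$ term, and they are not relabeled copies of one another. In the sequential residue computation the location of the $p_z$-pole $\mu$ at step $j$ depends on which contours the remaining $w_{j+1},\dots,w_N$ sit on, and whether $\mu$ lies inside the annulus between $1+C_{\epsilon_2}$ and $1+C_{\epsilon_3}$ is governed by the exponent $L-(j-1)(L/N)-m_2\beta_2-m_3\beta_3$ compared to $\beta_3$. The paper treats this carefully: for $j<N$ the pole is either outside the annulus, or inside but the resulting integral is bounded by a strictly positive power of $\epsilon'$ and hence vanishes — that is exactly what \Cref{l:u0_ind_step} establishes, and it is a delicate estimate, not an algebraic cancellation. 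Only at $j=N$ does the residue survive, producing $-u_0(\zeta)$ directly from \eqref{e:u0_definition}. Your proposal never engages with this vanishing; instead it posits a symmetrization that would be needed only if the $j<N$ terms were nonzero, and that would not work even then for the reason above. To repair the argument you would need to replace the transposition/symmetrization step with the quantitative estimate of \Cref{l:u0_ind_step}, i.e.\ show that each Family-B contribution with $j<N$ is $O((\epsilon')^{b_3})$ for some $b_3>0$.
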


\begin{proof}
  This result follows by residue computations similar to those in the proof of \Cref{l:contour_deformation_induction}. In particular, we inductively show
  \begin{equation}
    \label{e:J_k_recurrence}
    \begin{split}
      & \oint_{C_{\epsilon'}} \frac{\dd z}{z} \diffcontourA{k} \cdots \diffcontourA{N}
      J_{k-1}(w_{k}, \dots, w_N; z, \zeta, t) \\
      & =   \oint_{C_{\epsilon'}} \frac{\dd z}{z}
      \diffcontourA{k+1} \cdots \diffcontourA{N}
      J_{k}(w_{k+1}, \dots, w_N; z, \zeta, t)
      - u_0(\zeta) \mathds{1}(k = N)
    \end{split}
  \end{equation}
  for $k = 1, \dots, N$ with the integrands given as follows
  \begin{align*}
    J_0(w_{1}, \cdots, w_N; z , \zeta , t)
    & := \frac{ h_{\vec{w}} (Y) u_{\vec{w}} (X) e^{t E(\vec{w}) } }{p_z( \vec{w} ; \zeta ) \prod_{i = k}^N q_z(w_i)}, \qquad \mbox{ and}  \\
    J_k(w_{k+1}, \cdots, w_N; z, \zeta , t) & := \sum_{\SumLambdasR[k]}
                                              \Bigg( \frac{ h_{\vec{w}} (Y) u_{\vec{w}} (X) e^{t E(\vec{w})} }{ p_z(\vec{w} ; \zeta ) \prod_{i=1}^k w_i q_z'(w_i) \prod_{i=k+1}^N q_z(w_i) } \Bigg)_{(w_1, \cdots, w_k) = (\lambda_1, \cdots, \lambda_k)}.
  \end{align*}
  Note that the $J_k$ integrands are defined similarly to the $I_k$ integrands from \eqref{e:I_k}, except that the summations are taken over the set $\calQ_1(z)$ instead of the set $\calQ(z)$.
  Also, note that there is a negative sign infront of the $u_0$ function due to the negative sign in the definition of the $u_0$ function given by \eqref{e:u0_definition}. The result then follows from this sequence of equalities.
  We provide the details of the recursive equation \eqref{e:J_k_recurrence} below.

  Fix $k = 1, \dots, N$ and consider the left side of \eqref{e:J_k_recurrence}.
  Take the integral with respect to the $w_k$-variable by residue computation.
  There are two possible poles from the zeros of the functions $q_z$ and $p_z$. The poles from $q_z(w_{k}) = 0$ are given by $w_{k} = \lambda_{k} \in \calQ_1(z)$.
  The unique pole from $p_z(\vec{w}; \zeta)_{(w_1, \dots, w_{k-1}) = (\lambda_1, \dots, \lambda_{k-1})} = 0$ is given by \eqref{e:P2_pole_mu} $w_{k} = \mu$, where
  \begin{equation}
    \label{e:pole_mu}
    \mu = \left( \frac{1}{1 + (-1)^N \zeta^{-L} z^L \prod_{i \neq k}(1 - w_i^{-1})^{-1} } \right)_{(w_1, \dots, w_{k-1}) = (\lambda_1, \dots, \lambda_{k-1})}
  \end{equation}

  Let us first consider the contribution form the pole $w_{k} = \lambda_{k} \in \calQ_1(z)$. This pole always lies in the region bounded by the contours $1 + C_{\epsilon_2}$ and $1 + C_{\epsilon_3}$.
  Indeed, we have $|\lambda_{k} - 1| \sim (\epsilon')^{L/N}$ and $\beta_3 < \tfrac{L}{N} < \beta_2$.
  This gives rise to the first term on the right side of \eqref{e:J_k_recurrence}.

  Now, let us consider the contribution from the pole $w_{k} = \mu$ given in \eqref{e:pole_mu}.
  In this case, the pole may or may not lie in the region bounded by the contours $1 + C_{\epsilon_2}$ and $1 + C_{\epsilon_3}$. We have
  \[
    |1 - \mu^{-1} | \sim (\epsilon')^{L - (k-1) (L/N) - m_2 \beta_2 - m_3 \beta_3 }.
  \]
  where $m_j$ denotes the number of $w_i$-variables with $w_i \in 1 + C_{\epsilon_j}$, for $j=2,3$ and $i > k$. Note that 
  \[
    L - (k-1)(L/N) - m_2 \beta_2 - m_3 \beta_3 \leq L - (N-1) \beta_3 < \beta_2
  \]
  due to \eqref{e:beta3} and $L/N < \beta_2$. On the other hand, for $k \neq N$, the exponent $L - (k-1)(L/N) - m_2 \beta_2 - m_3 \beta_3 $ may be less than or greater than $\beta_3$.
  Still, we do know that the exponent is not equal to $\beta_3$ due to the second condition on \eqref{e:beta3} for $k \neq N$.
  Also, for the case $k= N$ which implies $m_2 = m_3=0$, we know that 
  \[
    L - (k-1)(L/N) - m_2 \beta_2 - m_3 \beta_3 = L/N > \beta_3,
  \]
  meaning that the pole lies in the region bounded by the contours $1 + C_{\epsilon_2}$ and $1 + C_{\epsilon_3}$.
  Then, let us consider the other cases.
  If $L - (k-1)(L/N) - m_2 \beta_2 - m_3 \beta_3  < \beta_3$ which necessarily means that $k \neq N$, the pole does not lie in the region bounded by the contours $1 + C_{\epsilon_2}$ and $1 + C_{\epsilon_3}$, meaning that there is no contribution of this pole in the residue computation and \eqref{e:J_k_recurrence} follows for this case. If $L - (k-1)(L/N) - m_2 \beta_2 - m_3 \beta_3  > \beta_3$ and $k \neq N$, the pole does lie in the region bounded by the contours $1 + C_{\epsilon_2}$ and $1 + C_{\epsilon_3}$.
  We treat this case in \Cref{l:u0_ind_step}, below, to show that this contribution is equal to zero.
  Lastly, if $k =N$, we have already noted that the pole does lie in the region bounded by the contours $1 + C_{\epsilon_2}$ and $1 + C_{\epsilon_3}$.
  Additionally, recall the computation from \eqref{e:pz_residue} that
  \[
    \PzResidue.
  \]
  Then, the residue computation in this case leads to the term $u_0(\zeta)$, given in \Cref{l:u0_residue}, on the right side of \eqref{e:J_k_recurrence}. This establishes \eqref{e:J_k_recurrence} and the result of this lemma.
\end{proof}

We conclude this section with a technical lemma that we used in the proof of \Cref{l:u0_res_int}. 

\begin{lemma}
  \label{l:u0_ind_step}
  Let $N \geq 2$ and set $\epsilon_3 = (\epsilon')^{\beta_3}$ with $\beta_3$ satisfying \eqref{e:beta3}. Additionally, take integers $1 \leq k \leq N-1$, $m_2, m_3 \geq 0$ such that $k + m_2 + m_3 = N$ and $L - (k-1)(L/N) - m_2 \beta_2 - m_3 \beta_3 > \beta_3$. Then, for any partition $I_2 \sqcup I_3 = \{ k+1, \dots, N \}$, we have
  \begin{equation}
    0 = \oint_{C_{\epsilon'}} \frac{\dd z}{z}
    \Bigg( \prod_{i \in I_2} \oint_{1 + C_{\epsilon_2}} \frac{\dd w_i}{w_i} \Bigg)
    \Bigg( \prod_{i \in I_3} \oint_{1 + C_{\epsilon_3}} \frac{\dd w_i}{w_i} \Bigg)
    \Bigg(
    \frac{(w_{k}-1) h_{\vec{w}} (Y) u_{\vec{w}} (X) e^{t E(\vec{w}) }}
    { \prod_{i=1}^{k-1}  w_i q_z'(w_i)  \prod_{i=k}^N q_z(w_i) }
    \Bigg)_{\subalign{ & (w_1, \dots, w_{k-1}) = (\lambda_1, \dots, \lambda_{k-1}) \\ & \phantom{(} w_k = \mu}} 
  \end{equation}
  so that $\lambda_i \in \calQ_1(z)$ for $1 \leq i \leq k-1$ and the variable $\mu$ is a root (with respect to  $w_k$) of $p(\vec{w}; \zeta)$ with $(w_1, \dots, w_{k-1}) = (\lambda_1, \dots, \lambda_{k-1})$.
\end{lemma}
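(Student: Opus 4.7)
The plan is to follow the same strategy as Step 5 in the proof of \Cref{l:contour_deformation_induction}: we bound the integrand uniformly on the product of contours by $(\epsilon')^c$ for some exponent $c>0$, so that, since $\epsilon'>0$ may be chosen arbitrarily small, the integral vanishes. The contour lengths for $\oint_{1+C_{\epsilon_j}} \dd w/w$ (with $j=2,3$) contribute additional positive powers of $\epsilon'$, and $\oint_{C_{\epsilon'}} \dd z/z$ is scale invariant, so bounding the integrand pointwise suffices.

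First I will record the magnitudes of the key quantities. Writing $|1 - w_j^{-1}| \sim (\epsilon')^{b_j}$, the hypotheses give $b_j = L/N$ for $1\leq j \leq k-1$ (where $w_j = \lambda_j \in \calQ_1(z)$), $b_k = a$ with $a := L - (k-1)L/N - m_2 \beta_2 - m_3 \beta_3 > \beta_3$ (where $w_k = \mu$, cf.~\eqref{e:pole_mu}), $b_j = \beta_2$ for $j \in I_2$, and $b_j = \beta_3$ for $j \in I_3$. Since all $w$-variables tend to $1$ as $\epsilon' \to 0$, the exponential $e^{t E(\vec{w})}$ and the products $w_{\sigma(i)}^{y_{\sigma(i)} - x_i}$ are uniformly $\Theta(1)$.

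Next, I estimate each remaining factor. The prefactor is $|w_k - 1| = |\mu - 1| \sim (\epsilon')^a$. Using $\lambda q_z'(\lambda) = (L\lambda - (L-N))/(\lambda - 1)$ on $\calQ_1(z)$, one has $|\lambda_i q_z'(\lambda_i)| \sim (\epsilon')^{-L/N}$ for $1 \leq i \leq k-1$. For $i \geq k+1$, $|q_z(w_i)| \sim (\epsilon')^{L - N\beta_2}$ if $i \in I_2$ (and $L - N\beta_2 < 0$ by \eqref{e:beta2_bounds}, so $|q_z(w_i)| \gg 1$), while $|q_z(w_i)| \sim 1$ if $i \in I_3$ (since $L - N \beta_3 > 0$ by \eqref{e:beta3}). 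For $w_k = \mu$, $|q_z(\mu)| \sim 1$ if $a < L/N$, and $|q_z(\mu)| \sim (\epsilon')^{L - Na}$ if $a > L/N$. Finally, for the combinatorial factor, I use $|A_\sigma(\vec{w})| \sim (\epsilon')^{\sum_j b_j (j - \sigma^{-1}(j))}$ and bound via the rearrangement inequality, exactly as in Step 5.

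The main obstacle will be verifying that the resulting total exponent $c$, obtained by summing all contributions together with the positive contribution $m_2 \beta_2 + m_3 \beta_3$ from the contour lengths, is strictly positive. This reduces to a finite case analysis on the position of $a$ relative to $L/N$ and $\beta_2$; within each case, one identifies the extremal permutation $\sigma \in S_N$ that minimizes $\sum_j b_j(j - \sigma^{-1}(j))$ via rearrangement. The key inputs are the ordering $\beta_3 < L/N < \beta_2$ (from \eqref{e:beta2_bounds} and \eqref{e:beta3}), the strict inequality $a > \beta_3$ from the hypothesis, and the non-degeneracy condition $L(N-k) - N(N-k-m)\beta_2 - Nm\beta_3 \neq 0$ imposed by \eqref{e:beta3}, which rules out the boundary case where the various exponents cancel exactly and therefore ensures $c \neq 0$. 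Combining these inputs, $c>0$ in each case, which completes the proof.
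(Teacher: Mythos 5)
Your strategy is the right one and matches the paper's own proof: bound the integrand on the contours by $(\epsilon')^c$, check $c>0$, and then let $\epsilon'\to 0$. The inventory of magnitudes you record (the exponents $b_j$ for $w_j$ near $1$, the $q_z$ estimates split on the sign of $L-Nb_j$, the bound on $|A_\sigma|$ via the rearrangement inequality) agrees with the paper's Step~5-style argument. But the proof is not finished: the entire computational substance of the lemma is the verification that the total exponent $c$ (the paper's $b_3$) is strictly positive, and you assert this rather than showing it. The paper's proof finds that after the rearrangement extremization and combining all contributions, $b_3$ equals $L$ in one regime and, in the other regime, a quadratic polynomial in $k$ with leading coefficient $\beta_3 - L/N < 0$. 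Positivity is then not automatic: one must notice the quadratic is concave in $k$, so it is minimized at the endpoints $k=1$ or $k=N-1$, and then check each endpoint separately using $(N-1)\beta_3 > L-\beta_2$ and $\beta_2 < L/(N-1)$. That is the non-trivial part, and your appeal to the non-degeneracy condition in \eqref{e:beta3} only rules out $c=0$; it does not establish $c>0$. As written, "combining these inputs, $c>0$ in each case" is a claim, not a proof, and it is precisely the claim the lemma is about.

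There is also one smaller point worth flagging: you split on $a < L/N$ versus $a > L/N$ for the $|q_z(\mu)|$ estimate but then state the rearrangement extremization abstractly. In the paper the extremal permutation genuinely changes between the two regimes (because which index gets matched with which exponent depends on whether $a$ lies above or below $L/N$), and the resulting expression for $b_2$ is different in the two cases. Without writing these out, you cannot carry the case analysis through, so be explicit about the extremizing $\sigma$ in each regime before attempting the positivity check.
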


\begin{proof}
  The proof is similar to the proof of \Cref{l:u0_zero_int}.
  In particular, we bound the integral by some positive power of $\epsilon'$ before taking $\epsilon' \rightarrow 0$.
  We just need to introduce more technical bounds on the terms of the integrand since some of the variables are now determined by roots of polynomials.
  
  Let us approximate the different terms in the integrand.
  Note that $|\lambda_j-1| \sim (\epsilon')^{L/N}$ for $j =1, \dots, k-1$ since $\lambda_j \in \calQ_1(z)$.
  Also, for $j > k$,  we have $|w_j-1| =  \epsilon_i $ with $i=2$ or $i=3$, since $w \in (1 + C_{\epsilon_2}) \cup (1+ C_{\epsilon_3})$.
  Let $m_i$ be equal to the number of $w_j$-variables that lie on the contour $1 + C_{\epsilon_i}$, for $i=2, 3$ with $j >k$.
  Then, we have $|\mu-1| \sim (\epsilon')^{b_1}$ with $b_1 = L - (k-1)(L/N) - m_2 \beta_2 - m_3 \beta_3$ where $w_k = \mu$ is a root of $p_z(\vec{w}; \zeta)$ with $(w_1, \dots, w_{k-1}) = (\lambda_1, \dots, \lambda_{k-1})$. 
  
  It is straightforward to check that all the terms in the integrand are bounded by constants except for the terms $q_z$ and $A_{\sigma}$.
  For the term $q_z$, we have the following approximation,
  \begin{align}
    \label{e:q_approx1}
    j > k, \qquad
    |q_z(w_j) - 1| & \sim \begin{cases}
      (\epsilon')^{L-N \beta_2 }, \quad & w_j \in 1+C_{\epsilon_2}, \\ 
      (\epsilon')^{L-N \beta_3 }, \quad & w_j \in 1 +C_{\epsilon_3}, \\
    \end{cases} \\
    |q_z(\mu) - 1| & \sim (\epsilon')^{L - N b_1}. \label{e:q_approx1_mu}
  \end{align}
  Recall the condition on the exponents, $\beta_3 < L/N < \beta_2 < L/(N-1)$.
  It follows that the exponent, in the first case of \eqref{e:q_approx1} above, is always negative and, in the second case of \eqref{e:q_approx1}, is always positive.
  For \eqref{e:q_approx1_mu}, the exponent may be positive or negative, but not equal to zero due to the second condition in \eqref{e:beta3}.
  Then, we have the following approximations,
  \begin{align}
    \label{e:q_approx}
    j > k, \qquad
    |q_z(w_j)^{-1}| & \sim
                      \begin{cases}
                        (\epsilon')^{N\beta_2 -L}, \quad & w_j \in 1+C_{\epsilon_2}, \\
                        1, \quad & w_j \in 1 +C_{\epsilon_3},
                      \end{cases} \\
    |q_z(\mu)^{-1}| & \sim
                      \begin{cases}
                        (\epsilon')^{N b_1-L}, \quad & \, b_1 > \frac{L}{N}, \\
                        1, \quad & b_1 < \frac{L}{N}.
                      \end{cases} \label{e:q_approx_mu}
  \end{align}
  
  Next, we consider the $A_{\sigma}$ terms.
  The magnitude of these terms depends on $\sigma, k, m_2, m_3$. In particular, let us fix $k$, $m_2$ and $m_3$ such that $k + m_2 + m_3 = N$.
  Recall that
  \[
    A_\sigma(\vec{w}) = \prod_{j=1}^N (1 - w_j^{-1})^{j} \prod_{j=1}^N (1 - w_{\sigma(j)}^{-1})^{-j},
  \]
  where the first part is independent of the choice of $\sigma$ and is maximized for the choice $w_j \in 1 + C_{\epsilon_2}$ for $j = k+1, \dots, k+m_2$ and $w_j \in 1 + C_{\epsilon_3}$ for $j = k+m_2+1, \dots, N$, since $\beta_3 < \beta_2$, and the second term can be maximized according to this choice for some specific $\sigma$.
  Then, we have $|A_{\sigma}| = \mathcal{O}((\epsilon')^{b_2})$ with
  \begin{equation}
    b_2 = \begin{cases}
      -(k-1)m_3 (L/N) - m_3 b_1 - m_2 m_3 \beta_2 + m_3(m_2 +k) \beta_3,& \quad \beta_3 < \frac{L}{N}<b_1 < \beta_2, \\
      -(k-1)(m_3+1)(L/N) + (k-1 -m_3) b_1 - m_3m_2 \beta_2 + m_3(m_2+k) \beta_3,& \quad \beta_3 < b_1 < \frac{L}{N} < \beta_2, 
    \end{cases}.
  \end{equation}
  We obtain the value for $b_2$ via the rearragement inequality, where large exponents of $(1 - w_{\sigma(j)}^{-1})$ are paired with large indices $j$. For instance, in the second case, the largest amplitude $(\epsilon')^{b_2}$ can be obtained when
  \begin{align*}
    \sigma^{-1}( \{ 1, \dots, k-1 \} ) & = \{ m_3 + 2, \dots, m_3 + k \}, \\
    \sigma^{-1}( k ) & = m_3 + 1, \\
    \sigma^{-1}( \{ k + 1, \dots, k + m_2 \} ) & = \{ m_3 + k + 1, \dots, N \}, \\
    \sigma^{-1}( \{ k+m_2+1, \dots, N \} ) & = \{ 1, \dots, m_3 \}.
  \end{align*}
  The first case can be treated similarly.
  
  Now, we use the approximations above to give a uniform bound for the integral.
  Let $I_N$ denote the integral with $m_i$ variables, with $i=2,3$, that lie on the contour $1 + C_{\epsilon_i}$.
  Then, we have $|I_N| = \mathcal{O}((\epsilon')^{b_3})$ with
  \begin{equation}
    b_3 = b_2 + (b_1 + (k-1)(L/N)) + (N b_1 -L)\mathds{1}(N b_1 -L >0) + m_2(N \beta_2 -L) + (m_2 \beta_2 + m_3 \beta_3)
  \end{equation}
  The first term is due to the bound on $A_{\sigma}$; the second term is due to the bound on $\prod_{i=1}^k(w_i-1)$, arising from the term in the numerator and the product of $q_z'(w_i)$ in the denominator, with $(w_1, \dots, w_{k-1}) =(\lambda_1, \dots, \lambda_{k-1})$ and $w_k = \mu$; the third term is due to the bound $1/q_z(w_k)$ with $w_k = \mu$; the fourth term is due to the bound on $\prod_{i=k+1}^N (1/q_z(w_k))$; and the last is due to the length of the contours. Then, we have
  \begin{equation}
    \label{e:b3}
    b_3 = \begin{cases}
      L, \\
      (L/N)(k(1-k)+ N(2k-N)) + (N-k)(N-k+1) \beta_3 + (N-k+1)(\beta_2 - \beta_3) m_2,
    \end{cases}
  \end{equation}
  for the different cases $\beta_3 < \frac{L}{N}<b_1 < \beta_2$ and $\beta_3 < b_1 < \frac{L}{N} < \beta_2$, respectively.
  Now, we just need to show that $b_3$ is positive.
  The first case is clear. For the second case, note that, when $k$ is fixed, $b_3$ is minimized when $m_2 = 0$ since $\beta_2 > \beta_3$. Moreover, setting $m_2 = 0$ in \eqref{e:b3}, we obtain a quadratic polynomial in $k$ with leading coefficient $\beta_3 - L/N < 0$, which is a concave parabola. Its minimum is reached either at $k = 1$ or $k = N-1$, giving the following quantities,
  \begin{align*}
    & m_2 = 0, k = 1, \quad
    & & b_3 = -L(N-2) + N (N-1) \beta_3
        > 2L - N \beta_2
        > \tfrac{L}{N-1} ( N-2 ) \geq 0, \\
    & m_2 = 0, k = N-1, \quad
    & & b_3 = \tfrac{L}{N} (N-2) + 2 \beta_3 > 0.
  \end{align*}
  where in the first case, we use $(N-1) \beta_3 > L - \beta_2$ and $\beta_2 < \frac{L}{N-1}$ Thus, $b_3 > 0$, for all cases, meaning that the integral goes to zero as $\epsilon'$ goes to zero.
  This establishes the result.
\end{proof}

\begin{rem}
  Note that \Cref{l:u0_ind_step} does not hold for the case $k=N$.
  The argument above breaks due to the approximation \eqref{e:q_approx_mu}, since the second condition in \eqref{e:beta3} is never true for $k=N$.
  In particular, the exponent on \eqref{e:q_approx_mu} is identically zero.
  This means that there may be pole arising from this term.
  In fact, this is the case and that is reflected in \Cref{l:u0_res_int}.
\end{rem}

\subsection{Residue computations for 1-fold integral}

In this section, we give the last residue computations needed for the proof of \Cref{t:main}.
That is, the computations needed to establish the equality between \eqref{e:main_1} and \eqref{e:main_eigenfunctions}.
The main result is given by \Cref{p:last_res}, which is broken up into \Cref{l:residue_computations} and \Cref{l:residue_simple}.
The last lemma in this section, \Cref{l:well_def_hol}, is a technical lemma showing that the integrand is well defined without any square root singularies arising from the double roots of the decoupled Bethe equations.

Recall, the function defined in \Cref{l:contour_deformation_induction}
\begin{equation}\label{e:i_N}
  I_N (z , \zeta,t) :=
  \sum_{\lambda_1, \cdots, \lambda_N \in \calQ(z)}
  \frac{ \sum_{\sigma \in S_N} A_{\sigma} \prod_{i=1}^{N} \lambda_{\sigma(i)}^{y_{\sigma(i)} - x_i} e^{t E(\vec{\lambda})} }{ p_{z}(\vec{\lambda}; \zeta ) \prod_{i=1}^N \lambda_i q_z'(\lambda_i) },
\end{equation}
with the set $\calQ(z)$ given by \eqref{e:bethe_roots_deformed}, the coefficients $A_{\sigma}$ given by \eqref{e:bethe_coeffs}, the function $p_z$ given by \eqref{e:dBE2}, the functions $\lambda_i q_z'(\lambda_i)$ and $E(\vec{\lambda})$ given in Theorem \eqref{e:transition_prob}, and the particle configurations $(x_i)_{i=1}^N, (y_i)_{i=1}^N \in \calX_{L,N}$.
We compute the contour integral of this function over a pair of arbitrarily large and small contours for general values of $\zeta \in \bbU$.

\begin{proposition}
  \label{p:last_res}
  \label{t:bethe_expansion}
  For all but finitely many $\zeta \in \bbU$, the following holds,
  \begin{equation}
    \oint_{\epsilon'}^{R'} \frac{\dd z}{z} I_N(z, \zeta,t)
    = \sum_{(\lambda_1, \dots, \lambda_N)\in \calR(\zeta) }
    \frac{\sum_{\sigma \in S_N} A_{\sigma} \prod_{i=1}^{N} \lambda_{\sigma(i)}^{y_{\sigma(i)} - x_i} e^{t E(\vec{\lambda})}}{\left(L - \sum_{i=1}^N \frac{1}{ \lambda_i - (1- \rho) } \right) \prod_{i=1}^N \lambda_i q_{z'}'(\lambda_i)}
  \end{equation}
  with the function $I_N(z, \zeta,t)$ given by \eqref{e:i_N}, the set $\calR(\zeta)$ given by \eqref{e:bethe_roots}, $\epsilon' = (R')^{-1}>0$ is arbitrarily small.
\end{proposition}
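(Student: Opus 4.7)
The plan is to evaluate $\oint_{\epsilon'}^{R'}\frac{\dd z}{z}\,I_N(z,\zeta,t)$ by residue calculus in the $z$-variable on the annulus $\epsilon'<|z|<R'$. Viewed as a function of $z$, the sum $I_N$ over $\vec\lambda\in\calQ(z)^N$ is a symmetric function of the $L$ roots of $q_z$, equivalently of the roots of $w^{L-N}(w-1)^N - z^L$; its symmetric combinations are rational in $z$. The factor $1/z$ has its singularity at $0$, which sits inside the inner contour and is not seen. Inside the annulus, poles only come from the denominator factor $p_z(\vec\lambda(z);\zeta)$ in the summand of $I_N$, which vanishes precisely at the points $z^*\in\calZ(\zeta)\setminus\{0\}$. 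Apparent singularities at double roots of $q_z$ (where $\lambda_i q_z'(\lambda_i)\to 0$) cancel in the symmetric sum by \Cref{l:well_def_hol} and contribute no residue. For all but finitely many $\zeta\in\bbU$ these poles are simple, since the discriminant of the decoupled system \eqref{e:dBE} in $z$ is a polynomial in $\zeta^L$ and therefore vanishes either on a finite subset of $\bbU$ or identically, the latter being ruled out by a single direct check.

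At a simple pole $z=z^*$ with $\vec\lambda^{*}=\vec\lambda(z^*)\in\calR(\zeta)$, the residue equals
\begin{equation*}
  \Res_{z=z^*}\frac{I_N(z,\zeta,t)}{z} = \frac{\bigl(\text{numerator of the summand of }I_N\bigr)\big|_{z^*,\vec\lambda^*}}{z^*\cdot\tfrac{d}{dz}\bigl[p_z(\vec\lambda(z);\zeta)\bigr]_{z=z^*}\cdot\prod_{i=1}^N \lambda_i^* q'_{z^*}(\lambda_i^*)}.
\end{equation*}
The core computation is the total $z$-derivative $\tfrac{d}{dz}p_z(\vec\lambda(z);\zeta) = \partial_z p_z + \sum_i \partial_{w_i}p_z\cdot\lambda_i'(z)$. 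Implicit differentiation of $q_z(\lambda_i(z))=0$ together with $\partial_z q_z(\lambda_i) = -L/z$ (an immediate consequence of $q_z(\lambda_i)=0$) gives $\lambda_i'(z) = L/(z\,q_z'(\lambda_i))$. Using $p_z(\vec\lambda^*;\zeta)=0$ to evaluate $\partial_z p_z\big|_{\vec\lambda^*} = L/z^*$ and $\partial_{w_i}p_z\big|_{\vec\lambda^*} = -1/(\lambda_i^*(\lambda_i^*-1))$, and invoking the identity $\lambda_i q_z'(\lambda_i) = (L\lambda_i - (L-N))/(\lambda_i - 1)$ from \Cref{t:main}, all the $\lambda$-dependence telescopes and one obtains
\begin{equation*}
  z^*\cdot\tfrac{d}{dz}\bigl[p_z(\vec\lambda(z);\zeta)\bigr]_{z=z^*} = L - \sum_{i=1}^N \frac{L}{L\lambda_i^* - (L-N)} = L - \sum_{i=1}^N \frac{1}{\lambda_i^* - (1-\rho)},
\end{equation*}
which is exactly the factor appearing in the denominator of the target formula. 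Summing these residues over $z^*\in\calZ(\zeta)\setminus\{0\}$ inside the annulus and re-indexing via the decomposition \eqref{e:bethe_roots_decomp} then recovers the stated sum over $\vec\lambda\in\calR(\zeta)$.

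The main technical obstacle is twofold. First, one must verify that every nonzero $z^*\in\calZ(\zeta)$ lies strictly inside the annulus $\epsilon'<|z|<R'$, so that no residue is missed by the choice of contours; this should follow from refining the root-location estimates already used in the proof of \Cref{l:contour_deformation_induction}. Second, and more delicate, is the uniform simplicity of the poles: one must confirm that the $\zeta^L$-discriminant is not identically zero, and that the resulting exceptional set on $\bbU$ is genuinely finite. This is what forces the ``all but finitely many'' hypothesis and cannot be relaxed without separately handling higher-order residues, which would produce derivatives of $r(\vec\lambda)$ --- a complication already flagged in the remark preceding \Cref{t:transition_prob_simple}.
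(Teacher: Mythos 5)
Your proposal follows the same route as the paper's two-step argument (\Cref{l:residue_computations} plus \Cref{l:residue_simple}): residue calculus in $z$ on the annulus, with \Cref{l:well_def_hol} ensuring the apparent square-root branching at the double root of $q_z$ contributes nothing, simplicity of the remaining poles for generic $\zeta$, and then a direct residue evaluation. Your explicit computation of $z^*\frac{d}{dz}\bigl[p_z(\vec\lambda(z);\zeta)\bigr]$ via implicit differentiation---using $\lambda_i'(z)=\tfrac{L}{z\,q_z'(\lambda_i)}$, $\partial_z p_z|_{p_z=0}=L/z$, $\partial_{w_i}p_z|_{p_z=0}=-1/(w_i(w_i-1))$, and the telescoping through $\lambda_i q_z'(\lambda_i)=(L\lambda_i-(L-N))/(\lambda_i-1)$---is correct and supplies exactly the detail behind what the paper dismisses in \Cref{l:residue_simple} as ``straightforward residue computations''.

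The one spot where your write-up is looser than the paper's is the generic-$\zeta$ simplicity claim. Rather than appealing to a $\zeta^L$-discriminant together with an unspecified ``single direct check'', the paper rewrites the non-simplicity condition as the pair \eqref{e:sys_eq}, observes that its second equation $L=\sum_i(\lambda_i-(1-\rho))^{-1}$ is $\zeta$-independent and has only finitely many $z$-solutions (the right side $\to 0$ as $z\to\infty$, so it is not identically $L$), and then lets the first equation pin down finitely many values of $\zeta^L$ per such $z$; this is both cleaner and self-contained. Your worry about whether every nonzero pole lies strictly inside the annulus is legitimate and is what \Cref{l:bethe_roots_location} handles: solutions of \eqref{e:dBE} obey $|z|\le 1+\rho$ and avoid a punctured neighborhood of the origin, so choosing $\epsilon'$ small enough with $R'=1/\epsilon'$ captures them all. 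Finally, note that at a given pole $z^*$ the residue of $I_N$ is a \emph{sum} over $\vec\lambda\in\calQ(z^*)^N\cap\calP(z^*;\zeta)$, not a single term; your phrase ``re-indexing via \eqref{e:bethe_roots_decomp}'' covers this, but it is worth keeping explicit since multiple $N$-tuples in $\calQ(z^*)^N$ may simultaneously satisfy $p_{z^*}=0$.
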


\begin{proof}
  This result is a direct consequence of \Cref{l:residue_computations} and \Cref{l:residue_simple}.
  Recall that
  \begin{equation}
    \calR(\zeta) = \bigcup_{\substack{z \in \mathcal{Z}(\zeta)\\z \neq 0}} \left( \calQ^N(z) \cap \mathcal{P}(z; \zeta) \right),
  \end{equation}
  see \eqref{e:bethe_roots}, \eqref{e:bethe_roots_deformed}, \eqref{e:coupled_roots}, and \eqref{e:bethe_roots_decomp}.
\end{proof}

The main point of the next lemma is that the non-trivial residues of the contour integral are only due to the roots of the function $p_z(\vec{\lambda}; \zeta)$.

\begin{lemma}
  \label{l:residue_computations}
  Take the function $I_N(z, \zeta, t)$ given by \eqref{e:i_N}. Then, we have
  \begin{equation}
    \oint_{\epsilon'}^{R'} \frac{\dd z}{z} I_N(z, \zeta,t)
    = \sum_{\substack{z'\in \calZ(\zeta) \\ z' \neq 0}} \Res_{z = z'} \frac{ I_N(z , \zeta,t) }{z} 
  \end{equation}
  with the set $\calZ(\zeta)$ given by \eqref{e:calZ_def} and $\epsilon' >0$ arbitrarily small.
\end{lemma}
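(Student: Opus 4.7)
The plan is to apply the residue theorem to the function $z \mapsto I_N(z, \zeta, t) / z$ on the annulus $\{\epsilon' < |z| < R'\}$ and identify the resulting poles.

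First we verify that $I_N(z, \zeta, t)$ is a single-valued meromorphic function of $z$ on $\bbC^*$. Although individual summands depend on the branches of the algebraic functions $z \mapsto \lambda(z)$ implicitly defined by $q_z(\lambda) = 0$, the sum $\sum_{\vec{\lambda} \in \calQ(z)^N}$ ranges over ordered tuples drawn from the full set $\calQ(z)$, so its value depends only on $\calQ(z)$ as an unordered set and is invariant under monodromy. Combined with the symmetric factor $e^{tE(\vec{\lambda})}$, the sum becomes a symmetric function of the roots of $P_z(w) := w^{L-N}(w-1)^N - z^L$, and hence a single-valued function of $z$ on $\bbC^*$.

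Next we locate the poles of $I_N(z)$. The summand has apparent singularities at two types of points: (i) zeros of $p_z(\vec{\lambda}; \zeta)$ with $\vec{\lambda} \in \calQ(z)^N$, that is, at $z \in \calZ(\zeta)$; and (ii) zeros of $\lambda_i q_z'(\lambda_i) = (L\lambda_i - (L-N))/(\lambda_i - 1)$, corresponding to $\lambda_i = 1 - \rho$. A direct computation using $P_z$ and $P_z'$ shows that $1 - \rho$ enters $\calQ(z)$ only at the finitely many branch points where two roots of $P_z$ merge at $1 - \rho$. Singularities of type (ii) should be removable by pairwise cancellation: at such a branch point $z^*$, the pair of summands obtained by swapping the assignments of the two colliding roots $\lambda_r, \lambda_s$ to two distinct indices $i, j$ combine to a numerator of order $(\lambda_r - \lambda_s)^2$, because both the factor $\lambda_r^{y_i}\lambda_s^{y_j} - \lambda_s^{y_i}\lambda_r^{y_j}$ and the determinantal factor $u_{\vec{\lambda}}(X)$ vanish linearly in $\lambda_r - \lambda_s$. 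This matches the order $(\lambda_r - \lambda_s)^2$ of the product $q_z'(\lambda_r)\, q_z'(\lambda_s)$ appearing in the denominator, so the contribution remains bounded as $z \to z^*$. The regularity of $I_N$ across every branch point is the statement of the final \Cref{l:well_def_hol} of this section, on which the present lemma relies.

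With the pole set of $I_N(z)/z$ inside $\{\epsilon' < |z| < R'\}$ contained in $\calZ(\zeta) \setminus \{0\}$, the residue theorem applied to $\frac{1}{2\pi\icomp}(\oint_{C_{R'}} - \oint_{C_{\epsilon'}})$ yields the stated identity. Finiteness of $\calZ(\zeta)$ (noted below \eqref{e:intro_bethe_roots}) guarantees that for $\epsilon'$ arbitrarily small and $R' = 1/\epsilon'$ correspondingly large, every nonzero element of $\calZ(\zeta)$ lies strictly inside the annulus, while $z = 0$ is excluded from it. The main obstacle is step (ii): the two-root cancellation must be upgraded to handle simultaneous collisions of more than two roots at a branch point, which requires careful bookkeeping of the vanishing order of the determinant $u_{\vec{\lambda}}(X)$ and of the swap-pair numerator. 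This is exactly the technical content of \Cref{l:well_def_hol}.
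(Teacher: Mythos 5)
Your overall approach matches the paper's: show $I_N(z,\zeta,t)/z$ is a single-valued meromorphic function on the annulus, check that the only poles lie in $\calZ(\zeta)\cap\bbC^*$, and apply the residue theorem, delegating the regularity at branch points to \Cref{l:well_def_hol}. However, there is a genuine gap in your argument: you never verify that the branch points (where some $\lambda_i = 1-\rho$, i.e.\ $q_z'(\lambda_i) = 0$) do not also lie in $\calZ(\zeta)$, that is, that $p_z(\vec{\lambda};\zeta) \neq 0$ at those points. This matters because \Cref{l:well_def_hol} requires the auxiliary factor $g(z;\vec{w})$ to be analytic in a neighborhood of the branch point, and in the application to $I_N$ one must take
\[
  g(z;\vec{\lambda}) = \frac{h_{\vec{\lambda}}(Y)\,e^{tE(\vec{\lambda})}}{p_z(\vec{\lambda};\zeta)\prod_i \lambda_i},
\]
which has $p_z$ in the denominator. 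If $p_z$ could vanish simultaneously with $q_z'$, the hypothesis of \Cref{l:well_def_hol} fails and the pairwise-cancellation argument does not produce holomorphy; you would instead have to disentangle a square-root branch singularity from a pole of $p_z$ at the same $z$-value, which is a different and harder computation than the one \Cref{l:well_def_hol} carries out. The paper closes this gap with \Cref{p:double_root_no_solution} (in \Cref{a:last_appendix}), which shows that no tuple $\vec{\lambda}\in\calQ(z)^N$ with a coordinate equal to $1-\rho$ can satisfy $p_z(\vec{\lambda};\zeta)=0$; this is the missing ingredient you need to legitimately invoke \Cref{l:well_def_hol} and conclude that the pole set is exactly $\calZ(\zeta)\setminus\{0\}$. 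As a minor remark, your concern about ``simultaneous collisions of more than two roots'' is unwarranted: the paper's \Cref{l:well_def_hol} proof observes that $q_z, q_z', q_z''$ have no common zero for $z\neq 0,\infty$ (since $L\neq N$), so the multiplicity of any colliding root is at most two and the pairwise-cancellation argument suffices.
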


\begin{proof}
  The equality is obtained by residue computations.
  Note that residue computations are well defined due to \Cref{l:well_def_hol}.
  Indeed, the only issue may arise at $z = z_0 = (1-\rho)^{1-\rho}(- \rho)^{\rho}$ where there may be a square root singularity when $q_z'(\lambda_i)$ vanishes, everywhere else the roots $\lambda_i \in \calQ(z)$ have multiplicity one and are locally holomorphic.
  We resolve the potential issue at the square root singularity by \Cref{l:well_def_hol}.
  Note that $p_z(\vec{\lambda}; \zeta ) \neq 0$ if $q_z'(\lambda_i)=0$ due to \Cref{p:double_root_no_solution}, meaning that we may apply the result of \Cref{l:well_def_hol}.
  Then, we have that the integrand is holomorphic at $z = z_0$ and meromorphic for $z \neq 0, \infty$.
  Moreover, the poles of the integrand are located at the roots of $p_z(\vec{\lambda}; \zeta )$ with $\lambda_i \in \calQ(z)$, which is equivalent to the set $\calZ(\zeta) \cap \bbC^{*}$. The result then follows by residue computation.
\end{proof}

Now, we actually compute the residues arising from the contour integral.
We will start by showing that the poles are simple for generic $\zeta \in \bbU$, i.e.~for all but finite values of $\zeta$, so that the corresponding residues can be obtained easily by taking derivatives.

\begin{lemma}\label{l:residue_simple}
  Take the function $I_N(z, \zeta, t)$ given by \eqref{e:i_N}.
  Then, for all but finitely many $\zeta \in \bbU$, the pole of the function $I_N(z;t)/z$ at $z = z'$, for $z' \in \calZ(\zeta) \cap \bbC^{*}$, is simple.
  Moreover, when the pole is simple, we have
  \begin{equation}
    \underset{z = z'}{\Res}\,\, \frac{I_{N}(z;t)}{z}
    = \sum_{\vec{\lambda} \in \calQ(z')^N \cap \mathcal{P}(z'; \zeta)} \frac{\sum_{\sigma \in S_N} A_{\sigma} \prod_{i=1}^{N} \lambda_{\sigma(i)}^{y_{\sigma(i)} - x_i} e^{t E(\vec{\lambda})}}{ \left(L - \sum_{i=1}^N \frac{1}{ \lambda_i - (1- \rho)} \right) \prod_{i=1}^N \lambda_i q_{z'}'(\lambda_i)},
  \end{equation}
  where $\vec{\lambda} = (\lambda_1, \dots, \lambda_N)$.
\end{lemma}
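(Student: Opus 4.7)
The strategy is to analyze each summand in $I_N(z,\zeta,t)$ indexed by a tuple $\vec\lambda \in \calQ(z)^N$ as a locally meromorphic function of $z$ near a putative pole $z = z'$, by using the implicit function theorem applied to the decoupled Bethe equations $q_z(\lambda_i) = 0$. Since the preceding \Cref{l:residue_computations} already disposes (via \Cref{l:well_def_hol}) of the branch points where $q_z'(\lambda_i) = 0$, we may assume $q_z'(\lambda_i) \neq 0$ throughout, so that each $\lambda_i(z)$ is a well-defined local holomorphic branch. The only potential pole in the $\vec\lambda$-summand comes from the factor $p_z(\vec\lambda(z);\zeta)$ in the denominator, so we must compute its order of vanishing and, when it is $1$, the corresponding residue.

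First I would carry out implicit differentiation. From $q_z(w) = 1 - z^L w^{-L}(1-w^{-1})^{-N}$ one computes $\partial_z q_z(w) = -L z^{-1}(1 - q_z(w))$, which equals $-L/z$ on the Bethe locus. Combined with the identity $\lambda_i q_z'(\lambda_i) = (L\lambda_i - (L-N))/(\lambda_i - 1)$ recorded in \Cref{t:main}, this gives
\[
\lambda_i'(z) = -\frac{\partial_z q_z(\lambda_i)}{q_z'(\lambda_i)} = \frac{\lambda_i(\lambda_i - 1)}{z(\lambda_i - (1-\rho))}.
\]
Next, write $p_z(\vec\lambda(z);\zeta) = 1 + P(z)$ with $P(z) := (-1)^N \zeta^L z^{-L} \prod_{i=1}^N (1 - \lambda_i(z)^{-1})$ and take the logarithmic derivative:
\[
P'(z) = P(z)\left[-\frac{L}{z} + \sum_{i=1}^N \frac{\lambda_i'(z)}{\lambda_i(\lambda_i - 1)}\right] = \frac{P(z)}{z}\left[-L + \sum_{i=1}^N \frac{1}{\lambda_i - (1-\rho)}\right].
\]
Since $p_{z'}(\vec\lambda(z');\zeta) = 0$ forces $P(z') = -1$, I obtain the key identity
\[
\frac{d}{dz} p_z(\vec\lambda(z);\zeta)\Big|_{z=z'} = \frac{1}{z'}\left[L - \sum_{i=1}^N \frac{1}{\lambda_i - (1-\rho)}\right].
\]

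For the simplicity claim, the pole of the $\vec\lambda$-summand at $z = z'$ is simple exactly when the above derivative is nonzero, i.e.\ when the factor $L - \sum_i (\lambda_i - (1-\rho))^{-1}$ does not vanish at $(\vec\lambda, z')$. Coupled with the decoupled Bethe equations \eqref{e:dBE}, this defines an algebraic subvariety of $(\vec w, z, \zeta)$-space whose projection to the $\zeta$-axis is either finite or all of $\bbU$; the latter is ruled out by inspection at a generic $\zeta$ (for instance any $\zeta$ with $\zeta^L \neq 1$ at which one can verify directly that the factor is nonzero on every Bethe root configuration, e.g.\ via the stationary-distribution normalization in \Cref{t:transition_prob_eigenbasis}). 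This yields the finite exceptional set of $\zeta \in \bbU$ described in the statement.

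Finally, at a simple pole the standard residue formula reads
\[
\Res_{z=z'} \frac{I_N(z,\zeta,t)}{z} = \sum_{\vec\lambda \in \calQ(z')^N \cap \calP(z';\zeta)} \frac{\sum_{\sigma \in S_N} A_\sigma \prod_i \lambda_{\sigma(i)}^{y_{\sigma(i)} - x_i} e^{tE(\vec\lambda)}}{z' \cdot \tfrac{d}{dz} p_z(\vec\lambda(z);\zeta)|_{z=z'} \cdot \prod_i \lambda_i q_{z'}'(\lambda_i)},
\]
and substituting the expression for the derivative cancels the factor $z'$ and produces precisely the claimed formula. The main obstacle is the generic-simplicity argument in the preceding paragraph: the derivative computation itself is mechanical once the implicit differentiation is set up, but the algebraic-geometric argument ruling out identical vanishing of $L - \sum_i (\lambda_i - (1-\rho))^{-1}$ on the Bethe variety requires some care, and is the step most likely to need invoking previously established results such as \Cref{t:transition_prob_eigenbasis} to exhibit a base point where simplicity holds.
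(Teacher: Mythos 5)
Your implicit-differentiation computation of $\tfrac{d}{dz}\,p_z(\vec\lambda(z);\zeta)\big|_{z=z'} = \tfrac{1}{z'}\bigl[L - \sum_{i=1}^N \tfrac{1}{\lambda_i - (1-\rho)}\bigr]$ is correct and in fact makes fully explicit the step the paper dismisses as ``straightforward residue computations''; the cancellation of $z'$ in the final residue formula is also right. So the computational half of your argument is solid.

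The gap is in the finiteness argument for the exceptional set of $\zeta$. First, invoking \Cref{t:transition_prob_eigenbasis} to exhibit a base point is circular: that theorem is proved by way of \Cref{t:main}, which in turn rests on \Cref{p:last_res}, which is exactly the proposition whose proof uses the present lemma. Second, your parenthetical claim that ``any $\zeta$ with $\zeta^L \neq 1$'' is a point where every Bethe root configuration satisfies $L \neq \sum_i (\lambda_i-(1-\rho))^{-1}$ is unjustified — the lemma itself only asserts a finite exceptional set inside $\bbU$, which in principle could include points with $\zeta^L \neq 1$, so you cannot simply declare those safe by inspection. The paper sidesteps both issues by a simple observation you do not make: the non-simplicity condition is the pair of equations $p_z(\vec\lambda;\zeta)=0$ and $L = \sum_i (\lambda_i-(1-\rho))^{-1}$, and the \emph{second} one is $\zeta$-independent. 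Since the right side of that equation is an algebraic function of $z$ (via $\lambda_i \in \calQ(z)$) tending to $0$ as $z \to \infty$, it is not identically equal to $L$, so it has finitely many solutions $z$; and each such $z$ (with its attached $\vec\lambda$) pins down $\zeta^L$ through the first equation, hence finitely many $\zeta \in \bbU$. This decoupling replaces your projection-of-a-variety argument and the need for a base point entirely. If you retain your route, you must supply an independent base point, not one borrowed from downstream results.
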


\begin{proof}
  The poles of the function $I_N(z;t)/z$ at $z = z'$ for $z' \in \calZ(\zeta) \cap \bbC^{*}$ correspond to roots of $p_z(\vec{\lambda}; \zeta)$ with $\vec{\lambda} = (\lambda_1, \dots, \lambda_N)$ and $\lambda_i \in \calQ(z)$.
  Note that a pole $z' \in \calZ(\zeta) \cap \bbC^{*}$ is not simple if and only if $p_z(\vec{\lambda}; \zeta)_{z = z'} = \partial_zp_z(\vec{\lambda}; \zeta)_{z = z'} =0$.
  In particular, we can write this system of equations as follows
  \begin{equation}\label{e:sys_eq}
    \zeta^{-L} = (-1)^{N+1} z^{-L} \prod_{i=1}^N(1 - \lambda_i^{-1}),
    \quad
    L = \sum_{i=1}^N\frac{1}{\lambda_i - (1- \rho)}.
  \end{equation}
  Note that the second equation is independent of $\zeta$ and it has finite number of solutions since the right side is an algebraic equation in the variable $z$ that is not identically equal to $L$. On can check this fact by noting that the limit of the right side is equal to zero as $z \rightarrow \infty$.
  Then, the system of equations given by \eqref{e:sys_eq} has finite number of solutions since we showed that the second equation has finite number of solutions and, given this, the first equation determines finitely many values of $\zeta$.
  Thus, it follows that the poles are simple except for finite number of $\zeta \in \bbU$.
  The rest of the result of the lemma follows from straightforward residue computations. 
\end{proof}

This last lemma of the section, \Cref{l:well_def_hol}, is a technical result.
In the computations above, we compute the contour integrals by contour deformation and the residue theorem.
So, we must check that the integrand is a meromorphic function.
Potentially, the integrand may be a multi-valued function since it is given as a function on the roots of $q_z(w)$.
This issue only arises in neighborhood of a root of $q_z(w)$ with higher multiplicity.
Otherwise, one may check that the integrand is locally meromorphic.
Below, we check that the integrand is well defined by showing that it is single-valued and, additionally, we show that the integrand is holomorphic at the double root of $q_z(w)$.

\begin{lemma}\label{l:well_def_hol}
  Let $g(z; \vec{w})$ be a symmetric function on $\vec{w} = (w_1, \dots, w_N)$ and set
  \begin{equation}\label{e:f}
    f(z) := \sum_{\lambda_1, \cdots, \lambda_N \in \calQ(z)}
    \frac{\det [(1 -\lambda_j^{-1})^{j-i}\lambda_j^{-x_i}]_{i,j=1}^N}{\prod_{i=1}^N q_z'(\lambda_i)} g(z; \vec{\lambda}).
  \end{equation}
  Also, assume that $g(z; \vec{w})$ is analytic in a neighborhood of $z_0:= (1-\rho)^{(1-\rho)}(- \rho)^{\rho}$ and $w_i \neq 0, \infty$ for $i=1, \dots, N$.
  Then, the function $f(z)$ is holomorphic on a neighborhood of $z_0$.
\end{lemma}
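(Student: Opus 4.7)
The plan is to rewrite $f(z)$ as a multivariate contour integral over a $z$-independent contour, thereby making holomorphy in $z$ manifest. First, I would exploit the factorization $q_z(w) = P_z(w)/[w^{L-N}(w-1)^N]$, where
\begin{equation*}
  P_z(w) := w^{L-N}(w-1)^N - z^L
\end{equation*}
is a degree-$L$ polynomial in $w$. Since $z_0 \neq 0$, neither $0$ nor $1$ is a root of $P_{z_0}$, so $\calQ(z_0)$ is a finite subset of $\bbC \setminus \{0, 1\}$; the role of $z_0$ is precisely that two of these roots coalesce at $\lambda = 1 - \rho$. A short computation shows that at any root $\lambda$ of $P_z$, one has $q_z'(\lambda) = P_z'(\lambda)/z^L$, so $1/q_z'(\lambda) = z^L/P_z'(\lambda)$.

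Next I would fix a $z$-independent contour $C = \gamma_1 \cup \cdots \cup \gamma_{L-1}$ consisting of disjoint small circles around each of the $L-1$ distinct roots of $P_{z_0}$, small enough to avoid $\{0, 1\}$. For $z$ in a sufficiently small disk $D$ around $z_0$, continuity of $P_z$ in $z$ ensures $P_z$ does not vanish on $C$, and Rouch\'e's theorem shows that each $\gamma_k$ encloses the same number of roots of $P_z$ as of $P_{z_0}$; in particular, $C$ encloses all $L$ roots of $P_z$ for every $z \in D$.

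For $z \in D \setminus \{z_0\}$ every root of $P_z$ is simple, so applying the residue theorem iteratively in each of the $N$ variables yields
\begin{equation*}
  f(z) = z^{NL} \left( \frac{1}{2 \pi \icomp} \right)^N \oint_C \cdots \oint_C \frac{ \det\bigl[(1 - w_j^{-1})^{j-i} w_j^{-x_i}\bigr]_{i,j=1}^N \, g(z; \vec{w}) }{ \prod_{i=1}^N P_z(w_i) } \, \dd w_1 \cdots \dd w_N.
\end{equation*}
The determinant is rational in each $w_j$ with singularities only at $\{0, 1\}$, which $C$ avoids, and $g(z; \vec{w})$ is analytic on a neighborhood of $\{z_0\} \times C^N$ by hypothesis; so the integrand is jointly continuous on $C^N \times D$ and holomorphic in $z$ for each fixed $\vec{w} \in C^N$. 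Since $C$ is fixed, differentiation under the integral sign shows that the right-hand side is holomorphic in $z$ throughout $D$, including at $z_0$. By the identity above, this contour integral agrees with $f(z)$ on the punctured disk $D \setminus \{z_0\}$, providing the desired holomorphic extension.

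The main technical point is the uniform choice of the contour $C$: I need it to enclose all roots of $P_z$ while avoiding the singularities of the integrand at $\{0, 1\}$, uniformly in $z \in D$. This is exactly where the hypothesis $z_0 \neq 0$ is essential, since it guarantees that $\calQ(z_0)$ is bounded away from $\{0, 1\}$, leaving room for the small circles; the rest of the argument is then standard continuity of roots plus analyticity of integrals with parameters.
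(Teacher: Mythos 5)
Your proof is correct and takes a genuinely different, and arguably cleaner, route than the paper's. The paper establishes holomorphy at $z_0$ by first arguing that $f$ is well-defined (single-valued) because the sum is a symmetric function of the labels of $\calQ(z)$, and then invoking Morera's theorem together with a somewhat delicate pairing argument: expanding the roots near the double root $1-\rho$ in powers of $(z-z_0)^{1/2}$, grouping the terms of the sum into pairs under the transposition that swaps the two colliding roots, and checking in three cases that the singular $(z-z_0)^{-1/2}$ contributions cancel. Your approach replaces all of this with a single structural observation: since $q_z'(\lambda) = P_z'(\lambda)/z^L$ with $P_z(w) = w^{L-N}(w-1)^N - z^L$ a polynomial whose roots vary continuously in $z$, the sum over roots is an iterated residue, and rewriting it as an $N$-fold contour integral over a $z$-independent contour $C$ enclosing $\calQ(z)$ but avoiding $\{0,1\}$ makes both single-valuedness and holomorphy immediate from analyticity of integrals with a holomorphic parameter. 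This buys you a proof that does not require tracking the square-root branching at all; the branching of individual roots is simply invisible to a fixed-contour integral. The only caveats you handled appropriately are that $z_0 \neq 0$ (so the roots of $P_{z_0}$ stay away from $\{0,1\}$ and the prefactor $z^{NL}$ is harmless), that the circles in $C$ can be chosen disjoint around the $L-1$ distinct roots of $P_{z_0}$, and that Rouch\'e preserves the root count inside each circle for $z$ in a small disk. One could also note that your argument does not actually use the hypothesis that $g$ is symmetric in $\vec w$, whereas the paper's well-definedness step is phrased in terms of symmetry of the summand; this is not a flaw but an indication that your representation is slightly more robust.
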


\begin{proof}
  We need to prove two things: (1) that the function $f(z)$ is indeed well-defined as a function of $z \in \bbC$, and (2) that the function $f(z)$ is holomorphic.
  A priori, the roots $\lambda_i$, for $i =1, \dots, L$, of the polynomial $q_z(w)$ are defined on a branched cover of $\bbC$.
  We show that $f$ is indeed defined on the base curve of the branch cover, i.e.~on $z \in \bbC$, since it is given as a symmetric function on the roots $\lambda_i$.
  Then, we show that $f(z)$ is holomorphic.
  Note that the function is locally holomorphic away from the branching point $z_0$ since the roots of $q_z(w)$ are locally holomorphic.
  Then, it suffices to show that $f(z)$ is holomorphic at $z_0$.
  Moreover, by Morera's theorem, $f$ is holomorphic at $z_0$ if
  \begin{equation}\label{e:morera}
    \oint_{\gamma} f(z) \dd z =0
  \end{equation}
  for every closed piecewise $C^1$ curve $\gamma$ in a neighborhood of $z_0$ and containing $z_0$. We show this below.

  First, we show that the function $f(z)$ is well-defined on $\bbC$ since it is a symmetric function on the roots of the polynomial $q_z(w)$, meaning that we do not have to choose branch cuts for the roots.
  To be more precise, we may define a symmetric function in $w_1, \dots, w_L$,
  \[
    h(w_1, \dots, w_L; z) := \sum_{1 \leq k_1, \dots, k_N \leq L}
    \frac{\det [(1 -w_{k_j}^{-1})^{j-i} w_{k_j}^{-x_i}]_{i,j=1}^N}{\prod_{i=1}^N q_z'(w_{k_i})} g(z; w_{k_1}, \dots, w_{k_N}),
  \]
  and write
  \begin{equation}
    f(z) = h(\lambda_1, \dots, \lambda_L; z),
  \end{equation}
  where $\lambda_i$'s denote the roots of $q_z(w)$, i.e.~$\calQ(z) = \{ \lambda_1(z), \dots, \lambda_L(z) \}$.
  Now, fix a base point $z \neq z_0$ and let $\gamma : [0,1] \rightarrow \bbC$ be a closed curve with $\gamma(0)= \gamma(1) = z$.
  Also, fix a labeling of the roots of $q_z(w)$, $\lambda_i$ for $i=1, \dots, L$ and the base point $z$.
  Let $\lambda_i(t)$ be the analytic continuation of $\lambda_i$ along the curve $\gamma$.
  We have that $\lambda_i(0) = \lambda_i$ and $\lambda_i(1) = \lambda_{\sigma(i)}$ for some permutation $\sigma \in S_L$.
  In particular, we have
  \begin{equation}
    \begin{split}
      f(\gamma(0)) & = h(\lambda_1(0), \dots, \lambda_L(0); \gamma(0) ) = h(\lambda_1, \dots, \lambda_L; \gamma(0) )\\
      & = h(\lambda_{\sigma(1)}, \dots, \lambda_{\sigma(L)}; \gamma(0) )
      = h(\lambda_1(1), \dots,\lambda_L(1); \gamma(1) )
      = f(\gamma(1))
    \end{split}
  \end{equation}
  since $h$ is symmetric on the first $L$ arguments and $\gamma(0)= \gamma(1)=z$.
  Since the denominator of $f(z)$ only vanishes at $z = z_0$ for a small neighboorhood of $z_0$, we have that $f$ is independent of the analytic continuation along $\gamma$ and well-defined for a punctured disk of $z_0$, $B = \{z \mid \delta_0>|z-z_0|>0 \}$ for some $\delta_0 > 0$ small enough.
  Note that the above argument only holds for $f$ but not for individual terms inside the summation defining $f$, underlying the importance of the symmetrization.
  
  Now, note that $f(z)$ is locally holomorphic on $B = \{z \mid 0< |z-z_0| < \delta_0 \}$ for $\delta_0 > 0$ small enough.
  Then, we have
  \begin{equation}\label{e:m_def}
    \oint_{\gamma} f(z) \dd z = \oint_{|z-z_0| = \delta} f(z) \dd z
  \end{equation}
  for every $\gamma \subset B$ containing $z_0$ and $ 0< \delta <\delta_0$.
  Thus, by Morera's theorem, it suffices to show that the limit of the right side of \eqref{e:m_def} is equal to zero as $\delta$ goes to zero.

  Let us now write
  \begin{equation}\label{e:m_def2}
    \oint_{|z-z_0| = \delta} f(z) \dd z = \int_{\substack{|z-z_0| = \delta\\ \myre(z-z_0) \geq 0}} f(z) \dd z + \int_{\substack{|z-z_0| = \delta\\ \myre(z-z_0) \leq 0}} f(z) \dd z.
  \end{equation}
  In the following, we show that each term on the right side goes to zero as $\delta \rightarrow 0$. That is,
  \begin{equation}\label{e:m_limit}
    \lim_{\delta \rightarrow 0^+} \int_{\substack{|z-z_0| = \delta\\ \myre(\pm (z-z_0) ) \geq 0}} f(z) \dd z  = 0.
  \end{equation}
  We take this decomposition so that we can express the roots of $q_z(w)$ as single valued series expansion with a fixed labeling. The argument for the both cases is practically identical. 
  
  We show that the terms in the sum of \eqref{e:f} cancel out in pairs in the limit $\delta \rightarrow 0$.
  We take $\delta >0$ in \eqref{e:m_def2} to be arbitrarily small so that we may take a series expansion of the $\lambda_i \in \calQ(z)$.
  Recall that the integrand is given as a sum over the set $\calQ(z)^N$.
  The terms in the sum will cancel in pairs due to the fact that the zeros of the denominator, i.e.~$q_z'(\lambda_i) =0$, arise from a double root of $q_z(\lambda)$.
  Below, we will first consider the series expansion of $\lambda_i$ in terms of $z \in B $ with $\myre(\pm (z-z_0)) >0$.
  Then, we introduce the pairing among the terms in the integrand.
  Lastly, we show the cancellations that lead to the result. 
  
  Consider $z \in \bbC$ with $0 < |z- z_0| < \delta$, $\myre (\pm(z-z_0))$ and $\delta > 0$ arbitrarily small. In this region, we may fix a labeling on the elements of the set $\calQ(z)$,
  \begin{equation}\label{e:Q_labelling}
    \calQ(z) = \{\lambda_1, \cdots, \lambda_L\},
  \end{equation}
  so that each $\lambda_i =\lambda_i(z)$, for $i=1, \cdots, N$, is a single-valued holomorphic function for $0 < |z- z_0| < \delta$ and $\myre(\pm(z-z_0))$.
  The function $f(z)$ is independent of the labelling since $f$ is symmetric on the $\lambda_i$.
  Note that the elements $\lambda_i$'s are pairwise distinct since  $z \neq z_0$.
  In fact, the roots $\lambda_i$'s only have higher multiplicity when $z = z_0$.
  Moreover, the multiplicity of each root is at most two when $z =z_0$, since there are no solutions to the system of equations $q_z(\lambda) = q_z'(\lambda) = q_z''(\lambda) = 0$ for any $z \neq  0 , \infty$ since $L\neq N$.

  Let $\lambda_0 = 1- \rho$ denote the double root in $\calQ(z_0)$.
  We distinguish two roots, say $\lambda_{1}, \lambda_{2} \in \calQ(z)$, to be given by
  \begin{equation}
    \label{e:root_approx}
    \lambda_{i} = \lambda_0  + (-1)^i c \, (z- z_0)^{1/2} + \calO(z- z_0), \quad i=1,2
  \end{equation}
  for some constant $c \in \bbC$, $0 < |z - z_0 | < \delta$, and $\myre(\pm(z-z_0))$.
  The approximation of the two distinguished roots $\lambda_{1}, \lambda_2 \in \calQ(z)$ follows by solving the functional equation $q_z(\lambda_i) = 0$ around $z = z_0$.

  We now give a paring among the terms in the function that will lead to cancellations.
  In the definition of the function $f(z)$, given by \eqref{e:f}, we take a sum over all $N$-tuples of elements of $Q(z)$, i.e.~a sum over all $(\lambda_{i_1}, \cdots, \lambda_{i_N})$ with $\lambda_{i_n} \in Q(z)$\footnote{We have introduced sub-indices since we have fixed a labelling, \eqref{e:Q_labelling}, for the elements of the set $\calQ(z)$}.
  We denote each term in the sum by
  \begin{equation}
    f^{(i_1, \cdots, i_N)}(z) := \frac{\det [(1 -\lambda_{i_m}^{-1})^{m-n}\lambda_{i_m}^{-x_{i_n}}]_{n,m=1}^N}{\prod_{n=1}^N q_z'(\lambda_{i_n})} g(z; \lambda_{i_1}, \dots, \lambda_{i_N}),
  \end{equation}
  for each $N$-tuple $(\lambda_{i_1}, \cdots, \lambda_{i_N}) \in \calQ(z)^N$. Note that the function vanishes,
  \begin{equation}
    f^{(i_1, \cdots, i_N)}(z) =0,
  \end{equation}
  if $i_n = i_m$ for some $n \neq m$ and $0 < |z - z_0| < \delta$, since the denominator does not vanish for $\delta$ small enough.
  Thus, we may assume that the $N$-tuples $(i_1, \cdots, i_N)\in Q(z)^N$ are pairwise distinct in the sum for the function $f(z)$.
  We then denote
  \begin{equation}
    \Delta^N := \left\{(i_1 , \cdots, i_N) \mid 1 \leq i_k \leq N, \quad i_n \neq i_m\quad\text{for}\quad n \neq m\right\}
  \end{equation}
  to be the set of pairwise distinct $N$-tuples of indices.
  It follows that we may write the function as follows
  \begin{equation}
    f(z) = \sum_{ i \in \Delta^N } f^i(z)
  \end{equation}
  with $i = (i_1, \cdots, i_N)$. 
  
  We now introduce a bijection $d:\Delta^N \rightarrow \Delta^N$ given by $d(i) := (d(i)_1, \cdots, d(i)_N)$ so that
  \begin{equation}
    d(i)_n =
    \begin{cases}
      2, \quad & i_n= 1, \\
      1, \quad  & i_n = 2, \\
      i_n, \quad & i_n \neq 1, 2,
    \end{cases}
  \end{equation}
  for $i  = (i_1, \cdots, i_N) \in \Delta^N$.
  The bijection is the action induced on $\Delta^N$ by the natural action of the transposition $(1\,2)$ on the set $\{ 1, \cdots, N \}$.
  Also, recall that the indices $1$ and $2$ correspond to the distinguished roots that approach the double root when $z \rightarrow z_0$, see \eqref{e:root_approx}.
  As a consequence, we may write
  \begin{equation}
    \label{e:f_sum_bijection}
    f(z) = \frac12 \sum_{i \in \Delta^N} \Big( f^{i}(z) + f^{d(i)}(z) \Big).
  \end{equation}

  Below, we establish the result of the Lemma by showing the following (approximate) cancellation
  \begin{equation}\label{e:pair_cancel}
    f^{i}(z) + f^{d(i)}(z) = \calO(1)
  \end{equation}
  for any $i \in \Delta^N$ as $z \rightarrow z_0$. By taking the sum over all $i \in \Delta^N$ of the identity above, we obtain that $f(z) = \calO(1)$ as $z \rightarrow z_0$ and, then, the limit given by \eqref{e:m_limit} follows. Moreover, the result for the lemma then  follows by Morera's Theorem \eqref{e:morera} and the identities \eqref{e:m_def} and \eqref{e:m_def2}.

  Below, for each $i \in \Delta^N$, we show the cancellation \eqref{e:pair_cancel} based on three different cases: (1) $|i \cap \{1,2 \}| = 0$, (2) $|i \cap \{ 1,2\}| =1$, and (3) $|i \cap \{1, 2\}| =2$.

  In the first case, $|i \cap \{1,2\}| = 0$, the functions $f^{i}(z)$ and $f^{d(i)}(z)$ do not have any poles for $|z-z_0| < \delta$ since none of the roots $\lambda_{i_n}$ or $\lambda_{b(i)_n}$, $n=1, \cdots, N$, correspond to the double roots when $z =z_0$.
  Then, we have
  \begin{equation}\label{e:pair_cancel_1}
    f^{i}(z)  ,  f^{d(i)}(z) = \mathcal{O}(1).
  \end{equation}
  since the denominator only vanishes when $\lambda_j = \lambda_1$ or $\lambda_j = \lambda_2$ for some $j$.
  This establishes \eqref{e:pair_cancel} for the case (1) $|i \cap \{ 1, 2 \}| = 0$.
  
  In the second case, $|i \cap \{1, 2 \}| =1$, we assume without loss of generality that $i \cap \{1, 2 \} = i_k =1$ for some $k \in \{ 1, \cdots, N \}$.
  By \eqref{e:root_approx}, we have
  \begin{equation}
    \begin{split}
      q_z'(\lambda_1) & = c_1 (z - z_0)^{1/2} + \calO(z - z_0), \\
      q_z'(\lambda_2) & = - c_1 (z - z_0)^{1/2} + \calO(z - z_0),
    \end{split}
  \end{equation}
  for some constant $c_1 \in \bbC$ independent of $z$ with $0 <|z -z_0| < \delta$.
  Moreover, since $i \cap \{1, 2\} =1$, the rest of the terms in the denominator do not vanish.
  Now, note that the numerators of the functions $f^i(z)$ and $f^{d(i)}(z)$ are equal when $z = z_0$, since $\lambda_1 = \lambda_2$ and the rest of the $\lambda_{i_n}$ and $\lambda_{d(i)_n}$ are equal.
  Then, we have
  \begin{equation}
    f^{i}(z)  = \frac{c_2}{(z - z_0)^{1/2}} \Big( 1 + \calO( \sqrt{z - z_0} ) \Big), \quad f^{d(i)}(z)  = -\frac{c_2}{(z - z_0)^{1/2}} \Big( 1 + \calO( \sqrt{z - z_0} ) \Big),
  \end{equation}
  for some non-zero constant $c_2$, which easily leads to $f^{i}(z) + f^{d(i)}(z) = \calO(1)$. This establishes the result in the second case $|i \cap \{1, 2\}| = 1$.
  
  In the third case, $|i \cap \{1, 2\}|= 2$, we assume $i_k = 1$ and $i_s = 2$ for some $k, s \in \{1, \cdots, N \}$.
  Similar to the second case, we have
  \begin{equation}
    \prod_{n=1}^N  q_z'(\lambda_{i_n})
    = \prod_{n=1}^N  q_z'(\lambda_{b(i)_n})
    = c_3 (z - z_0) + \calO \big( (z - z_0)^{3/2} \big)
  \end{equation}
  for some constant $c_3 \in \bbC$ independent of $z$.
  Note that the determinantal part in the numerator of each function, $f^i(z)$ and $f^{d(i)}(z)$, vanishes when some of the $\lambda_i$'s coincide.
  This implies that $\lambda_1 - \lambda_2$ is a factor of the determinant and it is not hard to show that its multiplicity is 1.
  Hence, as $z \rightarrow z_0$ and due to the determinant factor and $\lambda_1 = \lambda_2$ for $z = z_0$, we obtain that
  \begin{equation}
    \det \left[ (1-\lambda_{i_m}^{-1})^{m-n} \lambda_{i_m}^{x_n} \right]_{m,n=1}^N g(z;\vec{\lambda})
    = (\lambda_1 - \lambda_2) g_1(z; \vec{\lambda})
    = c_4 (z - z_0)^{1/2} g_1(z; \vec{\lambda})
  \end{equation}
  for some holomorphic function $g_1$ on the entries $\lambda_{i_n}$ that do not vanish at $z = z_0$ and some constant $c_4$.
  This gives
  \begin{equation*}
    f^{i}(z)  = \frac{c_5 (z-z_0)^{1/2}}{z - z_0} \big( 1 + \calO( \sqrt{z-z_0} ) \big), \quad
    f^{d(i)}(z)  = -\frac{c_5 (z-z_0)^{1/2}}{z - z_0} \big( 1 + \calO( \sqrt{z-z_0} ) \big),
  \end{equation*}
  leading to $f^{i}(z) + f^{d(i)}(z) = \calO(1)$.
  This establishes the result in the third case $|i \cap \{ 1, 2 \} | = 2$.

  Now, we can deduce \eqref{e:morera} by \eqref{e:m_def}, \eqref{e:f_sum_bijection} and \eqref{e:pair_cancel}.
\end{proof}

\section{Current observable and asymptotics}
\label{s:current}

In this section, we give the proofs of the results presented in \Cref{s:current_results}.

\subsection{Cumulative distribution function for the current observable}\label{s:cdf_proof}

We begin by giving a proof for \Cref{p:current_cdf} which gives an exact formula for the cumulative distribution function (CDF) of the current observable.
Additionally, we provide some technical lemmas needed for the proof of \Cref{p:current_cdf}.
We start with the proof of \Cref{p:current_cdf} and provide the technical lemmas afterwards.

\begin{proof}[Proof of \Cref{p:current_cdf}]
  We obtain the probability distribution by taking the Fourier inverse of the generating function $\QjXgf{L-1}{L}$, given by \eqref{e:local_joint_gf}, and summing over all the possible configurations for $X \in \mathcal{X}_{L,N}$. 
  
  We have
  \begin{equation}
    \bbP_Y(X(t) = X; Q_{L-1}(t) = Q) = \oint_{\bbU}\frac{d \zeta}{\zeta^{1+Q L}}\QjXgf{L-1}{L}.
  \end{equation}
  This follows directly from the definition of $\Qjgf{L-1}{L}$ given in \eqref{e:local_joint_gf}.
  
  We use \Cref{c:QjXgf_contour_integral} to compute the Fourier inverse. Additionally, we use the contour formula \eqref{e:main_1} and the expression \eqref{e:u0_alt} for $u_0(\zeta)$ to express the right side of \eqref{e:cont_local_x}. Then, we have
  \begin{equation}
    \label{e:prob_distr_alt}
    \bbP_Y(X(t) = X; Q_{L-1}(t) = Q) =
    \oint_{\bbU} \frac{d \zeta}{\zeta^{1 + Q L}} \oint_{\epsilon'}^R \frac{d z}{z}
    \sum_{\SumLambdas}' \left(\frac{h_{\vec{\lambda}}(Y) u_{\vec{\lambda}}(X) e^{tE(\vec{\lambda})} }{p_z(\vec{\lambda}; \zeta) \prod_{i=1}^N\lambda_i q'(\lambda_i)}  \right) 
  \end{equation}
  so that the summation excludes the case where $|z|= \epsilon'$ and $\lambda_i \in \calQ_1(z)$ for all $i=1, \dots, N$ and we denote this by writing a prime on the summation sign. Note that the case we are excluding follows from a cancellation due to the expression \eqref{e:u0_alt} for $u_0(\zeta)$.
  
  Next, we compute the contour integral with respect to the $\zeta$-variable by taking a series expansion of the integrand with respect to the $\zeta$-variable.
  In particular, we take a series expansion of the term $1/p_z(\vec{\lambda};z)$ with respect to the $\zeta$-variable.
  The series expansion depends on the location of the $\lambda_i$-variables and the $z$-variable.
  Note that we have
  \begin{equation}\label{e:rate}
    \left|\zeta^L z^{-L} \prod_{i=1}^N(1- \lambda_i^{-1}) \right| \sim
    \begin{cases}
      (\epsilon')^{-L - n_0 / (1-\rho) + n_1 d}, & \quad z \in C_{\epsilon'}, \\
      (\epsilon')^{L}, & \quad z \in C_{R'},
    \end{cases}
  \end{equation}
  where $n_0$ denotes the number of $\lambda_i$-roots that lie in the set $\calQ_0(z)$ and $n_1$ denotes the number of $\lambda_i$-roots that lie in the set $\calQ_1(z)$.
  It follows, by \eqref{e:beta1_bounds} and \eqref{e:beta2_bounds}, that the magnitude of the term in \eqref{e:rate} is greater than 1 if and only if $z \in C_{\epsilon'}$ and $n_1 \neq N$.
  Otherwise, the magnitude of the term in \eqref{e:rate} is less than one.
  Then, for $Q \leq -1$, we have
  \begin{equation}
    \label{e:Q_neg}
    \bbP_Y(X(t) = X; Q_{L-1}(t) = Q)
    = (-1)^{(N+1)Q} \oint_{C_{\epsilon'}} \frac{\dd z}{z}
    \sum_{\SumLambdas}'
    \Bigg( \frac{ h_{\vec{\lambda}}(Y) u_{\vec{\lambda}}(X) e^{t E(\vec{\lambda})} }{ \prod_{i=1}^N \lambda_i q'(\lambda_i)} \Bigg) \Big( z^{-L} \prod_{i=1}^N (1- \lambda_i) \Big)^Q.
  \end{equation}
  and similarly, for $Q \geq 0$, we have
  \begin{equation}
    \label{e:Q_pos}
    \bbP_Y(X(t) = X; Q_{L-1}(t) = Q)
    = (-1)^{(N+1)Q} \oint_{C_{R'}} \frac{\dd z}{z}
    \sum_{\SumLambdas}
    \Bigg( \frac{ h_{\vec{\lambda}}(Y) u_{\vec{\lambda}}(X) e^{t E(\vec{\lambda})} }{ \prod_{i=1}^N \lambda_i q'(\lambda_i)} \Bigg) \Big( z^{-L} \prod_{i=1}^N (1- \lambda_i) \Big)^Q.
  \end{equation}
  Note the difference between the two cases: the contours of integration are different and the case for negative current has a restricted summation.
  
  We will now deform the contour, from $C_{R'}$ to $C_{\epsilon'}$, in the formula \eqref{e:Q_pos}.
  We do not obtain a residue term from the deformation due to \Cref{l:well_def_hol}.
  More precisely, the only possible pole in the deformation is due to the roots of the terms $\lambda_i q'(\lambda_i) = (L \lambda_i - (L-N))/(\lambda_i -1)$ in the denominator.
  It turns out that the integrand is holomorphic at that point since the zeros of $\lambda_i q'(\lambda)$ correspond to double roots of $q_z(w)$ and the poles cancel pairwise in the summation over $\lambda_i \in \calQ(z)$ for $1 \leq i \leq N$; see \Cref{l:well_def_hol} for more details. Thus, we find
  \begin{equation}
    \label{e:Q_pos2}
    \bbP_Y(X(t) = X; Q_{L-1}(t) = Q)
    = (-1)^{(N+1)Q}  \oint_{C_{\epsilon'}} \frac{\dd z}{z}
    \sum_{\SumLambdas}
    \Bigg( \frac{ h_{\vec{\lambda}}(Y) u_{\vec{\lambda}}(X) e^{t E(\vec{\lambda})} }{ \prod_{i=1}^N \lambda_i q'(\lambda_i)} \Bigg) \Big( z^{-L} \prod_{i=1}^N (1- \lambda_i) \Big)^Q.
  \end{equation}

  Now, consider the term in the summation of \eqref{e:Q_pos2} with all $\lambda_i \in \calQ_1(z)$.
  We have that
  \begin{equation}
    \left|z^{-Q L}\prod_{i=1}^N\frac{(1 - \lambda_i^{-1})^Q}{\lambda_i q'(\lambda_i)} \right| = \mathcal{O}((\epsilon')^L )
  \end{equation}
  when $\lambda_i \in \calQ_1(z)$ for $1 \leq i \leq N$.
  Also, we have that the rest of the terms in the integrand are uniformly bounded as $\epsilon' \rightarrow 0$ when $\lambda_i \in \calQ_1(z)$ for $1 \leq i \leq N$.
  It follows that the integral of the term (in the summation of \eqref{e:Q_pos2} with $\lambda_i \in \calQ_1(z)$ for all $1 \leq i \leq N$) is identically equal to zero by taking $\epsilon'$ arbitrarily small.
  Then, for $Q \geq 0$, we have
  \begin{equation}
    \label{e:Q_pos3}
    \bbP_Y(X(t) = X; Q_{L-1}(t) = Q)
    = (-1)^{(N+1)Q}  \oint_{C_{\epsilon'}} \frac{\dd z}{z}
    \sum_{\SumLambdas}'
    \Bigg( \frac{ h_{\vec{\lambda}}(Y) u_{\vec{\lambda}}(X) e^{t E(\vec{\lambda})} }{ \prod_{i=1}^N \lambda_i q'(\lambda_i)} \Bigg) \Big( z^{-L} \prod_{i=1}^N (1- \lambda_i) \Big)^Q,
  \end{equation}
  which is exactly the same formula as in \eqref{e:Q_neg}.
  Therefore, for all $Q \in \bbZ$, we have
  \begin{equation}
    \label{e:Q}
    \bbP_Y(X(t) = X; Q_{L-1}(t) = Q)
    = (-1)^{(N+1)Q}  \oint_{C_{\epsilon'}} \frac{\dd z}{z}
    \sum_{\SumLambdas}'
    \Bigg( \frac{ h_{\vec{\lambda}}(Y) u_{\vec{\lambda}}(X) e^{t E(\vec{\lambda})} }{ \prod_{i=1}^N \lambda_i q'(\lambda_i)} \Bigg) \Big( z^{-L} \prod_{i=1}^N (1- \lambda_i) \Big)^Q.
  \end{equation}
  
  Next, we obtain the probability function for the current by summing over all possible configurations $X \in \mathcal{X}_{L,N}$. This summation is performed in \Cref{l:sum_config_space}. The result is
  \begin{equation}\label{e:Qneg4}
    \begin{split}
      & \bbP_Y(Q_{L-1}(t) = Q)= \sum_{X\in\mathcal{X}_{L,N}}\bbP_Y(X(t)= X; Q_{L-1}(t) = Q) \\
      & = \oint_{\epsilon'} \frac{\dd z}{z} \sum_{\substack{\lambda_i \in \calQ(z) \\ 1 \leq i\leq N}}' \det\left[\frac{(1 -\lambda_j^{-1})^{j-i-1}\lambda_j^{y_j+1}e^{tE(\lambda_j)}}{L \lambda_j -(L-N)} \right]_{i,j=1}^N \\
      & \qquad \qquad \times \left((-1)^{(N+1)Q}z^{-QL}\prod_{i=1}^N(1- \lambda_i^{-1})^{Q+1}- (-1)^{(N+1)(Q+1)} z^{-(Q+1)L}\prod_{i=1}^N(1- \lambda_i^{-1})^{Q+2}\right)
    \end{split}
  \end{equation}
  for any $Q\in \mathbb{Z}$. 
  
  We now aim to take a telescoping sum to obtain the cumulative distribution function. Note that
  \begin{equation}
    \left|z^{-L} \prod_{i=1}^N (1- \lambda_j^{-1}) \right|
    \sim (\epsilon')^{-L - n_0 / (1-\rho) + n_1 d }
    \gg 1,
  \end{equation}
  for $n_1 \neq N$, where $n_i$ are the number of Bethe roots in the set $\calQ_i(z)$.
  Recall that the case $n_1 = N$ is excluded by the restricted sum.
  It follows that telescopic sum for the cumulative distribution function of the current is convergent,
  \begin{equation}\label{e:Q6}
    \begin{split}
      & \bbP_Y(Q_{L-1}(t) \leq Q-1)= \sum_{k \leq Q-1} \bbP_Y( Q_{L-1}(t) = k) \\
      & = - (-1)^{(N+1) Q} \oint_{C_{\epsilon'}}
      \frac{\dd z}{z} \sum_{\substack{\lambda_i \in \calQ(z) \\ 1 \leq i \leq N}}'
      \det \left[ \frac{(1 -\lambda_j^{-1})^{j-i-1} \lambda_j^{y_j+1} e^{tE(\lambda_j)}}{L \lambda_j -(L-N)} \right]_{i,j=1}^N z^{-QL} \prod_{i=1}^N (1- \lambda_i^{-1})^{Q+1} \\
      & = -(-1)^{(N+1) Q} \oint_{C_{\epsilon'}} \frac{\dd z}{z^{1 + Q L}} \sum_{\substack{\lambda_i \in \calQ(z) \\ 1 \leq i \leq N}}'
      \det \left[ \frac{ (1 -\lambda_j^{-1})^{Q+j-i} \lambda_j^{y_j+1} e^{t E(\lambda_j)}}{L \lambda_j -(L-N)} \right]_{i,j=1}^N.
    \end{split}
  \end{equation}
  Lastly, we may remove the restriction on the summation by applying \Cref{l:stat_current}. Then, we have
  \begin{equation}\label{e:Q7}
    \begin{split}
      & \bbP_Y (Q_{L-1}(t) \leq Q-1) \\
      & = 1 -(-1)^{(N+1) Q}\oint_{\epsilon'} \frac{\dd z}{z^{1 +(Q+1)L }}
      \sum_{\substack{\lambda_i \in \calQ(z) \\ 1 \leq i \leq N}}
      \det \left[ \frac{(1 -\lambda_j^{-1})^{Q+j-i} \lambda_j^{y_j+1} e^{tE(\lambda_j)}}{L \lambda_j -(L-N)} \right]_{i,j=1}^N.
    \end{split}
  \end{equation}
  Moreover, by the complement formula, we have
  \begin{equation}
    \bbP_Y (Q_{L-1}(t) \geq Q) = (-1)^{(N+1)Q}\oint_{\epsilon'} \frac{\dd z}{z^{1 +Q L }} \sum_{\substack{\lambda_i \in \calQ(z)\\1\leq i \leq N}} \det\left[\frac{(1 -\lambda_j^{-1})^{Q+j-i}\lambda_j^{y_j+1}e^{tE(\lambda_j)}}{L \lambda_j -(L-N)} \right]_{i,j=1}^N.
  \end{equation}
  This establishes the first formula of \Cref{p:current_cdf}.
  The second formula of \Cref{p:current_cdf} follows by a straightforward application of the the deformed Bethe equations \eqref{e:dBE}.

\end{proof}

\begin{lemma}
  \label{l:sum_config_space}
  Take $\lambda_i \in \calR(z)$ for $i =1, \cdots, N$ and $z \in \bbC^\star$.
  Then, we have
  \begin{equation}
    \sum_{X \in \calX_{L, N}} \det[(1 - \lambda_j^{-1})^{-i} \lambda_j^{-x_i}]
    = \left( 1 + (-1)^N z^{-L} \prod_{i=1}^N (1 - \lambda_i^{-1}) \right) \det[ (1 - \lambda_j^{-1})^{-i-1} ].
  \end{equation}
\end{lemma}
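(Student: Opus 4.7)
The plan is to expand the LHS determinant via the Leibniz formula, convert the sum over strictly-ordered tuples $X$ into a sum over gap variables plus a slack, and apply the Bethe equation $\lambda_j^L (1-\lambda_j^{-1})^N = z^L$. Setting $a_j := \lambda_j^{-1}$ and $A_j := (1-a_j)^{-1}$, the LHS equals $\sum_{\sigma \in S_N} (-1)^\sigma \prod_i A_{\sigma(i)}^i \cdot \mathcal{S}_\sigma$, where $\mathcal{S}_\sigma := \sum_{0 \le x_1 < \cdots < x_N \le L-1} \prod_i a_{\sigma(i)}^{x_i}$. Introducing the gap variables $y_1 := x_1$, $y_j := x_j - x_{j-1} - 1$ for $j \ge 2$, and the slack $y_{N+1} := L-1-x_N$ (so that $y_j \ge 0$ and $\sum_{j=1}^{N+1} y_j = L-N$) yields $\mathcal{S}_\sigma = \prod_i a_{\sigma(i)}^{i-1} \cdot h_{L-N}(V_1^\sigma, \ldots, V_{N+1}^\sigma)$, where $V_k^\sigma := \prod_{i \ge k} a_{\sigma(i)}$ for $k \le N$, $V_{N+1}^\sigma := 1$, and $h_m$ denotes the complete homogeneous symmetric polynomial.

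The partial-fraction expansion $h_m(V_1,\ldots,V_{N+1}) = \sum_k V_k^{m+N}/\prod_{\ell \ne k}(V_k-V_\ell)$, combined with the Bethe equation which gives $(V_k^\sigma)^L = z^{-(N-k+1)L} \prod_{j \ge k}(1-a_{\sigma(j)})^N$, decomposes the LHS as $\sum_{k=1}^{N+1} D_k$ with explicit $D_k$. Next, I would evaluate each $D_k$ separately. For $k = N+1$, where $V_{N+1}^\sigma = 1$ is symmetric in $\sigma$, the $\sigma$-sum rearranges into $\det[(1-\lambda_j^{-1})^{-i-1}]$ after factoring. For $k = N$, where $V_N^\sigma = a_{\sigma(N)}$ depends only on $\sigma(N)$, combining the Bethe substitution with simplification yields $D_N = (-1)^N z^{-L} \prod_j(1-a_j) \cdot \det[(1-\lambda_j^{-1})^{-i-1}]$. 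For $1 \le k \le N-1$, I would show that $D_k = 0$. Summing the three contributions produces the factor $p_z(\vec{\lambda}; 1) = 1 + (-1)^N z^{-L} \prod_i (1-\lambda_i^{-1})$ times $\det[(1-\lambda_j^{-1})^{-i-1}]$, matching the RHS.

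The main obstacle will be establishing the vanishing $D_k = 0$ for $1 \le k \le N-1$. The explicit verification for $N=2$, $k=1$ illustrates the mechanism: after the Bethe substitution and simplification the $\sigma$-sum collapses to the determinant of an auxiliary $N \times N$ matrix whose rows $k$ and $k+1$ coincide (for $N=2$, $k=1$, both rows equal $(1-a_i)^{i-2}$ in column $i$), forcing it to vanish. The general case should follow the same mechanism but requires careful bookkeeping of the exponents $\sigma(j)-j$ appearing in $A_\sigma$, the gap-variable powers $a_{\sigma(i)}^{i-1}$, and the denominator structure $\prod_{\ell \ne k}(V_k^\sigma - V_\ell^\sigma)$ after factoring out the common terms $V_k^\sigma$ and $(a_{\sigma(j)} - 1)$ that emerge from the identity $V_k^\sigma - V_\ell^\sigma = V_{\max(k,\ell)}^\sigma \cdot (\prod_{j=\min(k,\ell)}^{\max(k,\ell)-1} a_{\sigma(j)} - 1)$.
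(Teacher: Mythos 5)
Your approach is genuinely different from the paper's. The paper proceeds row by row: it exploits the fact that the $k$-th row of the matrix depends only on $x_k$, sums the geometric series in $x_k$, and observes that the boundary term $(1-\lambda_j^{-1})^{-k-1}\lambda_j^{-x_{k+1}}$ produced by the upper limit is proportional to the $(k+1)$-th row and therefore dies under a row operation; what survives is a row $(1-\lambda_j^{-1})^{-k-1}$ independent of $X$, and the induction proceeds until the final sum over $x_N$, where the Bethe equation $\lambda_j^L(1-\lambda_j^{-1})^N=z^L$ turns the boundary term into the coupling factor. You instead expand the determinant via Leibniz, transform the ordered-tuple sum into the complete homogeneous symmetric polynomial $h_{L-N}$ of the partial products $V_k^\sigma$, apply the classical partial-fraction identity for $h_m$, and only then invoke the Bethe equation at the level of $(V_k^\sigma)^L$. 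Your setup (gap variables, the identity for $h_{L-N}$, the substitution $(V_k^\sigma)^L = z^{-(N-k+1)L}\prod_{j\ge k}(1-a_{\sigma(j)})^N$) is correct.

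The gap is the vanishing of the middle contributions $D_k=0$ for $1\le k\le N-1$, which you verify only for $N=2$, $k=1$, and then assert in general by appealing to ``the same mechanism.'' This is not auxiliary bookkeeping: it is precisely the cancellation on which the whole identity rests. In the paper's inductive argument the analogous cancellation is automatic at each step (the boundary term from the $x_k$-sum is killed by row $k+1$ before the next summation is performed), so no separate vanishing lemma is ever needed. Your global expansion defers all the cancellations to the end and then requires identifying each $D_k$ with the determinant of a matrix with two equal rows --- a structure you describe but do not establish for general $N$ and $k$. You also stop short of carrying out the evaluations of $D_N$ and $D_{N+1}$ beyond asserting the claimed closed forms. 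As written, the proposal is an outline rather than a proof; the paper's row-reduction reaches the same endpoint more directly because it never has to re-derive these cancellations after the fact.
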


\begin{rem}
  Note that this identity provides an alternative proof that the formula \eqref{e:transition_prob_1} given in \Cref{t:transition_prob} is indeed normalized to one, i.e.~the sum of the probability distribution over the state space is equal to one. More precisely, note that the summation over the state space $\calX_{L, N}$ cancels out the coupling term in the denominator of \eqref{e:transition_prob_1}. Then, the resulting contour integral in \eqref{e:transition_prob_1} after the summation equals zero since the coupling terms has cancelled out and we may deform the large contour to the small contour due to \Cref{l:well_def_hol}. Thus, the only contribution in the summation over the state space is due to $u_0(1) = {L \choose N}^{-1}$, which is equal to one as desired.
  
  The proof of this result is similar to \cite[Lemma 6.1]{baikliu2018}.
  We include the proof for self-contained argument.
\end{rem}

\begin{proof}
  We perform the summation using the linearity property of matrices.
  Then, we simplify the result of the summation by using the deformed Bethe equations \eqref{e:dBE1} and \eqref{e:dBE2} since $w_i \in \calR(z)$.

  Denote the entries of the matrix by
  \begin{equation}
    M^{(0)}(i,j) = (1 - \lambda_j^{-1})^{-i} \lambda_j^{-x_i}.
  \end{equation}
  We write the summation in the following order
  \begin{equation} \label{eqn:sum_dets0}
    \sum_{X \in \calX_{L, N}} \det[ M^{(0)}(i,j) ]_{i,j=1}^N
    = \sum_{x_N=0}^{L-1} \sum_{x_{N-1} = 0}^{x_N-1} \cdots \sum_{x_1 = 0}^{x_2-1} \det[ M^{(0)}(i,j) ]_{i,j=1}^N.
  \end{equation}
  Note that only the first row of the matrix $M^{(0)}$ depends on the variable $x_1$.
  Then, by the linearity of the determinant, the sum of the determinants, with respect to $x_1$, is equal to the determinant of the resulting matrix by adding all of the first rows.
  The sum of the first rows in the $j$-th column is
  \begin{equation}
    \sum_{x_1 = 0}^{x_2-1} (1 - \lambda_j^{-1})^{-1} \lambda_j^{-x_1} = (1- \lambda_j^{-1})^{-2} (1 - \lambda_j^{-x_2}).
  \end{equation}
  The resulting first row is composed of two terms, the first one depending only on the column number $j$ and the second one being the term in the second row with a negative sign.
  Adding the second row row to the resulting first row, one gets,
  \begin{equation}
    \sum_{x_1 = 0}^{x_2-1} \det[M^{(0)}(i,j)]_{i,j=1}^N = \det[M^{(1)}(i,j)]_{i,j=1}^N.
  \end{equation}    
  where we introduce the matrix
  \begin{equation}
    M^{(1)}(i,j) = \begin{cases}
      (1- \lambda_j^{-1})^{-2}, & i = 1, \\
      M^{(0)}(i,j), & i \geq 2.
    \end{cases}
  \end{equation}
  We repeat this procedure inductively for the rest of the summations.
  The corresponding summation of the $k$-rows, for $1 \leq k \leq N-1$, is
  \begin{equation}
    \sum_{x_{k} = 0}^{x_{k+1}-1} (1 - \lambda_j^{-1})^{-k} \lambda_j^{-x_{k}}
    = (1- \lambda_j^{-1})^{-k-1} (1- \lambda_j^{-x_{k+1}}),
  \end{equation}
  and we introduce the matrices
  \begin{equation}
    M^{(k)}(i,j) = \begin{cases}
      (1- \lambda_j^{-1})^{-i-1}, & i \leq k, \\
      M^{(0)}(i,j), & i \geq k+1.
    \end{cases}
  \end{equation}
  Then, we have
  \begin{equation}
    \sum_{x_{k} = 0}^{x_{k+1} - 1} \det[M^{(k-1)}(i,j)]_{i,j=1}^N = \det[M^{(k)}(i,j)]_{i,j=1}^N,
  \end{equation}
  for $1 \leq k \leq N-1$.
  In particular, we have
  \begin{equation}
    \sum_{X \in \calX_{L, N}} \det[M^{(0)}(i,j)]_{i,j=1}^N = \sum_{x_N = 0}^{L-1} \det[M^{(N-1)}(i,j)]_{i,j=1}^N.
  \end{equation}
  The sum of the $N$-rows gives 
  \begin{align*}
    \sum_{x_{N} = 0}^{L-1} (1 - \lambda_j^{-1})^{-N} \lambda_j^{-x_{N}}
    & = (1- \lambda_j^{-1})^{-N-1} (1- \lambda_j^{-L}) \\
    & = (1- \lambda_j^{-1})^{-N-1} - (1- \lambda_j^{-1})^{-1} z^{-L},
  \end{align*}
  where we use the fact that $\lambda_j \in \calR(z)$ satisfying the deformed Bethe equation \eqref{e:dBE1}.
  Then, we have
  \begin{equation}\label{e:sum_det}
    \sum_{X \in \calX_{L, N}} \det[M^{(0)}(i,j)]_{i,j=1}^N = \det M^{(N)} - \det \widetilde{M}^{(N)},
  \end{equation}
  where $M^{(N)}(i,j) = (1- \lambda_j^{-1})^{-i-1}$ is a Vandermonde matrix and 
  \begin{equation}
    \widetilde{M}^{(N)} (i,j) = \begin{cases}
      (1 - \lambda_j^{-1})^{-i-1}, & i \leq N-1, \\
      (1 - \lambda_j^{-1})^{-1} z^{-L}, & i =N.
    \end{cases}
  \end{equation}
  Moreover, one can factorize $z^{-L}$ from the last row of $\widetilde{M}^{N}$ and $(1-\lambda_j)^{-1}$ from each column, then rearrange the rows of the matrix to obtain
  \begin{equation*}
    \det \widetilde{M}^{(N)} = (-1)^{N-1} z^{-L} \prod_{i=1}^N (1 - \lambda_j^{-1}) \cdot \det M^{(N)}.
  \end{equation*}
  This, along with \eqref{e:sum_det}, establish the identity in the Lemma.
\end{proof}

We conclude this subsection with a technical lemma needed for the proof of \Cref{p:current_cdf}, see ~\eqref{e:Q7}.

\begin{lemma}\label{l:stat_current}
  For any integer $Q \in \bbZ$, we have
  \begin{equation}
    \oint_{C_{\epsilon'}} \frac{\dd z}{z^{1 + QL}}
    \sum_{\substack{\lambda_i \in \calQ_1(z) \\ 1 \leq i \leq N}}
    \det \left( \frac{(1 - \lambda_j)^{j-i-1} \lambda_j^{y_j+1} e^{t E(\lambda_j)}}{L \lambda_j - (L-N)} \right)_{i, j=1}^N
    \prod_{i = 1}^N (1- \lambda_i^{-1})^{Q+1} = (-1)^{(N+1) Q}.
  \end{equation}
\end{lemma}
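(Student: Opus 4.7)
The plan is to identify the contour integral as the $z^{LQ}$ Laurent coefficient at $z=0$ of a single-valued meromorphic function $F(z)$, and to evaluate that coefficient directly from the leading asymptotics of the Bethe roots in $\calQ_1(z)$ near $z=0$.

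First I would recast the summand in closed form. For $|z|$ small, $|\calQ_1(z)|=N$, and since the determinant vanishes whenever two $\lambda_i$ coincide, the sum over $\vec\lambda\in\calQ_1(z)^N$ reduces to a sum over permutations of $\{\nu_1,\ldots,\nu_N\}:=\calQ_1(z)$. Setting $w_k:=1-\nu_k^{-1}$ and $\tilde w_k:=w_k^{-1}$, pulling out of each column $j$ the prefactor $C_j(\lambda):=\lambda^{y_j+1}e^{tE(\lambda)}/(L\lambda-(L-N))$, and applying the elementary identity $\det[\tilde w_j^{i+1-j}]_{i,j}=\prod_j \tilde w_j^{2-j}\,V(\tilde{\vec w})$, the symmetrization over permutations combined with the symmetric factor $\prod_i w_i^{Q+1}$ packages the summand into a second determinant:
\begin{equation*}
F(z) = V(\tilde{\vec w})\,\prod_{k=1}^N w_k^{Q+N-1}\,\det\bigl[C_j(\nu_k)\,\tilde w_k^{N-j}\bigr]_{j,k=1}^N.
\end{equation*}
By the single-valuedness argument of \Cref{l:well_def_hol}, $F$ is holomorphic on a punctured neighborhood of $0$, so the integral equals the coefficient of $z^{LQ}$ in its Laurent expansion.

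Next I would read off that coefficient from the leading asymptotics of $\calQ_1(z)$. Each $\nu\in\calQ_1(z)$ solves $(\nu-1)^N\nu^{L-N}=z^L$, so the $N$ branches satisfy $\nu_k-1=\zeta_k(1+O(z^{L/N}))$ with $\zeta_k:=z^{L/N}\omega^{k-1}$ and $\omega:=e^{2\pi i/N}$. This gives $w_k\sim\zeta_k$, $\tilde w_k\sim\zeta_k^{-1}$, $E(\nu_k)\to 0$, $L\nu_k-(L-N)\to N$, and $C_j(\nu_k)\to 1/N$. Writing $W:=V(1,\omega,\ldots,\omega^{N-1})$, three short Vandermonde reductions give
\begin{equation*}
V(\tilde{\vec w})\sim(-1)^{\binom{N}{2}+N-1}z^{-L(N-1)/2}W, \qquad \det\bigl[C_j(\nu_k)\tilde w_k^{N-j}\bigr]\sim N^{-N}(-1)^{N-1}z^{-L(N-1)/2}W,
\end{equation*}
together with $\prod_k w_k^{Q+N-1}\sim z^{L(Q+N-1)}(-1)^{(N-1)(Q+N-1)}$. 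The discriminant identity $W^2=\operatorname{disc}(x^N-1)=(-1)^{\binom{N}{2}+N-1}N^N$ cancels $N^{\pm N}$, the powers of $z$ collapse to $z^{LQ}$ exactly, and a short parity count (using that $N-1$ and $N+1$ have the same parity) yields $F(z)=(-1)^{(N+1)Q}z^{LQ}(1+o(1))$. Hence $\oint_{C_{\epsilon'}}\tfrac{\dd z}{z^{1+QL}}F(z)=(-1)^{(N+1)Q}$.

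The main obstacle is the sign bookkeeping: three Vandermonde factorizations, one column-reversal in $\det[\tilde w_k^{N-j}]$, and the sign of the discriminant of $x^N-1$ must each be tracked correctly, and any single sign error would flip the parity of the final answer. A separate concern is ruling out contributions from lower-order Laurent terms, but this is automatic since each factor in $F(z)$ has a leading asymptotic and the corrections, which involve fractional powers $z^{kL/N}$, either cancel by the monodromy-invariance of $F$ or contribute to strictly higher integer powers. For safety I would verify the parity outcome directly in the small cases $N=2,3$ and small $|Q|$.
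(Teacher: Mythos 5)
Your overall strategy — interpret the contour integral as the $z^{LQ}$ Laurent coefficient of a single‑valued function $F(z)$ near $z=0$ and compute that coefficient from the leading asymptotics $1-\nu_k^{-1}\sim \omega^{k-1}z^{L/N}$ — is exactly the paper's strategy, and your final answer $(-1)^{(N+1)Q}$ matches. But you evaluate the sum of determinants via a genuinely different route. After symmetrizing over permutations you package the summand into a product of two Vandermonde‑type determinants (an Andreief‑style factorization), read off each factor's leading order, and close the loop using the discriminant identity $W^2=\operatorname{disc}(x^N-1)=(-1)^{\binom{N}{2}+N-1}N^N$. The paper instead never introduces Vandermonde at all: it substitutes the leading asymptotics inside the determinant, then pushes the sum over $\eta_j\in\bbU_N$ into column $j$ by multilinearity, so that each entry becomes $z^{d(j-i-1)}\sum_{\eta}\eta^{j-i-1}/N=z^{d(j-i-1)}\,\mathds{1}(j\equiv i+1\bmod N)$, leaving a companion‑type matrix whose determinant is $(-1)^{N+1}z^{-L}$ by inspection. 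The paper's version sidesteps almost all the sign bookkeeping you identify as the main risk (three Vandermonde factorizations, a column reversal, and the sign of the discriminant) in exchange for one clean root‑of‑unity orthogonality step; your route is more explicit about the underlying biorthogonality structure but carries more opportunities for parity errors, which you rightly propose to check in small $N$. I verified that your asymptotics for $V(\tilde{\vec w})$, the second determinant, and $\prod_k w_k^{Q+N-1}$ are each correct and that they combine to $(-1)^{(N+1)Q}z^{LQ}$ as claimed.

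Two smaller remarks. First, the citation to \Cref{l:well_def_hol} to justify single‑valuedness of $F$ near $z=0$ is misplaced: that lemma handles the branch point $z_0=(1-\rho)^{1-\rho}(-\rho)^{\rho}$, not the origin. The correct justification near $z=0$ is simply that $\calQ_1(z)$ is a monodromy‑invariant finite set and $F$ is a symmetric function of its elements, hence single‑valued; this is the same kind of symmetrization argument, just not the cited lemma. Second, you should be explicit about the reduction to permutations: the determinant vanishes identically for repeated $\lambda_j$'s because the entry factors as $a_i(\lambda_j)\,b_j$ with $a_i(\lambda)=(1-\lambda^{-1})^{-i}$ and $b_j=(1-\lambda_j^{-1})^{j-1}\lambda_j^{y_j+1}e^{tE(\lambda_j)}/(L\lambda_j-(L-N))$, so $\det=\prod_j b_j\cdot\det[a_i(\lambda_j)]$ and two equal $\lambda_j$'s produce two equal columns of $[a_i(\lambda_j)]$. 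This is the step that makes both your Andreief factorization and the paper's multilinear step legitimate, and it is worth stating rather than leaving implicit.
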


\begin{proof}
  We compute the integral by residue computation. Note that the only residue inside the contour is located at $z=0$.
  So, we take a series expansion of the integrand at $z=0$ to obtain the result.
  
  For the Bethe roots $\lambda \in \calQ_1(z)$, we have the following expansion
  \begin{equation}
    1- \lambda^{-1} = \eta z^{d}(1 + o(1)), 
  \end{equation}
  where $\eta$ is an $N^{\mathrm{th}}$ root of unity; see \eqref{e:q_small_sol}.
  Additionally, note that in the summation, we may assume that the Bethe roots and, also, the roots of unity, are pairwise distinct. It is due to the fact that the determinant in the integrand will be identically zero if two columns are equal to each other. Then, we have
  \begin{equation*}
    z^{-QL} \prod_{i=1}^N(1- \lambda_i^{-1})^{Q+1}
    = z^{-QL} \prod_{i=1}^N(\eta_{\sigma(i)} z^d)^{Q+1} (1 + o(1))
    = (-1)^{(N+1)(Q+1)} z^L (1 + o(1)),
  \end{equation*}
  where $\eta_j = \exp(2 \icomp \pi j/ N )$ and $\sigma \in S_N$ is a permutation.
  Now, let $\bbU_N$ be the set of $N^{\mathrm{th}}$ roots of unity.
  Then, we have the following series expansion for the sum of the determinants
  \begin{equation}
    \begin{split}
      & \sum_{\substack{\lambda_i \in \calQ_1(z) \\ 1 \leq i \leq N}}
      \det \left( \frac{(1 - \lambda_j^{-1})^{j-i-1} \lambda_j^{y_j+1} e^{t E(\lambda_j)}}{L \lambda_j - (L-N)} \right)_{i, j=1}^N
      = \sum_{\substack{\eta_i \in \bbU_N \\ 1 \leq i \leq N}}
      \det \left(\frac{( \eta_j z^d)^{j - i-1}}{N} \right)_{i, j=1}^N (1 + o(1)) \\
      & = (-1)^{N+1} z^{-L} \det \left( \sum_{\eta \in \bbU_N} \eta^{j - i}/N \right)_{i, j=1}^N(1 + o(1))
      = (-1)^{N+1} z^{-L} \det(\mathds{1}(i=j))_{i, j=1}^N(1 + o(1))\\
      & = (-1)^{N+1} z^{-L} (1 + o(1)).
    \end{split}
  \end{equation}
  Thus, we obtain the result by taking the product of the series expansions above,
  \begin{align*}
    & \oint_{C_{\epsilon'}} \frac{\dd z}{z^{1 + QL}}
      \sum_{\substack{\lambda_i \in \calQ_1(z) \\ 1 \leq i \leq N}}
    \det \left( \frac{(1 - \lambda_j)^{j-i-1} \lambda_j^{y_j+1} e^{t E(\lambda_j)}}{L \lambda_j - (L-N)} \right)_{i, j=1}^N \prod_{i = 1}^N (1- \lambda_i^{-1})^{Q+1} \\
    & = (-1)^{(N+1)Q} \oint_{C_{\epsilon'}} \frac{\dd z}{z} (1 + o(1))
      = (-1)^{(N+1)Q}.
  \end{align*}
\end{proof}

\subsection{Flat initial condition}

In the next two sections, we turn to some specific classes of initial conditions: flat and step initial conditions.
In this section, we consider the flat case and, in the next section, we consider the step initial conditions. We give an expression for cumulative distribution function of the current as a contour integral of a Fredholm determinant; see \Cref{p:flat_current}.
We then use this result to give a proof of \Cref{t:asymptotic_flat_current}.
We begin by introducing some necessary notation. 

Fix an integer $d \geq 2$.
Assume that the length of the ring $L$ and the number of particles $N$ satisfy the following identity $L = d N $.
Recall that some of the results are slightly different in the case $2N = L$ but still true; see \Cref{r:2nl}.
Then, the flat initial conditions are given as follows:
\begin{equation}
  \label{e:flat_IC}
  (y_1, \dots, y_N) = (\delta, d + \delta, \dots, (N-1)d + \delta) \in \calX_{L, N} \quad \text{for some} \, 0 \leq \delta < d.
\end{equation}

Recall the sets of deformed Bethe roots $\calQ_0(z)$ and $\calQ_1(z)$, given by \eqref{e:q_small_sol} for $z \in \bbC$ satisfying $0 < |z| < r_0$. The set $\calQ_1(z)$ contains $N$ points and the set $\calQ_0(z)$ contains $L-N = (d-1)N$ points of the solutions to
\[
  q_z(w) = 1 - z^L w^{-L} (1-w^{-1})^{-N} = 1 - (z^d w^{-d} (1-w^{-1}))^N=0.
\]
Let us introduce the following sets of roots,
\[
  D_k(z) := \{ w \in \bbC \mid w^d (1-w^{-1}) = z^d e^{2 \pi \icomp k/N} \}, \qquad 0 \leq k \leq N-1.
\]
Each set $D_k(z)$ contains $d$ roots of the function $q_z(w)$ so that exactly 1 root is in the set $\calQ_1(z)$ and $N-1$ roots are in the set $\calQ_0(z)$.
For $u \in D_k(z) \cap \calQ_0(z)$, write $V(u) := D_k(z) \cap \calQ_1(z)$ which only contains only one element.
For $v \in D_k(z) \cap \calQ_1(z)$, write $U(v) := D_k(z) \cap \calQ_0(z)$ which contains $N-1$ elements.
See \Cref{fig:Bethe_UV} for an illustration.

\begin{figure}[htb!]
  \centering
  \includegraphics[scale=0.8]{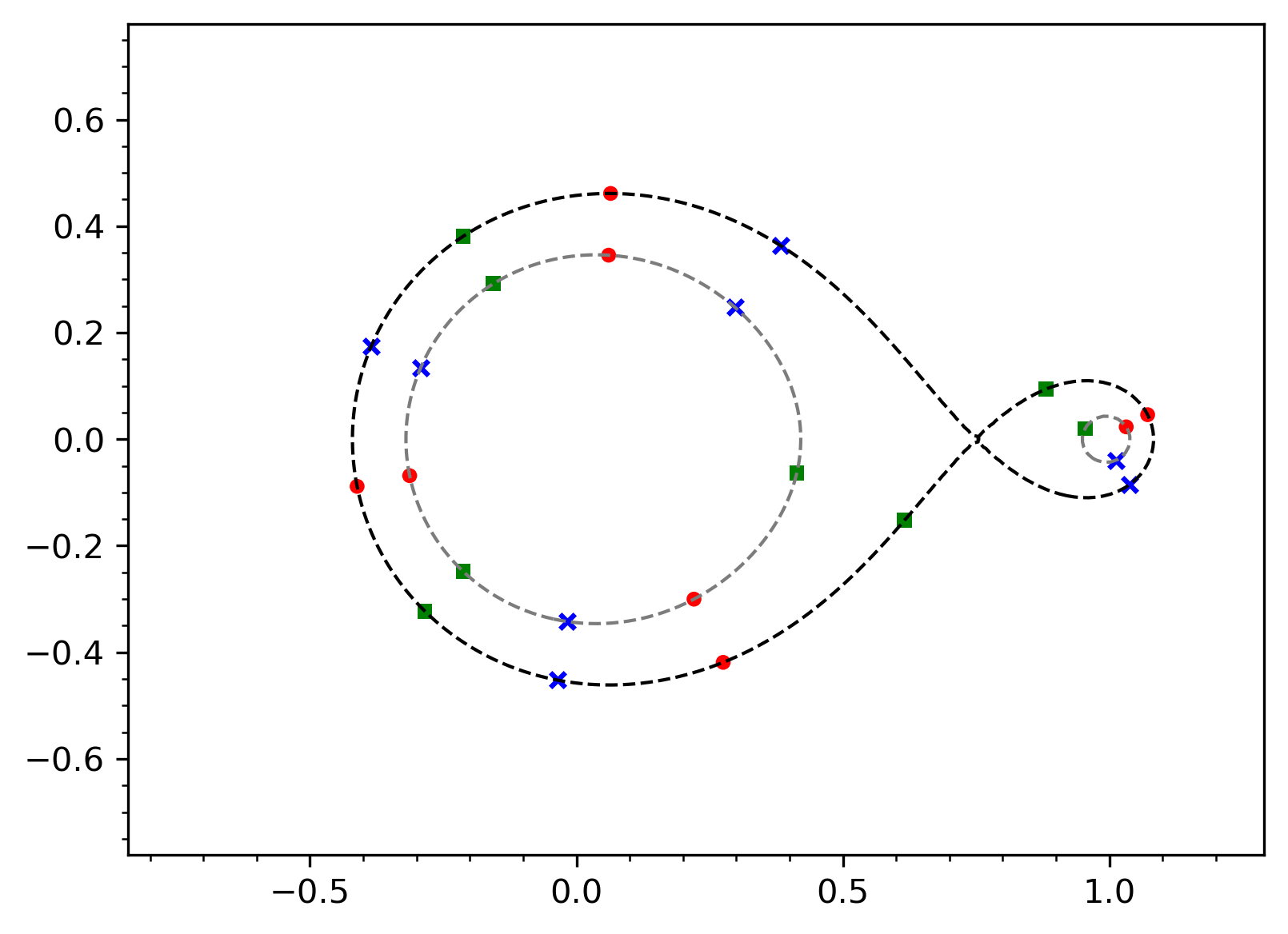}
  \caption{The two dotted curves correspond to $|w^{L-N}(w-1)^N| = |z|^L$ with $|z| = r_0 := \rho^{\rho} (1-\rho)^{1-\rho}$ (black color, outer curve) and $|z| = 0.8 r_0$ (grey color, inner curve with two components).
    Note that the self-intersection point of the black curve corresponds to the critical solution $w = 1-\rho$.
    The roots on the right (resp. left) of the critical solution are put in $\calQ_1(z)$ (resp. $\calQ_0(z)$).
    Additionally, the red (circle), green (squre) and blue (cross) dots on the curves correspond to the root sets $D_0(z)$, $D_1(z)$ and $D_2(z)$ with $z = r_0 e^{i\pi/6}$ and $z = 0.8 r_0 e^{i\pi/6}$.
    Therefore, for any given root $u$ on the left (resp. $v$ on the right), the set $V(u)$ (resp. $U(v)$) is given by the the unique element on the right (resp. three elements on the left), represented by the dot of the same color (shape) on the same dotted curve.
  }
  \label{fig:Bethe_UV}
\end{figure}

Additionally, we define the following factors of the function $q_z(w)$
\begin{equation}
  \label{e:qz01}
  q_{z, 0}(w) = \prod_{v \in \calQ_0(z)} (w-v)
  \qquad \text{and} \qquad
  q_{z, 1}(w) = \prod_{v \in \calQ_1(z)} (w-v).
\end{equation}
In particular, we note the following identities
\begin{equation}
  \label{e:product_qz01}
  q_{z, 0}(w) q_{z, 1}(w) = w^L (1-w^{-1})^N q_z(w) = w^{L-N} (w-1)^N - z^L
\end{equation}
and
\begin{equation}
  \label{e:derivative_qz01}
  q_{z, 1}'(v) q_{z, 0}(v) = L v^{L-N-1} (v-1)^{N-1} [v - (1-\rho)], \qquad v \in \calQ_1(z).
\end{equation}
These identities will appear later in the proof of \Cref{p:flat_current}.

For $z \in \bbC$ satisfying $0 < |z| < r_0$, define the function
\begin{equation}
  \label{e:C_N_flat}
  \calC_N^\icflat(z)
  := \frac{ \prod_{u \in \calQ_0(z)} (1-u)^{-Q} \prod_{v \in \calQ_1(z)} v^{L-N-Q-\tfrac{d}{2}+1}  \sqrt{d^{-1} (v - (1-\rho))^{-1} } e^{t E(v)} }{ \prod_{v \in \calQ_1(z)} \prod_{u \in \calQ_0(z)} \sqrt{v-u} },
\end{equation}
where $\sqrt{\cdot}$ is the square root function with branch cut $\bbR_{<0}$.
We also define the operator $K_z^\icflat$ acting on $\ell^2(\calQ_0(z))$ by the kernel
\begin{equation}
  \label{e:K_flat}
  K_z^\icflat(u, u') = \frac{ f^\icflat(u) }{ (u-v') f^\icflat(v') }, \qquad u, u' \in \calQ_0(z),
\end{equation}
where $v'$ is the unique element in $V(u')$ and the function $f^\icflat : \calQ(z) \longrightarrow \bbC$ is defined by
\begin{equation}
  \label{e:f_flat}
  f^\icflat(w) := \frac{ w^{-N+2} (1-w^{-1})^{Q-N+1} e^{t E(w)} }{w - (1-\rho)}
  \cdot
  \left\{
    \begin{array}{ll}
      q_{z, 1}(w), & w \in \calQ_0(z), \\
      q_{z, 1}'(w), & w \in \calQ_1(z).
    \end{array}
  \right.
\end{equation}

\begin{proposition}
  \label{p:flat_current}
  Take the PushASEP on a ring with flat initial conditions given by \eqref{e:flat_IC} and let $Q$ be an integer. Then, we have
  \begin{equation}
    \label{e:flat_current}
    \bbP^\icflat_{L, d, \delta}(Q_{L-1}(t) \geq Q) = \oint_{C_r} \calC_N^\icflat (z) \cdot \det \big( I + K_z^\icflat \big) \frac{\dd z}{z},
  \end{equation}
  where $0 < r < r_0 := \rho^{\rho} (1-\rho)^{1-\rho}$.
\end{proposition}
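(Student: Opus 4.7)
My starting point is formula \eqref{e:current_cdf2} of Proposition \ref{p:current_cdf}, specialized to the flat initial condition $y_j = (j-1)d + \delta$. The matrix entry becomes
\[
\frac{(1-\lambda_j^{-1})^{j-i} \lambda_j^{(j-1)d + \delta + 1 - Qd} e^{tE(\lambda_j)}}{L(\lambda_j - (1-\rho))}.
\]
The algebraic identity $(1-\lambda^{-1})^{j} \lambda^{(j-1)d} = \lambda^{-d}[\lambda^{d}(1-\lambda^{-1})]^{j}$, combined with $\lambda^{d}(1-\lambda^{-1}) = z^{d} e^{2 \pi \icomp k / N}$ for $\lambda \in D_{k}(z)$, factors column $j$ into (i) a Vandermonde-type piece $(1-\lambda_{j}^{-1})^{-i}$ in $i$, (ii) a pure phase $e^{2 \pi \icomp k_{j} j / N}$ where $k_{j}$ is determined by $\lambda_{j} \in D_{k_{j}}(z)$, and (iii) a scalar depending only on $\lambda_{j}$.

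Next I would split the sum over $\vec{\lambda} \in \calQ(z)^{N}$ according to how many $\lambda_{j}$ lie in $\calQ_{1}(z)$ versus $\calQ_{0}(z)$. Writing the sum, after antisymmetrization, as an expansion over $n \in \{0, 1, \ldots, N\}$ of configurations with $\vec{v} \in \calQ_{1}(z)^{n}$ and $\vec{u} \in \calQ_{0}(z)^{N-n}$, the key combinatorial input is that each $D_{k}(z)$ contains exactly one element of $\calQ_{1}(z)$ and $d-1$ elements of $\calQ_{0}(z)$. Together with the phase $e^{2 \pi \icomp k_{j} j / N}$ and the antisymmetry of the determinant, this selects specific configurations via discrete Fourier orthogonality over the $k_{j}$.

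The prefactor $\calC_{N}^{\icflat}(z)$ should be pulled out by completing the products over $\calQ_{1}(z)$ and $\calQ_{0}(z)$ using the factorizations \eqref{e:product_qz01} and \eqref{e:derivative_qz01}. A Cauchy determinant identity applied to the Vandermonde factor after the $\vec{u}/\vec{v}$ split then produces the $\sqrt{v - u}$ denominators in $\calC_{N}^{\icflat}(z)$ together with the kernel-like ratios $1/(u - v')$ that enter $K_{z}^{\icflat}$. The remaining sum should take the form
\[
\sum_{n \geq 0} \frac{1}{n!} \sum_{u_{1}, \ldots, u_{n} \in \calQ_{0}(z)} \det \bigl[ K_{z}^{\icflat}(u_{i}, u_{j}) \bigr]_{i, j = 1}^{n} = \det \bigl( I + K_{z}^{\icflat} \bigr),
\]
with the kernel arising as $f^{\icflat}(u) / [(u - v') f^{\icflat}(v')]$ where $v'$ is the unique element of $V(u')$.

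The principal obstacle will be the meticulous bookkeeping of signs, combinatorial constants, and square-root branches: I must verify in particular that the exponents $L - N - Q - d/2 + 1$ on $v$ in $\calC_{N}^{\icflat}(z)$, the $\sqrt{d^{-1}(v-(1-\rho))^{-1}}$ factor, and the exponent $Q - N + 1$ on $(1 - w^{-1})$ in $f^{\icflat}(w)$ emerge correctly once all $\lambda^{-d}$, $\lambda^{\delta + 1 - Qd}$, and $q_{z,0}, q_{z,1}$ contributions are collected; the $\delta$-dependence, in particular, should cancel, as the asymptotic statement of Theorem~\ref{t:asymptotic_flat_current} is independent of $\delta$. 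Once this identification is complete, the contour $C_{\epsilon'}$ can be deformed to $C_{r}$ for any $0 < r < r_{0}$, since the Fredholm form is holomorphic in the annulus $0 < |z| < r_{0}$ (the only potential issue, branching of $\calQ_{0}(z), \calQ_{1}(z)$ as $|z| \to r_{0}$, lies outside this annulus).
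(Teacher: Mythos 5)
Your starting point is where the argument breaks. You quote formula \eqref{e:current_cdf2}, but the statement of \Cref{p:current_cdf} explicitly restricts this form of the CDF to the case where $Q$ is a multiple of $N$ (the passage from \eqref{e:current_cdf1} to \eqref{e:current_cdf2} uses the decoupled Bethe equation to replace $(1-\lambda_j^{-1})^{Q} z^{-QL}$ by $\lambda_j^{-Qd}$ up to sign, which involves extracting an $N$-th root and is single-valued only when $N \mid Q$). \Cref{p:flat_current} is claimed for every integer $Q$, so you would either have to prove \eqref{e:current_cdf2} for arbitrary $Q$ first (which the paper does not assert and which does not hold as written), or you must start from \eqref{e:current_cdf1} as the paper does. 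In \eqref{e:current_cdf1} the $Q$-dependence sits in the exponent $(1-\lambda_j^{-1})^{Q+j-i}$, and the paper peels off a factor $(1-\lambda^{-1})^{i-1}\,[\lambda^d(1-\lambda^{-1})]^{j-1} f_1(\lambda)$, runs Cauchy--Binet/Andreief, and then invokes the symmetrization-and-split machinery of Baik--Liu (their Theorem~7.1) rather than a discrete-Fourier argument over the sectors $D_k(z)$.

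Two secondary remarks. First, your combinatorial step --- pulling the phase $e^{2\pi\icomp k_j j/N}$ out as a column factor and then invoking discrete Fourier orthogonality over the labels $k_j$ --- is not well-formed as stated: the phase couples the column index $j$ with the branch label $k_j$ of the chosen root, so it is not a pure column scalar, and antisymmetrization over the $\lambda_j$'s does not by itself reduce to a Fourier sum over the $k_j$'s (the Baik--Liu route replaces this with the $q_{z,0}/q_{z,1}$ factorization in \eqref{e:product_qz01}--\eqref{e:derivative_qz01} and a careful Vandermonde rearrangement giving the $\sqrt{v-u}$ factors, not a Cauchy determinant identity). Second, the heuristic ``the $\delta$-dependence should cancel because \Cref{t:asymptotic_flat_current} is $\delta$-independent'' is not a valid deduction for the \emph{exact} formula: the computed prefactor $\calC_N^\icflat(z)$ in the paper's proof in fact carries a $\prod_{v\in\calQ_1(z)} v^{\delta}$ factor, and $\delta$-independence only emerges in the $L\to\infty$ asymptotics, where this factor is $1+o(1)$. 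You cannot use the limiting statement to settle what the finite-$L$ identity looks like.
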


\begin{proof}
  We mimic the proof from \cite[Theorem~7.1]{baikliu2018}.
  We start with \eqref{e:current_cdf1} where we make the identification \eqref{e:flat_IC} for the flat initial condition. Then, we have
  \begin{align*}
    & \bbP^\icflat_{L, d, \delta} (Q_{L-1}(t) \geq Q)
      = (-1)^{(N+1)Q} \oint_{C_{\epsilon'}} \frac{\dd z}{z^{1+QL}}
      \sum_{\substack{\lambda_i \in \calQ(z) \\ 1 \leq i \leq N}}
    \det \Bigg[ \frac{1}{L} \frac{ (1- \lambda_j^{-1})^{Q+j-i}  \lambda_j^{(j-1)d + \delta +1} e^{t E(\lambda_j)}}{ \lambda_j - (1-\rho) } \Bigg]_{i,j=1}^N \\
    & = (-1)^{(N+1)Q + {N \choose 2}} \oint_{C_{\epsilon'}} \frac{\dd z}{z^{1+QL}}
      \sum_{\substack{\lambda_i \in \calQ(z) \\ 1 \leq i \leq N}}
    \det \Bigg[ \frac{1}{L} \frac{ (1- \lambda_j^{-1})^{Q+j-(N-i+1)}  \lambda_j^{(j-1)d + \delta+1} e^{t E(\lambda_j)}}{ \lambda_j - (1-\rho) } \Bigg]_{i,j=1}^N \\
    & = C_1 \oint_{C_{\epsilon'}} \frac{\dd z}{z^{1+QL}}
      \det \Big[ \sum_{\lambda \in \calQ(z)} (1-\lambda^{-1})^{i-1} [\lambda^d (1-\lambda^{-1})]^{j-1} f_1(\lambda) \Big]_{i,j=1}^N
  \end{align*}
  where, in the second line, we revert the row indices $i \leftarrow N-i+1$ and, in the third line, we use the linearity of the determinant and set
  \begin{equation}
    C_1  = (-1)^{(N+1)Q + {N \choose 2}} L^{-N}, \quad f_1(w)  =  \frac{w^{1 + \delta} (1-w^{-1})^{Q-N+1} e^{t E(w)}}{w-(1-\rho)}.
  \end{equation}
  Applying the Cauchy-Binet/Andreief formula, we obtain
  \begin{align*}
    \frac{C_1}{N!} \oint_{C_{\epsilon'}} \frac{\dd z}{z^{1+QL}}
    \sum_{\substack{\lambda_i \in \calQ(z) \\ 1 \leq i \leq N}}
    \prod_{i < j} (\lambda_j^{-1} - \lambda_i^{-1}) [ \lambda_i^d (1-\lambda_i^{-1}) - \lambda_j^d (1-\lambda_j^{-1}) ]
    \prod_{i=1}^N f_1(\lambda_i) \\
    = \frac{C_1}{N!} \oint_{C_{\epsilon'}} \frac{\dd z}{z^{1+QL}}
    \sum_{\substack{\lambda_i \in \calQ(z) \\ 1 \leq i \leq N}}
    \prod_{i < j} (\lambda_i - \lambda_j) [ \lambda_i^d (1-\lambda_i^{-1}) - \lambda_j^d (1-\lambda_j^{-1}) ]
    \prod_{i=1}^N f_2(\lambda_i)
  \end{align*}
  where $f_2(w) := w^{-(N-1)} f_1(w)$ for any $w \in \bbC$.
  The above formula takes exactly the same form as \cite[(7.13)]{baikliu2018} and may be simplified using exactly the same method as in the proof of \cite[Theorem~7.1]{baikliu2018}.
  In particular, the above formula can be further simplified to
  \[
    C_1 \oint_{C_{\epsilon'}} \frac{\dd z}{z^{1+QL}}
    \prod_{j=1}^N f_2(v_j) \prod_{1 \leq i < j \leq N} (v_i - v_j) [ v_i^d (1-v_i^{-1}) - v_j^d (1-v_j^{-1}) ]
    \det(I + K_z)
  \]
  where we enumerate the elements in $\calQ_1(z)$ by $\calQ_1(z) := \{ v_1, \dots, v_N \}$ and define the kernel $K_z^{\icflat}$ to be
  \[
    K_z^{\icflat}(u, u') := \frac{f^{\icflat}(u)}{f^{\icflat}(u') (u-v')}, \quad \forall u, u' \in \calQ_0(z),
  \]
  with
  \[
    f^{\icflat}(w) := f_2(w)
    \cdot
    \left\{
      \begin{array}{ll}
        q_{z, 1}(w), & w \in \calQ_0(z), \\
        q_{z, 1}'(w), & w \in \calQ_1(z).
      \end{array}
    \right.
  \]
  Then, we consider the following computation
  \begin{align}
    \calC_N^\icflat (z) & := \frac{C_1}{z^{QL}} \prod_{j=1}^N f_2(v_j) \prod_{1 \leq i < j \leq N} (v_i - v_j) [ v_i^d (1-v_i^{-1}) - v_j^d (1-v_j^{-1}) ] \nonumber \\
                        & = \frac{(-1)^{(N+1)Q + {N \choose 2}}}{ z^{QL} L^{N} } \prod_{j=1}^N f_2(v_j)
                          \prod_{1 \leq i < j \leq N} (v_i - v_j)^2
                          \prod_{1 \leq i < j \leq N} \frac{ v_i^d (1-v_i^{-1}) - v_j^d (1-v_j^{-1}) }{ v_i - v_j } \nonumber \\
                        & = \frac{(-1)^{(N+1)Q}}{z^{QL}} \prod_{j=1}^N \frac{q_{z, 1}'(v_j) v_j^{-N+\delta+2} (1-v_j^{-1})^{Q-N+1} e^{t E(v_j)} }{ L (v_j - (1-\rho) ) }
                          \prod_{1 \leq i < j \leq N} \frac{ v_i^d (1-v_i^{-1}) - v_j^d (1-v_j^{-1}) }{ v_i - v_j }, \label{e:C_N_flat_intermediate}
  \end{align}
  where the last equality is due to the following identity,
  \[
    \prod_{j=1}^N q_{z, 1}'(v_j) = (-1)^{{N \choose 2}} \prod_{1 \leq i < j \leq N} (v_i - v_j)^2.
  \]
  We rewrite the terms in \eqref{e:C_N_flat_intermediate}. First, note that the $L$ roots of the polynomial $w^{L-N} (w-1)^N - z^L$ (with respect to $w$) are given by $\lambda \in \calQ(z) = \calQ_0(z) \bigcup \calQ_1(z)$. So, the product of these roots satisfies the relation
  \begin{equation}
    \label{e:roots_relation1}
    \prod_{u \in \calQ_0(z)} u \prod_{v \in \calQ_1(z)} v = (-1)^{L+1} z^{L}.
  \end{equation}
  Similarly, the $L$ roots of the polynomial $(\tilde{w} - 1)^L + (-1)^{L+1} z^{-L} \tilde{w}^{N}$ (with respect to $\tilde{w}$) are given by $1 - \lambda^{-1}$ with $\lambda \in \calQ(z)$, giving another relation
  \begin{equation}
    \label{e:roots_relation2}
    \prod_{u \in \calQ_0(z)} (1-u^{-1}) \prod_{v \in \calQ_1(z)} (1-v^{-1}) = 1.
  \end{equation}
  Then, we rewrite the first product in \eqref{e:C_N_flat_intermediate}, along with its prefactor, as follows
  \begin{align*}
    &\frac{(-1)^{(N+1)Q}}{z^{QL}} \prod_{j=1}^N \frac{q_{z, 1}'(v_j) v_j^{-N+\delta+2} (1-v_j^{-1})^{Q-N+1} e^{t E(v_j)} }{ L (v_j - (1-\rho) ) } = \frac{(-1)^{(N+1)Q}}{z^{QL}} \prod_{v \in \calQ_1(z)} \frac{ v^{L-N+\delta} (1-v^{-1})^{Q} e^{t E(v)} }{ q_{z, 0}(v) } \\
    &= \frac{(-1)^{(L-N)Q}}{\prod_{u \in \calQ_0(z)} u^Q \prod_{v \in \calQ_1(z)} v^Q}
      \prod_{v \in \calQ_1(z)} \frac{ v^{L-N+\delta} (1-v^{-1})^{Q} e^{t E(v)} }{ q_{z, 0}(v) }
      \prod_{u \in \calQ_0(z)} (1-u^{-1})^{-Q} \prod_{v \in \calQ_1(z)} (1-v^{-1})^{-Q} \\
    &= \prod_{v \in \calQ_1(z)} \frac{ v^{L-N-Q+\delta} e^{t E(v)} }{ q_{z, 0}(v) }
      \prod_{u \in \calQ_0(z)} (1-u)^{-Q},
  \end{align*}
  where the first equality follows from \eqref{e:derivative_qz01}, the second equality follows from the relations \eqref{e:roots_relation1} and \eqref{e:roots_relation2}, and the last equality follows from simplification of the previous expression.

  We write the second product in \eqref{e:C_N_flat_intermediate}, by \cite[(7.30)-(7.32)]{baikliu2018}, as follows
  \[
    \prod_{1 \leq i < j \leq N} \frac{ v_i^d (1-v_i^{-1}) - v_j^d (1-v_j^{-1}) }{ v_i - v_j }
    = \frac{ \prod_{v \in \calQ_1(z)} \prod_{u \in \calQ_0(z)} \sqrt{v-u} }{ \prod_{v \in \calQ_1(z)} \sqrt{d(v - (1-\rho))} \sqrt{v}^{d-2} }.
  \]
  We obtain the result and the expression of $\calC_N^\icflat$ in \eqref{e:C_N_flat} by putting the previous identities together.
\end{proof}

\begin{proof}[Proof of \Cref{t:asymptotic_flat_current}]
  The computations go as in \cite[Section~8.1]{baikliu2018} except that, in our setting, the expressions for $\calC_N^\icflat(z)$ and $K_z^\icflat$ are slightly different and the variables $u$, $v$ and $w$ correspond to our $u-1$, $v-1$ and $w-1$. In particular, set
  \begin{align*}
    t & = \frac{\tau}{ \sqrt{\rho(1-\rho)} } L^{3/2}, \\
    Q & = vt - x \sqrt{1-\rho} \tau^{1/3} N^{1/2} 
        = vt - x \rho^{2/3} (1-\rho)^{2/3} t^{1/3},
  \end{align*}
  and, additionally, replace the variable $z$ by $\bfz$ and introduce the change of variables $\bfz^L = (-1)^N r_0^L z$.
  Then, we consider the asymptotic expansion of \eqref{e:flat_current} as $N \rightarrow \infty$.
  
  Let us start with the asymptotic expansion of $\calC_N^\icflat(\bfz)$. We express $\calC_N^\icflat(\bfz)$ as the product of the three following terms
  \begin{equation}
    \begin{split}
      \calC_{N, 1}^\icflat(\bfz) & = \frac{ \prod_{u \in \calQ_0(\bfz)} (\sqrt{1-u})^N \prod_{v \in \calQ_1(\bfz)} (\sqrt{v})^{L-N} }{ \prod_{v \in \calQ_1(\bfz)} \prod_{u \in \calQ_0(\bfz)} \sqrt{v-u} }, \\
      \calC_{N, 2}^\icflat(\bfz) & = \frac{1}{ \prod_{v \in \calQ_1(\bfz)} \sqrt{d(v-(1-\rho))} (\sqrt{v})^{d-2} }, \\
      \calC_{N, 3}^\icflat(\bfz) & = \prod_{u \in \calQ_0(\bfz)} (\sqrt{1-u})^{-2Q-N} \prod_{v \in \calQ_1(\bfz)} (\sqrt{v})^{L-N-2Q+2\delta} e^{t E(v)}.
    \end{split}
  \end{equation}
  Using the results summarized in \cite[Section~8.1.1]{baikliu2018}, we find that
  \begin{align*}
    \calC_{N, 1}^\icflat(\bfz) & = e^{B(z)} (1 + O(N^{\epsilon-1/2})) \\
    \calC_{N, 2}^\icflat(\bfz) & = e^{A_3(z)} (1 + O(N^{\epsilon-1/2}))
  \end{align*}
  where $B(z)$ and $A_3(z)$ are defined as in \eqref{e:A_i(z)} and \eqref{e:B(z)}.
  For $\calC_{N, 3}^\icflat(\bfz)$, we follow the proof given in \cite[Section~9.3]{baikliu2018} and find
  \begin{align}
    \ln \calC_{N, 3}^\icflat(\bfz)
    & = - \big( Q + \frac{N}{2} \big) \sum_{u \in \calQ_0(z)} \ln(1-u)
      + \sum_{v \in \calQ_1(z)} \big( \big( \frac{L-N}{2} -Q +\delta \big) \ln v + t E(v) \big) \nonumber \\
    & = L \bfz^L \int_{1-\rho - \icomp \infty}^{1-\rho + \icomp \infty} (G^\icflat(w) - G^\icflat(1-\rho)) \frac{w - (1-\rho)}{w(w-1) \tilde{q}_\bfz(w)} \frac{\dd w}{2 \pi \icomp}
      \label{e:Gflat_integral}
  \end{align}
  with $\tilde{q}_\bfz(w) = w^L (1-w^{-1})^N - \bfz^L = w^L (1-w^{-1})^N q_\bfz(w)$ and
  \begin{align*}
    G^\icflat(w) = - \big( Q + \frac{N}{2} \big) \ln(1-w) - \big( \frac{L-N}{2} -Q +\delta \big) \ln w - t E(w).
  \end{align*}

  We make the change of variables $w = 1-\rho + \rho \sqrt{1-\rho} \xi N^{-1/2}$ and split the above integral into two parts $|\xi| \leq N^{\epsilon/4}$ and $|\xi| > N^{\epsilon/4}$ for some fixed $\epsilon \in (0, \tfrac12)$.
  The second part gives the estimate $\calO(e^{-C N^{\epsilon/2}})$ while the first part requires more care.
  Taylor expansion (using the computer software \texttt{Sage}) shows that for $w = 1-\rho + \alpha$ with $\alpha = \rho \sqrt{1-\rho} \xi N^{-1/2} = \calO(N^{\epsilon/4-1/2})$,
  \begin{align}
    &G^\icflat(w) - G^\icflat(1-\rho)= \nonumber \\
    &\quad   C_1 \tau N^{1/2} \xi^2 
      + \bigg( - \tau^{1/3} x \xi + \frac14 \xi^2 + C_2 \tau \xi^3 \bigg) + E_1 N^{-1/2} \xi^2 + E_2 N^{-1/2} \xi^3 + E_3 N^{-1/2} \xi^4 + O(N^{5\epsilon/4-1}) \label{e:G_flat_Taylor}
  \end{align}
  where 
  \begin{align*}
    C_1 & = \frac{1}{2} \frac{1}{\rho \sqrt{1-\rho}} \Big[ (1-2\rho) p - \frac{q}{(1-\rho)^2} \Big] = \frac{\vshock}{2 \rho \sqrt{1-\rho}}, \\
    C_2 & = \frac{1}{3 \rho (1-\rho)} \Big[ (3 \rho^2 - 3 \rho + 1) p + \frac{3\rho -1}{(1-\rho)^2} q \Big],
  \end{align*}
  and the constants $E_i$ depend only on $\rho$, $\tau$, $p$ and $q$.
  Later, we will see that $E_2=\frac{ 1-2\rho }{ 6 \sqrt{1-\rho} }$ is the only non-zero contribution after the integration in \eqref{e:Gflat_integral}.

  Rewriting \cite[(9.36) and (9.37)]{baikliu2018}, for the same $w = 1-\rho + \alpha$ as above, we find
  \begin{align}
    \frac{\tilde{q}_\bfz(w)}{\bfz^L}
    & = \frac{e^{-\xi^2/2} - z}{z}
      \bigg( 1 + \frac{2\rho - 1}{3 \sqrt{1-\rho}} \frac{e^{-\xi^2/2}}{e^{-\xi^2/2} - z} \xi^3 N^{-1/2} + E_4 \xi^4 N^{-1} + \calO(N^{5\epsilon/4 - 3/2})  \bigg), \label{e:BL_CI_term1} \\
    \frac{L (w-(1-\rho))}{w(w-1)}
    & = - \frac{ \xi N^{1/2} }{\rho \sqrt{1-\rho}}
      \bigg( 1 + \frac{1-2\rho}{\sqrt{1-\rho}} \xi N^{-1/2} + E_5 \xi^2 N^{-1} + \calO(N^{3\epsilon/4 - 3/2}) \bigg). \label{e:BL_CI_term2}
  \end{align}
  Thus,
  \begin{equation}
    \frac{ L(w-(1-\rho)) \bfz^L }{ w(w-1) \tilde{q}_\bfz(w) }  \frac{\dd w}{\dd \xi}
    = - \frac{z}{e^{-\xi^2/2} - z} \bigg[ \xi + \bigg( \frac{ 1-2\rho }{ \sqrt{1-\rho} } \xi^2
    + \frac{ (1-2\rho) }{ 3 \sqrt{1-\rho} } \frac{ e^{-\xi^2/2} }{ e^{-\xi^2/2} - z } \xi^4 \bigg) N^{-1/2} \bigg]
    + \calO(N^{-1})
  \end{equation}
  Now, we can put the above three expansions together and evaluate the integral in \eqref{e:Gflat_integral} in the domain $|\xi| \leq N^{\epsilon/4}$.
  Using the fact that the integral of an odd function (in $\xi$) is zero on our integration domain, we may simplify
  \begin{align*}
    & \int_{1-\rho-\icomp \rho \sqrt{1-\rho} N^{\epsilon/4}}^{1-\rho+\icomp \rho \sqrt{1-\rho} N^{\epsilon/4}}
      ( G^\icflat(w) - G^\icflat(1-\rho) ) \frac{ L(w-(1-\rho)) \bfz^L }{ w(w-1) \tilde{q}_\bfz(w) } \frac{\dd w}{2\pi \icomp}= \\
    & \int_{-\icomp N^{\epsilon/4}}^{\icomp N^{\epsilon/4}}
      \frac{z}{e^{-\xi^2/2} - z} \bigg( \tau^{1/3} x \xi^2 -  C_2 \tau \xi^4 - \frac{1-2\rho}{\sqrt{1-\rho}} C_1 \tau \xi^4 \bigg) \frac{\dd \xi}{2 \pi \icomp} - \frac{1-2\rho}{3\sqrt{1-\rho}} C_1 \tau
      \int_{-\icomp N^{\epsilon/4}}^{\icomp N^{\epsilon/4}}
      \frac{z e^{-\xi^2/2}}{(e^{-\xi^2/2} - z)^2} \xi^6 \frac{\dd \xi}{2 \pi \icomp}\\
    & + \calO(N^{2 \epsilon -1}).
  \end{align*}
  In the limit $N \to \infty$, we find that integral on the right side is equal to 
  \begin{equation}
    \tau^{1/3} x A_1(z) + \bigg( 3 C_2 + \frac{3(1-2\rho)}{\sqrt{1-\rho}} C_1 - \frac{5(1-2\rho)}{\sqrt{1-\rho}} C_1 \bigg) \tau A_2(z)
    = \tau^{1/3} x A_1(z) + \bigg( p + \frac{q}{(1-\rho)^3} \bigg) \tau A_2(z)
  \end{equation}
  due to the following identities
  \begin{align*}
    I_2 & = \int_{\myre \xi = 0} \xi^2 \frac{z}{e^{-\xi^2/2} -z} \frac{\dd \xi}{2 \pi \icomp}
          = -\frac{1}{\sqrt{2\pi}} \Li_{3/2}(z) = A_1(z), \\
    I_4 & = \int_{\myre \xi = 0} \xi^4 \frac{z}{e^{-\xi^2/2} -z} \frac{\dd \xi}{2 \pi \icomp}
          = \frac{3}{\sqrt{2\pi}} \Li_{5/2}(z) = -3 A_2(z), \\
    J_6 & = \int_{\myre \xi = 0} \xi^6 \frac{z e^{-\xi^2/2}}{ (e^{-\xi^2/2} -z)^2 } \frac{\dd \xi}{2 \pi \icomp}
          = -5 I_4 = 15 A_2(z).
  \end{align*}

  We now give an alternate expression of the Fredholm determinant $\det(I + K_\bfz^\icflat)$ that is more suitable for asymptotic expansion. Recall the expression of the kernel $K_\bfz^\icflat$ from \eqref{e:K_flat} and \eqref{e:f_flat},
  \begin{align*}
    K_z^\icflat(u, u') & = \frac{ f^\icflat(u) }{ (u-v') f^\icflat(v') }, \qquad u, u' \in \calQ_0(z), \\
    f^\icflat(w) & := \frac{ w^{-Q+1} (w-1)^{Q-N+1} e^{t E(w)} }{w - (1-\rho)}
                   \cdot
                   \left\{
                   \begin{array}{ll}
                     q_{z, 1}(w), & w \in \calQ_0(z), \\
                     q_{z, 1}'(w), & w \in \calQ_1(z).
                   \end{array}
                                     \right.,
  \end{align*}
  where $v' \in \calQ_1(z)$ is the unique element satisfying
  \begin{equation}
    \label{e:u'_v'}
    (u')^d (1 - (u')^{-1}) = (v')^d (1 - (v')^{-1}).
  \end{equation}
  Additionally, we introduce a conjugated kernel $\tilde{K}_\bfz^\icflat$ defined as follows
  \begin{equation*}
    \tilde{K}_z^\icflat(u, u') = \frac{ h^\icflat(u) }{ (u-v') h^\icflat(v') }, \qquad u, u' \in \calQ_0(z),
  \end{equation*}
  with the function $h^\icflat$ is given by
  \begin{align*}
    h^\icflat(w) & := \frac{ g^\icflat(w) }{(w - (1-\rho)) (w-1)^N}
                   \cdot
                   \left\{
                   \begin{array}{ll}
                     q_{z, 1}(w), & w \in \calQ_0(z), \\
                     q_{z, 1}'(w), & w \in \calQ_1(z),
                   \end{array}
                                     \right. \\
    g^\icflat(w) & = \frac{ \tilde{g}^\icflat(w) [ w^{d-1} (w-1) ]^{k} }{ \tilde{g}^\icflat(1-\rho) [ (1-\rho)^{d-1} (-\rho) ]^{k} }\\
    \tilde{g}^\icflat(w) &= w^{-Q+1} (w-1)^{Q+1} e^{t E(w)}
  \end{align*}
  and the exponent $k$ is chosen to be
  \begin{equation*}
    k = - \Bigg\lfloor \sqrt{ \frac{\rho}{1-\rho} } \Big( (1 - 2\rho) p - \frac{q}{(1-\rho)^2} \Big) \tau L^{3/2} \Bigg\rfloor
    = - \rho \vshock t + O(1).
  \end{equation*}
  This gives us
  \[
    \ln g^\icflat(w) = \tau^{1/3} x \xi - \frac{\tau}{3} \bigg( p + \frac{q}{(1-\rho)^3} \bigg) \xi^3 + \calO(N^{-1/2}).
  \]
  Then, by using \eqref{e:u'_v'}, one may show that the kernel $\tilde{K}_\bfz^\icflat$ is equal to the kernel $K_\bfz^\icflat$ after conjugation, 
  \begin{equation*}
    K_\bfz^\icflat(u, u') = \left(\frac{(u')^d(1- (u')^{-1})}{u^d(1- u^{-1})} \right)^k \tilde{K}_\bfz^\icflat(u, u').
  \end{equation*}
  Moreover, it follows that
  \begin{equation}
    \label{e:Kz_flat_conjugation}
    \det(I + K_\bfz^\icflat) = \det(I + \tilde{K}_\bfz^\icflat).
  \end{equation}
  Below we obtain the asymptotic expansion by considering the right side of the identity above.

  We now consider the asymptotic expansion of the Fredholm determinant. In particular, after careful analysis similar to \cite[Section~8.1.2]{baikliu2018}, we may show that
  \begin{equation*}
    \lim_{N \to \infty} \det(I + \tilde{K}_\bfz^\icflat) = \det(I - \calK_z^\icflat).
  \end{equation*}
  The operator $\calK_z^\icflat$ is defined on $\calS_{-}(z)$ with kernel
  \begin{equation*}
    \calK_z^\icflat(\xi, \eta) = \frac{ \exp \big( \frh_1(\xi, z) + \frh_1(\eta, z) - \frac13 r \tau \xi^3 + \tau^{1/3} x \xi - \frac13 r \tau \eta^3 + \tau^{1/3} x \eta \big) }{ \xi (\xi + \eta) }, %\big( 1 + \calO(N^{\epsilon - 1/2} \ln N) \big).
  \end{equation*}
  Recall the definition of $r = r(p, q) := p + \frac{q}{(1-\rho)^3}$
  and define $\frh_1$ by
  \begin{equation*}
    \frh_1(\xi, z) := -\frac{1}{\sqrt{2 \pi}} \int_{-\infty}^\xi \Li_{1/2}(z e^{(\xi^2 - y^2) /2}) \dd y, \quad \myre(\xi) \leq 0,
  \end{equation*}
  where the integral contour is taken to be $(-\infty, \myre(\xi)] \cup [\myre(\xi), \xi]$ so that the integral is well defined.
  Note that we may simplify the expression given above, for $\xi \in \calS_{-}(z)$, as follows 
  \begin{equation*}
    \frh_1(\xi, z) := -\frac{1}{\sqrt{2 \pi}} \int_{-\infty}^\xi \Li_{1/2}(e^{- y^2/2}) \dd y, \quad \xi \in \calS_{-}(z),
  \end{equation*}
  since $z = e^{-\xi^2/2}$. Thus, by putting all the asymptotic expansions together, we obtain the cumulative probability distribution function $F_1(\tau^{1/3}x; r\tau)$.
\end{proof}

\subsection{Step initial condition}

The statistics of the local current depends on the observed location for the step initial conditions.
This is in contrast to the flat initial conditions, where the initial conditions are translation invariant in the relaxation time scale.
Note the current $Q_{L-1}(t)$ on the edge $(L-1)$ has a well suited formula for asymptotic analysis, see \Cref{p:current_cdf}.
Whereas, the current for any other edge does not apper to be suitable for asymptotic analysis.
Thus, to study the asymptotic behavior of the current on different relative locations, we simply consider shifted step initial conditions.
More precisely, let us consider the PushASEP model with the step initial condition defined by
\begin{equation}
  \label{e:step_IC1}
  (y_1, \dots, y_N) = (0, 1, \dots, N-1) + (m, \dots, m) \in \calX_{L, N},
\end{equation}
for an integer $0 \leq m \leq L-N$, or,
\begin{equation}
  \label{e:step_IC2}
  (y_1, \dots, y_N) = (0, 1, \dots, k-1, L-N+k, \dots, L-1)\in \calX_{L, N},
\end{equation}
for and integer $0 \leq k \leq N$.
When we consider the second initial condition above, we define additionally $m = (N-k)(d-1)$.
Note that the difference between the two cases above is that, in the second case \eqref{e:step_IC2}, the position of the current considered is in the middle of the step initial configuration.
This leads to slightly different computations that we will see below.
Our goal is to rewrite and simplify the distribution function of the current observable given in \eqref{p:current_cdf} in this specific setting.

For $z \in \bbC$ satisfying $0 < |z| < r_0$, we define the function $\calC_{i, N}^\icstep(z)$, for $i=1,2$, depending on whether the initial step condition is in the case \eqref{e:step_IC1} or \eqref{e:step_IC1}.
In the case \eqref{e:step_IC1}, we define
\begin{equation}
  \label{e:C_N_step1}
  \calC_{1,N}^\icstep(z)
  := \frac{ \prod_{v \in \calQ_1(z)} v^{L-N+m-Q} e^{t E(v)} \prod_{u \in \calQ_0(z)} (1-u)^{-Q} }{ \prod_{v \in \calQ_1(z)} \prod_{u \in \calQ_0(z)} (v-u) }.
\end{equation}
In the case \eqref{e:step_IC2}, we define
\begin{equation}
  \label{e:C_N_step2}
  \calC_{2, N}^\icstep(z)
  := \frac{ \prod_{v \in \calQ_1(z)} v^{L-N-Q} e^{t E(v)} \prod_{u \in \calQ_0(z)} (1-u)^{-Q+N-k} }{ \prod_{v \in \calQ_1(z)} \prod_{u \in \calQ_0(z)} (v-u) }.
\end{equation}

We also define the operator $K_{i, z}^\icstep$ acting on $\ell^2(\calQ_0(z))$ by the kernel
\begin{equation}
  \label{e:K_z_step}
  K_{i, z}^\icstep(u, u') = \sum_{v \in \calQ_1(z)} \frac{ f_i^\icstep(u) f_i^\icstep(v)^{-1} }{ (u-v) (u'-v) }, \qquad u, u' \in \calQ_0(z),
\end{equation}
where the function $f_i^\icstep : \calQ(z) \longrightarrow \bbC$, for $i=1,2$, is defined differently according to whether the initial step condition is in the case \eqref{e:step_IC1} or \eqref{e:step_IC2}, respectively. In the case \eqref{e:step_IC1}, we define
\begin{equation}
  \label{e:f_step1}
  f_1^\icstep(w) := \frac{ w^{m-N+2} (1-w^{-1})^{Q-N+1} e^{t E(w)} }{w - (1-\rho)}
  \cdot
  \left\{
    \begin{array}{ll}
      (q_{z, 1}(w))^2, & w \in \calQ_0(z), \\
      (q_{z, 1}'(w))^2, & w \in \calQ_1(z).
    \end{array}
  \right.
\end{equation}
In the case \eqref{e:step_IC2}, we define
\begin{equation}
  \label{e:f_step2}
  f_2^\icstep(w) := \frac{ w^{-2N+k+2} (1-w^{-1})^{Q-2N+k+1} e^{t E(w)} }{w - (1-\rho)}
  \cdot
  \left\{
    \begin{array}{ll}
      (q_{z, 1}(w))^2, & w \in \calQ_0(z), \\
      (q_{z, 1}'(w))^2, & w \in \calQ_1(z).
    \end{array}
  \right.
\end{equation}

\begin{proposition}
  \label{p:step_current}
  Let $\bbP_i^\icstep$, for $i=1, 2$, be the probability distribution for the PushASEP on a ring with step initial condition given by \eqref{e:step_IC1} or \eqref{e:step_IC1}, respectively, and let $Q$ be an integer. Then, we have
  \begin{equation}
    \label{e:step_current}
    \bbP_i^\icstep (Q_{L-1}(t) \geq Q) = \oint_{C_r} \calC_{i, N}^\icstep (z) \cdot \det \big( I + K_{i, z}^\icstep \big) \frac{\dd z}{z},
  \end{equation}
  where $0 < r < r_0 := \rho^{\rho} (1-\rho)^{1-\rho}$.
\end{proposition}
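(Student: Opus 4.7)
The plan is to follow the strategy of the proof of Proposition 6.1 for the flat case, but adapted to the step initial condition. The starting point is formula \eqref{e:current_cdf1} from Proposition 5.4, which expresses $\mathbb{P}_Y(Q_{L-1}(t) \geq Q)$ as a contour integral in $z$ of a sum (over $N$-tuples of deformed Bethe roots) of a determinant. The substitution of $(y_1,\dots,y_N)$ by the step initial condition then drives the rest of the computation.

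First, I would substitute each of the explicit step initial conditions into the determinant in \eqref{e:current_cdf1}. In case \eqref{e:step_IC1} the exponents $y_j+1 = m+j$ are consecutive, while in case \eqref{e:step_IC2} they split into two consecutive blocks, $y_j+1 = j$ for $j \leq k$ and $y_j+1 = L-N+j$ for $j > k$. In either case the matrix entries take the form $(1-\lambda_j^{-1})^{Q+j-i}\lambda_j^{c_j}e^{tE(\lambda_j)}/(L\lambda_j-(L-N))$ for some integer exponents $c_j$ depending only on $j$ and the case. After reverting the row index and pulling out the $j$-dependent factors via linearity of the determinant, the Cauchy–Binet/Andréief identity converts the sum $\sum_{\lambda_i \in \mathcal{Q}(z)}$ combined with the determinant into a symmetric product $\prod_i g(\lambda_i)\prod_{i<j}(\lambda_i-\lambda_j)^2$, up to signs and a factor depending on $m$ or $k$.

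Second, I would split each $N$-tuple of Bethe roots according to membership in $\mathcal{Q}_0(z)$ or $\mathcal{Q}_1(z)$ and perform symmetrization as in \cite[Section 7]{baikliu2018}. Since $|\mathcal{Q}_1(z)|=N$, the sum naturally organizes itself: the extremal term where all $N$ Bethe roots lie in $\mathcal{Q}_1(z)$ fixes them and produces the prefactor $\mathcal{C}_{i,N}^{\icstep}(z)$; the remaining terms, where some roots are exchanged into $\mathcal{Q}_0(z)$, can be repackaged, via the Cauchy determinant identity applied to the factors $\prod_{v \in \mathcal{Q}_1(z) }\prod_{u \in \mathcal{Q}_0(z)}(v-u)^{-1}$ that arise from the split of the squared Vandermonde, as a Fredholm determinant on $\ell^2(\mathcal{Q}_0(z))$. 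The squared Vandermonde $\prod_{i<j}(\lambda_i-\lambda_j)^2$ is the crucial new feature compared to the flat case: one factor of Vandermonde is absorbed into the prefactor denominator $\prod_v \prod_u (v-u)$, while the other combines with the $g$-terms to form the kernel $K_{i,z}^{\icstep}$ in the form \eqref{e:K_z_step}, in which the summation over $v \in \mathcal{Q}_1(z)$ is exactly the trace of the intermediate matrix product.

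Third, I would simplify the prefactor using the identities \eqref{e:product_qz01} and \eqref{e:derivative_qz01} for $q_{z,0}$ and $q_{z,1}$, together with the global relations on products of Bethe roots
\[
\prod_{u \in \mathcal{Q}_0(z)} u \prod_{v \in \mathcal{Q}_1(z)} v = (-1)^{L+1} z^L,
\quad
\prod_{u \in \mathcal{Q}_0(z)} (1-u^{-1}) \prod_{v \in \mathcal{Q}_1(z)} (1-v^{-1}) = 1,
\]
already used in the flat case. This bookkeeping converts terms of the form $\prod_u u^\bullet \prod_v v^\bullet$ into $\prod_v v^{\bullet'}$ times a power of $z$, and accounts for the different exponents $L-N+m-Q$ versus $L-N-Q$ appearing in $\mathcal{C}_{1,N}^{\icstep}$ and $\mathcal{C}_{2,N}^{\icstep}$, as well as the shift $(1-u)^{-Q+N-k}$ coming from the second block of exponents in \eqref{e:step_IC2}. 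Finally, one checks that the definitions \eqref{e:f_step1}, \eqref{e:f_step2} of $f_i^{\icstep}$ match the conjugation needed for the kernel to take the form \eqref{e:K_z_step}.

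The main obstacle I expect is the combinatorial bookkeeping in the second step. Whereas in the flat case the exponents $y_j=(j-1)d+\delta$ produced a clean product $\prod_{i<j}[\lambda_i^d(1-\lambda_i^{-1})-\lambda_j^d(1-\lambda_j^{-1})]$ that linearized into a single-summand kernel $K_z^{\icflat}$, the consecutive exponents of the step case yield the squared Vandermonde, whose split between $\mathcal{Q}_0$ and $\mathcal{Q}_1$ requires careful sign tracking and index reordering. Case \eqref{e:step_IC2} has a genuine index break at $j=k$, so in the Cauchy–Binet symmetrization one must also track how the two blocks of column exponents interact under the permutation sum, which is responsible for the extra factor $(1-u)^{N-k}$ in the prefactor. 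Once these identities are verified — and they are essentially the step-case analogues of the manipulations in \cite[Section 7]{baikliu2018} — the identification of $\mathcal{C}_{i,N}^{\icstep}(z)$ and $K_{i,z}^{\icstep}$ follows by direct comparison.
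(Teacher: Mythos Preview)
Your proposal is correct and follows essentially the same route as the paper: start from \eqref{e:current_cdf1}, revert rows and use linearity, apply Cauchy--Binet/Andr\'eief to obtain the squared Vandermonde $\prod_{i<j}(\lambda_i-\lambda_j)^2$, then invoke the \cite[Theorem~7.2]{baikliu2018} mechanism to produce the Fredholm determinant and simplify the prefactor via the root-product identities. The one concrete maneuver you should make explicit for case \eqref{e:step_IC2} is that the extra block factor $\lambda^{L-N}$ for $j>k$ is converted, using the Bethe equation $w^{L-N}=z^L(w-1)^{-N}$ on $\calQ(z)$, into $z^L(\lambda-1)^{-N}$; this is what reduces the two-block determinant back to a pure Vandermonde and yields the shift $(1-u)^{N-k}$ in the prefactor.
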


\begin{proof}
  We consider the two types of step initial conditions, \eqref{e:step_IC1} and \eqref{e:step_IC2}, separately. Below, we begin by considering the step initial conditions given by \eqref{e:step_IC1}.
  
  We start with formula for the cumulative probability distribution for the current, given by \eqref{e:current_cdf1}, and specialize to  the step initial conditions \eqref{e:step_IC1}. Then, we obtain
  \begin{align*}
    \bbP^\icstep_1 (Q_{L-1}(t) \geq Q)
    & = (-1)^{(N+1)Q} \oint_{C_{\epsilon'}} \frac{\dd z}{z^{1+QL}}
      \sum_{\substack{\lambda_i \in \calQ(z) \\ 1 \leq i \leq N}}
    \det \Bigg[ \frac{1}{L} \frac{ (1- \lambda_j^{-1})^{Q+j-i}  \lambda_j^{j+m} e^{t E(\lambda_j)}}{ \lambda_j - (1-\rho) } \Bigg]_{i,j=1}^N \\
    & = (-1)^{(N+1)Q + {N \choose 2}} \oint_{C_{\epsilon'}} \frac{\dd z}{z^{1+QL}}
      \sum_{\substack{\lambda_i \in \calQ(z) \\ 1 \leq i \leq N}}
    \det \Bigg[ \frac{1}{L} \frac{ (1- \lambda_j^{-1})^{Q+j-(N-i+1)}  \lambda_j^{j+m} e^{t E(\lambda_j)}}{ \lambda_j - (1-\rho) } \Bigg]_{i,j=1}^N \\
    & = C \oint_{C_{\epsilon'}} \frac{\dd z}{z^{1+QL}}
      \det \Big[ \sum_{\lambda \in \calQ(z)} (1-\lambda^{-1})^{i+j-2} \lambda^{j-1} f_{1,1}(\lambda) \Big]_{i,j=1}^N
  \end{align*}
  where, in the second line, we revert the line numbers $i \leftarrow N-i+1$ and, in the third line, we use the linearity of the determinant by setting
  \begin{equation*}
    C  = (-1)^{(N+1)Q + {N \choose 2}} L^{-N}, \quad  f_{1,1}(w)  =  \frac{w^{m+1} (1-w^{-1})^{Q-N+1} e^{t E(w)}}{w-(1-\rho)}.
  \end{equation*}
  Applying the Cauchy-Binet/Andreief formula, we obtain
  \begin{align*}
    \frac{C}{N!} \oint_{C_{\epsilon'}} \frac{\dd z}{z^{1+QL}}
    \sum_{\substack{\lambda_i \in \calQ(z) \\ 1 \leq i \leq N}}
    \prod_{i < j} (\lambda_i - \lambda_j)^2
    \prod_{i=1}^N \lambda_i^{-(N-1)} f_{1,1}(\lambda_i) \\
    = \frac{C}{N!} \oint_{C_{\epsilon'}} \frac{\dd z}{z^{1+QL}}
    \sum_{\substack{\lambda_i \in \calQ(z) \\ 1 \leq i \leq N}}
    \prod_{i < j} (\lambda_i - \lambda_j)^2
    \prod_{i=1}^N f_{1,2}(\lambda_i)
  \end{align*}
  where $f_{1,2}(\lambda_i) = \lambda_i^{-(N-1)} f_{1,1}(\lambda_i)$.
  The above formula takes exactly the same form as \cite[(7.45)]{baikliu2018} and may be simplified using exactly the same method as in the proof of \cite[Theorem~7.2]{baikliu2018}. 
  In particular, the above formula can be further simplified to
  \[
    (-1)^{{N \choose 2}} C_1
    \oint_{C_{\epsilon'}} \frac{\dd z}{z^{1+QL}}
    \prod_{v \in \calQ_1(z)} f_{1,2}(v) q_{z, 1}'(v) \det(I + K_{1,z}^{\icstep}),
  \]
  where the kernel $K_{1,z}^{\icstep}$ is defined to be
  \begin{equation}
    \label{e:K_z_step_intermediate}
    K_{1, z}^{\icstep}(u, u')
    := \sum_{v \in \calQ_1(z)} \frac{ f_1^{\icstep}(u) f_1^{\icstep}(v)^{-1} }{ (u-v) (u'-v) }, \qquad u, u' \in \calQ_0(z),
  \end{equation}
  with
  \begin{equation}
    \label{e:f_f2_step_intermediate}
    f_1^{\icstep}(w) := f_{1,2}(w)
    \cdot
    \left\{
      \begin{array}{ll}
        (q_{z, 1}(w))^2, & w \in \calQ_0(z), \\
        (q_{z, 1}'(w))^2, & w \in \calQ_1(z).
      \end{array}
    \right.
  \end{equation}
  Then, we  compute
  \begin{align}
    \calC_{1,N}^\icstep (z)
    & = (-1)^{{N \choose 2}} \frac{C}{z^{QL}} \prod_{v \in \calQ_1(z)} f_{1,2}(v) q_{z, 1}'(v) \nonumber \\
    & = \frac{(-1)^{(N+1)Q}}{z^{QL}} \prod_{v \in \calQ_1(z)} \frac{ v^{m-N+2} (1-v^{-1})^{Q-N+1} e^{t E(v)} }{ L( v - (1-\rho) ) } \cdot
      \frac{ L v^{L-N-1} (v-1)^{N-1} [v - (1-\rho)]} { q_{z, 0}(v) } \nonumber \\
    & =  \frac{(-1)^{(N+1)Q}}{z^{QL}}
      \prod_{v \in \calQ_1(z)} \frac{ v^{L-N+m} (v-1)^Q e^{t E(v)} }{ q_{z, 0}(v) } \nonumber \\
    & = \prod_{v \in \calQ_1(z)} \frac{ v^{L-N+m-Q} e^{t E(v)} }{ q_{z, 0}(v) }
      \prod_{u \in \calQ_0(z)} (1 - u)^{-Q},
  \end{align}
  where in the second line, we use \eqref{e:derivative_qz01}
  and in the last line, we use the relations \eqref{e:roots_relation1} and \eqref{e:roots_relation2} to simplify the expression.
  
  Now, let us consider the second type of step initial conditions given as in \eqref{e:step_IC2},
  \[
    y_j = j-1 + (L-N) \mathds{1}_{k+1 \leq j \leq N}, \quad \forall j = 1, \dots, N.
  \]
  The computation is very similar to what we just did above, so here we only write down the important intermediate steps and the computations that differ.
  First, by specializing \eqref{e:current_cdf1} for the step initial conditions \eqref{e:step_IC2}, we find
  \begin{align*}
    \bbP^\icstep_2 (Q_{L-1}(t) \geq Q)
    & =  C \oint_{C_{\epsilon'}} \frac{\dd z}{z^{1+QL}}
      \det \Big[ \sum_{\lambda \in \calQ(z)} (1-\lambda^{-1})^{i+j-2} \lambda^{j-1} f_{2,1}(\lambda) g_j(\lambda) \Big]_{i,j=1}^N,
  \end{align*}
  by setting
  \begin{equation*}
    C  = (-1)^{(N+1)Q + {N \choose 2}} L^{-N}, \quad f_{2,1}(w)  =  \frac{w (1-w^{-1})^{Q-N+1} e^{t E(w)}}{w-(1-\rho)}, \quad g_j(w)  =\begin{cases}
      w^{L-N}, & \text{if}\ k+1 \leq j \leq N, \\
      1, & \text{otherwise}.
    \end{cases}
  \end{equation*}
  Applying the Cauchy-Binet/Andreief formula, we get
  \begin{align}
    \label{e:step_exp1}
    \frac{C}{N!} \oint_{C_{\epsilon'}} \frac{\dd z}{z^{1+QL}}
    \sum_{\substack{\lambda_i \in \calQ(z) \\ 1 \leq i \leq N}}
    \prod_{i < j} (\lambda_j - \lambda_i)
    \prod_{i=1}^N \lambda_i^{-(N-1)} f_{2,1}(\lambda_i)
    \det \big[ (\lambda_j - 1)^{i-1} g_i(\lambda_j) \big]_{i, j = 1}^N.
  \end{align}
  Note that for $w \in \calQ(z)$, we may rewrite 
  \[
    g_i(w) = w^{L-N} = z^L (w - 1)^{-N},
  \]
  for $k+1 \leq i \leq N$. This allows us to compute the above determinant by first factorizing the powers $z^L$ and $(\lambda_i-1)^{-(N-k)}$ out, and then by permuting the rows,
  \begin{align*}
    \det \big[ (\lambda_j - 1)^{i-1} g_i(\lambda_j) \big]_{i, j = 1}^N
    & = (-1)^{(N-1)k} z^{(N-k)L} \prod_{i=1}^N (\lambda_i - 1)^{-(N-k)} \det \big[ (\lambda_j - 1)^{i-1} \big]_{i, j = 1}^N \\
    & = (-1)^{(N-1)k} z^{(N-k)L} \prod_{i=1}^N (\lambda_i - 1)^{-(N-k)} \prod_{i < j} (\lambda_j - \lambda_i)
  \end{align*}
  Inserting the above formula into \eqref{e:step_exp1}, we find
  \begin{align*}
    & (-1)^{(N-1)k}
      \frac{C}{N!} \oint_{C_{\epsilon'}} \frac{\dd z}{z^{1+(Q-N+k)L}}
      \sum_{\substack{\lambda_i \in \calQ(z) \\ 1 \leq i \leq N}} 
    \prod_{i < j} (\lambda_j - \lambda_i)^2
    \prod_{i=1}^N \lambda_i^{-(N-1)} (\lambda_i - 1)^{-(N-k)} f_{2,1}(\lambda_i) \\
    & = (-1)^{(N-1)k}
      \frac{C}{N!} \oint_{C_{\epsilon'}} \frac{\dd z}{z^{1+(Q-N+k)L}}
      \sum_{\substack{\lambda_i \in \calQ(z) \\ 1 \leq i \leq N}}
    \prod_{i < j} (\lambda_j - \lambda_i)^2
    \prod_{i=1}^N f_{2,2}(\lambda_i),
  \end{align*}
  where
  \begin{align*}
    f_{2,2}(w) & = w^{-(N-1)-(N-k)} (1 - w^{-1})^{-(N-k)}  f_{2,1}(w) \\
               & = \frac{ w^{-Q+1} (w-1)^{Q-2N+k+1} e^{t E(w)} }{w - (1-\rho)}.
  \end{align*}
  The above formula takes exactly the same form as \cite[(7.45)]{baikliu2018} and can be further simplified to
  \[
    (-1)^{(N-1)k + {N \choose 2}} C
    \oint_{C_{\epsilon'}} \frac{\dd z}{z^{1+(Q-N+k)L}}
    \prod_{v \in \calQ_1(z)} f_{2,2}(v) q_{z, 1}'(v) \det(I + K^{\icstep}_{2,z}),
  \]
  where the kernel $K^{\icstep}_{2, z}$ is defined to be as in \eqref{e:K_z_step_intermediate}
  \begin{equation}
    \label{e:K_z_step2_intermediate}
    K_{2, z}^{\icstep}(u, u')
    := \sum_{v \in \calQ_1(z)} \frac{ f_2^{\icstep}(u) f_2^{\icstep}(v)^{-1} }{ (u-v) (u'-v) }, \qquad u, u' \in \calQ_0(z),
  \end{equation}
  with
  \begin{equation}
    \label{e:f_f2_step2_intermediate}
    f_2^{\icstep}(w) := f_{2,2}(w)
    \cdot
    \left\{
      \begin{array}{ll}
        (q_{z, 1}(w))^2, & w \in \calQ_0(z), \\
        (q_{z, 1}'(w))^2, & w \in \calQ_1(z).
      \end{array}
    \right.
  \end{equation}
  Finally, the following computation allows us to conclude the proof,
  \begin{align}
    \calC_{2,N}^\icstep (z)
    & = (-1)^{(N-1)k + {N \choose 2}} C \prod_{v \in \calQ_1(z)} f_{2,2}(v) q_{z, 1}'(v) \nonumber \\
    & = \frac{(-1)^{(N+1)(Q+k)}}{z^{(Q-N+k)L}} \prod_{v \in \calQ_1(z)} \frac{ v^{-Q+1} (v-1)^{Q-2N+k+1} e^{t E(v)} }{ L(v - (1-\rho)) } \cdot
      \frac{ L v^{L-N-1} (v-1)^{N-1} [v - (1-\rho)]} { q_{z, 0}(v) } \nonumber \\
    & = \prod_{v \in \calQ_1(z)} \frac{ v^{L-N-Q} (v-1)^{Q-N+k} e^{t E(v)} }{ q_{z, 0}(v) } \nonumber \\
    & = \prod_{v \in \calQ_1(z)} \frac{ v^{L-N-Q} e^{t E(v)} }{ q_{z, 0}(v) }
      \prod_{u \in \calQ_0(z)} (1-u)^{-Q+N-k},
  \end{align}
  where in the last line, we use again the relations \eqref{e:roots_relation1} and \eqref{e:roots_relation2} to simplify the expression.
\end{proof}

Below, we perform the asymptotic analysis in the relaxation time scale.

\begin{proof}[Proof of \Cref{t:asymptotic_step_current}]
  Let us start with the step initial condition as in \eqref{e:step_IC1}, assume that the parameters $p$, $q$, and $\rho$ are chosen such that $\vshock \neq 0$, and set the parameters as follow,
  \begin{align*}
    t & = \frac{L}{|\vshock|} \bigg\lfloor \frac{|\vshock| \tau}{ \sqrt{\rho(1-\rho)} } L^{1/2} \bigg\rfloor - \frac{(\rho + \gamma) L + m}{\vshock}, \\
      & =  \frac{\tau L^{3/2}}{ \sqrt{\rho(1-\rho)} }
        - \frac{L}{|\vshock|} \bigg\{ \frac{|\vshock| \tau}{ \sqrt{\rho(1-\rho)} } L^{1/2} \bigg\}
        - \frac{(\rho + \gamma) L + m}{\vshock}, \\
    Q & = vt - x \rho^{2/3} (1-\rho)^{2/3} t^{1/3} - (1-\rho) N + \rho m.
  \end{align*}
  The case with $\vshock = 0$ is similar, and we omit the details in the computations below.
  Additionally, introduce the change of variables $\bfz^L = (-1)^N r_0^L z$. Then, we consider the asymptotic expansion of \eqref{e:step_current} as $N \rightarrow \infty$.
  
  First, we compute the asymptotic expansion of $\calC_{1, N}^\icstep(\bfz)$ as given in \eqref{e:C_N_step1}. In the following, we set $\calC_{N}^\icstep(\bfz) = \calC_{1, N}^\icstep(\bfz)$ for ease of notation. We express $\calC_N^\icstep(\bfz)$ as the product of the two following terms
  \begin{equation}
    \label{e:C_N_step_decomposition}
    \begin{split}
      \calC_{N, 1}^\icstep(\bfz) & = \frac{ \prod_{u \in \calQ_0(\bfz)} (1-u)^N \prod_{v \in \calQ_1(\bfz)} v^{L-N} }{ \prod_{v \in \calQ_1(\bfz)} \prod_{u \in \calQ_0(\bfz)} (v-u) }, \\
      \calC_{N, 2}^\icstep(\bfz) & = \prod_{u \in \calQ_0(\bfz)} (1-u)^{-Q-N} \prod_{v \in \calQ_1(\bfz)} v^{m-Q} e^{t E(v)}.
    \end{split}
  \end{equation}
  Using the results summarized in \cite[Section~8.1.1]{baikliu2018}, we find that
  \begin{align*}
    \calC_{N, 1}^\icstep(\bfz) & = e^{2 B(z)} (1 + O(N^{\epsilon-1/2}))
  \end{align*}
  where $B(z)$ is defined as in \eqref{e:B(z)}.
  For $\calC_{N, 2}^\icstep(\bfz)$, we follow the proof given in \cite[Section~9.3]{baikliu2018} and find
  \begin{align}
    \ln \calC_{N, 2}^\icstep(\bfz)
    & = - (Q + N) \sum_{u \in \calQ_0(z)} \ln(1-u)
      + \sum_{v \in \calQ_1(z)} \big( (m -Q ) \ln v + t E(v) \big) \nonumber \\
    & = L \bfz^L \int_{1-\rho - \icomp \infty}^{1-\rho + \icomp \infty} (G^\icstep(w) - G^\icstep(1-\rho)) \frac{w - (1-\rho)}{w(w-1) \tilde{q}_\bfz(w)} \frac{\dd w}{2 \pi \icomp}
      \label{e:Gstep_integral}
  \end{align}
  with $\tilde{q}_\bfz(w) = w^L (1-w^{-1})^N - \bfz^L = w^L (1-w^{-1})^N q_\bfz(w)$ and
  \begin{align*}
    G^\icstep(w) = - (Q + N) \ln(1-w) + (Q-m) \ln w - t E(w).
  \end{align*}

  Following the same steps as in the proof of \Cref{t:asymptotic_flat_current} , we start with the Taylor expansion (using the computer software \texttt{Sage}) with $w = 1-\rho + \alpha$ and $\alpha = \rho \sqrt{1-\rho} \xi N^{-1/2} = \calO(N^{\epsilon/4-1/2})$,
  \begin{align}
    G^\icstep(w) - G^\icstep(1-\rho)
    & =  C_1 N^{1/2} \xi^2
      + \bigg( - \tau^{1/3} x \xi  - \frac12 \gamma \xi^2 +  C_2 \tau \xi^3 \bigg) \nonumber \\
    & \qquad + E_1 N^{-1/2} \xi^2 + E_2 N^{-1/2} \xi^3 + E_3 N^{-1/2} \xi^4 + O(N^{5\epsilon/4-1}) \label{e:G_step_Taylor}
  \end{align}
  where 
  \begin{align*}
    C_1 & = \frac{1}{2} \frac{1}{\rho \sqrt{1-\rho}} \Big[ (1-2\rho) p - \frac{q}{(1-\rho)^2} \Big] = \frac{\vshock}{2 \rho \sqrt{1-\rho}}, \\
    C_2 & = \frac{1}{3 \rho (1-\rho)} \Big[ (3 \rho^2 - 3 \rho + 1) p + \frac{3\rho -1}{(1-\rho)^2} q \Big],
  \end{align*}
  as in \eqref{e:G_flat_Taylor} with the $E_i$ constants depending only on $\rho$, $\tau$, $p$ and $q$.

  Note that in the expansions of \eqref{e:G_flat_Taylor} and \eqref{e:G_step_Taylor}, the coefficients that contribute in the integrals \eqref{e:Gflat_integral} and \eqref{e:Gstep_integral} are exactly the same.
  Therefore, we find the same behavior for both $\calC_{N, 3}^\icflat(\bfz)$ and $\calC_{N, 2}^\icstep(\bfz)$.
  
  For the asymptotic behavior of the kernel $K_{1, \bfz}^\icstep$, as in \eqref{e:Kz_flat_conjugation}, we have the identity
  \begin{equation*}
    \det(I + K_{1, \bfz}^\icstep) = \det(I + \tilde{K}_{ \bfz}^\icstep),
  \end{equation*}
  where 
  \begin{equation*}
    \tilde{K}_{ z}^\icstep(u, u') = \sum_{v \in \calQ_1(z)} \frac{ h^\icstep(u) h^\icstep(v)^{-1} }{ (u-v) (u'-v) }, \qquad u, u' \in \calQ_0(z),
  \end{equation*}
  with the function $h^\icstep$ given by
  \begin{align*}
    h^\icstep(w) & := \frac{ g^\icstep(w) }{(w - (1-\rho)) (w-1)^{2N}}
                   \cdot
                   \left\{
                   \begin{array}{ll}
                     (q_{z, 1}(w))^2, & w \in \calQ_0(z), \\
                     (q_{z, 1}'(w))^2, & w \in \calQ_1(z),
                   \end{array}
                                         \right. \\
    g_1^\icstep(w) & = \frac{ \tilde{g}^\icstep(w) [ w^{L-N} (w-1)^N ]^{k}  }{ \tilde{g}^\icstep(1-\rho) [ (1-\rho)^{L-N} (-\rho)^N ]^{k}  }, \\
    \tilde{g}^\icstep(w) &= w^{m-Q+1} (w-1)^{Q+N+1} e^{t E(w)},
  \end{align*}
  and the exponent $k$ is chosen to be
  \begin{equation}
    \label{e:g_step_k}
    k = - \sgn(\vshock) \cdot \bigg\lfloor \frac{|\vshock| \tau}{ \sqrt{\rho(1-\rho)} } L^{1/2} \bigg\rfloor
    =  - \frac{\vshock \tau}{ \sqrt{\rho(1-\rho)} } L^{1/2}
    + \sgn(\vshock) \cdot \bigg\{ \frac{|\vshock| \tau}{ \sqrt{\rho(1-\rho)} } L^{1/2} \bigg\}.
  \end{equation}
  This gives us
  \[
    \ln g^\icstep(w) = \tau^{1/3} x \xi + \frac12 \gamma \xi^2 - \frac{\tau}{3} \bigg( p + \frac{q}{(1-\rho)^3} \bigg) \xi^3  + \calO(N^{-1/2}).
  \]

  We now consider the asymptotic expansion of the Fredholm determinant. In particular, after careful analysis similar to \cite[Section~8.1.2]{baikliu2018}, we may show that
  \begin{equation*}
    \lim_{N \to \infty} \det(I + \tilde{K}_{ \bfz}^\icstep) = \det(I - \calK_{z}^\icstep).
  \end{equation*}
  The operator $\calK_z^\icstep$ is defined on $\calS_{-}(z)$ with kernel
  \begin{equation*}
    \calK_{z}^\icstep(\xi_1, \xi_2) = \sum_{\eta \in \calS_{-}(z)} \frac{ \exp \big( \Phi_z(\xi_1; \tau^{1/3} x, r\tau) + \Phi_z(\eta; \tau^{1/3} x, r\tau) + \frac{\gamma}{2} (\xi_1^2 - \eta^2) \big) }{ \xi_1 \eta (\xi_1 + \eta) (\xi_2 + \eta) }.
  \end{equation*}
  The details of the proof is similar to and may be found in \cite[Section~8.1.2]{baikliu2018}; we omit the details here.

  For the step initial condition as in \eqref{e:step_IC2}, we only describe the main differences below. If the parameters $p$, $q$, and $\rho$ are chosen such that $\vshock \neq 0$, we set the parameters as follow,
  \begin{align*}
    t & = \frac{L}{|\vshock|} \bigg\lfloor \frac{|\vshock| \tau}{ \sqrt{\rho(1-\rho)} } L^{1/2} \bigg\rfloor - \frac{k + \gamma L}{\vshock}, \\
    Q & = vt - x \rho^{2/3} (1-\rho)^{2/3} t^{1/3} - (1-\rho) k.
  \end{align*}
  Then, in the asymptotic computations for $\calC_{N}^\icstep(\bfz) = \calC_{2, N}^\icstep(\bfz)$, we have the same decomposition as in \eqref{e:C_N_step_decomposition} with $\calC_{N, 2}^\icstep(\bfz)$ replaced by
  \begin{align*}
    \calC_{N, 2}^\icstep(\bfz) & = \prod_{u \in \calQ_0(\bfz)} (1-u)^{-Q-k} \prod_{v \in \calQ_1(\bfz)} v^{-Q} e^{t E(v)}.
  \end{align*}
  As a result, the function $G^\icstep(w)$ changes as follows 
  \begin{align*}
    G^\icstep(w) = - (Q + k) \ln(1-w) + Q \ln w - t E(w).
  \end{align*}

  Following the same steps as before, the Taylor expansion (using the computer software \texttt{Sage}) gives
  \begin{align*}
    G^\icstep(w) - G^\icstep(1-\rho)
    & = C_1 N^{1/2} \xi^2
      + \bigg( - \tau^{1/3} x \xi - \frac12 \gamma \xi^2 + C_2 \tau \xi^3 \bigg) \\
    & \qquad + E_1 N^{-1/2} \xi^2 + E_2 N^{-1/2} \xi^3 + E_3 N^{-1/2} \xi^4 + O(N^{5\epsilon/4-1})
  \end{align*}
  for $w = 1-\rho + \alpha$ and $\alpha = \rho \sqrt{1-\rho} \xi N^{-1/2} = \calO(N^{\epsilon/4-1/2})$, which is exactly the same expression as in \eqref{e:G_step_Taylor}.
  Similarly, for the kernel asymptotics, we introduce the same quantities with the only difference that
  \begin{align*}
    g^\icstep(w) & = \frac{ \tilde{g}^\icstep(w) [ w^{L-N} (w-1)^N ]^{k} }{ \tilde{g}^\icstep(1-\rho) [ (1-\rho)^{L-N} (-\rho)^N ]^{k}}, \\
    \tilde{g}^\icstep(w) &= w^{-Q+1} (w-1)^{Q+k+1} e^{t E(w)},
  \end{align*}
  where the exponent $k$ is chosen to be the same as in \eqref{e:g_step_k},
  \begin{equation*}
    k = - \sgn(\vshock) \cdot \bigg\lfloor \frac{|\vshock| \tau}{ \sqrt{\rho(1-\rho)} } L^{1/2} \bigg\rfloor
    =  - \frac{\vshock \tau}{ \sqrt{\rho(1-\rho)} } L^{1/2}
    + \sgn(\vshock) \cdot \bigg\{ \frac{|\vshock| \tau}{ \sqrt{\rho(1-\rho)} } L^{1/2} \bigg\},
  \end{equation*}
  leading to the following asymptotics
  \[
    \ln g^\icstep(w) = \tau^{1/3} x \xi + \frac12 \gamma \xi^2 - \frac{\tau}{3} \bigg( p + \frac{q}{(1-\rho)^3} \bigg) \xi^3  + \calO(N^{-1/2}).
  \]

\end{proof}

\section{Generating series expansion}
\label{s:prob_fun_expansion}

We write the the generating series for the joint distribution $g(X;\zeta; t)$ as an infinite sum of integrals by taking a series expansion of the coupling term in the formula \eqref{e:main_1} given in \Cref{t:main}. The key in this argument is to write the series expansion of the coupling term in terms of the amplitude coefficients $A_{\sigma}$ and to determine the action of the product of two amplitude coefficients. These results are given below; see \Cref{l:coupling_expansion} and \Cref{l:amp_product}. Then, given those results, we prove \Cref{p:method_images}.

\begin{lemma}\label{l:coupling_expansion}
  Take $z, \zeta \in \bbC$ with $|\zeta|=1$ and $|z| = R',\epsilon' $ so that \eqref{e:conditions} are satisfied. Let $\lambda_i = \lambda_i(z)$, for $i=1, \cdots, N$, be solutions of the decoupled Bethe equation \eqref{e:dBE1} and $s = (N\, N-1 \, \cdots \, 1) \in S_N$ denote the shift permutation. Then, we have
  \begin{equation}
    \frac{1}{1 - (-1)^{N-1}\zeta^{L}z^{-L}\prod_{i=1}^N(1- \lambda_i^{-1})} = \sum_{k=0}^{\infty}\sum_{m=0}^{N-1} A_{s^m}^{-1} \prod_{i=1}^N\left(\frac{\lambda_i}{\zeta}\right)^{-L k}   \prod_{j=0}^{m-1} \left(\frac{\lambda_{N-j}}{\zeta}\right)^{-L}
  \end{equation}
  for $|z| = R'$ and
  \begin{equation}
    \frac{1}{1 - (-1)^{N-1}\zeta^Lz^{-L}\prod_{i=1}^N(1- \lambda_i^{-1})} = -\sum_{k=-1}^{-\infty}\sum_{m=0}^{N-1}A_{s^m}^{-1} \prod_{i=1}^N\left(\frac{\lambda_i}{\zeta}\right)^{-L k}   \prod_{j=0}^{m-1} \left(\frac{\lambda_{N-j}}{\zeta}\right)^{-L}
  \end{equation}
  for $|z| = \epsilon'$ with at least one root so that $\lambda_i \in \mathcal{Q}_0(z)$, where the set $\mathcal{Q}_0(z)$ is given by \eqref{e:q_small_sol}.
\end{lemma}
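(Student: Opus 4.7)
The plan is to reduce the identity to an algebraic one by factoring the denominator and then matching each piece with the expansion. Introduce the shorthand
\[
X := (-1)^{N-1}\zeta^L z^{-L}\prod_{i=1}^N(1-\lambda_i^{-1}), \qquad t_i := (\lambda_i/\zeta)^{-L}, \qquad T := \prod_{i=1}^N t_i,
\]
so the left-hand side of the lemma is $\frac{1}{1-X}$. The decoupled Bethe equation \eqref{e:dBE1} reads $(1-\lambda_i^{-1})^N = z^L \lambda_i^{-L}$, and taking the product over $i = 1, \dots, N$ yields the crucial identity $X^N = T$ (the sign $(-1)^{N(N-1)}=1$ is even). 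Combined with the elementary factorization $\frac{1-X^N}{1-X} = \sum_{m=0}^{N-1} X^m$, this reduces the lemma to two ingredients: (a) the pointwise identity
\[
X^m = A_{s^m}^{-1}\prod_{j=0}^{m-1} t_{N-j}, \qquad 0 \leq m \leq N-1,
\]
and (b) an absolutely convergent geometric expansion of $\frac{1}{1-T}$ in the two regimes.

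For (a), I would argue by induction on $m$, the base $m=0$ being trivial since $A_e=1$ and the product is empty. For the induction step, unpacking \eqref{e:bethe_coeffs_zeta} with $s(j)=j-1$ (mod $N$) gives
\[
A_{s^m} = (-1)^{(N-1)m} \prod_{i=N-m+1}^N (1-\lambda_i^{-1})^{N-m} \prod_{i=1}^{N-m}(1-\lambda_i^{-1})^{-m},
\]
and comparing the exponent of each factor $(1-\lambda_i^{-1})$ in $A_{s^m}$ versus $A_{s^{m+1}}$ shows that all exponents increase by $1$ except at $i=N-m$, where they change by $-(N-1)$. The ratio therefore collapses to
\[
\frac{A_{s^m}}{A_{s^{m+1}}} = (-1)^{N-1}(1-\lambda_{N-m}^{-1})^{-N}\prod_{i=1}^N(1-\lambda_i^{-1}),
\]
and substituting $(1-\lambda_{N-m}^{-1})^{-N} = \lambda_{N-m}^L z^{-L}$ from the Bethe equation produces $A_{s^m}/A_{s^{m+1}} = X/t_{N-m}$, which is exactly the recursion propagating the pointwise identity from $m$ to $m+1$.

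For (b), I would separate the two regimes using the magnitude estimates already established earlier in the paper. When $|z|=R'$ is large, the approximation $\lambda_i \approx \eta_i z$ with $\eta_i$ an $L$-th root of unity gives $|T|\approx (R')^{-LN}\ll 1$, so $\frac{1}{1-T}=\sum_{k\geq 0}T^k$ converges and multiplying by $\sum_{m=0}^{N-1}A_{s^m}^{-1}\prod_{j=0}^{m-1}t_{N-j}$ reproduces the first claimed formula. When $|z|=\epsilon'$ is small with at least one $\lambda_j\in\calQ_0(z)$, the asymptotics \eqref{e:q_small_asymptotic} force $|\lambda_j|\sim\epsilon'^{1/(1-\rho)}$ and hence $|T|\gg 1$, so $\frac{1}{1-T}=-\sum_{k\geq 1}T^{-k}$; reindexing $k\mapsto -k$ yields the second formula. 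There is no substantial obstacle: the identity $X^N=T$ is the only non-obvious step, and it cleanly reduces the resummation of the coupled term to a symmetric expression in the $t_i$'s.
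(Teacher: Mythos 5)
Your proof is correct and takes essentially the same approach as the paper: both rely on the geometric series expansion in the two regimes, the decoupled Bethe equation to handle $N$-th powers, and the identification of $A_{s^m}^{-1}$ as the residual factor. Your observation that $X^N = T$ combined with the factorization $\frac{1-X^N}{1-X}=\sum_{m=0}^{N-1}X^m$ is exactly the paper's Euclidean division $r=Nk+m$ stated more explicitly, and your inductive derivation of $X^m = A_{s^m}^{-1}\prod_{j=0}^{m-1}t_{N-j}$ replaces the paper's direct computation but reaches the same identity.
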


\begin{proof}
  In both cases, we take a series expansion of the terms on the left side of the identities. Then, after applying certain identities due the decoupled Bethe equations \eqref{e:dBE1}, we obtain the result.
  
  For $|z|=R'$, we have
  \begin{equation}
    \left|\zeta^{L}z^{-L} \prod(1 - \lambda_i^{-1}) \right|  \ll 1
  \end{equation}
  On the other hand, for $|z|=\epsilon'$ and at least one root so that $\lambda_i \in \mathcal{Q}_0(z)$, we have
  \begin{equation}
    \left|\zeta^{L}z^{-L} \prod(1 - \lambda_i^{-1}) \right| \gg 1.
  \end{equation}
  The case with $|z| = \epsilon'$ and none of the roots in the set $\mathcal{Q}_0(z)$ is excluded since the norm of the quantity above is not necessarily less than or greater than one, which depends on the argument of $z$.

  Given the inequalities above, we have the following series expansions. For $|z| = R'$, we have
  \begin{equation}
    \frac{1}{1 - (-1)^{N-1}\zeta^{L}z^{-L}\prod_{i=1}^N(1- \lambda_i^{-1})} = \sum_{r=0}^{\infty} (-1)^{(N-1) r}\zeta^{L r}z^{-L r}\prod_{i=1}^N(1- \lambda_i^{-1})^r.
  \end{equation}
  Similarly, for for $|z|=\epsilon'$ and at least one root so that $\lambda_i \in \mathcal{Q}_0(z)$, we have
  \begin{equation}
    \frac{1}{1 - (-1)^{N-1}\zeta^Lz^{-L}\prod_{i=1}^N(1- \lambda_i^{-1})} = -\sum_{r=-1}^{-\infty} (-1)^{(N-1) r}\zeta^{L r}z^{-L r}\prod_{i=1}^N(1- \lambda_i^{-1})^r.
  \end{equation}
  Since both summations have the same terms, up to an overall sign, and the only difference is the range of the exponent $r$, we proceed the rest of the proof treating both cases simultaneously assuming that $r \in \mathbb{Z}$.

  We rewrite the terms in the sum, using the decoupled Bethe equation \eqref{e:dBE1} and the amplitude coefficients $A_{\sigma}$. First, as a straightforward consequence of the decoupled Bethe equation \eqref{e:dBE1}, we have
  \begin{equation}
    \prod_{i=1}^N(1- \lambda_i^{-1})^{N k} = z^{ L N k} \prod_{i=1}^{N} \lambda_i^{- Lk},
  \end{equation}
  for any $k \in \bbZ$. Next, we write
  \begin{equation}
    \prod_{i=1}^N (1- \lambda_i^{-1})^m = \prod_{j=0}^{m-1}(1 - \lambda_{N-j}^{-1})^{N} \frac{\prod_{i=1}^N (1- \lambda_i^{-1})^m}{\prod_{j=0}^{m-1}(1 - \lambda_{N-j}^{-1})^{N}},
  \end{equation}
  for $m=0,1,  \cdots, N-1$. We simplify the first term on the right side of the previous identity using \eqref{e:dBE1} and note that the second term on the right side is equal to $A_{s^m}^{-1}$, up to a sign, since
  \begin{equation}
    s^{m}(i)-i = \begin{cases}
      -m, \quad i > m \\
      N- m, \quad i \leq m,
    \end{cases}
  \end{equation}
  for $m =0, 1, \cdots, N-1$. That is, we have
  \begin{equation}
    \prod_{i=1}^N (1- \lambda_i^{-1})^m = (-1)^{(N-1)m}z^{L m} A_{s^m}^{-1} \prod_{i=0}^{m-1} \lambda_{N-i}^{-L},
  \end{equation}
  for $m = 0, 1, \cdots,N$. Then, for any $r \in \mathbb{Z}$, we write $r = N k + m$ for some $k \in \mathbb{Z}$ and $0 \leq m \leq N-1$, and we have
  \begin{equation}
    (-1)^{(N-1)r}\zeta^{L r}z^{-L r}\prod_{i=1}^N(1- \lambda_i^{-1})^r =  A_{s^m}^{-1} \prod_{i=1}^N\left(\frac{\lambda_i}{\zeta}\right)^{-L k}   \prod_{j=0}^{m-1} \left( \frac{\lambda_{N-j}}{\zeta}\right)^{-L}.
  \end{equation}
  The result follows by rewriting each term in the series expansion above using the previous identity. Additionally, note that the overall exponent on the $-1$ terms, after rewriting the terms, equals $(N-1)(N k + 2m)$, which is even, making the term equal to one. Lastly, we separate the two cases by noting the following set decompositions
  \begin{equation}
    \{r \leq -1\} = \{N k + m \mid k \leq -1,\, m=0, \dots, N-1 \}, \quad \{r \geq 0\} = \{Nk +m \mid k \geq 0,\, m=0, \dots, N-1 \}.
  \end{equation}
  Thus, we obtain the result.
\end{proof}

Below, we use the use the following notation for the action of the symmetric group on the variables of a function:
\begin{equation}
  \sigma \cdot f(w_1, \dots, w_N) := f(w_{\sigma(1)}, \dots, w_{\sigma(N)})
\end{equation}
for any function $f$ with $N \in \bbN$ variables and any permutation $\sigma \in S_N$.

\begin{lemma}\label{l:amp_product}
  Take the amplitude coefficients $A_{\sigma}$ given by \eqref{e:bethe_coeffs}. Then,
  \begin{equation}
    A_{\tau}^{-1}(\vec{w}) A_{\sigma}(\vec{w}) = \tau \cdot A_{\tau^{-1}\cdot \sigma}(\vec{w})
  \end{equation}
  for any $\tau , \sigma \in S_N$.
\end{lemma}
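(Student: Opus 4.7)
The proof is a direct computation; the only trick is to re-index the defining product for $A_\sigma$ so that the index matches the variable subscript, making both the multiplication on the left and the $\tau$-action on the right transparent.

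The plan is to start from the alternative form of the amplitude coefficient obtained by setting $k = \sigma(j)$ in the defining product:
\begin{equation*}
A_\sigma(\vec{w}) = (-1)^\sigma \prod_{j=1}^N (1-w_{\sigma(j)}^{-1})^{\sigma(j)-j} = (-1)^\sigma \prod_{i=1}^N (1-w_i^{-1})^{i - \sigma^{-1}(i)}.
\end{equation*}
This form is convenient because now each factor $(1-w_i^{-1})$ is tied to the index $i$, so products can be combined exponent-wise.

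Applying this to the left-hand side, and using $(-1)^{\tau^{-1}} = (-1)^\tau$, I would write
\begin{equation*}
A_\tau^{-1}(\vec{w}) A_\sigma(\vec{w}) = (-1)^{\tau+\sigma} \prod_{i=1}^N (1-w_i^{-1})^{\tau^{-1}(i) - \sigma^{-1}(i)}.
\end{equation*}

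For the right-hand side, I would expand $A_{\tau^{-1}\sigma}$ using the same re-indexed formula, noting that $(\tau^{-1}\sigma)^{-1} = \sigma^{-1}\tau$, to get
\begin{equation*}
A_{\tau^{-1}\sigma}(\vec{w}) = (-1)^{\tau+\sigma} \prod_{i=1}^N (1-w_i^{-1})^{i - \sigma^{-1}\tau(i)}.
\end{equation*}
Then I apply the permutation action $\tau \cdot f(\vec{w}) = f(w_{\tau(1)},\dots,w_{\tau(N)})$ and change summation variable $k = \tau(i)$, which turns $i - \sigma^{-1}\tau(i)$ into $\tau^{-1}(k) - \sigma^{-1}(k)$. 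Comparing the resulting expression with the formula above yields the claimed identity.

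The only conceptual point worth flagging is bookkeeping on the sign: one must use the multiplicativity of $(-1)^{\bullet}$ together with $(-1)^{\tau^{-1}} = (-1)^\tau$ to see that both sides carry the sign $(-1)^{\tau+\sigma}$. There is no genuine obstacle; this is essentially an exercise in re-indexing symmetric group products.
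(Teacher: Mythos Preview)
Your proposal is correct and follows essentially the same approach as the paper: both proofs are direct computations that re-index the defining product so the factors are tied to $w_i$, combine exponents to obtain $(-1)^{\tau+\sigma}\prod_i (1-w_i^{-1})^{\tau^{-1}(i)-\sigma^{-1}(i)}$, and then identify this with $\tau\cdot A_{\tau^{-1}\sigma}(\vec w)$ via the same change of variable. The only cosmetic difference is that the paper runs the computation as a single chain from left to right, whereas you compute both sides separately and compare.
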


\begin{proof}
  This is just a computation that follows from the definition of the amplitude coefficients:
  \begin{equation}
    \begin{split}
      A_{\tau}^{-1}(\vec{w}) A_{\sigma}(\vec{w}) &= (-1)^{\tau + \sigma} \prod_{i=1}^N(1 - w_{\tau(i)}^{-1})^{i - \tau(i)} \prod_{i=1}^N(1-w_{\sigma(i)}^{-1})^{\sigma(i)-i}\\
      &=(-1)^{\tau + \sigma} \prod_{i=1}^N(1 - w_{i}^{-1})^{\tau^{-1}(i) - i} \prod_{i=1}^N(1-w_{i}^{-1})^{i-\sigma^{-1}(i)}\\
      &= (-1)^{\tau + \sigma} \prod_{i=1}^N(1 - w_{i}^{-1})^{\tau^{-1}(i) - \sigma^{-1}(i)}\\
      & =(-1)^{\tau^{-1}\cdot \sigma} \prod_{i=1}^N(1- w_{\sigma(i)}^{-1})^{\tau^{-1}\cdot \sigma (i)- i} = \tau \cdot A_{\tau^{-1} \cdot \sigma}( \vec{w}).
    \end{split}
  \end{equation}
\end{proof}

The expansion of the probability function is given as a sum functions that resemble the probability function without the coupling terms. Recall the function
\begin{equation}
  \begin{split}
    u^{\rm BL(p,q)}(Y, X; t) := 
    & \CIpi{1} \oint_{C_{R'}} \frac{\dd z}{z}  \sum_{\SumLambdas}
    \frac{ \sum_{\sigma \in S_N} A_{\sigma} \prod_{i =1}^N \left( ( \lambda_{\sigma(i)}/ \zeta)^{y_{\sigma(i)} - x_i} e^{tE(\lambda_i)} \right)}
    { \prod_{i =1}^N \left( \frac{L \lambda_i - (L-N)}{\lambda_i - 1} \right)}\\
    = & \CIpi{1} \oint_{C_{R'}} \frac{\dd z}{z} \sum_{\SumLambdas}
    \det \left[ \frac{1}{L} \frac{ (1- \lambda_j^{-1})^{j-i+1}  \lambda_j^{y_j - x_i +1} e^{tE(\lambda_i)}}
      { \lambda_j - (1-\rho) } \right]_{i,j=1}^N \prod_{i=1}^{N} \zeta^{x_i -y_i},
  \end{split}
\end{equation}
for any $X, Y \in \bbZ^N$, which was also given earlier in \eqref{e:prob_fun_winding}. Now, we are set to prove \Cref{p:method_images}, which give an alternate series expansion of the generating series for the joint probability distribution $g(Y,X; \zeta;t)$.

\begin{proof}[Proof of \Cref{p:method_images}]
  We take the formula given by \eqref{e:main_1} for the generating series of the joint probability function. Also, we express the $u_{0}(\zeta)$ function by the identity given in \eqref{e:u0_alt}.
  Then, we have
  \begin{equation}
    g(X; \zeta; t)= \oint^{R'}_{\epsilon'} \frac{\dd z}{z}  \sum_{\substack{\lambda_{i} \in \mathcal{Q}(z)\\ 1 \leq i \leq N}}^{\prime}
    \frac{ \sum_{\sigma \in S_N} A_{\sigma} \prod_{i =1}^N \left( ( \lambda_{\sigma(i)}/ \zeta)^{y_{\sigma(i)} - x_i} e^{tE(\lambda_i)} \right)}
    { p_z(\vec{\lambda}; \zeta)\prod_{i =1}^N \left( \frac{L \lambda_i - (L-N)}{\lambda_i - 1} \right)}
  \end{equation}
  so that the summation excludes the case where $|z|= \epsilon'$ and $\lambda_i \in \calQ_1(z)$ for all $i=1, \dots, N$ -- we write the prime on the summation sign to denote the sum excluding this case. Note that the case we are excluding follows from a cancellation due to the expression \eqref{e:u0_alt} for $u_0(\zeta)$.

  Recall that 
  \begin{equation}
    p_z(\vec{\lambda}; \zeta) = 1 - (-1)^{N-1}\zeta^Lz^{-L}\prod_{i=1}^N(1- \lambda_i^{-1}).
  \end{equation}
  Then, by \Cref{l:coupling_expansion}, we have
  \begin{equation}
    \begin{split}
      &g(X; \zeta; t)=\\
      &\sum_{k=0}^{\infty}\sum_{m=0}^{N-1}\oint_{C_{R'}} \frac{\dd z}{z} \sum_{i=1}^{N} \sum_{\SumLambdas}^{\prime}
      \frac{ \sum_{\sigma \in S_N} A_{s^m}^{-1} A_{\sigma} \prod_{i =1}^N  \left( \frac{\lambda_{\sigma(i)}}{ \zeta}\right)^{y_{\sigma(i)} -k L - x_i}  \prod_{j=0}^{m-1} \left(\frac{\lambda_{N-j}}{\zeta}\right)^{-L} e^{tE(\vec{\lambda})}}
      { \prod_{i =1}^N \left( \frac{L \lambda_i - (L-N)}{\lambda_i - 1} \right)}\\
      &+\sum_{k=-1}^{-\infty}\sum_{m=0}^{N-1}\oint_{C_{\epsilon'}} \frac{\dd z}{z} \sum_{\SumLambdas}^{\prime}
      \frac{ \sum_{\sigma \in S_N} A_{s^m}^{-1}A_{\sigma} \prod_{i =1}^N  \left( \frac{\lambda_{\sigma(i)}}{ \zeta}\right)^{y_{\sigma(i)}-kL - x_i} \prod_{j=0}^{m-1} \left(\frac{\lambda_{N-j}}{\zeta}\right)^{-L}e^{tE(\vec{\lambda})}}
      { \prod_{i =1}^N \left( \frac{L \lambda_i - (L-N)}{\lambda_i - 1} \right)}.
    \end{split}
  \end{equation}
  Now, after taking the series expansion, we can extend the summation and include the case where $|z|= \epsilon'$ and $\lambda_i \in \calQ_1(z)$ for all $i=1, \dots, N$. More precisely, we have that 
  \begin{equation}
    \oint_{C_{\epsilon'}} \frac{\dd z}{z} \sum_{\SumLambdasR}
    \frac{ \sum_{\sigma \in S_N} (s^{m}\cdot A_{s^{-m} \cdot \sigma})\prod_{i =1}^N  \left( \frac{\lambda_{\sigma(i)}}{ \zeta}\right)^{y_{\sigma(i)} -k L - x_i}  \prod_{j=0}^{m-1} \left(\frac{\lambda_{N-j}}{\zeta}\right)^{-L} e^{tE(\vec{\lambda})}}
    { \prod_{i =1}^N \left( \frac{L \lambda_i - (L-N)}{\lambda_i - 1} \right)} = 0.
  \end{equation}
  The argument for this identity follows the same computations as in the proof of \Cref{p:current_cdf} where we showed a similar result; see the argument between \eqref{e:Q_pos2} and \eqref{e:Q_pos3}. Then, we have
  \begin{equation}
    \begin{split}
      &g(X; \zeta; t)=\\
      &\sum_{k=0}^{\infty}\sum_{m=0}^{N-1}\oint_{C_{R'}} \frac{\dd z}{z} \sum_{\SumLambdas}
      \frac{ \sum_{\sigma \in S_N} A_{s^m}^{-1} A_{\sigma} \prod_{i =1}^N  \left( \frac{\lambda_{\sigma(i)}}{ \zeta}\right)^{y_{\sigma(i)} -k L - x_i}  \prod_{j=0}^{m-1} \left(\frac{\lambda_{N-j}}{\zeta}\right)^{-L} e^{tE(\vec{\lambda})}}
      { \prod_{i =1}^N \left( \frac{L \lambda_i - (L-N)}{\lambda_i - 1} \right)}\\
      &+\sum_{k=-1}^{-\infty}\sum_{m=0}^{N-1}\oint_{C_{\epsilon'}} \frac{\dd z}{z} \sum_{\SumLambdas}
      \frac{ \sum_{\sigma \in S_N} A_{s^m}^{-1}A_{\sigma} \prod_{i =1}^N  \left( \frac{\lambda_{\sigma(i)}}{ \zeta}\right)^{y_{\sigma(i)}-kL - x_i} \prod_{j=0}^{m-1} \left(\frac{\lambda_{N-j}}{\zeta}\right)^{-L}e^{tE(\vec{\lambda})}}
      { \prod_{i =1}^N \left( \frac{L \lambda_i - (L-N)}{\lambda_i - 1} \right)}.
    \end{split}
  \end{equation}
  Next, we deform the contour $C_{\epsilon'}$ to the contour $C_{R'}$. This deformation does not introduce any residue terms due to \Cref{l:well_def_hol}. Then, we have
  \begin{equation}
    \begin{split}
      &g(X; \zeta; t)=\\
      &\sum_{k=-\infty}^{\infty}\sum_{m=0}^{N-1}\oint_{C_{R'}} \frac{\dd z}{z} \sum_{\SumLambdas}
      \frac{ \sum_{\sigma \in S_N} A_{s^m}^{-1} A_{\sigma} \prod_{i =1}^N  \left( \frac{\lambda_{\sigma(i)}}{ \zeta}\right)^{y_{\sigma(i)} -k L - x_i}  \prod_{j=0}^{m-1} \left(\frac{\lambda_{N-j}}{\zeta}\right)^{-L} e^{tE(\vec{\lambda})}}
      { \prod_{i =1}^N \left( \frac{L \lambda_i - (L-N)}{\lambda_i - 1} \right)}.
    \end{split}
  \end{equation}
  
  We now conclude the proof by simplifying the integrand by using \Cref{l:amp_product}. We have
  \begin{equation}
    \begin{split}
      &g(X; \zeta; t)\\
      &=\sum_{k=-\infty}^{\infty}\sum_{m=0}^{N-1}\oint_{C_{R'}} \frac{\dd z}{z} \sum_{\SumLambdas}
      \frac{ \sum_{\sigma \in S_N} (s^{m}\cdot A_{s^{-m} \cdot \sigma})\prod_{i =1}^N  \left( \frac{\lambda_{\sigma(i)}}{ \zeta}\right)^{y_{\sigma(i)} -k L - x_i}  \prod_{j=0}^{m-1} \left(\frac{\lambda_{N-j}}{\zeta}\right)^{-L} e^{tE(\vec{\lambda})}}
      { \prod_{i =1}^N \left( \frac{L \lambda_i - (L-N)}{\lambda_i - 1} \right)}\\
      &=\sum_{k=\infty}^{\infty}\sum_{m=0}^{N-1}\oint_{C_{R'}} \frac{\dd z}{z} \sum_{\SumLambdas}
      s^m \cdot \left(\frac{ \sum_{\sigma \in S_N}  A_{s^{-m} \cdot \sigma} \prod_{i =1}^N  \left( \frac{\lambda_{s^{-m}\cdot\sigma(i)}}{ \zeta}\right)^{\tilde{y}_{s^{-m} \cdot \sigma(i)} -k L - x_i}  \prod_{j=1}^{m} \left(\frac{\lambda_{j}}{\zeta}\right)^{-L} e^{tE(\vec{\lambda})}}
        { \prod_{i =1}^N \left( \frac{L \lambda_i - (L-N)}{\lambda_i - 1} \right)} \right)\\
      &= \sum_{k=\infty}^{\infty}\sum_{m=0}^{N-1}\oint_{C_{R'}} \frac{\dd z}{z} \sum_{\SumLambdas}
      \frac{ \sum_{\sigma \in S_N}  A_{ \sigma} \prod_{i =1}^N  \left( \frac{\lambda_{\sigma(i)}}{ \zeta}\right)^{\tilde{y}_{\sigma(i)} -k L - x_i}  \prod_{j=1}^{m} \left(\frac{\lambda_{j}}{\zeta}\right)^{-L} e^{tE(\vec{\lambda})}}
      { \prod_{i =1}^N \left( \frac{L \lambda_i - (L-N)}{\lambda_i - 1} \right)}\\
      &= \sum_{k=\infty}^{\infty}\sum_{m=0}^{N-1}  u^{\rm BL(p,q)}(\widetilde{Y}^m - (kL, \dots, kL), X; t) \zeta^{m L} \prod_{i=1}^N \zeta^{x_i - y_i + k L} ,\\
      &= \sum_{m=-\infty}^{\infty} u^{\rm BL(p,q)}(Y^m, X; t) \prod_{i=1}^N \zeta^{x_i - y^m_i}
    \end{split}
  \end{equation}
  where $\widetilde{Y}^m = (\tilde{y}^m_i)_{i=1}^N := y_{s^m(i)}$ and $Y^m = (y^m_i)_{i=1}^N$ is the $m^{th}$ periodic shift of $Y$ given by \eqref{e:Y_m_shift}. Note that the third to last equality is due to relabeling the elements of the symmetric group, i.e.~$s^{-m} \cdot \sigma \mapsto \sigma$, and relabeling the decoupled Bethe roots, i.e.~$\lambda_{s^m(i)} \rightarrow \lambda_{i}$. In particular, for each fixed $k$ and $m$, the terms are equal after relabeling, both the symmetric group elements and the Bethe roots, since we are summing over all permutations and all decoupled Bethe roots. Thus, we have the desired result.
\end{proof}

\section*{Acknowledgements}

The work of J.-H.L.~and A.S.~was partially supported by the EPSRC grant EP/R024456/1. Both authors would like to give a warm thanks to Nikos Zygouras for his support that led to this collaboration.
J.-H.L. also acknowledges support from National Science and Technology Council of Taiwan through grant 110-2115-M-002-013-MY3.

\appendix

\section{Delta basis, proof}
\label{a:delta_basis}

In this appendix, we give the proof of \Cref{p:delta_contour_formula}. We will decompose the nested contour integrals into a linear combination of $2 \cdot 3^N$ nested contour integrals. We do this by writing each contour integral as a linear combination of contour integrals so that the contour of each integral is connected. Then, we show that one of the $2 \cdot 3^N$ nested contour integrals gives us the delta function, while the rest of the integrals are identically zero. We evaluate each of the $2 \cdot 3^N$ nested contour integrals by taking a series expansion of the integrand and, consecutively, taking the integrals with respect to the $z$-variable and the $w$-variables. We give the details below.

First, we introduce the following notation for contour integrals that will appear in the arguments below. Take a partition $I \sqcup J \sqcup K = [N] := \{1, \cdots, N \}$ and let $r >0$ be radius of the contour for the $z$-variable. Then, we define
\begin{equation}
  \label{e:CI_R_epsilon_v2}
  \CI(r, I, J, K)
  = (-1)^{ |I| + |J|} \oint_{C_{r}} \frac{\dd z}{z}
  \prod_{i \in I} \oint_{C_R} \frac{\dd w_i}{w_i}
  \prod_{i \in J} \oint_{C_{\epsilon_1}} \frac{\dd w_i}{w_i}
  \prod_{i \in K} \oint_{1+C_{\epsilon_2}} \frac{\dd w_i}{w_i}
  \frac{ h_{\vec{w}} (Y) u_{\vec{w}} (X) h_{\zeta}(X-Y)}{ p_z( \vec{w}; \zeta ) \prod_{i=1}^N q_z(w_i) }.
\end{equation}
Note that $I$ (resp. $J$ and $K$) denotes the set of indices $1 \leq i \leq N$ such that $w_i \in C_R$ (resp.~$w_i \in C_{\epsilon_1}$ and $w_i \in 1 + C_{\epsilon_2}$). Below, we begin with some preliminary result, \Cref{l:CI_delta_function} and \Cref{l:CI_zero}, giving the value of these terms for different cases of the partition $I \sqcup J \sqcup K = [N]$ and the radius $r>0$.

\begin{lemma}
  \label{l:CI_delta_function}
  Using the notation \eqref{e:CI_R_epsilon_v2}, we have
  \begin{align*}
    \CI(R', [N], \emptyset, \emptyset) = \mathds{1} (X = Y).
  \end{align*}
  so that $R'>0$ satisfies \eqref{e:conditions}.
\end{lemma}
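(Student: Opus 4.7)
The strategy exploits the fact that all contours are large: on $C_{R'} \times \prod_i C_R$ with $R = (R')^\beta$, $\beta > 1$, we have the uniform estimates
\[
|\zeta^L z^{-L} \textprod_i (1-w_i^{-1})| \sim (R')^{-L}, \qquad |z^L w_i^{-L}(1-w_i^{-1})^{-N}| \sim (R')^{L(1-\beta)},
\]
both of which tend to $0$ as $R' \to \infty$. This validates the absolutely convergent geometric expansions
\[
\frac{1}{p_z(\vec w; \zeta)} = \sum_{k \geq 0} (-1)^{(N+1)k} \zeta^{Lk} z^{-Lk} \textprod_i (1 - w_i^{-1})^k, \qquad \frac{1}{q_z(w_i)} = \sum_{n_i \geq 0} z^{L n_i} w_i^{-L n_i} (1 - w_i^{-1})^{-N n_i},
\]
and lets us exchange the summations with the contour integrations.

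After substitution, the total $z$-degree of each summand equals $L(\sum_i n_i - k)$, so $\oint_{C_{R'}} \frac{\dd z}{z}$ enforces $k = \sum_i n_i$. Using the determinantal form \eqref{e:bethe_determinantal} of $u_{\vec w}(X)$ and absorbing the factor $w_j^{y_j - L n_j}(1-w_j^{-1})^{k - N n_j}$ into column $j$ of the determinant, the surviving $w$-integrand becomes $\det[(1-w_j^{-1})^{j-i+k-Nn_j}\, w_j^{y_j - L n_j - x_i}]_{i,j=1}^N$. Extracting the $w_j^0$ Laurent coefficient column by column (which is what $\oint_{C_R} \frac{\dd w_j}{w_j}$ produces on the large circle) converts each entry into the binomial coefficient $(-1)^{y_j - L n_j - x_i}\binom{j-i+k-Nn_j}{y_j-L n_j - x_i}$, understood as $0$ whenever the lower argument is negative.

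The key vanishing mechanism is that for any tuple with $k = \sum_i n_i \geq 1$ there exists some $j_0$ with $n_{j_0} \geq 1$; in column $j_0$ one would then need $y_{j_0} \geq L n_{j_0} + x_i \geq L$, contradicting $y_{j_0} \leq L - 1$. Thus column $j_0$ is identically zero and the corresponding determinant vanishes, killing every $k \geq 1$ contribution (and with it all $\zeta$-dependence). Only the $(k = 0)$ term survives, reducing the integral to $\det\bigl[(-1)^{y_j - x_i}\binom{j-i}{y_j - x_i}\bigr]_{i,j=1}^N$.

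It remains to identify this determinant with $\mathds{1}(X = Y)$, up to the global sign and $h_\zeta(X - Y)$ factor built into \eqref{e:CI_R_epsilon_v2}. When $X = Y$, the strict ordering $x_1 < \cdots < x_N$ forces every sub-diagonal entry to vanish (since $y_j - x_i = x_j - x_i < 0$ for $i > j$), leaving an upper-triangular matrix with unit diagonal and determinant $1$. The vanishing for $X \neq Y$ is the main obstacle: the cleanest route is to recognize the determinant as the constant Laurent coefficient of $h_{\vec w}(Y)\, u_{\vec w}(X)$ and then to establish the orthogonality $[w_1^0 \cdots w_N^0]\bigl(h_{\vec w}(Y)\, u_{\vec w}(X)\bigr) = \mathds{1}(X = Y)$ via Cauchy--Binet applied to \eqref{e:bethe_determinantal}, using the strict monotonicity of both $X$ and $Y$ to rule out all off-diagonal contributions.
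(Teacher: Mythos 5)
Your proposal follows the paper's opening moves closely: expand $1/p_z$ and each $1/q_z(w_i)$ as absolutely convergent geometric series on the large contours, then integrate out $z$, which forces $k = \sum_i n_i$. From there you diverge from the paper, and in a useful way. Rather than estimating the remaining $w$-integral analytically, you compute it exactly by extracting the constant Laurent coefficient column by column in the determinant, and you observe that any configuration with $k = \sum_i n_i \geq 1$ has some $n_{j_0} \geq 1$, for which the constraint $y_{j_0} - L n_{j_0} - x_i \geq 0$ cannot be met (since $y_{j_0} \leq L-1$ and $x_i \geq 0$); hence column $j_0$ vanishes identically. This is correct, and it is actually cleaner and more robust than the paper's order-of-magnitude argument --- in fact the factorized formula the paper writes after the $z$-integration is slightly off (it should carry $\prod_j(1-w_j^{-1})$ rather than only $(1-w_i^{-1})$ in each factor), a typo that your exact computation sidesteps entirely.

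The genuine gap is in your last paragraph. After the $k\geq 1$ terms die, what remains is precisely the identity $\bigl[w_1^0 \cdots w_N^0\bigr]\bigl(h_{\vec w}(Y)\, u_{\vec w}(X)\bigr) = \det\bigl[(-1)^{y_j-x_i}\binom{j-i}{y_j-x_i}\bigr]_{i,j=1}^N = \mathds{1}(X=Y)$ for strictly increasing $X, Y$. This is exactly the $t=0$ specialization of the Sch\"utz/Tracy--Widom transition-probability formula for TASEP on $\bbZ$, which is what the paper invokes by citing Theorem~2.1 of \cite{TW08a}. Your proposed justification --- ``Cauchy--Binet applied to \eqref{e:bethe_determinantal}'' --- does not apply here: the Laurent extraction has already collapsed everything to a \emph{single} determinant of binomial coefficients, so there is no product of two determinants over a common measure for Andreief/Cauchy--Binet to act on. The $X=Y$ case is indeed trivially upper-triangular with unit diagonal, but the vanishing for $X\neq Y$ is not a one-line consequence of strict monotonicity; it needs a real argument (Sch\"utz's original induction, a contour-shrinking argument as in \cite{TW08a}, or a signed lattice-path argument in the spirit of Lindstr\"om--Gessel--Viennot). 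You should either supply one of these or do as the paper does and cite the known result.
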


\begin{proof}
  This identity follows by doing a series expansion of the denominator, which can also be seen, equivalently, as the formula for TASEP on the interger line from \cite{Schutz97} and the generalization from \cite{TW08a}.
  
  We first consider the series expansion of the denominators of the integrand.
  Given $z \in C_{R'}$ and $w_i \in C_{R}$ for $i = 1, \cdots, N$, we have the following series expansions,
  \begin{align*}
    \frac{1}{p_z(\vec{w}; \zeta)} = \frac{1}{1 + (-1)^N \zeta^L z^{-L} \prod_{i=1}^N (1- w_i^{-1})}
    & = \sum_{k =0}^{\infty} \left( (-1)^{N-1} \zeta^L z^{-L} \prod_{i=1}^N (1- w_i^{-1}) \right)^k, \\
    \frac{1}{q_z(w_i)} = \frac{1}{1 - z^{L} w_i^{-L}(1-w_i^{-1})^{-N}}
    & = \sum_{k=0}^{\infty} \left(z^{L} w_i^{-L}(1-w_i^{-1})^{-N} \right)^k,
  \end{align*}
  for all $i=1, \cdots, N$, where we use \eqref{e:z_contour}, \eqref{e:powers} and \eqref{e:beta} to guarantee convergence of the series expansion.
  
  We compute the integral with respect to $z$, term by term, after taking the series expansion of the denominator. Note that the integral vanishes unless the exponent of $z$ is exactly $-1$.
  Therefore, we find
  \begin{align*}
    & \oint_{C_R} \frac{\dd z}{z} \oint_{C_{R}} \frac{\dd w_1}{w_1} \cdots \oint_{C_{R}} \frac{\dd w_N}{w_N}
      \frac{ h_{\vec{w}} (Y) u_{\vec{w}} (X) }{ p_z( \vec{w} ) \prod_{i=1}^N q_z(w_i) } \\
    & = \CIpi{N} \oint_{C_{R}} \frac{\dd w_1}{w_1} \cdots \oint_{C_{R}} \frac{\dd w_N}{w_N} \sum_{\sigma \in S_N} A_{\sigma} \prod_{i =1}^N w_{\sigma(i)}^{y_{\sigma(i)} - x_i} \prod_{i =1}^N \frac{1}{1 + (-1)^N \zeta^L (1-w_i^{-1})^{1-N} w_i^{-L}}.
  \end{align*}
  We continue by writing the term in the second product as follows,
  \begin{equation}
    \label{e:denom_decomp}
    \frac{1}{1 + (-1)^N \zeta^L (1-w_i^{-1})^{1-N} w_i^{-L}}
    = 1 - \frac{ (-1)^N \zeta^L (1-w_i^{-1})^{1-N} w_i^{-L}}{1 + (-1)^N \zeta^L (1-w_i^{-1})^{1-N} w_i^{-L}}.
  \end{equation}
  Note that
  \begin{equation}
    |A_{\sigma}(\vec{w})| = \calO(1), \quad
    \left| \frac{ (-1)^N \zeta^L (1-w_i^{-1})^{1-N}w_i^{-L}}{1 + (-1)^N \zeta^L (1-w_i^{-1})^{1-N} w_i^{-L}} \right| = \calO(R^{-L}), \quad
    \left| w_{\sigma(i)}^{y_{\sigma(i)}-x_{i}-1} \right| = \calO(R^{L-2})
  \end{equation}
  as $|w_i|= R \rightarrow \infty$.
  Thus, if we expand the product in the integrand using \eqref{e:denom_decomp}, the only contributing term is the leading term given by one. That is,
  \begin{align*}
    & \CIpi{N} \oint_{C_{R}} \frac{\dd w_1}{w_1} \cdots \oint_{C_{R}} \frac{\dd w_N}{w_N}
      \sum_{\sigma \in S_N} A_{\sigma} \prod_{i =1}^N w_{\sigma(i)}^{y_{\sigma(i)} - x_i} \prod_{i =1}^N \frac{1}{1 + (-1)^N \zeta^L (1-w_i^{-1})^{1-N} w_i^{-L}} \\
    = & \CIpi{N} \oint_{C_{R}} \frac{\dd w_1}{w_1} \cdots \oint_{C_{R}} \frac{\dd w_N}{w_N}
        \sum_{\sigma \in S_N} A_{\sigma} \prod_{i =1}^N w_{\sigma(i)}^{y_{\sigma(i)} - x_i},
  \end{align*}
  since any term that is not equal to $1$ from the expansion of the second product in the first line will vanish by deforming the contour of some $w_i$ to infinity. The remaining contour integral is the same as the contour integral for the TASEP on the line, see \cite{TW08a}. Then, our result follows from Theorem 2.1 in \cite{TW08a}.
\end{proof}

\begin{rem}
  The formula we use for TASEP on the line in the proof of \Cref{l:CI_delta_function} uses large contours instead of the typical small contours, but one may easily check that this does not affect the result by performing a change of variables $w \mapsto 1/w$. 
\end{rem}

\begin{lemma}
  \label{l:CI_zero}
  Using the notation \eqref{e:CI_R_epsilon_v2}, we have
  \begin{enumerate}[label=(\alph*)]
    \item $\CI(R', I, J, K) = 0$ if $J \sqcup K \neq \emptyset$,
    \item $\CI(\epsilon', I, J, K) = 0$ if $|K| \leq N-1$,
    \item $\CI(\epsilon', I, J, K) = 0$ if $|K| = N$.
  \end{enumerate}
\end{lemma}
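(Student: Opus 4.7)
The plan is to prove each case by combining the series-expansion technique used in \Cref{l:CI_delta_function} with the magnitude-bounding strategy employed in Step~5 of the proof of \Cref{l:contour_deformation_induction}, and then showing that the integrand on the prescribed contours either integrates to zero termwise or is bounded by a positive power of $\epsilon'$ while being constant on the valid range of radii.

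For part (a), where $z \in C_{R'}$ with $|z| = R' = 1/\epsilon'$ large, I would proceed by series expansion, as in the proof of \Cref{l:CI_delta_function}, but with two types of expansions for the $q_z$ factors. For indices $i \in I$ (so $w_i \in C_R$), $|z^L w_i^{-L}(1-w_i^{-1})^{-N}|$ is small and one expands $1/q_z(w_i)$ in nonnegative powers; for indices $j \in J \sqcup K$, the same quantity is large by \eqref{e:powers}, \eqref{e:beta1_bounds}, \eqref{e:beta2_bounds}, and one expands $1/q_z(w_j)$ in strictly negative powers. Similarly $1/p_z(\vec{w};\zeta)$ expands as in \Cref{l:CI_delta_function}. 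After exchanging sums and integrals, the $z$-integration picks out a single integer exponent; the remaining $w_i$-integrals over small contours extract residues at $w_i = 0$ (if $i \in J$) or $w_i = 1$ (if $i \in K$), and a careful exponent count shows that the required Laurent coefficient is always zero. Alternatively, one can bound directly: each $q_z(w_j)^{-1}$ contributes a factor of size $(\epsilon')^{L+\beta_1(L-N)}$ (or a similar $\beta_2$-expression), the determinantal factor $h_{\vec w}(Y) u_{\vec w}(X)$ is $O(1)$, and the length of each small contour gives $\epsilon_1$ or $\epsilon_2$, yielding an overall bound of $(\epsilon')^{c}$ with $c > 0$ strictly positive whenever $J \sqcup K \neq \emptyset$.

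For parts (b) and (c), where $z \in C_{\epsilon'}$, I would adopt the direct magnitude-bound strategy from Step~5 of \Cref{l:contour_deformation_induction}. Namely, for each prescribed configuration of $w_i$'s, I approximate $|q_z(w_i)|$, $|1-w_i^{-1}|$, and $|A_\sigma(\vec w)|$ via the rearrangement inequality by appropriate powers of $\epsilon'$, and control the $p_z(\vec w;\zeta)$ factor using that the location analysis done in Step~2 of \Cref{l:contour_deformation_induction}, together with \Cref{l:bethe_roots_location} and \Cref{p:double_root_no_solution}, guarantees that no root of $p_z$ lies on any contour of integration. Collecting these estimates with the lengths of the small contours gives $|\CI(\epsilon', I, J, K)| \leq \mathrm{Const}\times(\epsilon')^{c'}$ with $c' > 0$. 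Finally, since within the valid regime \eqref{e:conditions} the contours can be continuously deformed without crossing any pole of the integrand, the value of $\CI$ is independent of $\epsilon'$; combined with the bound above, this forces $\CI = 0$.

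The distinction between (b) and (c) is that in case (c) all $w_i$ lie on $1 + C_{\epsilon_2}$, so the relevant poles of the integrand are controlled differently: here the coupling equation \eqref{e:pole_b1_v2} can produce a root of $p_z$ inside the bounded region (exactly as in Step~2 of \Cref{l:contour_deformation_induction}), but the associated residue, analyzed as in Step~5 of the same proof and using the asymptotics $|1-\lambda^{-1}| \sim |z|^d$ for $\lambda \in \calQ_1(z)$, contributes only a positive power of $\epsilon'$. The main obstacle throughout is bookkeeping: extracting the positive exponent $c'$ in the total bound requires using the full strength of the inequalities $\tfrac{1}{1-\rho} < \beta_1 < d < \beta_2 < \tfrac{L}{N-1}$ from \eqref{e:conditions}, and in case (c) also requires ruling out any nontrivial contribution from the would-be pole at $z = 0$, which is handled by the same asymptotic analysis that underlies \Cref{l:stat_current}.
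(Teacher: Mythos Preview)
Your plan for part~(a) via series expansion is essentially the paper's approach, but for~(b) and~(c) you diverge in a way that creates real gaps.

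\textbf{Case (c) is much simpler than you propose.} With $z\in C_{\epsilon'}$ and all $w_i\in 1+C_{\epsilon_2}$, both $1/p_z(\vec w;\zeta)$ and every $1/q_z(w_i)$ expand in \emph{nonpositive} (resp.\ strictly negative) powers of $z$. Hence the full integrand, as a Laurent series in $z$, has only strictly negative powers, and the $z$-integral vanishes termwise. That is the whole argument. Your discussion of roots of $p_z$, residues, and the asymptotics of $\lambda\in\calQ_1(z)$ is irrelevant here: no $w_i$ is pinned to a Bethe root in this computation, so \Cref{l:bethe_roots_location} and \Cref{p:double_root_no_solution} play no role.

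\textbf{For (a) with $I\neq\emptyset$ and for (b), your global magnitude bound is not what the paper does and is hard to close.} When some $w_i\in C_R$ with $|w_i|=(\epsilon')^{-\beta}$, the factor $h_{\vec w}(Y)u_{\vec w}(X)$ contributes $R^{y_j-x_i}$-type terms that are \emph{negative} powers of $\epsilon'$; whether these are beaten by the $1/p_z$ and $1/q_z$ factors depends delicately on $\beta$, which you never constrain beyond $\beta>1$. The paper avoids this entirely: after integrating out $z$ by series expansion, the surviving $[z^0]$-coefficient has the structured form
\[
\sum_{\substack{n_i\ge 0,\,i\in I\\ m_j\ge 1,\,j\in J\sqcup K}}\prod_{i\in I}(T_iS)^{n_i}\prod_{j\in J\sqcup K}(T_jS)^{-m_j}\,\mathds{1}\Bigl(\textstyle\sum n_i\ge\sum m_j\Bigr),
\]
with $T_i=w_i^{-L}(1-w_i^{-1})^{-N}$ and $S=(-1)^{N-1}\zeta^L\prod_i(1-w_i^{-1})$. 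One then bounds in modulus and deforms a \emph{single} $w_j$ with $j\in J\sqcup K$ to $0$ (if $j\in J$) or to $1$ (if $j\in K$). The key balance is that $|T_jS|^{-1}=\calO(|w_j|^{L-N+1})$ (resp.\ $\calO(|w_j-1|^{N-1})$), which exactly dominates the worst-case growth $|A_\sigma(\vec w)\,w_j^{y_j-x_i}|=\calO(|w_j|^{-(L-N)})$ (resp.\ $\calO(|w_j-1|^{-(N-1)})$). This single-variable deformation sidesteps any dependence on $\beta$ and any claim about global independence of $\epsilon'$.

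So: keep your series-expansion idea from~(a), push it uniformly to extract $[z^0]$ in~(b) as well, and then shrink one small $w_j$-contour instead of attempting a global bound. Case~(c) (and case~(a) with $I=\emptyset$) falls out of the series expansion immediately.
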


\begin{proof}
  We follow the same ideas as in the proof of \Cref{l:CI_delta_function}. We use series expansions to integrate out the $z$ variable, then deform one of the $w$ contours in a specific way to establish the result.

  First, we note that the series expansion of $1/p_z(\vec{w} ; \zeta)$ depends on whether $| \zeta^L z^{-L} \textprod_{i=1}^N (1- w_i^{-1}) |$ is greater or smaller than 1.
  Recall that $|\zeta|=1$ and consider the different cases in the statement of the Lemma.
  
  In case (a), since $z \in C_{R'}$, using $\beta_1 < d$ in \eqref{e:beta1_bounds}, we have
  \begin{align*}
    | \zeta^L z^{-L} \textprod_{i=1}^N (1- w_i^{-1}) | = \calO \big( (\epsilon')^L \epsilon_1^{-N} \big)
    = \calO \big( (\epsilon')^{L - \beta_1 N}  \big) \ll 1.
  \end{align*}
  In case (b), since $z \in C_{\epsilon'}$, using $\beta_2 < \tfrac{L}{N-1}$ in \eqref{e:beta2_bounds}, we have
  \begin{align*}
    | \zeta^L z^{-L} \textprod_{i=1}^N (1- w_i^{-1}) |
    = \Omega \big( (\epsilon')^{-L} \epsilon_2^{N-1} \big)
    = \Omega \big( (\epsilon')^{-L + \beta_2 (N-1)}  \big) \gg 1.
  \end{align*}
  In case (c), since $z \in C_{\epsilon'}$, using $\beta_2 > \tfrac{L}{N}$ in \eqref{e:beta2_bounds}, we have
  \begin{align*}
    | \zeta^L z^{-L} \textprod_{i=1}^N (1- w_i^{-1}) | = \calO \big( (\epsilon')^{-L} \epsilon_2^{N} \big)
    = \calO \big( (\epsilon')^{-L + \beta_2 N}  \big) \ll 1.
  \end{align*}
  Therefore, for the different cases we have
  \begin{equation}
    \begin{split}
      \label{e:p_z_expansion}
      \frac{1}{ p_z(\vec{w} ;\zeta )} &= \frac{1}{1 + (-1)^N \zeta^L z^{-L} \prod_{i=1}^N (1- w_i^{-1})}\\
      &=
      \begin{cases}
        \sum_{k =0}^{\infty} \left( (-1)^{N-1} \zeta^L z^{-L} \prod_{i=1}^N (1- w_i^{-1}) \right)^k, &  \mbox{ in (a) and (c)}, \\
        - \sum_{k=1}^{\infty} \left( (-1)^{N-1} \zeta^L z^{L} \prod_{i=1}^N (1 - w_i^{-1})^{-1} \right)^k, & \mbox{ in (b)}.
      \end{cases}
    \end{split}
  \end{equation}
  On the other hand, due to the conditions \eqref{e:beta1_bounds}, $\beta_1 > \tfrac{1}{1-\rho}$ and $\beta_2 > d$ in \eqref{e:beta2_bounds},
  the series expansion for $1/q_z(w_i)$ depends only on whether $w_i \in C_R$, $w_i \in C_{\epsilon_1}$ or  $w_i \in 1 + C_{\epsilon_2}$, independently of $z \in C_{R'}$ or $z \in C_{\epsilon'}$.
  We have
  \begin{equation}\label{e:q_z_expansion}
    \begin{split}
      \frac{1}{q_z(w_i)} &= \frac{1}{1 - z^{L} w_i^{-L}(1-w_i^{-1})^{-N}}\\
      &=
      \begin{cases}
        \sum_{k=0}^{\infty} \left( z^{L} w_i^{-L}(1-w_i^{-1})^{-N} \right)^k, & w_i \in C_{R}, \\
        - \sum_{k=1}^{\infty} \left( z^{-L} w_i^{L}(1-w_i^{-1})^{N} \right)^k, & w_i \in C_{\epsilon_1} \cup (1+ C_{\epsilon_2} ),
      \end{cases}
    \end{split}
  \end{equation}
  for all $i = 1, \cdots, N$.

  First, consider case (c). The series expansion of the integrand $\CI(\epsilon', \emptyset, \emptyset, [N])$ only has negative powers of $z$, due to \eqref{e:p_z_expansion} and \eqref{e:q_z_expansion}. Thus, the corresponding contour integral vanishes after taking the contour integral with respect to the $z$-variable. This establishes case (c).

  Now, we treat the remaining cases cases (a) and (b) using a similar method. We illustrate the idea for case (a) below and omit the proof in case (b). 
  
  For case (a) with $I = \emptyset$, one may also show that the contour integral vanishes since there are only negative powers of $z$ after a series expansion, as was shown in case (c). Then, in the argument below, we consider case (a) with $I \neq \emptyset, [N]$, where the second case is omitted by the assumption of the lemma.

  Using \eqref{e:p_z_expansion} and \eqref{e:q_z_expansion}, we expand the denominator in the integrand of $\CI(R', I, J, K)$ as follows
  \begin{align*}
    & \frac{1}{ p_z(\vec{w} ; \zeta) \prod_{i=1}^N q_z(w_i) } \\
    & = \sum_{k \geq 0} \left( (-1)^{N-1} \zeta^L z^{-L} \prod_{i=1}^N (1 - w_i^{-1}) \right)^k
      \prod_{i \in I} \sum_{n_i \geq 0}  \left( z^{L} w_i^{-L}(1-w_i^{-1})^{-N} \right)^{n_i}
      \prod_{j \in J \sqcup K} \sum_{m_j \geq 1}  \left( z^{-L} w_j^{L}(1 - w_j^{-1})^{N} \right)^{m_j}, \\
    & = \sum_{\substack{k \geq 0 \\ n_i \geq 0, i \in I \\ m_j \geq 1, j \in J \sqcup K}}
    \left( (-1)^{N-1} \zeta^{k L} \prod_{i=1}^N (1 - w_i^{-1}) \right)^k
    \left( w_i^{-L}(1 - w_i^{-1})^{-N} \right)^{n_i}
    \left( w_j^{L}(1 - w_j^{-1})^{N} \right)^{m_j}  z^{L(\sum n_i - \sum m_j - k)}.
  \end{align*}
  Then, the coefficient $[z^0]$ reads,
  \begin{align}
    \label{e:TiS_TjS_expansion}
    \sum_{\substack{n_i \geq 0, i \in I \\ m_j \geq 1, j \in J \sqcup K}}
    \prod_{i \in I} (T_i S)^{n_i} \prod_{j \in J \sqcup K} (T_j S)^{-m_j} \mathds{1} \big( \sum n_i \geq \sum m_j \big),
  \end{align}
  where $T_i =  w_i^{-L}(1 - w_i^{-1})^{-N}$ and $S = (-1)^{N-1} \zeta^{k L} \prod_{i=1}^N (1 - w_i^{-1})$.
  Note that $|T_i S| < 1$ for $i \in I$ and $|T_j S| > 1$ for $j \in J \sqcup K$, where we obtain the inequalities by the assumption $I \neq \emptyset, [N]$ and by the bounds \eqref{e:beta1_bounds} and \eqref{e:beta1_bounds}. More precisely, the exponent in $\epsilon'$ of $T_j S$ for $j \in J \sqcup K$ writes
  \begin{equation}
    -(L-N) \beta_1 - |J| \beta_1 + |K| \beta_2
    \leq -(L-N) \beta_1 + (N-1) \beta_2
    < - \frac{L-N}{1-\rho} + \frac{L}{N-1} (N-1) = 0.
  \end{equation}
  In particular, this means that the series in\eqref{e:TiS_TjS_expansion} is absolutely converging.

  Now, we can bound the contour integral $CI(R', I, J, K)$ as follows, using the triangle inequality,
  \begin{align*}
    &|\CI(R', I, J, K)|\\
    & = \Bigg|
      \prod_{i \in I} \oint_{C_R} \frac{\dd w_i}{w_i}
      \prod_{j \in J} \oint_{C_{\epsilon_1}} \frac{\dd w_j}{w_j}
      \prod_{k \in K} \oint_{C_{1 + \epsilon_2}} \frac{\dd w_k}{w_k}
    \\
    & \hspace{20mm}\sum_{\substack{n_i \geq 0, i \in I \\ m_j \geq 1, j \in J \sqcup K}}
    \prod_{i \in I} (T_i S)^{n_i} \prod_{j \in J \sqcup K} (T_j S)^{-m_j} \mathds{1} \big( \sum n_i \geq \sum m_j \big)
    \cdot h_{\vec{w}} (Y) u_{\vec{w}} (X)
    \Bigg| \\
    & \leq
      \prod_{i \in I} \oint_{C_R} \Big| \frac{\dd w_i}{w_i} \Big|
      \prod_{j \in J} \oint_{C_{\epsilon_1}} \Big| \frac{\dd w_j}{w_j} \Big|
      \prod_{k \in K} \oint_{C_{1 + \epsilon_2}} \Big| \frac{\dd w_k}{w_k} \Big|
      \sum_{\substack{n_i \geq 0, i \in I \\ m_j \geq 1, j \in J \sqcup K}}
    \prod_{i \in I} |T_i S|^{n_i} \prod_{j \in J \sqcup K} |T_j S|^{-m_j} 
    \cdot h_{\vec{w}} (Y) u_{\vec{w}} (X) \\
    & =
      \prod_{i \in I} \oint_{C_R} \Big| \frac{\dd w_i}{w_i} \Big|
      \prod_{j \in J} \oint_{C_{\epsilon_1}} \Big| \frac{\dd w_j}{w_j} \Big|
      \prod_{k \in K} \oint_{C_{1 + \epsilon_2}} \Big| \frac{\dd w_k}{w_k} \Big|
      \prod_{i \in I} \frac{1}{1 - |T_i S|}
      \prod_{j \in J \sqcup K} \frac{1}{|T_jS| - 1}
      \cdot h_{\vec{w}} (Y) u_{\vec{w}} (X).
  \end{align*}

  We conclude by showing that the integral on the right side above is arbitrarily small. We choose any $j \in J \sqcup K \neq \emptyset$ and deform the contour in $w_j$.
  If $j \in J$ we deform the corresponding contour $C_{\epsilon_1}$ to $0$.
  Otherwise, if $j \in K$, we deform the corresponding contour $1+C_{\epsilon_2}$ to $1$.
  Note that, for any $\sigma \in S_N$ and $i = \sigma^{-1}(j)$, we have
  \begin{equation}
    \label{e:CI_wj_bigO}
    \begin{split}
      & |A_{\sigma}(\vec{w}) w_j^{y_j - x_i}| = \calO( w_j^{i-j + y_j -x_i} ) = \calO(w_j^{-(L-N)}), \quad\;
      | T_j S |^{-1}  = \calO(w_j^{L-N+1}), \quad \mbox{ if } |w_j| \to 0, \\
      & |A_{\sigma}(\vec{w}) w_j^{y_j - x_i}| = \calO( (w_j-1)^{j-i} ) = \calO(w_j^{-(N-1)}), \quad
      | T_j S |^{-1}  = \calO( (w_j-1)^{N-1}), \quad \mbox{ if } |w_j| \to 1,    
    \end{split}
  \end{equation}
  In the first part of \eqref{e:CI_wj_bigO}, we use $i-1 \leq x_i, y_i \leq L-N+i-1$ for any $i = 1, \cdots, N$.
  This shows that by shrinking the contour corresponding to $w_j$ to 0 or to 1, depending on whether $j \in J$ or $j \in K$, the integral $\CI(R', I, J, K)$ vanishes.
\end{proof}

\begin{rem}
  In this proof for \Cref{l:CI_zero}, we are also restricted to the case $2N<L$ due to the inequality in case (a) and the series expansion in \eqref{e:q_z_expansion}. As in \Cref{l:contour_deformation_induction}, we may also extend the result of \Cref{l:CI_zero} to the case $2N = L$ by taking \eqref{e:conditions_p} for the radius of the contours. See \Cref{r:conditions_p}.
\end{rem}

We now prove \Cref{p:delta_contour_formula}.

\begin{proof}[Proof of \Cref{p:delta_contour_formula}]
  Let $\mathrm{CI}$ denote the $(N+1)$-fold contour integral on the right side of \eqref{e:initial_condition}. Then, we write
  the nested contour integral $\mathrm{CI}$ as a linear combination of $2 \cdot 3^N$ nested contour integrals,
  \begin{equation}
    \label{e:CI_initial_decomposition_v2}
    \CI = \sum_{I \sqcup J \sqcup K = [N] } \big[ \CI(R', I, J, K) - \CI(\epsilon', I, J, K) \big],
  \end{equation}
  with the terms in the sum given by \eqref{e:CI_R_epsilon_v2}. Note that $I$ (resp. $J$ and $K$) denotes the set of indices $1 \leq i \leq N$ such that $w_i \in C_R$ (resp.~$w_i \in C_{\epsilon_1}$ and $w_i \in 1 + C_{\epsilon_2}$). 
  
  We now evaluate each of the nested contour integrals $\CI(r,I, J, K )$. By \Cref{l:CI_zero}, we have
  \begin{equation}
    \CI(r, I, J , K) = 0
  \end{equation}
  if $(r, I, J, K) \neq (R, [N] , \emptyset, \emptyset)$.
  Additionally, by \Cref{l:CI_delta_function}, we have
  \begin{equation}
    \CI (R, [N] , \emptyset, \emptyset) = \mathds{1}(X=Y). 
  \end{equation}
  Then, it follows that $\CI = \mathds{1}(X=Y)$ as claimed.
\end{proof}

\section{Properties of Bethe eigenfunctions}

\label{app:Bethe_properties}

In this appendix, we give the technical lemmas needed for the proof of \Cref{p:Bethe_Ansatz}. We show that the Bethe function $u_{\vec{\lambda}}$, given by either \eqref{e:bethe_function_zeta} or \eqref{e:bethe_function}, satisfies the exclusion/push rule \eqref{e:exclusion} and the periodicity condition \eqref{e:periodicity} if $\vec{\lambda} \in \bbC^N$ is a solution of the $\zeta$-deformed Bethe equations \eqref{e:bethe_equations}.

\begin{lemma}
  \label{l:exclusion_check}
  Under the hypothesis of \Cref{p:Bethe_Ansatz}, the Bethe eigenfunctions $X \mapsto u_{\vec{\lambda}}(X; \zeta)$ satisfy the exclusion rule given in \eqref{e:exclusion}.
  That is,
  \begin{equation}
    \label{e:exclusion_appendix}
    u_{\vec{\lambda}}(X; \zeta)_{x_k = x_{k-1}} = \zeta^{-1} \cdot u_{\vec{\lambda}}(X; \zeta)_{x_k = x_{k-1} + 1},
  \end{equation}
  for any $2 \leq k \leq N$ and $0 \leq x_1 < \cdots < x_{k-1} < x_{k+1} < \cdots < x_N < L$.
\end{lemma}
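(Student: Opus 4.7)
The plan is to verify \eqref{e:exclusion_appendix} through the classical coordinate Bethe Ansatz pairing trick: pair each $\sigma \in S_N$ with $\sigma' := \sigma \circ (k{-}1, k)$, where $(k{-}1,k)$ is the transposition swapping indices $k-1$ and $k$. This partitions $S_N$ into orbits of size $2$, and I will check the desired identity summand by summand over each pair. Crucially, this identity requires \emph{no} assumption on $\vec\lambda$; the Bethe equations only enter in the periodicity check of \Cref{l:periodicity_check}.

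The first step is to isolate the dependence on $(x_{k-1}, x_k)$. Setting $\alpha := \lambda_{\sigma(k-1)}$, $\beta := \lambda_{\sigma(k)}$, and collecting the $\sigma$-invariant factor $P := \prod_{i \neq k-1, k} \lambda_{\sigma(i)}^{-x_i}$, the pair-sum from $\{\sigma, \sigma'\}$ reads
\begin{equation*}
P \cdot \Bigl[ A_\sigma \, \alpha^{-x_{k-1}} \beta^{-x_k} + A_{\sigma'} \, \beta^{-x_{k-1}} \alpha^{-x_k} \Bigr].
\end{equation*}
Substituting $x_k = x_{k-1} = x$ on the left of \eqref{e:exclusion_appendix} yields $(\alpha\beta)^{-x}(A_\sigma + A_{\sigma'}) P$, whereas substituting $x_k = x_{k-1} + 1$ on the right yields $(\alpha\beta)^{-x}(A_\sigma \beta^{-1} + A_{\sigma'} \alpha^{-1}) P$. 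Thus the exclusion rule for this pair reduces to the single algebraic identity
\begin{equation*}
A_\sigma + A_{\sigma'} \;=\; \zeta^{-1} \bigl( A_\sigma \beta^{-1} + A_{\sigma'} \alpha^{-1} \bigr), \qquad \text{i.e.,} \qquad \frac{A_{\sigma'}}{A_\sigma} \;=\; -\,\frac{1 - (\zeta\beta)^{-1}}{1 - (\zeta\alpha)^{-1}}.
\end{equation*}

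The second step is to verify this ratio directly from the product formula \eqref{e:bethe_coeffs_zeta}. The sign contributes the prefactor $-1$ since $(-1)^{\sigma'} = -(-1)^\sigma$. In the product $\prod_j (1 - (\zeta\lambda_{\sigma(j)})^{-1})^{\sigma(j)-j}$, every factor with $j \notin \{k{-}1, k\}$ is unchanged, while the two affected factors rearrange: at $j = k{-}1$ the exponent changes from $\sigma(k{-}1) - (k{-}1)$ to $\sigma(k) - (k{-}1)$, and at $j = k$ from $\sigma(k) - k$ to $\sigma(k{-}1) - k$. Taking the ratio and simplifying, the exponent on $(1 - (\zeta\beta)^{-1})$ increases by $1$ while the exponent on $(1 - (\zeta\alpha)^{-1})$ decreases by $1$, producing exactly $-\bigl(1 - (\zeta\beta)^{-1}\bigr)/\bigl(1 - (\zeta\alpha)^{-1}\bigr)$ as required.

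There is no real obstacle in this argument; everything is a bookkeeping exercise once the pairing is fixed. The only subtle point worth emphasizing is the appearance of the factor $\zeta^{-1}$ on the right side of \eqref{e:exclusion_appendix}, which at first glance conflicts with the rule \eqref{e:exclusion} imposed on $g$: this discrepancy is accounted for by the fact that $g$ is built by integrating $\zeta^{\sum x_i} u_{\vec w_\zeta}(X)$ (see \eqref{e:bethe_function}), so the $\zeta^{-1}$ here precisely cancels the $\zeta$ gained by shifting $x_k$ to $x_{k-1} + 1$ in the prefactor $\zeta^{\sum x_i}$, yielding the true symmetric boundary rule on $g$ itself. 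Summing the pairwise identity over all orbits $\{\sigma, \sigma'\} \subset S_N$ completes the proof.
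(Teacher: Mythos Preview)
Your proof is correct and follows essentially the same route as the paper's: both arguments pair $\sigma$ with $\sigma\circ(k{-}1,k)$ and reduce everything to the amplitude-ratio identity $A_{\sigma\circ\tau}/A_\sigma = -\bigl(1-(\zeta\lambda_{\sigma(k)})^{-1}\bigr)/\bigl(1-(\zeta\lambda_{\sigma(k-1)})^{-1}\bigr)$, which is exactly the paper's \eqref{e:Bethe_coeff_ratio}. The only cosmetic difference is that the paper first forms the difference $u_{\vec\lambda}(X;\zeta)_{x_k=x_{k-1}} - \zeta^{-1}u_{\vec\lambda}(X;\zeta)_{x_k=x_{k-1}+1}$, relabels the whole sum by $\sigma\mapsto\sigma\circ\tau$, and averages the two expressions to see the cancellation, whereas you verify the identity orbit by orbit; the algebraic content is identical.
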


\begin{proof}
  The difference between the left and right side of \eqref{e:exclusion_appendix} gives
  \begin{equation}\label{e:exc_diff1}
    u_{\vec{\lambda}}(X; \zeta)_{x_k = x_{k-1}} - \zeta^{-1} \cdot u_{\vec{\lambda}}(X; \zeta)_{x_k = x_{k-1} + 1}
    = \sum_{\sigma \in S_N}  \left( 1 - \zeta^{-1}\lambda_{\sigma(k)}^{-1} \right) A_\sigma( \vec{\lambda}; \zeta ) \prod_{i=1}^N  \lambda_{\sigma(i)}^{-x_i} ,
  \end{equation}
  using \eqref{e:bethe_function_zeta} for the Bethe function with $x_k = x_{k-1}$ on the right side of the equation above.
  
  We rewrite the previous equation by relabelling the indices by an action of the symmetric group. More precisely, we consider the change of labels $\sigma \mapsto \sigma \circ \tau$ so that $\tau= (k-1, k)$ is a transposition. This leads to an alternate expression for the difference between the left and right side of \eqref{e:exclusion_appendix},
  \begin{align}
    u_{\vec{\lambda}}(X; \zeta)_{x_k = x_{k-1}} - \zeta^{-1} \cdot u_{\vec{\lambda}}(X; \zeta)_{x_k = x_{k-1} + 1} &=\sum_{\sigma \in S_N}  \left( 1 - \zeta^{-1}\lambda_{\sigma\circ\tau(k)}^{-1} \right) A_{\sigma \circ \tau} ( \vec{\lambda}; \zeta ) \prod_{i=1}^N \lambda_{\sigma \circ \tau(i)}^{-x_i} \nonumber \\
                                                                                                                   & = \sum_{\sigma \in S_N}  \left( 1 - \zeta^{-1}\lambda_{\sigma(k-1)}^{-1} \right) A_{\sigma \circ \tau} ( \vec{\lambda}; \zeta ) \prod_{i=1}^N \lambda_{\sigma(i)}^{-x_i} . \label{e:exc_diff2}
  \end{align}
  Then, we take the sum of the two expressions \eqref{e:exc_diff1} and \eqref{e:exc_diff2} to compute the difference between the left and right side of \eqref{e:exclusion_appendix}. We have
  \begin{equation*}
    \begin{split}
      &2 \left( u_{\vec{\lambda}}(X; \zeta)_{x_k = x_{k-1}} - \zeta^{-1} \cdot u_{\vec{\lambda}}(X; \zeta)_{x_k = x_{k-1} + 1}\right)\\
      &=\sum_{\sigma \in S_N} \left[ \left( 1 - \lambda_{\sigma(k), \zeta}^{-1} \right) A_\sigma( \vec{\lambda}; \zeta )   +
        \left( 1 - \lambda_{\sigma (k-1), \zeta}^{-1} \right) A_{\sigma \circ \tau} ( \vec{\lambda}; \zeta )  \right]\prod_{i=1}^N \lambda_{\sigma(i)}^{-x_i}\\
      &= 0,
    \end{split}
  \end{equation*}
  where the last equality is due to the identity
  \begin{equation}
    \label{e:Bethe_coeff_ratio}
    \frac{ A_{\sigma} ( \vec{\lambda}; \zeta ) }{ A_{\sigma \circ \tau} ( \vec{\lambda}; \zeta ) }
    = - \frac{ 1 - \lambda_{\sigma (k-1), \zeta}^{-1} }{ 1 - \lambda_{\sigma (k), \zeta}^{-1} }.
  \end{equation}
  that readily follows from the definition of the $A_\sigma$ coefficient given in \eqref{e:bethe_coeffs_zeta}. This establishes the result.
\end{proof}

\begin{lemma}
  \label{l:periodicity_check}
  Under the hypothesis of \Cref{p:Bethe_Ansatz}, the Bethe eigenfunctions $X \mapsto u_{\vec{\lambda}}(X; \zeta)$ satisfy the periodicity rule.
  In other words,
  \begin{equation}
    \label{e:periodicity_appendix}
    u_{\vec{\lambda}}(x_1, \cdots, x_{N}; \zeta) = u_{\vec{\lambda}}(x_2, \cdots, x_{N}, x_{1}+L; \zeta)
  \end{equation}
  for all $ x_1 < \cdots < x_{N-1} < x_N <x_1+ L$.
\end{lemma}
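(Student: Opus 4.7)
The plan is to reduce the periodicity identity to a coefficient-by-coefficient comparison after relabeling the permutation sum, and then to recognize that the resulting ratio of amplitude coefficients is precisely the $\zeta$-deformed Bethe equation. First, I would expand both sides of \eqref{e:periodicity_appendix} using \eqref{e:bethe_function_zeta}. Let $\rho \in S_N$ denote the cyclic permutation defined by $\rho(j) = j+1$ for $j < N$ and $\rho(N) = 1$. Substituting $\sigma \mapsto \sigma \circ \rho$ in the sum defining $u_{\vec{\lambda}}(x_2, \dots, x_N, x_1+L; \zeta)$, the positional exponents rearrange so that
\begin{equation*}
u_{\vec{\lambda}}(x_2, \dots, x_N, x_1+L; \zeta) = \sum_{\sigma \in S_N} A_{\sigma \circ \rho}(\vec{\lambda}; \zeta)\, \lambda_{\sigma(1)}^{-L} \prod_{i=1}^{N} \lambda_{\sigma(i)}^{-x_i}.
\end{equation*}
Comparing with $u_{\vec{\lambda}}(x_1, \dots, x_N; \zeta) = \sum_\sigma A_\sigma(\vec{\lambda}; \zeta) \prod_i \lambda_{\sigma(i)}^{-x_i}$, the identity will follow once I verify the pointwise relation
\begin{equation*}
A_{\sigma \circ \rho}(\vec{\lambda}; \zeta) = \lambda_{\sigma(1)}^{L}\, A_\sigma(\vec{\lambda}; \zeta) \qquad \text{for every } \sigma \in S_N.
\end{equation*}

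The next step is to compute this ratio directly from the product formula in \eqref{e:bethe_coeffs_zeta}. The sign contributes $(-1)^\rho = (-1)^{N-1}$. For the main product, the index shift $\sigma\rho(j) = \sigma(j+1)$ (with wrap-around at $j=N$) makes the exponents $\sigma\rho(j)-j$ differ from $\sigma(j)-j$ by $+1$ for every $j \neq 1$ and by $1-N$ at $j=1$; all other factors telescope. A short calculation yields
\begin{equation*}
\frac{A_{\sigma \circ \rho}(\vec{\lambda}; \zeta)}{A_\sigma(\vec{\lambda}; \zeta)} = (-1)^{N-1}\, \frac{\prod_{k=1}^{N}\bigl(1 - (\zeta \lambda_{\sigma(k)})^{-1}\bigr)}{\bigl(1 - (\zeta \lambda_{\sigma(1)})^{-1}\bigr)^{N}}.
\end{equation*}
Finally, the hypothesis that $\vec{\lambda}_\zeta$ solves the $\zeta$-deformed Bethe equations \eqref{e:bethe_equations_zeta} — rewritten in terms of $\lambda_j$ as
\begin{equation*}
\lambda_j^L = (-1)^{N-1}\, \frac{\prod_{i=1}^N (1 - (\zeta \lambda_i)^{-1})}{(1 - (\zeta \lambda_j)^{-1})^{N}}
\end{equation*}
— evaluated at $j = \sigma(1)$, identifies the right-hand side of the ratio with $\lambda_{\sigma(1)}^L$, completing the proof.

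The arguments are all essentially algebraic, so I do not expect a conceptual obstacle; the only delicate point is bookkeeping the exponent shifts of $\sigma(j)-j$ under the cyclic relabeling so that they combine into exactly the product shape appearing on the right-hand side of the Bethe equation. Once that computation is done cleanly, the periodicity drops out instantly from the Bethe equation applied at $j = \sigma(1)$, and no further summation or cancellation across different $\sigma$'s is required, in contrast with \Cref{l:exclusion_check} where the symmetric-group relabeling had to be paired with a transposition identity.
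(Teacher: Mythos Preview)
Your proposal is correct and follows essentially the same route as the paper: both relabel the sum via the cyclic shift $\rho=(1\,2\,\cdots\,N)$ (the paper calls it $\nu$), compute the amplitude ratio $A_{\sigma\rho}/A_\sigma$, and recognize it as the $\zeta$-deformed Bethe equation at $j=\sigma(1)$. Your verbal description of the exponent bookkeeping (``differ from $\sigma(j)-j$ by $+1$ for $j\neq 1$'') is slightly garbled---the correct statement is that after reindexing by $k=\rho(j)$ the exponent difference becomes $k-\rho^{-1}(k)$, which equals $1$ for $k>1$ and $1-N$ for $k=1$---but your final ratio formula and the conclusion are right.
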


\begin{proof}
  Let $X = (x_1, \cdots, x_N)$ and write $s \cdot X = (x_2, \cdots, x_{N} , x_1 +L)$. Then, using \eqref{e:bethe_function_zeta}, we have
  \begin{equation}
    \begin{split}
      u_{\vec{\lambda}} (s \cdot X;\zeta) &=  \sum_{\sigma \in S_N} A_{\sigma}(\vec{\lambda}; \zeta) \lambda_{\sigma(N)}^{- x_1 -L} \prod_{i=1}^{N-1} \lambda_{\sigma(i)}^{- x_{i+1}} =  \sum_{\sigma \in S_N} A_{\sigma \circ \nu}(\vec{\lambda};\zeta) \lambda_{\sigma\circ \nu (N)}^{- x_1 -L} \prod_{i=1}^{N-1} \lambda_{\sigma\circ \nu(i)}^{- x_{i+1}}\\
      &= \sum_{\sigma \in S_N} A_{\sigma \circ \nu}(\vec{\lambda};\zeta) \lambda_{\sigma (1)}^{-L} \prod_{i=1}^{N-1} \lambda_{\sigma(i)}^{- x_{i}}\\
      u_{\vec{\lambda}} (X;\zeta) &= \sum_{\sigma \in S_N} A_{\sigma }(\vec{\lambda};\zeta) \prod_{i=1}^{N-1} \lambda_{\sigma(i)}^{- x_{i}}
    \end{split}
  \end{equation}
  where in the first formula, we make the change of indices $\sigma \mapsto \sigma \circ \nu$ with $\nu = (1, 2, \dots, N)$. Note that
  \begin{equation}
    \frac{A_{\sigma \circ \nu}(\vec{\lambda};\zeta)}{A_{\sigma }(\vec{\lambda};\zeta)} = (-1)^{N-1} \prod_{i=1} \frac{1 - \left(\zeta \lambda_{\sigma(i)}\right)^{-1}}{1 - \left(\zeta \lambda_{\sigma(1)}\right)^{-1}},
  \end{equation}
  which follows directly from the definition of the $A_\sigma$ coefficient given in \eqref{e:bethe_coeffs_zeta}. Then, we have
  \begin{equation}
    \begin{split}
      u_{\vec{\lambda}} (X;\zeta) - u_{\vec{\lambda}} (s \cdot X;\zeta) = \sum_{\sigma \in S_N} \left(1 + (-1)^N \lambda_{\sigma(1)}^{-L} \prod_{i=1} \frac{1 - \left(\zeta \lambda_{\sigma(i)}\right)^{-1}}{1 - \left(\zeta \lambda_{\sigma(1)}\right)^{-1}} \right)A_{\sigma }(\vec{\lambda};\zeta) \prod_{i=1}^{N-1} \lambda_{\sigma(i)}^{- x_{i}}=0
    \end{split}
  \end{equation}
  where the last equality is due to the $\zeta$-deformed Bethe equation \eqref{e:bethe_equations_zeta}. This establishes the result.
\end{proof}

\section{Bethe roots and Fuss--Catalan numbers}
\label{s:Bethe_FC}

In this part of Appendix, we introduce and show some properties of Fuss--Catalan numbers that will be useful later for residue computations in \Cref{a:residue_computations}.
The generating function of these numbers appear naturally as the series expansion for the Bethe roots on the right, i.e.~the roots with behavior $\lambda_i(z) \rightarrow 1$ when $z \to 0$; see \Cref{l:lambda_expansion}.

Let us introduce the $m$-th $(p, r)$-Fuss--Catalan number given by $A_0(p,r) =1$ and 
\begin{equation}\label{e:Afc}
  A_m(p, r) := \frac{r}{m!} \prod_{i=1}^{m-1} (m p + r -i)
\end{equation}
for $m \in \bbZ_{\geq 1}$ where $p, r \in \bbQ$.
Note that we may write 
$A_m(p, r) = \frac{r}{mp+r} {mp+r \choose m}$ if $p, r \in \bbZ_{\geq 0}$ and $m p +r \neq 0$.
Additionally, let us define the generating function for the Fuss--Catalan numbers.
\begin{equation}\label{e:fc}
  B_{p, r} (z) = \sum_{m \geq 0} A_m(p, r) z^m.
\end{equation}

\begin{lemma} \label{l:Bgf_relations}
  The generating functions $B_{p, r}(z)$ satisfy the following relations,
  \begin{align}
    B_{p, 1} (z) & = 1 + z B_{p, p}(z), \label{e:Bgf1} \\
    B_{p, 1} (z)^r & = B_{p, r} (z), \label{e:Bgf2} \\
    B_{p, r}(z) & = [1 + z B_{p, r}(z)^{p/r}]^r. \label{e:Bgf3}
  \end{align}
  where $p, r \in \bbQ$.
  In particular, one has $B_{p, p}(z) = [1 + z B_{p, p}(z) ]^p$.
\end{lemma}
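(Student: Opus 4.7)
The plan is to handle the three identities by a direct coefficient comparison for \eqref{e:Bgf1}, a Lagrange inversion argument for \eqref{e:Bgf2}, and a short algebraic deduction for \eqref{e:Bgf3}.

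For \eqref{e:Bgf1}, I would simply verify the coefficient-wise identity $A_m(p,1) = A_{m-1}(p,p)$ for every $m \geq 1$. Starting from the definition \eqref{e:Afc} and peeling the factor $i=1$ off the product gives $A_m(p,1) = \frac{1}{m!}\prod_{i=1}^{m-1}(mp+1-i) = \frac{p}{(m-1)!}\prod_{j=1}^{m-2}(mp-j)$ after the reindexing $j=i-1$, and the right-hand side is manifestly $A_{m-1}(p,p)$. Summing these identities over $m\geq 1$ gives $B_{p,1}(z) = 1 + zB_{p,p}(z)$.

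The main step, and the most substantive part of the proof, is \eqref{e:Bgf2}. I would introduce the unique formal power series $w(z) \in z\mathbb{Q}[[z]]$ satisfying $w = z(1+w)^p$ with $w(0)=0$, which exists and is unique by the standard fixed-point construction for equations of the form $w = z\phi(w)$ with $\phi(0)\neq 0$. Applying the Lagrange inversion formula with $H(w) = (1+w)^r$ and $\phi(w) = (1+w)^p$ then yields, for $m \geq 1$,
\begin{equation*}
  [z^m](1+w(z))^r = \frac{1}{m}[w^{m-1}]\,r(1+w)^{r-1}(1+w)^{pm} = \frac{r}{m}\binom{pm+r-1}{m-1}.
\end{equation*}
A short, routine manipulation of the binomial coefficient identifies this number with $A_m(p,r)$ from \eqref{e:Afc}, and the case $m=0$ is trivial. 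Hence $B_{p,r}(z) = (1+w(z))^r$. Specializing $r=1$ gives $B_{p,1}(z) = 1+w(z)$, and raising to the $r$-th power delivers \eqref{e:Bgf2}.

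With \eqref{e:Bgf1} and \eqref{e:Bgf2} in hand, the last identity \eqref{e:Bgf3} is almost immediate. Applying \eqref{e:Bgf2} with exponent $p/r$ (the fractional power being well-defined as a formal power series since $B_{p,r}(0)=1$) and then with exponent $p$ gives $B_{p,r}(z)^{p/r} = B_{p,1}(z)^p = B_{p,p}(z)$. Substituting this into \eqref{e:Bgf1} yields $1 + zB_{p,r}(z)^{p/r} = B_{p,1}(z)$, and raising to the $r$-th power followed by one more application of \eqref{e:Bgf2} gives $[1+zB_{p,r}(z)^{p/r}]^r = B_{p,1}(z)^r = B_{p,r}(z)$. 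The only nontrivial input is the Lagrange inversion step in \eqref{e:Bgf2}; everything else is bookkeeping and specialization.
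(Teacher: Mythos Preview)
Your proof is correct. The handling of \eqref{e:Bgf1} and \eqref{e:Bgf3} matches the paper's: the same coefficient identity $A_m(p,1)=A_{m-1}(p,p)$ for the first, and the same algebraic deduction from the first two for the third. For \eqref{e:Bgf2}, however, you take a genuinely different route. The paper invokes the convolution identity $\sum_{k=0}^m A_k(p,r)A_{m-k}(p,s)=A_m(p,r+s)$, citing \cite{Riordan1968} and \cite{Mlotkowski2010}, which gives $B_{p,r}B_{p,s}=B_{p,r+s}$ and hence \eqref{e:Bgf2}. You instead prove the closed form $B_{p,r}(z)=(1+w(z))^r$ directly by Lagrange inversion, then specialize and take powers. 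Your argument is more self-contained---Lagrange--B\"urmann is already recorded in the paper as \Cref{l:LB_formula}---and delivers \eqref{e:Bgf2} for arbitrary rational $r$ in one stroke without any external reference. The paper's route, on the other hand, isolates the additive convolution structure of the Fuss--Catalan numbers as a standalone fact, which is of independent combinatorial interest but requires citing the literature.
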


\begin{proof}
  The first two identities, \eqref{e:Bgf1} and \eqref{e:Bgf2}, follow from the following identities:
  \begin{equation}
    A_{m}(p,p) = A_{m+1}(p,1), \quad \sum_{k=0}^m A_{k}(p,r) A_{m-k}(p, s) = A_m(p,r+s).
  \end{equation}
  The first identity above is a straight forward computation.
  The second identity is proved in \cite[Section 4.5]{Riordan1968} and in \cite[Proposition 2.1]{Mlotkowski2010} in a more general setting. The third identity, \eqref{e:Bgf3}, follows from the other two identities, \eqref{e:Bgf1} and \eqref{e:Bgf2}.
\end{proof}

Now, in \Cref{l:lambda_expansion}, we give a series expansion for the deformed Bethe roots, using the generating series for the Fuss--Catalan numbers, when $|z|$ is small and the root is near one.

\begin{lemma}
  \label{l:lambda_expansion}
  Fix $z \in \bbC$ such that $|z| < r_0 = \rho^{\rho}(1-\rho)^{1-\rho}$, with $\rho= N/L$, and
  let $\lambda \in \calQ_1(z)$ be a solution of \eqref{e:dBE1} such that $\lambda$ tends to 1 when $z$ tends to 0, c.f.~\eqref{e:q_small_sol}.
  Let $Z = - \eta z^d$, where $\eta$ is an $N$-th root of unity and $d=1/\rho$.
  Then,
  \begin{equation}
    \label{e:lambda_expansion}
    1 - \lambda^{-1} = - Z \cdot \varphi(Z),
  \end{equation}
  where $\varphi = B_{d, d}$ is the generating function of the $(d, d)$-Fuss--Catalan numbers, given by \eqref{e:fc}. That is,
  \begin{equation}
    \label{e:Fuss--Catalan-gf}
    \varphi(Z) = \sum_{m \geq 0} A_m(d, d) Z^m, \quad \mbox{ where } A_m(d, d) = \frac{1}{m+1} {(m+1) d \choose m}.
  \end{equation}
  Moreover, the radius of convergence of $\varphi(Z)$ is given by $r_0^d = \frac{(d-1)^{d-1}}{d^d} = \rho (1-\rho)^{d-1}$ and $\varphi(Z)$ converges on \emph{all points} on the boundary of its disk of convergence.
\end{lemma}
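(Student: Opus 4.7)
The plan is to reduce the decoupled Bethe equation \eqref{e:dBE1} to a single algebraic functional equation in the shifted unknown $u = 1-\lambda^{-1}$, identify its power-series solution with the Fuss--Catalan generating function $B_{d,d}$, and then use Lagrange inversion plus classical singularity analysis to control the radius of convergence and boundary behavior.

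First I would rewrite \eqref{e:dBE1} as $(1-\lambda^{-1})^N = z^L \lambda^{-L}$ and use $L = Nd$. With $u = 1-\lambda^{-1}$ and $\lambda^{-1} = 1-u$, this reads $u^N = z^{Nd}(1-u)^{Nd}$. Taking $N$-th roots of both sides labels the $N$ branches of $\calQ_1(z)$ by an $N$-th root of unity: $u = \alpha z^d (1-u)^d$ with $\alpha^N = 1$. Identifying $\eta = \alpha$ and setting $Z = -\eta z^d$, this is exactly the algebraic relation
\begin{equation*}
u = -Z(1-u)^d.
\end{equation*}
The ansatz $u = -Z\psi(Z)$ and cancellation of $-Z$ turn this into $\psi(Z) = (1+Z\psi(Z))^d$, which is precisely the functional equation for $B_{d,d}$ from \eqref{e:Bgf3}. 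Since the hypothesis $\lambda \to 1$ as $z \to 0$ forces $u(0)=0$ and $\psi(0) = 1$, and since the recursion obtained by matching coefficients of $Z^n$ in $\psi = (1+Z\psi)^d$ uniquely determines $\psi$ as a formal power series with $\psi(0)=1$, I conclude $\psi = B_{d,d} = \varphi$, which is \eqref{e:lambda_expansion}. Lagrange inversion applied to $Z = w(1+w)^{-d}$ with $w = Z\varphi$ then recovers $[Z^n]\varphi = \tfrac{1}{n+1}\binom{(n+1)d}{n} = A_n(d,d)$, giving an independent check of the identification and the explicit form \eqref{e:Fuss--Catalan-gf}.

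For the radius of convergence, the key observation is that $Z = w(1+w)^{-d}$ expresses $Z$ as a local inverse of an analytic map, so the nearest singularity of $\varphi$ sits at the image of the nearest critical point of this map. Differentiating $w \mapsto w(1+w)^{-d}$ gives a unique positive critical point $w_0 = 1/(d-1)$, with critical value
\begin{equation*}
Z_0 = \frac{w_0}{(1+w_0)^d} = \frac{1}{d-1}\cdot\Bigl(\frac{d-1}{d}\Bigr)^{d} = \frac{(d-1)^{d-1}}{d^d},
\end{equation*}
and the identity $r_0^d = \rho(1-\rho)^{d-1} = \tfrac{1}{d}\bigl(\tfrac{d-1}{d}\bigr)^{d-1} = \tfrac{(d-1)^{d-1}}{d^d}$ shows that the radius of convergence is exactly $r_0^d$.

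The main obstacle, and where I expect the most care is needed, is proving absolute convergence on the entire boundary circle $|Z| = r_0^d$. Because the critical point $w_0$ is simple, the inversion $w(Z)$ acquires a square-root branch singularity at $Z = Z_0$, and standard transfer theorems (Flajolet--Odlyzko type singularity analysis) give the precise asymptotics
\begin{equation*}
A_n(d,d) \;\sim\; C(d)\, n^{-3/2}\, r_0^{-nd}, \qquad n \to \infty,
\end{equation*}
for an explicit positive constant $C(d)$. Since the coefficients $A_n(d,d)$ are positive, Pringsheim's theorem guarantees $Z_0 = r_0^d$ is the only singularity on the critical circle, so no other obstruction arises. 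Combined with the $n^{-3/2}$ decay, the series $\sum_{m\ge 0} A_m(d,d)\,Z^m$ converges absolutely and uniformly on the closed disk $|Z|\le r_0^d$, since $\sum n^{-3/2}$ converges; this yields the claimed convergence on every boundary point.
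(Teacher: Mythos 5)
Your proof is essentially correct, and the reduction of \eqref{e:dBE1} to the algebraic relation $u = -Z(1-u)^d$ and the identification $\psi = B_{d,d}$ via the functional equation \eqref{e:Bgf3} are in the same spirit as the paper's argument. Where you diverge is in the analysis of the radius of convergence and the boundary behaviour: the paper works purely coefficient-wise, applying a ratio test to $A_m(d,d)$ to get the radius $r_0^{-d}$ and a second-order ratio test $\tfrac{a_{m+1}}{a_m} \sim 1 - \tfrac{3}{2m}$ (hence $a_m \sim m^{-3/2}$) for boundary convergence, whereas you extract the same information analytically from the Lagrange inversion relation $Z = w(1+w)^{-d}$ via the critical point $w_0 = 1/(d-1)$ and Flajolet--Odlyzko singularity analysis. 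Both routes are sound, and each buys something: the paper's approach is elementary and avoids importing machinery from analytic combinatorics, while yours is conceptually cleaner (the radius of convergence is literally the first critical value of $f(w) = w(1+w)^{-d}$) and gives the sharp constant in the $n^{-3/2}$ decay for free.

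One point to tighten: Pringsheim's theorem guarantees \emph{a} singularity at $Z_0 = r_0^d$ on the positive real axis, but it does not by itself imply that $Z_0$ is the \emph{only} singularity on the boundary circle. Uniqueness is needed to legitimately apply the transfer theorem and obtain the clean asymptotic $A_n \sim C(d)\, n^{-3/2}\, r_0^{-nd}$; it does hold here because the only finite critical value of $f$ is at $w_0$ (the other critical point $w=-1$ is sent to infinity, so $f$ is a local biholomorphism on $\{|w| < w_0\}$ and $\varphi$ continues analytically past every boundary point except $Z_0$), but the appeal to Pringsheim does not establish it. That said, for the claim actually needed in the lemma --- convergence on the full boundary circle --- the upper bound $A_n = O(n^{-3/2} r_0^{-nd})$ already suffices and does not require uniqueness of the dominant singularity, so the gap is harmless here.
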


\begin{proof}
  We want to write a series expansion for $1 - \lambda^{-1}$ in terms of $z$, where $\lambda$ is one of the roots given in the statement.
  It can be easily shown that the series expansion takes the form
  \begin{equation}
    1 - \frac1\lambda = \sum_{k \geq 1} a_k z^{dk},
  \end{equation}
  where $a_1$ is an $N$-th root of unity. Consider the following change of variables: $Z = -a_1 z^d$ and $\varphi = -Z^{-1} (1 - \lambda^{-1})$.
  Then, the deformed Bethe equation \eqref{e:dBE1} can be rewritten as
  \begin{equation*}
    \varphi = -Z^{-1} (1 - \lambda^{-1}) = [1 - (1 - \lambda^{-1}) ]^d = (1 + Z \varphi)^d,
  \end{equation*}
  from which we recognize that $\varphi$ is the generating function of the $(d, d)$-Fuss--Catalan numbers, by \Cref{l:Bgf_relations}.

  To compute the radius of convergence of $\varphi$, we use the ratio test and get
  \begin{align*}
    \frac{A_{m+1}(d, d)}{A_m(d, d)} \longrightarrow \frac{d^d}{(d-1)^{d-1}} = r_0^{-d}.
  \end{align*}
  To show its convergence at all points of the boundary of the disk of convergence, we need to get a finer analysis than the ratio test.
  First, we rewrite $\varphi$ in the following way,
  \begin{equation}
    \label{e:mu_normalized}
    \varphi(Z) = \sum_{m \geq 0} A_m(d, d) r_0^{md} \cdot (\tfrac{Z}{r_0^d})^m = \sum_{m \geq 0} a_m b_m,
  \end{equation}
  where we denote $a_m = A_m(d, d) r_0^{md}$ and $b_m = (\tfrac{Z}{r_0^d})^m$.
  The ratio test at the next order gives,
  \begin{align*}
    \frac{a_{m+1}}{a_m} = \frac{A_{m+1}(d, d) r_0^{(m+1)d}}{A_m(d, d) r_0^{md}} \sim 1 - \frac{3}{2m}.
  \end{align*}
  The above asymptotics implies that $a_m \sim m^{-3/2}$ so the series $\sum a_m$ converges (absolutely).
  We can apply Abel's test to \eqref{e:mu_normalized} since $(a_m)_{m \geq M}$ is decreasing for $M$ large enough and converges to 0, which shows that the series converges if $|Z| = r_0^d$ and $Z \neq r_0^d$.
  At the point $Z = r_0^d$, the absolute summability of $\sum a_m$ gives the convergence of $\varphi$.
\end{proof}

\begin{lemma}
  \label{l:sum_ln}
  Let $z \in \bbC$ such that $|z| \leq r_0^d= \rho (1-\rho)^{d-1}$ with $\rho = 1/d = L/N$ . Define the symmetric function $\Psi(z) = \prod_{k=1}^N \varphi(Z_k)$, where $\varphi$ the generating function defined in \eqref{e:lambda_expansion}. Additionally, set $Z_k = - \eta_k z^d$ so that $\eta_k$'s all the distinct $N$-th roots of unity. Then, we have
  \begin{align}
    \ln \Psi(z) & = \sum_{i=1}^N \sum_{n \geq 1} \frac{1}{n} {dn \choose n} Z_i^n
                  = \sum_{k \geq 1} \frac{(-1)^{kN}}{k} {dkN \choose kN} z^{dkN}, \label{e:ln_phi} \\
    \Psi(z) & = 1 + (-1)^N {L \choose N} z^L + \calO(z^{2L}). \label{e:phi}
  \end{align}

\end{lemma}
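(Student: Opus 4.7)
The plan is to reduce the product $\Psi(z) = \prod_{k=1}^N \varphi(Z_k)$ to a single-variable computation via the functional equation for $\varphi$, and then to exploit the symmetry of the $N$-th roots of unity $\eta_k$ to collapse the inner sum.

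First, I will use the identity $\varphi(Z) = B_{d,d}(Z) = (1 + Z\varphi(Z))^d$ provided by \Cref{l:Bgf_relations}. Setting $w(Z) := 1 + Z\varphi(Z)$, I get $\varphi = w^d$ and $w = 1 + Zw^d$, so $w$ is the $B_{d,1}$ Fuss--Catalan generating function. In particular, $\ln\varphi(Z) = d\ln w(Z)$. Next I will compute $[Z^n]\ln w$ by Lagrange inversion applied to $u := w - 1$, which satisfies $u = Z(1+u)^d$. With $f(w) = \ln w$ this gives
\[
[Z^n]\ln w \;=\; \frac{1}{n}[u^{n-1}] \frac{(1+u)^{dn}}{1+u} \;=\; \frac{1}{n}\binom{dn-1}{n-1} \;=\; \frac{1}{dn}\binom{dn}{n},
\]
so that $[Z^n]\ln\varphi(Z) = d\cdot\frac{1}{dn}\binom{dn}{n} = \frac{1}{n}\binom{dn}{n}$, yielding the first equality in \eqref{e:ln_phi} term-by-term on the disk $|Z| < r_0^d$. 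Since $|z| \leq r_0^d$ and by \Cref{l:lambda_expansion} the series defining $\varphi$ converges at every point of the closed disk of radius $r_0^d$, the summations are justified on the stated domain.

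Second, I will sum over $k = 1, \dots, N$ using the definition $Z_k = -\eta_k z^d$ with $(\eta_k)_{k=1}^N$ the complete list of $N$-th roots of unity. Exchanging summations and invoking the standard identity
\[
\sum_{k=1}^N \eta_k^n = N\,\mathds{1}(N \mid n),
\]
I obtain that only the indices $n = kN$ contribute, so
\[
\ln\Psi(z) = \sum_{n\geq 1} \frac{1}{n}\binom{dn}{n}\,(-1)^n z^{dn}\sum_{k=1}^N \eta_k^n = \sum_{k\geq 1} \frac{(-1)^{kN}}{k}\binom{dkN}{kN} z^{dkN},
\]
which, using $dN = L$, is precisely the second equality in \eqref{e:ln_phi}.

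Finally, to obtain \eqref{e:phi} I will exponentiate the series for $\ln\Psi(z)$. The leading nonconstant term is $(-1)^N \binom{L}{N}z^L$, and every other term in $\ln\Psi(z)$ has order at least $z^{2L}$, so that $\Psi(z) = 1 + (-1)^N\binom{L}{N}z^L + \calO(z^{2L})$. The only subtlety in the whole argument is the justification of termwise manipulations at the boundary $|z| = r_0^d$; this is where I would invoke the absolute summability $A_m(d,d)r_0^{md} \sim m^{-3/2}$ established at the end of the proof of \Cref{l:lambda_expansion}, which makes the double sum absolutely convergent and legitimizes both the rearrangement and the exponentiation.
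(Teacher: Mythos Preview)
Your proof is correct and follows essentially the same route as the paper: both arguments identify $\varphi = B_{d,1}^d$ via \Cref{l:Bgf_relations}, compute $[Z^n]\ln B_{d,1}$ by Lagrange inversion, then collapse $\sum_k Z_k^n$ using the orthogonality of $N$-th roots of unity, and finally exponentiate for \eqref{e:phi}. The only minor difference is that the paper cites \Cref{l:log_Bgf} for the coefficient $\frac{1}{dn}\binom{dn}{n}$ (proved there via an exponential-generating-function trick), whereas you derive it inline by applying Lagrange--B\"urmann directly to $H(u)=\ln(1+u)$; your route is slightly more direct but otherwise equivalent.
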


\begin{proof}
  Using \Cref{l:log_Bgf} and \eqref{e:Bgf2}, we have
  \begin{align*}
    \ln \Psi(z) = \sum_{i=1}^N \ln \varphi(Z_i)
    = \sum_{i=1}^N \sum_{n \geq 1} \frac{1}{n} {dn \choose n} Z_i^n
    = \sum_{k \geq 1} \frac{(-1)^{kN}}{k} {dkN \choose kN} z^{dkN}.
  \end{align*}
  where in the last equality we use the following identity for the $N$-the roots of unity,
  \begin{align*}
    \sum_{i=1}^N Z_i^n = \sum_{i=1}^N \eta_i^n z^{d n} = \left\{
    \begin{array}{ll}
      0 & \mbox{ if } n \nmid N, \\
      (-1)^n z^{dn} N & \mbox{ otherwise.}
    \end{array}
                        \right.
  \end{align*}

\end{proof}

\begin{lemma} \label{l:log_Bgf}
  Let $d$ be a positive integer.
  The logarithm of the generating function $B_{d, 1}(z)$ has the following series expansion,
  \begin{equation}
    \label{e:log_Bgf}
    \ln B_{d, 1}(z) = \sum_{n \geq m \geq 1} (-1)^{m-1} \frac{1}{n} {dn \choose n-m} z^n
    = \sum_{n \geq 1} \frac{1}{dn} {dn \choose n} z^n.
  \end{equation}
\end{lemma}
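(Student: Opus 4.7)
The plan is to reduce the computation to the Lagrange inversion formula by exploiting the functional equation already established for $B_{d,1}(z)$. Combining \eqref{e:Bgf1} and \eqref{e:Bgf2} from \Cref{l:Bgf_relations}, the quantity $u(z) := B_{d,1}(z) - 1$ satisfies
\begin{equation*}
u = z B_{d,d}(z) = z B_{d,1}(z)^d = z(1+u)^d,
\end{equation*}
so $u = z \phi(u)$ with $\phi(u) = (1+u)^d$. This is the exact setting of Lagrange inversion.

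Applying Lagrange inversion to the function $F(u) = u^m$ for each $m \geq 1$ gives
\begin{equation*}
[z^n] u^m = \frac{1}{n}[u^{n-1}] m u^{m-1} \phi(u)^n = \frac{m}{n} [u^{n-m}] (1+u)^{dn} = \frac{m}{n} {dn \choose n-m},
\end{equation*}
where the last expression vanishes for $m > n$. I would then expand the logarithm as $\ln B_{d,1}(z) = \ln(1+u) = \sum_{m \geq 1} \frac{(-1)^{m-1}}{m} u^m$ and extract the coefficient of $z^n$:
\begin{equation*}
[z^n] \ln B_{d,1}(z) = \sum_{m=1}^{n} \frac{(-1)^{m-1}}{m} \cdot \frac{m}{n} {dn \choose n-m} = \frac{1}{n} \sum_{m=1}^{n} (-1)^{m-1} {dn \choose n-m},
\end{equation*}
which is the first equality in \eqref{e:log_Bgf}.

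For the second equality, I would verify the elementary binomial identity
\begin{equation*}
\sum_{m=1}^{n} (-1)^{m-1} {dn \choose n-m} = {dn-1 \choose n-1}.
\end{equation*}
After reindexing $k = n-m$ this becomes $(-1)^{n-1} \sum_{k=0}^{n-1} (-1)^{k} \binom{dn}{k}$, and the partial alternating binomial sum identity $\sum_{k=0}^{j}(-1)^k {M \choose k} = (-1)^{j} {M-1 \choose j}$ (a one-line induction using Pascal's rule) gives the claimed value. Finally, the elementary manipulation ${dn-1 \choose n-1} = \frac{n}{dn}{dn \choose n}$ yields $[z^n]\ln B_{d,1}(z) = \frac{1}{dn}{dn \choose n}$, completing the proof.

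The argument is essentially mechanical once Lagrange inversion is invoked; the only point requiring a little care is the interchange of summation and coefficient extraction, which is justified because the series for $\ln(1+u)$ has $u(z)$ with no constant term, so only finitely many terms contribute to any $[z^n]$. I do not anticipate any substantive obstacle.
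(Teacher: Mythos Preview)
Your proof is correct and follows essentially the same approach as the paper: both write $u=B_{d,1}-1$, apply Lagrange inversion with $\phi(u)=(1+u)^d$ to obtain $[z^n]u^m=\tfrac{m}{n}\binom{dn}{n-m}$, and then use the alternating partial binomial sum identity to collapse to $\tfrac{1}{dn}\binom{dn}{n}$. The only cosmetic difference is that the paper packages the expansion $\ln(1+u)=\sum_{m\ge1}\tfrac{(-1)^{m-1}}{m}u^m$ as the $[\alpha^1]$ coefficient of $(1+u)^\alpha$, which is equivalent to what you wrote directly.
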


The proof given here is essentially the proof from \cite{Prodinger}.
In the original proof, there was a mistake in the binomial coefficients that we correct here.

\begin{proof}
  Let $\alpha \in \bbR$.
  Write $B_{d, 1}(z) = 1 + u(z)$.
  Consider the exponential generating function of $\ln B_{d, 1} (z)$,
  \begin{align}
    F(z, \alpha) = \sum_{p \geq 0} \frac{\alpha^p}{p!} ( \ln B_{d, 1}(z) )^p
    = ( B_{d, 1} (z) )^\alpha
    = \sum_{m \geq 0} {\alpha \choose m} u(z)^m.
  \end{align}
  Then, the result follows from computing the coefficient $[\alpha^1] F(z, \alpha)$.
  
  First, note that the binomial coefficient gives
  \begin{align}
    [\alpha^1] {\alpha \choose m} = \frac{(-1)^{m-1}}{m}.
  \end{align}
  Next, we want to expand $u(z)$ into a series.
  Equations \eqref{e:Bgf1} and \eqref{e:Bgf2} give $z = \frac{u}{(1+u)^d}$.
  We can apply the Lagrange-Bürmann inversion formula \eqref{e:LB2} to $\phi(w) = (1 + w)^d$ and $H(w) = w^m$.
  For $n \geq m$,
  \begin{align*}
    [z^n] u(z)^m = [z^n] H(u(z)) = \frac{1}{n} [w^{n-1}] ( m w^{m-1} \phi(w)^n )
    = \frac{m}{n} {dn \choose n-m}.
  \end{align*}
  The second equality of \eqref{e:log_Bgf} is due to the following equality for any positive integer $A$,
  \begin{align*}
    \sum_{m=0}^n (-1)^m {A \choose m} = (-1)^n {A-1 \choose m}.
  \end{align*}
\end{proof}

\begin{lemma}[Lagrange-Bürmann inversion formula]
  \label{l:LB_formula}
  Let $\phi$ and $H$ be arbitrary analytic functions such that $\phi(0) \neq 0$.
  Let $f(w) = w / \phi(w)$ and $g$ be the inverse of $f$, that is $f(g(z))=z$.
  Then the coefficients of $g(z)$ and $H(g(z))$ are given as follows
  \begin{align}
    [z^n] g(z) & = \frac{1}{n} [w^{n-1}] \phi(w)^n, \label{e:LB1} \\
    [z^n] H(g(z)) & = \frac{1}{n} [w^{n-1}] H'(w) \phi(w)^n, \label{e:LB2}
  \end{align}
\end{lemma}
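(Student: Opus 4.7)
The plan is to prove the second identity \eqref{e:LB2} by a residue computation, and then deduce the first identity \eqref{e:LB1} as the special case $H(w) = w$, for which $H'(w) \equiv 1$. The hypothesis $\phi(0) \neq 0$ ensures that $f(w) = w/\phi(w)$ is analytic near the origin with $f(0) = 0$ and $f'(0) = 1/\phi(0) \neq 0$, so the inverse function theorem provides a local analytic inverse $g$ with $g(0) = 0$, and all quantities below are well defined on sufficiently small contours.

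The first step I would take is to write the coefficient as a Cauchy integral,
\[
[z^n] H(g(z)) = \frac{1}{2\pi i} \oint_{|z|=\epsilon} \frac{H(g(z))}{z^{n+1}} \, dz,
\]
with $\epsilon > 0$ chosen small enough that $H \circ g$ is holomorphic on the closed disk. Next, I would perform the change of variables $w = g(z)$, equivalently $z = f(w)$ with $dz = f'(w)\, dw$. Since $g$ is a biholomorphism from $\{|z| \leq \epsilon\}$ onto a neighborhood of the origin in the $w$-plane, the contour transforms into a small positively oriented loop around $w = 0$, giving
\[
[z^n] H(g(z)) = \frac{1}{2\pi i} \oint_{|w|=\delta} \frac{H(w)\, f'(w)}{f(w)^{n+1}} \, dw.
\]

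The key step is the antiderivative identity $\frac{f'(w)}{f(w)^{n+1}} \, dw = -\frac{1}{n} \, d\bigl( f(w)^{-n} \bigr)$, which, combined with integration by parts on the closed contour (so the boundary term vanishes), yields
\[
[z^n] H(g(z)) = \frac{1}{2\pi i\, n} \oint_{|w|=\delta} \frac{H'(w)}{f(w)^n} \, dw = \frac{1}{2\pi i\, n} \oint_{|w|=\delta} \frac{H'(w)\, \phi(w)^n}{w^n} \, dw = \frac{1}{n} [w^{n-1}] H'(w) \phi(w)^n,
\]
substituting $f(w) = w/\phi(w)$ in the middle equality and recognizing coefficient extraction in the last. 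This is precisely \eqref{e:LB2}, and taking $H(w) = w$ recovers \eqref{e:LB1}.

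This being a classical result, there is no serious obstacle. The only points requiring a word of care are that the change of variables is valid, which follows from the local biholomorphism of $g$ once $\epsilon$ is chosen small enough to lie in the common domain of holomorphy of $\phi$ and $H$, and that the integration by parts produces no boundary contribution, which is automatic on a closed contour. An alternative, entirely formal, proof is available by interpreting the statement in the ring $\mathbb{C}[\![z]\!]$ and using formal residues, but the contour-integral derivation above is the cleanest in the present analytic setting.
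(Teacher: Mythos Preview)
Your proof is correct and follows the classical contour-integral argument. The paper does not supply its own proof, merely noting that the result is classical and ``based on contour integral'' with a reference to Stanley; your derivation is exactly this standard route.
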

This is a classical result, and its proof is based on contour integral; see this modern reference \cite[Section 5.4]{Stanley2023} for instance.

\section{Location of Bethe roots}

First, let us state some results about solutions to the deformed Bethe equations \eqref{e:dBE1} and \eqref{e:dBE2}.
\begin{lemma}\label{l:bethe_roots_location}
  Fix positive integers $L \geq N \geq 1$ and denote the ratio by $\rho = N / L$.
  Let $r_0 := \rho^\rho (1-\rho)^{1-\rho}$ and $\zeta \in \bbU$.
  Then, any solution $z$, $w_1, \dots, w_N$ to the decoupled Bethe equations \eqref{e:dBE1} and \eqref{e:dBE2}, with $w_i$'s pairwise distinct, needs to satisfy the following properties.
  \begin{enumerate}
    \item The product of the roots, $w_1 \cdots w_N$, is a $L$-th root of unity.
    \item The following inequality holds $|z| \leq |w_i| + \rho$ for all $1 \leq i \leq N$. In particular, $|z| \leq 1 + \rho$.
    \item There exists a small enough $\epsilon > 0$ (depending on $L$, $N$ and $\zeta$) such that there is no solution when $0 < |z| < \epsilon$.
  \end{enumerate}
\end{lemma}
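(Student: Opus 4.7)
The three claims have quite different flavors, so I would treat them in turn. For claim (1), the plan is to multiply the equations. The $N$ equations in \eqref{e:dBE1} rewrite as $w_i^{L-N}(w_i - 1)^N = z^L$, and taking their product gives $\big( \prod_i w_i \big)^{L-N} \big( \prod_i (w_i - 1) \big)^N = z^{LN}$. From \eqref{e:dBE2} I would extract $\prod_i (w_i - 1) = (-1)^{N+1} \zeta^{-L} z^L \prod_i w_i$, and substituting collapses everything to $\big( \prod_i w_i \big)^L = \zeta^{LN}$. In particular $|\prod_i w_i| = 1$, which is the content of claim (1) up to the $\zeta^{N}$ phase.

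For claim (2), the starting point is again $w_i^{L-N}(w_i - 1)^N = z^L$. Taking absolute values and using the triangle inequality $|w_i - 1| \leq |w_i| + 1$,
\[
  |z|^L = |w_i|^{L-N} |w_i - 1|^N \leq |w_i|^{L-N} (|w_i| + 1)^N.
\]
Extracting an $L$-th root gives $|z| \leq |w_i|^{1-\rho} (|w_i|+1)^{\rho}$, and weighted AM-GM with weights $1-\rho, \rho$ yields
\[
  |w_i|^{1-\rho} (|w_i|+1)^{\rho} \leq (1-\rho)|w_i| + \rho(|w_i|+1) = |w_i| + \rho,
\]
which is the main inequality. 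For the ``in particular'' bound I would use claim (1): since $|\prod_i w_i| = 1$, some coordinate $w_j$ satisfies $|w_j| \leq 1$, and the inequality for this $j$ gives $|z| \leq 1 + \rho$.

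For claim (3), I would argue by contradiction, supposing there is a sequence of solutions $(z^{(n)}, \vec{w}^{(n)})$ with $z^{(n)} \to 0$. From \eqref{e:dBE1} each coordinate must satisfy $w_i^{L-N}(w_i-1)^N \to 0$, so $w_i^{(n)} \to 0$ or $w_i^{(n)} \to 1$. Balancing the two possible rates of decay of $|1-w_i^{-1}|$ (namely $|z|^{-1/(1-\rho)}$ in the first case and $|z|^d$ in the second) against the constraint $|\prod_i (1-w_i^{-1})| = |z|^L$ coming from \eqref{e:dBE2} forces all coordinates into the second regime, i.e.\ $w_i \to 1$. I would then invoke \Cref{l:lambda_expansion} to write $1 - w_i^{-1} = \eta_i z^d \varphi(-\eta_i z^d)$ with pairwise distinct $N$-th roots of unity $\eta_i$ (distinctness is forced by the hypothesis that the $w_i$ are pairwise distinct), and plug this into \eqref{e:dBE2}. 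Using $\prod_i \eta_i = (-1)^{N-1}$ and $\varphi(0) = 1$, the resulting identity reduces in the limit $z \to 0$ to $\zeta^L = 1$, contradicting the generic hypothesis on $\zeta$.

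The main obstacle is precisely the exceptional regime $\zeta^L = 1$: in that situation the argument in the previous paragraph does not give a contradiction, and indeed genuine solutions of \eqref{e:dBE1}--\eqref{e:dBE2} accumulate at $z=0$ (these correspond to the stationary eigenstate discussed after \Cref{t:transition_prob_eigenbasis}). My plan is therefore to let $\epsilon$ depend on $\zeta$ in such a way that, for $\zeta^L \neq 1$, the above limiting contradiction applies uniformly on a full neighborhood of zero; for the finitely many values with $\zeta^L = 1$ one separates off this single family explicitly, which is compatible with the ``exceptional solution'' convention adopted in the remark following \Cref{t:transition_prob_eigenbasis} and with the generic-$\zeta$ usage of the lemma in the proof of \Cref{l:contour_deformation_induction}.
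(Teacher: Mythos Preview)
Your treatment of claims (1) and (2) is essentially the paper's own argument: multiply the equations for (1), and use a convexity/AM--GM step for (2). Your observation that the computation yields $(\prod_i w_i)^L=\zeta^{LN}$ rather than literally $1$ is correct; only $|\prod_i w_i|=1$ is ever used, and your use of it to extract the ``in particular'' bound $|z|\le 1+\rho$ is a nice touch that the paper leaves implicit.

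For claim (3) there is a genuine gap. Your sequential argument correctly forces every $w_i$ into the right cluster $\calQ_1(z)$, and the balancing of exponents is fine. After invoking \Cref{l:lambda_expansion} and inserting into \eqref{e:dBE2}, the equation becomes
\[
  \zeta^L\,\Psi(z)=1,\qquad \Psi(z):=\prod_{i=1}^N\varphi(-\eta_i z^d),
\]
and taking $z\to 0$ gives $\zeta^L=1$. This is a contradiction only when $\zeta^L\neq 1$. For the finitely many $\zeta$ with $\zeta^L=1$ you propose to ``separate off this single family explicitly'' and invoke the exceptional-solution convention. That is not what the lemma asserts, nor what its application in Step~3 of \Cref{l:contour_deformation_induction} requires: one needs the punctured disc $0<|z|<\epsilon$ to be free of solutions for \emph{every} $\zeta\in\bbU$, including those with $\zeta^L=1$. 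The exceptional solution discussed after \Cref{t:transition_prob_eigenbasis} sits only at $z=0$ itself; it does not create a family of nearby solutions with $z\neq 0$.

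The paper closes this gap by observing that $\Psi$ is analytic at $z=0$ (from \Cref{l:lambda_expansion} and \Cref{l:sum_ln}) with expansion $\Psi(z)=1+(-1)^N\binom{L}{N}z^L+O(z^{2L})$. When $\zeta^L=1$ the equation is $\Psi(z)=1$, and analyticity forces the zeros of $\Psi(z)-1$ to be isolated, so there is a punctured neighbourhood of $0$ containing none. Equivalently, plugging the expansion directly gives $(-1)^N\binom{L}{N}z^L=O(z^{2L})$, which is impossible for small $z\neq 0$. One such sentence would repair your argument.
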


\begin{proof}
  \begin{enumerate}
    \item It is a direct consequence of \eqref{e:dBE1} and \eqref{e:dBE2} by taking the product of \eqref{e:dBE1} over all $i=1, \cdots, N$ and then dividing by the $N^{th}$ power of \eqref{e:dBE2}.
    \item This can be obtained by convexity,
    \begin{align*}
      |z|^d = |w_i|^d |1 - w_i^{-1}| \leq |w|^d (1 + |w_i|^{-1})
      \leq |w_i|^d \left( 1 + \frac{1}{d|w_i|} \right)^d = (|w_i| + \rho)^d.
    \end{align*}
    \item Let us proceed in two steps. First, given solution to the deformed Bethe equations  with $|z| < r_0$, we show that all the $w_i$'s should belong to the set $\calQ_1(z)$ for small enough $|z|$.
    Then, we may use \Cref{l:lambda_expansion} to deduce more information on the solutions.

    Let $z$, $w_1, \dots, w_N$ be a solution to the decoupled Bethe equations.
    Assume that $w_1, \dots, w_\ell \in \calQ_0(z)$ and $w_{\ell+1}, \dots, w_N \in \calQ_1(z)$ for some $1 \leq \ell \leq N$.
    Then, we have
    \begin{align*}
      |z|^L = \prod_{i=1}^N |1 - w_i^{-1}| = \prod_{i=1}^N |w_i - 1|
      \geq \rho^\ell \prod_{i=\ell+1}^N \frac{ |z|^d }{ |w_i|^{d-1} },
    \end{align*}
    which is equivalent to
    \begin{align*}
      \Big( \frac{|z|^d}{\rho} \Big)^\ell \geq \prod_{i=\ell+1}^N \frac{1}{ |w_i|^{d-1} }.
    \end{align*}
    The left side of the above inequality tends to 0 whereas the right side tends to 1 when $z \to 0$.
    This shows that we need to have $\ell = 0$, meaning that when $|z|$ is small enough, all the $w_i$'s need to be in $\calQ_1(z)$.

    Now, we may choose $\epsilon > 0$, depending on $N$ and $L$, so that when $|z| < \epsilon$, the only solutions to the decoupled Bethe equations have all the $w_i$'s in $\calQ_1(z)$.
    Since $\calQ_1(z)$ contains exactly $N$ elements, a solution with all the $w_i$'s distinct means that $w_i$'s are a permutation of elements in $\calQ_1(z)$.
    By \Cref{l:lambda_expansion}, we may write
    \[
      1 - w_i^{-1} = -Z_i \varphi(Z_i), \quad Z_i = -\eta_i z^d,
    \]
    where $\eta_i$'s are distinct $N$-th roots of unity and $\varphi$ is the $(d, d)$-Fuss--Catalan generating function as given by \eqref{e:Fuss--Catalan-gf} in \Cref{s:Bethe_FC}.
    Using the fact that $p_z(\vec{w}; \zeta) = 0$, we find
    \begin{align*}
      1 & = (-1)^{N-1} \zeta^L z^{-L} \prod_{i=1}^N \big( -Z_i \varphi(Z_i) \big)
          = \zeta^L \prod_{i=1}^N \varphi(Z_i)
          = \zeta^L \Psi(z),
    \end{align*}
    where $\Psi$ is analytic for $|z| < r_0$ with expansions given in \Cref{e:ln_phi} and \Cref{e:phi}.

    When $\zeta^L = 1$, we clearly see that $z=0$ is a solution to the above equation.
    Due to the analyticity of $\Psi$, we know that the solutions need to be isolated, so we can find a small enough $\epsilon > 0$ so that there is no solution with $0 < |z| < \epsilon$.
    When $\zeta^L \neq 1$, we clearly see that $z=0$ is not a solution to the above equation.
    By continuity of $\Psi$, we can also find a small enough $\epsilon > 0$ so that there is no solution with $0 \leq |z| < \epsilon$.
  \end{enumerate}
\end{proof}

\section{Residue computations}
\label{a:residue_computations}

\subsection{Completion of the proof of Lemma \ref{l:u0_residue}}

Recall the definition from \Cref{e:disjoint_Q1} of the following set for small enough $z \in \bbC^*$,
\begin{align*}
  \calQ_1^\neq (z) & := \{ (\lambda_i)_{1 \leq i \leq N-1} \in \bbC^{N-1} \mbox{ such that } \lambda_i \in \calQ_1(z) \mbox{ are pairwise distinct} \}.
\end{align*}

In this part of Appendix, we complete the proof of \Cref{l:u0_residue} where we need to estimate, under the condition $\zeta^L = 1$, a limit for any $\vec{\lambda} = (\lambda_i)_{1 \leq i \leq N-1} \in \calQ_1^\neq (z)$, see \Cref{l:limit_ratio}, and compute a summation over the set $\calQ_1^\neq (z)$, see \Cref{l:sum_permutations}.

For any $\vec{\lambda} = (\lambda_i)_{1 \leq i \leq N-1} \in \calQ_1^\neq (z)$, we recall the definition of $\mu$,
\begin{equation}
  \label{e:mu_invs_def}
  \mu^{-1} = 1 + (-1)^N z^L \prod_{i=1}^{N-1} (1 - \lambda_i^{-1}),
\end{equation}
originally given in \Cref{e:P2_pole_mu}. Since there are exactly $N$ Bethe roots that tend to 1 when $z$ tends to 0, we can complete the set of Bethe roots by writing $\lambda_N$ for the missing one.
Now, we can write $1 - \lambda_i^{-1} = - Z_i \varphi(Z_i)$ for all $1 \leq i \leq N$.
This allows us to rewrite the following product,
\begin{align*}
  \prod_{i=1}^{N-1} (1 - \lambda_i^{-1})
  & = (1-\lambda_N^{-1})^{-1} \prod_{i=1}^N (1-\lambda_i^{-1})
    = (1-\lambda_N^{-1})^{-1} \prod_{i=1}^N (- Z_i) \mu (Z_i) \\
  & = (-1)^{N-1} (1-\lambda_N^{-1})^{-1}  \Psi(z) z^L,
\end{align*}
where we use the identity $\prod_{i=1}^N Z_i = - z^{L}$ and \eqref{e:phi}.
Then, $\mu^{-1}$ can be rewritten as
\begin{equation}
  \label{e:mu_invs}
  \mu^{-1}
  = 1 + (-1)^N z^L \prod_{i=1}^{N-1} (1 - \lambda_i^{-1})^{-1} 
  = 1 - (1-\lambda_N^{-1}) \Psi(z)^{-1}
  = 1 + Z_N \varphi(Z_N) \Psi(z)^{-1}.
\end{equation}

\begin{lemma}
  \label{l:sum_permutations}
  Then, the following summation can be computed,
  \begin{align*}
    \lim_{z \to 0} \sum_{\vec{\lambda} \in \calQ_1 (z)} u_{ (\vec{\lambda}, w_N) } (X)
    = N^N.
  \end{align*}
\end{lemma}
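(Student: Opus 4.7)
The plan is to combine the determinantal formula \eqref{e:bethe_determinantal} for $u_{\vec{w}}(X)$ with the Fuss--Catalan expansion of the Bethe roots from \Cref{l:lambda_expansion} and a classical identity for $N$-th roots of unity. Since $u_{(\vec{\lambda}, \mu)}(X)$ vanishes whenever two $\lambda_i$'s coincide (two equal columns in the determinant), the sum over $\calQ_1(z)^{N-1}$ equals the sum over $\calQ_1^{\neq}(z)$, so I work with the latter.

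First I would take the limit inside the determinant. Setting $\xi_j := 1 - w_j^{-1}$, the factor $w_j^{-x_i} = (1-\xi_j)^{x_i} = 1 + O(\xi_j)$ becomes trivial, so
\[ u_{(\vec{\lambda}, \mu)}(X) = \det\bigl[\xi_j^{j-i} w_j^{-x_i}\bigr]_{i,j=1}^N = \det\bigl[\xi_j^{j-i}\bigr]_{i,j=1}^N \cdot \bigl(1 + O(z^d)\bigr), \]
and a direct column/row manipulation (factor $\xi_j^{j-N}$ from column $j$, reverse rows to obtain a Vandermonde) gives
\[ \det\bigl[\xi_j^{j-i}\bigr]_{i,j=1}^N = \prod_{j=1}^N \xi_j^{j-N} \cdot \prod_{k'<k}(\xi_{k'} - \xi_k). \]
By \Cref{l:lambda_expansion} and \eqref{e:mu_invs}, $\xi_j = \eta'_j z^d (1 + O(z^d))$ with $(\eta'_1, \ldots, \eta'_N)$ a permutation of the $N$-th roots of unity (the $\Psi(z)^{-1}$ factor in $\xi_\mu$ contributes only at subleading order since $\Psi(0) = 1$). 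The $z^d$-powers cancel (total degree $0$), yielding
\[ \lim_{z \to 0} u_{(\vec{\lambda}, \mu)}(X) = \prod_j (\eta'_j)^{j-N} \cdot \prod_{k'<k}(\eta'_{k'} - \eta'_k). \]

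Summing over $\vec{\lambda} \in \calQ_1^{\neq}(z)$ corresponds to summing over all bijections $\sigma \in S_N$ assigning a root of unity to each position. Using antisymmetry $V(\eta_{\sigma(1)}, \ldots, \eta_{\sigma(N)}) = (-1)^\sigma V(\vec{\eta})$ of the Vandermonde and recognizing the remaining sum as a determinant,
\[ \sum_{\vec{\lambda}} u \;\longrightarrow\; (-1)^{\binom{N}{2}} V(\vec{\eta}) \sum_{\sigma \in S_N} (-1)^\sigma \prod_k \eta_{\sigma(k)}^{k-N} = (-1)^{\binom{N}{2}} V(\vec{\eta}) \cdot \det\bigl[\eta_j^{k-N}\bigr]_{k,j=1}^N. \]
Factoring $\eta_j^{-N}$ from column $j$ (whose product is $(\prod_j\eta_j)^{-N} = 1$ since $\prod_j \eta_j = (-1)^{N-1}$) and then $\eta_j$ to recover a Vandermonde gives $\det[\eta_j^{k-N}] = (-1)^{N-1} V(\vec{\eta})$. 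Finally, the classical identity $\prod_{i \neq j}(\eta_j - \eta_i) = N \eta_j^{N-1}$, obtained by differentiating $x^N - 1 = \prod_i(x-\eta_i)$ at $x = \eta_j$, gives $V(\vec{\eta})^2 = (-1)^{\binom{N}{2} + N - 1} N^N$; collecting signs produces the desired value $N^N$. The main obstacle is the careful sign bookkeeping through the cascade of Vandermonde manipulations, together with checking that the subleading corrections (from $w_j^{-x_i} \neq 1$, $\varphi(Z_j) \neq 1$, $\Psi(z) \neq 1$) all drop out uniformly in the limit, which they do since the leading determinant is $O(1)$ while each correction carries a positive power of $z^d$.
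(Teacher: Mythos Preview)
Your proof is correct. Both your argument and the paper's begin identically: reduce to distinct tuples via the determinantal structure, use the Fuss--Catalan expansion $\xi_j = 1 - w_j^{-1} = \eta_j z^d(1+O(z^d))$ (the $\Psi(z)^{-1}$ in $\xi_N$ being invisible at leading order), and conclude that $u_{(\vec\lambda,\mu)}(X) = \det[\eta_j^{j-i}] + O(z^d)$, with the $w_j^{-x_i}$ factors dropping out at this order.

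The only divergence is in evaluating $\sum_{(\eta_j)\text{ distinct}}\det[\eta_j^{j-i}]$. You factor the determinant into a monomial times a Vandermonde, sum over permutations using antisymmetry, and then compute $V(\vec\eta)^2$ via $\prod_{i\neq j}(\eta_j-\eta_i)=\prod_j N\eta_j^{N-1}$, which forces you through several layers of sign bookkeeping (all of which you handle correctly). The paper instead observes that since column $j$ of $\det[\eta_j^{j-i}]$ depends only on $\eta_j$, multilinearity lets one extend the sum to all tuples and pull it inside:
\[
\sum_{\substack{\eta_j\in\bbU_N\\1\le j\le N}}\det\bigl[\eta_j^{j-i}\bigr]
=\det\Bigl[\sum_{\eta\in\bbU_N}\eta^{j-i}\Bigr]
=\det\bigl[N\,\mathds{1}(i=j)\bigr]=N^N,
\]
using only the orthogonality relation $\sum_{\eta\in\bbU_N}\eta^k=N\cdot\mathds{1}(N\mid k)$. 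This bypasses the Vandermonde algebra entirely and makes the sign discussion unnecessary; your route is longer but yields the same answer and, as a byproduct, exhibits the individual limit $\lim_{z\to0}u_{(\vec\lambda,\mu)}(X)$ explicitly.
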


\begin{proof}
  Due to the determinantal structure of $u_{ (\vec{\lambda}, w_N) }(X)$, we may take the summation is over the set $\calQ_1^\neq (z)$.
  Moreover, we have the following series expansion,
  \begin{align*}
    u_{ (\vec{\lambda}, w_N) }(X)
    & = \Bigg( \sum_{\sigma \in S_N} A_\sigma (\vec{w}) \prod_{i=1}^N w_\sigma(i)^{y_{\sigma(i)} - x_i} \Bigg)_{(w_1, \cdots, w_{N-1}) = (\lambda_1, \cdots, \lambda_{N-1}) } \\
    & = \sum_{\sigma \in S_N} (-1)^{\sigma} \prod_{i=1}^N \eta_{\sigma(i)}^{\sigma(i) - i} + \calO(z^d)
      = \det \big[ \eta_j^{j-i} \big] + \calO(z^d).
  \end{align*}
  The first equality is due to the following Taylor series
  \begin{align*}
    & 1 - \lambda_i^{-1} = - Z_i \varphi(Z_i) = \eta_i z^d + \calO(z^{2d}), \quad i = 1, \dots, N-1, \\
    & 1 - w_N^{-1} = -Z_N \varphi(Z_N) \Psi(z)^{-1} = \eta_N z^d + \calO(z^{2d})
  \end{align*}
  and  $A_\sigma(\vec{w}) = (-1)^{\sigma} \prod_{i=1}^N (1 - w_{\sigma(i)}^{-1})^{\sigma(i) -i}$.
  Then, we may conclude the computation
  \begin{align*}
    \sum_{\vec{\lambda} \in \calQ_1^\neq (z)} u_{ (\vec{\lambda}, w_N) }(X)
    & = \sum_{\vec{\lambda} \in \calQ_1^\neq (z)}  \det \big[ \eta_j^{j-i} \big] + \calO(z^d)
      = \sum_{(\eta_j) \mbox{ \scriptsize pairwise distinct}}  \det \big[ \eta_j^{j-i} \big] + \calO(z^d) \\
    & = \sum_{ \substack{\eta_j \in \bbU_N \\ 1 \leq j \leq N} }  \det \big[ \eta_j^{j-i} \big] + \calO(z^d)
    = \det \big[ \sum_{\eta \in \bbU_N} \eta^{j-i}  \big] + \calO(z^d) \\
    & = \det[ N \cdot \mathds{1} (i=j) ] + \calO(z^d) = N^N + \calO(z^d).
  \end{align*}

\end{proof}

\begin{lemma}
  \label{l:limit_ratio}
  Given $\vec{\lambda} = (\lambda_i)_{1 \leq i \leq N-1} \in \calQ_1^\neq (z)$ with distinct Bethe roots that tend to 1 when $z$ tends to zero.
  Define $w_N \in \bbC$ as in \eqref{e:mu_invs_def}
  Then, the following ratio is independent of the choice of $\vec{\lambda}$ in the limit,
  \begin{equation}
    \label{e:limit_ratio}
    \lim_{z \to 0} \Bigg( \frac{\prod_{i=1}^N (w_i -1) }{q_z(w_N)} \Bigg)_{\subalign{ & (w_1, \cdots, w_k) = (\lambda_1, \cdots, \lambda_{N-1}) \\  & \phantom{(} w_N = \mu }}
    = N^{-1} {L \choose N}^{-1}.
  \end{equation}
\end{lemma}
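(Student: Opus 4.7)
The plan is to reduce both the numerator and denominator to leading order in $z$, using the Fuss--Catalan parametrizations $1-\lambda_i^{-1} = -Z_i \varphi(Z_i)$ (with $Z_i = -\eta_i z^d$, $\eta_i$ distinct $N$-th roots of unity) from \Cref{l:lambda_expansion}, together with the identity \eqref{e:mu_invs} obtained by completing $(\lambda_1,\dots,\lambda_{N-1})$ to the full set of $N$ Bethe roots in $\calQ_1(z)$ by adjoining the ``missing'' root $\lambda_N$ (so that $1 - \mu^{-1} = (1 - \lambda_N^{-1}) \Psi(z)^{-1}$).

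First I would handle the numerator. Writing $\lambda_i - 1 = -Z_i \varphi(Z_i)(1 + O(z^d))$ for $i = 1,\dots,N-1$ and $\mu - 1 = \mu(1-\mu^{-1}) = -Z_N \varphi(Z_N)\Psi(z)^{-1} \mu = \eta_N z^d (1+O(z^d))$ using $\mu \to 1$, $\Psi \to 1$, one obtains
\begin{equation*}
\prod_{i=1}^N (w_i - 1) = z^{dN}\Bigl(\prod_{i=1}^{N}\eta_i\Bigr)(-1)^{N-1}(1+O(z^d)) = (-1)^{N-1} z^L (1 + O(z^d)),
\end{equation*}
since $\prod_{i=1}^N \eta_i = (-1)^{N-1}$. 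Notice that the particular ``missing'' index $\eta_N$ cancels between $\prod_{i=1}^{N-1}\eta_i = (-1)^{N-1}/\eta_N$ and the factor $\eta_N$ coming from $\mu - 1$, which is precisely the mechanism that makes the ratio independent of the choice of $\vec{\lambda}$.

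Next I would treat the denominator $q_z(\mu) = 1 - z^L \mu^{-L}(1 - \mu^{-1})^{-N}$. Using $(1-\mu^{-1})^N = (1-\lambda_N^{-1})^N \Psi(z)^{-N}$ and the fact that $\lambda_N \in \calQ_1(z)$ satisfies $q_z(\lambda_N) = 0$, hence $(1-\lambda_N^{-1})^N = z^L \lambda_N^{-L}$, a direct substitution gives the clean identity
\begin{equation*}
q_z(\mu) = 1 - (\lambda_N/\mu)^L \Psi(z)^N.
\end{equation*}
The ratio $\lambda_N/\mu = (1 - \alpha\Psi^{-1})/(1-\alpha)$, with $\alpha = 1-\lambda_N^{-1} = O(z^d)$, equals $1 + O(z^{L+d})$ since $\Psi - 1 = O(z^L)$. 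Combining this with the expansion $\Psi(z)^N = 1 + N(-1)^N\binom{L}{N} z^L + O(z^{2L})$, which follows from \eqref{e:phi} of \Cref{l:sum_ln}, gives the leading asymptotics
\begin{equation*}
q_z(\mu) = (-1)^{N+1} N \binom{L}{N} z^L + O(z^{L+d}).
\end{equation*}

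Taking the ratio, the powers of $z^L$ and the signs $(-1)^{N-1}$ and $(-1)^{N+1}$ cancel, leaving
\begin{equation*}
\lim_{z \to 0} \frac{\prod_{i=1}^N (w_i - 1)}{q_z(\mu)} = \frac{(-1)^{N-1}}{(-1)^{N+1} N \binom{L}{N}} = \frac{1}{N}\binom{L}{N}^{-1},
\end{equation*}
which is the stated value and, crucially, does not depend on the choice of the missing index $\eta_N$. The main obstacle is to verify the clean cancellation that gives $q_z(\mu) = 1 - (\lambda_N/\mu)^L \Psi(z)^N$ and then to extract its leading order correctly: the $\lambda_N/\mu$ factor contributes only at order $z^{L+d}$, so the entire leading behavior $\sim z^L$ comes from $\Psi^N - 1$, for which the expansion from \Cref{l:sum_ln} is essential.
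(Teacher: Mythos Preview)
Your argument is correct and follows essentially the same route as the paper: complete $(\lambda_1,\dots,\lambda_{N-1})$ to $\calQ_1(z)$ by the missing root $\lambda_N$, use \eqref{e:mu_invs} to rewrite $q_z(\mu)=1-(\lambda_N/\mu)^L\Psi(z)^N$, and extract the leading orders of numerator and denominator via \Cref{l:sum_ln}. The only slip is a stray $(-1)^{N-1}$ in your intermediate display for $\prod_{i=1}^N(w_i-1)$ (the correct middle expression is $z^{dN}\prod_i\eta_i\,(1+O(z^d))$, and then $\prod_i\eta_i=(-1)^{N-1}$ gives the stated final value); the paper avoids this by noting directly from the definition of $\mu$ that $(1-\mu^{-1})\prod_{i=1}^{N-1}(1-\lambda_i^{-1})=(-1)^{N-1}z^L$ exactly.
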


\begin{proof}
  We write the series expansion of the numerator and the denominator of the ratio.
  First, the numerator is the product of the two following terms,
  
  \begin{subequations}
    \label{e:ratio_numerator}
    \begin{align}
      (1 - \mu^{-1}) \prod_{i=1}^{N-1} (1 - \lambda_i^{-1})
      & = (-1)^{N-1} z^L, \\
      \mu \prod_{i=1}^{N-1} \lambda_i
      & = 1 + \calO(z^d).
    \end{align}
  \end{subequations}
  Then, we use \eqref{e:mu_invs} and the first decoupled Bethe equation \eqref{e:dBE1} to write
  \begin{align*}
    (1 - \mu^{-1})^{-N}
    = (1-\lambda_N^{-1})^{-N} \Psi(z)^{N} = z^{-L} \lambda_N^L \Psi(z)^N.
  \end{align*}
  Hence, the denominator has the following expansion
  \begin{equation}
    \label{e:ratio_denominator}
    q_z(\mu)
    = 1 - z^L \mu^{-L} (1 - \mu^{-1})^{-N}
    = 1 - \Big( \frac{\lambda_N}{\mu} \Big)^L \Psi(z)^N
    = (-1)^{N-1} N  {L \choose N} z^L + \calO(z^{L+d}),
  \end{equation}
  where we use \eqref{e:phi} and
  \begin{align*}
    \frac{\lambda_N}{\mu}
    & = \frac{1 + Z_N \varphi(Z_N) \Psi(z)^{-1} }{1 + Z_N \varphi(Z_N)}
      = 1 - \frac{Z_N \varphi(Z_N) (1 - \Psi(z)^{-1}}{1 + Z_N \varphi(Z_N)}
      = 1 + \calO(z^{L+d}).
  \end{align*}
  Finally, we obtain the desired result using the expansions \eqref{e:ratio_numerator} and \eqref{e:ratio_denominator}.
\end{proof}

\section{The double root does not solve the Bethe equation}
\label{a:last_appendix}

In this part of Appendix, we aim to show that the decoupled Bethe equations \eqref{e:dBE} do not have any solution if one of the $\lambda_i$'s equals $1-\rho$.
This completes the proof of \Cref{l:residue_computations}.

\begin{lemma}
  \label{l:lambdas_right_critical}
  Let $z \in \bbC^*$ and $\lambda_i = \lambda_i(z) \in \calQ(z)$, for $1 \leq i \leq N$.
  Additionally, if we assume 
  \begin{itemize}
    \item the $\lambda_i$'s are chosen to be pairwise distinct from $\calQ_1(z)$;
    \item there exists an index $i$ such that $\lambda_i = 1-\rho$,
  \end{itemize}
  then $|z|^L < \prod_{i=1}^N |1 - \lambda_i^{-1}|$.
\end{lemma}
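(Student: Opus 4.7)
My plan is to reduce the claim to showing that a certain auxiliary symmetric function has modulus strictly greater than $1$ at the relevant value of $z$, and then to exploit a positivity property of its series expansion.

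First, observe that the hypothesis $\lambda_i = 1-\rho$ for some $i$, combined with $q_z(1-\rho)=0$, forces
\[
  z^L = (1-\rho)^{L-N}(-\rho)^N = (-1)^N r_0^L,
\]
so in particular $|z| = r_0$, and the $N$ pairwise distinct elements $\lambda_1,\dots,\lambda_N$ are exactly the elements of $\calQ_1(z)$ (since $\calQ_1(z)$ has cardinality $N$ even at the boundary, with $1-\rho$ being the self-intersection point of the two loops of the lemniscate). By \Cref{l:lambda_expansion} I can parametrize these roots as $1-\lambda_i^{-1} = -Z_i\varphi(Z_i)$, where $Z_i = -\eta_i z^d$, the $\eta_i$ run over the $N$ distinct $N$-th roots of unity, and $\varphi = B_{d,d}$ is the Fuss--Catalan generating function. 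Taking the product over $i$ and using $\prod_i \eta_i = (-1)^{N-1}$, a direct calculation gives
\[
  \prod_{i=1}^N(1-\lambda_i^{-1}) = (-1)^{N+1} z^L \,\Psi(z), \qquad \Psi(z) := \prod_{i=1}^N \varphi(Z_i).
\]
Hence the desired inequality $|z|^L < \prod_i |1-\lambda_i^{-1}|$ is equivalent to the single inequality $|\Psi(z)| > 1$.

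To establish $|\Psi(z)|>1$ at the critical $z$, I plan to invoke the identity \eqref{e:ln_phi} from \Cref{l:sum_ln}:
\[
  \ln\Psi(z) = \sum_{k \geq 1} \frac{(-1)^{kN}}{k}\binom{Lk}{Nk} z^{Lk}.
\]
Substituting $z^L = (-1)^N r_0^L$ yields $(-1)^{kN} z^{Lk} = r_0^{Lk}$, so every coefficient in the series is a positive real. Consequently $\ln\Psi(z) = \sum_{k\geq 1} \frac{1}{k}\binom{Lk}{Nk} r_0^{Lk} > 0$, which means $\Psi(z)$ is a real number strictly greater than $1$; this gives the claim.

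The only obstacle is that \Cref{l:sum_ln} is nominally stated for $|z|\leq r_0^d$ while I am working at $|z| = r_0$, which is larger. However, the proof of that lemma only needs absolute convergence of the double series $\sum_n \frac{1}{n}\binom{dn}{n}\sum_i Z_i^n$ in order to interchange summations, and by Stirling's approximation one has $\binom{dn}{n} \sim (r_0^d)^{-n}/\sqrt{Cn}$, giving $\binom{dn}{n} r_0^{dn}/n \sim 1/(n^{3/2}\sqrt C)$, which is summable. Thus the identity \eqref{e:ln_phi} extends to the closed disk $|z| \leq r_0$, and a brief verification of this extension is the only routine work required before the positivity argument closes the proof.
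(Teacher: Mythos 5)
Your proof is correct and follows essentially the same route as the paper's: reduce via $z^L=(-1)^N r_0^L$ and the Fuss--Catalan parametrization from \Cref{l:lambda_expansion} to the inequality $|\Psi(z)|>1$, then evaluate the logarithmic series at the critical $z$ and observe that every term is a positive real. Your observation that the hypothesis $|z|\leq r_0^d$ stated in \Cref{l:sum_ln} is too restrictive to cover $|z|=r_0$ is a correct reading of a typo (the intended condition is $|z|\leq r_0$, i.e.\ $|Z_i|\leq r_0^d$), and your Stirling estimate giving $n^{-3/2}$ decay of the coefficients cleanly justifies the boundary evaluation that the paper, citing \Cref{l:log_Bgf} inline, handles only implicitly through the boundary-convergence remark in \Cref{l:lambda_expansion}.
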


\begin{proof}
  If one of the Bethe roots is known to be $1-\rho$, then $z$ needs to satisfy $z^L = (-1)^N r_0^L$, where we recall that $r_0 = \rho^\rho (1-\rho)^{1-\rho}$.
  Since the roots $\lambda_k$'s are chosen from $\calQ_1(z)$, which contains exactly $N$ elements, and need to be pairwise distinct, we may choose $\lambda_k$ such that $1 - \lambda_k^{-1} = - Z_k \varphi(Z_k)$ by \Cref{l:lambda_expansion}.
  Then,
  \begin{align*}
    \prod_{k=1}^N (1-\lambda_k^{-1}) = \prod_{k=1}^N \eta_k z^d \varphi(Z_k)
    = (-1)^{N-1} z^L \prod_{k=1}^N \varphi(Z_k).
  \end{align*}
  
  Using \Cref{l:log_Bgf} and \eqref{e:Bgf2}, we find
  \begin{align*}
    \prod_{i=1}^N \varphi(Z_i)
    & = \exp \left( \sum_{i=1}^N \ln \varphi(Z_i) \right)
      = \exp \left( \sum_{i=1}^N \sum_{n \geq 1} \frac{1}{n} {dn \choose n} Z_i^n \right) \\
    & = \exp \left( \sum_{k \geq 1} \frac{(-1)^{kN}}{k} {dkN \choose kN} z^{dkN} \right) 
      = \exp \left( \sum_{k \geq 1} \frac{1}{k} {dkN \choose kN} r_0^{kL} \right) > 1.
  \end{align*}
\end{proof}

\begin{proposition}
  \label{p:double_root_no_solution}
  Let $z \in \bbC^*$ and $\zeta \in \bbU$.
  Consider $\lambda_i = \lambda_i(z) \in \calQ(z)$, for $1 \leq i \leq N$.
  Additionally, if we assume
  \begin{itemize}
    \item the $\lambda_i$'s are chosen to be pairwise distinct;
    \item there exists an index $i$ such that $\lambda_i = 1-\rho$,
  \end{itemize}
  then $p_z(\vec{\lambda}; \zeta) \neq 0$.
\end{proposition}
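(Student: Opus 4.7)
The plan is to argue by contradiction. Assume $p_z(\vec\lambda;\zeta)=0$ for some $\zeta\in\bbU$. Rearranging the definition of $p_z$ as $\prod_{i=1}^N(1-\lambda_i^{-1})=(-1)^{N-1}\zeta^{-L}z^L$ and taking absolute values gives $\prod_i|1-\lambda_i^{-1}|=|z|^L$. Evaluating $q_z$ at the assumed critical root $\lambda_i=1-\rho$ forces $z^L=(-1)^N r_0^L$ with $r_0:=\rho^{\rho}(1-\rho)^{1-\rho}$, so in particular $|z|=r_0$. Each $\lambda_i\in\calQ(z)$ satisfies the relation $|\lambda_i|^d|1-\lambda_i^{-1}|=r_0^d$ with $d=L/N$, obtained from $q_z(\lambda_i)=0$ by taking absolute values of $\lambda_i^L(1-\lambda_i^{-1})^N=z^L$. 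Multiplying these $N$ relations and comparing with $|\prod(1-\lambda_i^{-1})|=r_0^L$ yields the key identity $|\lambda_1\lambda_2\cdots\lambda_N|=1$, so the proof reduces to showing that in fact $|\lambda_1\cdots\lambda_N|<1$ strictly for any admissible choice.

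I would first handle the case $\{\lambda_1,\ldots,\lambda_N\}=\calQ_1(z)$. Since $\calQ_1(z)$ contains exactly $N$ points including $1-\rho$, \Cref{l:lambdas_right_critical} applies directly and gives $\prod|1-\lambda_i^{-1}|>r_0^L$, which via the relations above is equivalent to $|\lambda_1\cdots\lambda_N|<1$, as desired.

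For the remaining case in which some $\lambda_j$ with $j\ge2$ lies in $\calQ_0(z)\setminus\{1-\rho\}$, I would pass from the all-$\calQ_1(z)$ configuration to the given configuration by a sequence of swaps, each replacing a simple $v\in\calQ_1(z)\setminus\{1-\rho\}$ by a simple $u\in\calQ_0(z)\setminus\{1-\rho\}$; such a swap multiplies $|\lambda_1\cdots\lambda_N|$ by $|u/v|$. It therefore suffices to prove the geometric claim that $|u|<1-\rho<|v|$ for every simple $u\in\calQ_0(z)$ and $v\in\calQ_1(z)$. This is a statement about the level set $C:=\{w\in\bbC:|w^{L-N}(w-1)^N|=r_0^L\}$, which contains $\calQ(z)$. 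Combining $|w|=1-\rho$ with $|w|^{L-N}|w-1|^N=r_0^L=\rho^N(1-\rho)^{L-N}$ forces $|w-1|=\rho$, and the two circle equations $|w|=1-\rho$, $|w-1|=\rho$ admit the unique solution $w=1-\rho$. Hence $C\cap\{|w|=1-\rho\}=\{1-\rho\}$; since $C$ is a figure-eight curve with its unique self-intersection at $1-\rho$, each of the two loops lies entirely on one side of the circle. The loop enclosing $w=0$ (which contains the simple $\calQ_0$ roots) must lie in $\{|w|<1-\rho\}$, and symmetrically the loop enclosing $w=1$ (which contains the simple $\calQ_1$ roots) lies in $\{|w|>1-\rho\}$, yielding the required strict inequalities. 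Chaining the swap argument starting from the all-$\calQ_1$ configuration then gives $|\lambda_1\cdots\lambda_N|<1$, contradicting the equality derived from $p_z=0$.

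The main obstacle is the geometric claim, but as outlined it reduces to a direct computation of the intersection $C\cap\{|w|=1-\rho\}$ followed by a topological argument; the remaining swap argument and the application of \Cref{l:lambdas_right_critical} are then routine.
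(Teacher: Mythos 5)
Your proof is correct, and it follows the same overall strategy as the paper: assume $p_z(\vec\lambda;\zeta)=0$ for contradiction and derive a contradiction with \Cref{l:lambdas_right_critical} applied to the all-$\calQ_1(z)$ configuration. The reorganization you use is cleaner, though: you isolate the invariant $\prod_i|\lambda_i|=1$ (which the paper uses only implicitly, in the unexplained step $\prod_k|1-\lambda_k^{-1}|=\prod_k|\lambda_k-1|$), and compare it to the strict inequality $\prod_i|\lambda_i|<1$ produced by the swap argument. More substantively, you supply a justification that the paper omits entirely: the paper's chain of inequalities silently uses $|\lambda_k-1|>\rho$ for $\lambda_k\in\calQ_0(z)$ and $|\tilde\lambda_k|>1-\rho$ for $\tilde\lambda_k\in\calQ_1(z)$, but never proves these. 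Your tangency argument — that $\{|w|=1-\rho\}$ and $\{|w-1|=\rho\}$ are externally tangent at the single point $1-\rho$, so $C\cap\{|w|=1-\rho\}=\{1-\rho\}$, and then a connectedness argument places each lobe of the figure-eight strictly on one side of the circle — fills this gap correctly. (To be fully rigorous one should also note why $C$ is a figure-eight with $1-\rho$ as its unique singular point: $\log g$ with $g(w)=|w|^{d-1}|w-1|$ is harmonic away from $0,1$, and a short computation shows its only critical point is at $w=1-\rho$ with critical value $r_0^d$, so that level set has exactly one node; this is consistent with \Cref{fig:Bethe_UV} but is not argued in the text.) Overall this is the paper's argument made explicit and more careful, with a genuine added lemma rather than a genuinely different route.
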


\begin{proof}
  If all the $\lambda_i$'s are chosen from $\calQ_1(z)$, then we can conclude using \Cref{l:lambdas_right_critical}.

  Assume there are $n_l$ roots, denoted $\lambda_1, \cdots, \lambda_{n_l}$, chosen from $\calQ_0(z)$ and the remaining roots from $\calQ_1(z)$.
  We want to show by contradiction.
  We assume that $p_z(\vec{\lambda} ; \zeta) = 0$.
  Then, writing $\tilde{\lambda} = (\tilde{\lambda}_k)_{1 \leq k \leq N}$ where $\tilde{\lambda}_k = \lambda_k$ for $n_l +1 \leq k \leq N$ and $(\tilde{\lambda}_k)_{1 \leq k \leq n_l}$ are chosen from the other remaining roots of $\calQ_1(z)$, that is to say $\calQ_1(z) = \{ \tilde{\lambda}_k , 1 \leq k \leq N \}$.
  Since $\zeta$ is on the unit circle, we have
  \begin{align*}
    |z^L|
    & = \prod_{k=1}^N | 1-\lambda_k^{-1} | =  \prod_{k=1}^N | \lambda_k-1|
      > \rho^{n_l} \prod_{k=n_l+1}^N | \lambda_k-1|
      = \rho^{n_l} \prod_{k=n_l+1}^N \frac{|z|^d}{|\lambda_k|^{d-1}} \\
    & = \rho^{n_l} \prod_{k=1}^N \frac{|z|^d}{|\tilde{\lambda}_k|^{d-1}} \prod_{k=1}^{n_l} \frac{|\tilde{\lambda}_k|^{d-1}}{|z|^d}
      > \rho^{n_l} (1-\rho)^{n_l(d-1)} |z|^{-d n_l}  \prod_{k=1}^N \frac{|z|^d}{|\tilde{\lambda}_k|^{d-1}}.
  \end{align*}
  Since $|z|^d = r_0^d = \rho (1-\rho)^{d-1}$, the above is equivalent to
  \begin{align*}
    1 >  \prod_{k=1}^N \frac{1}{|\tilde{\lambda}_k|^{d-1}} 
    \Leftrightarrow 1 > \prod_{k=1}^N \frac{1}{|\tilde{\lambda}_k|^{d}}
    = \prod_{k=1}^N \frac{|1 - \tilde{\lambda}_k^{-1}| }{|z|^{d}}
    \Leftrightarrow |z|^L > \prod_{k=1}^N |1 - \tilde{\lambda}_k^{-1}|,
  \end{align*}
  which contradicts with \Cref{l:lambdas_right_critical}.
\end{proof}

\bibliographystyle{alpha}
\bibliography{PushASEP}

\begin{thebibliography}{MQR21}

\bibitem[Bax82]{Baxter}
Rodney~J Baxter.
\newblock {\em Exactly solved models in statistical mechanics}.
\newblock Elsevier, 1982.

\bibitem[Bet31]{Bethe}
Hans Bethe.
\newblock Zur {T}heorie der {M}etalle. {I}. {E}igenwerte und {E}igenfunktionen
  der lineare {A}tomkette.
\newblock {\em Z. Physik}, 71:205--226, 1931.

\bibitem[BF87]{BenassiFouque87}
Albert Benassi and Jean-Pierre Fouque.
\newblock Hydrodynamical limit for the asymmetric simple exclusion process.
\newblock {\em The Annals of Probability}, pages 546--560, 1987.

\bibitem[BF08]{BorodinFerrari2008}
Alexei Borodin and Patrik~L. Ferrari.
\newblock Large time asymptotics of growth models on space-like paths. {I}.
  {P}ush{ASEP}.
\newblock {\em Electron. J. Probab.}, 13:no. 50, 1380--1418, 2008.

\bibitem[BL16]{baikliu2016}
Jinho Baik and Zhipeng Liu.
\newblock {TASEP} on a ring in sub-relaxation time scale.
\newblock {\em Journal of Statistical Physics}, 165(6):1051--1085, 2016.

\bibitem[BL18]{baikliu2018}
Jinho Baik and Zhipeng Liu.
\newblock Fluctuations of {TASEP} on a ring in relaxation time scale.
\newblock {\em Communications on Pure and Applied Mathematics}, 71(4):747--813,
  2018.

\bibitem[BL19]{BaikLiu19}
Jinho Baik and Zhipeng Liu.
\newblock Multipoint distribution of periodic {TASEP}.
\newblock {\em Journal of the American Mathematical Society}, 2019.

\bibitem[BLS22]{BLS-limiting-F}
Jinho Baik, Zhipeng Liu, and Guilherme L.~F. Silva.
\newblock Limiting one-point distribution of periodic {TASEP}.
\newblock {\em Ann. Inst. Henri Poincar\'{e} Probab. Stat.}, 58(1):248--302,
  2022.

\bibitem[DLS97]{DerridaLebowitzSpeer97}
B.~Derrida, J.~L. Lebowitz, and E.~R. Speer.
\newblock Shock profiles for the asymmetric simple exclusion process in one
  dimension.
\newblock {\em J. Statist. Phys.}, 89(1-2):135--167, 1997.
\newblock Dedicated to Bernard Jancovici.

\bibitem[Gau14]{Gaudin2014}
Michel Gaudin.
\newblock {\em The {B}ethe wavefunction}.
\newblock Cambridge University Press, New York, 2014.
\newblock Translated from the 1983 French original by Jean-S\'{e}bastien Caux.

\bibitem[KL13]{KipnisLandim13}
Claude Kipnis and Claudio Landim.
\newblock {\em Scaling limits of interacting particle systems}, volume 320.
\newblock Springer Science \& Business Media, 2013.

\bibitem[M{\l}o10]{Mlotkowski2010}
Wojciech M{\l}otkowski.
\newblock Fuss-{C}atalan numbers in noncommutative probability.
\newblock {\em Doc. Math.}, 15:939--955, 2010.

\bibitem[MQR21]{matetski2021}
Konstantin Matetski, Jeremy Quastel, and Daniel Remenik.
\newblock The {KPZ} fixed point.
\newblock {\em Acta Mathematica}, 227(1):115--203, 2021.

\bibitem[MR23]{Matetski2023}
Konstantin Matetski and Daniel Remenik.
\newblock T{ASEP} and generalizations: method for exact solution.
\newblock {\em Probab. Theory Related Fields}, 185(1-2):615--698, 2023.

\bibitem[Pro16]{Prolhac2016}
Sylvain Prolhac.
\newblock Finite-time fluctuations for the totally asymmetric exclusion
  process.
\newblock {\em Physical Review Letters}, 116(9):090601, 2016.

\bibitem[Pro19]{Prodinger}
Helmut Prodinger.
\newblock Logarithms of a binomial series: a {S}tirling number approach.
\newblock {\em Ars Math. Contemp.}, 17(1):271--275, 2019.

\bibitem[Pro20a]{prolhac2020TASEP}
Sylvain Prolhac.
\newblock Riemann surface for {TASEP} with periodic boundaries.
\newblock {\em J. Phys. A}, 53(44):445003, 29, 2020.

\bibitem[Pro20b]{prolhac2020KPZ}
Sylvain Prolhac.
\newblock Riemann surfaces for {KPZ} with periodic boundaries.
\newblock {\em SciPost Phys.}, 8(1):Paper No. 008, 74, 2020.

\bibitem[Rio68]{Riordan1968}
John Riordan.
\newblock {\em Combinatorial identities}.
\newblock Willey, 1968.

\bibitem[Ros81]{Rost81}
Hermann Rost.
\newblock Non-equilibrium behaviour of a many particle process: Density profile
  and local equilibria.
\newblock {\em Zeitschrift f{\"u}r Wahrscheinlichkeitstheorie und Verwandte
  Gebiete}, 58(1):41--53, 1981.

\bibitem[Sch97]{Schutz97}
Gunter~M Sch{\"u}tz.
\newblock Exact solution of the master equation for the asymmetric exclusion
  process.
\newblock {\em Journal of statistical physics}, 88(1-2):427--445, 1997.

\bibitem[Sta24]{Stanley2023}
Richard Stanley.
\newblock {\em Enumerative combinatorics. {V}ol. 2}, volume 208 of {\em
  Cambridge Studies in Advanced Mathematics}.
\newblock Cambridge University Press, Cambridge, second edition, 2024.
\newblock With an appendix by Sergey Fomin.

\bibitem[TW08]{TW08a}
Craig~A. Tracy and Harold Widom.
\newblock Integral formulas for the asymmetric simple exclusion process.
\newblock {\em Communications in Mathematical Physics}, 279(3):815--844, 2008.

\end{thebibliography}

\end{document}